\DeclareMathOperator{\Ric}{Ric}
\DeclareMathOperator{\spt}{spt}
\DeclareMathOperator{\Id}{Id}
\DeclareMathOperator{\Vol}{Vol}
\DeclareMathOperator{\dist}{dist}
\DeclareMathOperator{\Sing}{Sing}
\DeclareMathOperator{\minvol}{MinVol}
\DeclareMathOperator{\minent}{MinEnt}
\DeclareMathOperator{\spherevol}{SphereVol}
\DeclareMathOperator{\Reg}{\mathrm{Reg}}
\DeclareMathOperator{\Induced}{Ind}
\DeclareMathOperator{\End}{End}
\newtheorem{theo}{Theorem}[section]
\newtheorem{prop}[theo]{Proposition}
\newtheorem{lemme}[theo]{Lemma}
\newtheorem{definition}[theo]{Definition}
\newtheorem{coro}[theo]{Corollary}
\newtheorem{conj}[theo]{Conjecture}
\theoremstyle{definition}
\newtheorem{remarque}[theo]{Remark}
\begin{document}
\title{Hyperbolic groups and spherical minimal surfaces}

\author{Antoine Song}

\AtEndDocument{\bigskip{\footnotesize%
  \textsc{\\California Institute of Technology\ 177 Linde Hall, \\ \#1200 E. California Blvd., Pasadena, CA 91125}\par 
  \textit{\\E-mail address}: \texttt{aysong@caltech.edu}
  }}

\date{}
\maketitle

\begin{abstract} 
Let $M$ be a closed, oriented, negatively curved, $n$-dimensional manifold with fundamental group $\Gamma$.
Let $S^\infty$ be the unit sphere in $\ell^2(\Gamma)$, on which $\Gamma$ acts by the regular representation.
The spherical volume of $M$ is a topological invariant introduced by Besson-Courtois-Gallot.
We show that it is equal to the area of an $n$-dimensional area-minimizing minimal surface inside the ultralimit of $S^\infty/\Gamma$, in the sense of Ambrosio-Kirchheim. Our proof combines the theory of metric currents with a study  of limits of the regular representation of torsion-free hyperbolic groups. 
\end{abstract}

\tableofcontents

\section{Introduction}

The \emph{spherical volume} of a closed oriented manifold $M$ is an invariant, introduced by Besson-Courtois-Gallot in \cite[Section 3, I]{BCG91},  which measures the topological complexity of $M$.
It generalizes the hyperbolic volume of low dimensional manifolds \cite[Exemples 3.3 (d)]{BCG91} \cite[Equation (1.2)]{BCG95} \cite{BCG96} \cite[Theorem 5.1]{Antoine23a}, and
it has a rich interplay with a host of other invariants, as shown in the chain of inequalities (\ref{ineq chain}), 
Historically, it played a crucial role in the celebrated proof of the entropy rigidity conjecture of Katok-Gromov by Besson-Courtois-Gallot \cite{BCG96} (see also \cite[Subsection 4.2]{Antoine23a} for a review), which in turn resulted in an elementary proof of Mostow's rigidity theorem. Nevertheless, many basic questions about this invariant remain unanswered \cite{Kotschick11} \cite{Antoine23a}. The spherical volume of $M$ is nonzero only when $M$ is ``large'', for instance when $M$ admits a Riemannian metric of negative curvature \cite[Subsection 0.3]{Gromov82} \cite[Th\'{e}or\`{e}me 3.16]{BCG91}.  By combining tools from geometric measure theory and geometric group theory, we find that 
the spherical volume of a negatively curved $M$ is naturally realized by an $n$-dimensional ``mass-minimizing cycle'', a generalized notion of minimal surface which is central in geometric measure theory \cite[Introduction]{AK00}.

Let $M$ be a closed, oriented, negatively curved, Riemannian $n$-manifold. 
Set $\Gamma:=\pi_1(M)$, the fundamental group of $M$, let $\tilde{M}$ be the universal cover of $M$, on which $\Gamma$ acts properly, freely, cocompactly, and let $D_M$ be a Borel fundamental domain in $\tilde{M}$. 
Let $(S^\infty,\mathbf{g}_{\mathrm{Hil}})$ be the real separable Hilbert unit sphere of infinite dimension, with its standard round metric. 
We can identify it with the unit sphere in $\ell^2(\Gamma)$. 
Then $\Gamma$ acts isometrically on $S^\infty$ by translation, namely by the regular representation $\lambda_\Gamma$.
Consider the family $\mathscr{H}_{M}$ of $\Gamma$-equivariant smooth maps from $\tilde{M}$ to $S^\infty$.
Then the spherical volume of $M$ is defined as
\begin{equation}\label{first def sv}
\spherevol(M):= \inf\{\Vol(D_M,\phi^*\mathbf{g}_{\mathrm{Hil}});\quad \phi\in \mathscr{H}_M\}.
\end{equation}
Let $S^\infty/\Gamma$ be the quotient of $S^\infty$ by $\Gamma$ via the action $\lambda_\Gamma$. Denote by  $(S^\infty/\Gamma)_\omega$ the ultralimit of the constant sequence $S^\infty/\Gamma$ with respect to a non-principal ultrafilter $\omega$ \cite[Chapter 10]{DK18}. This ultralimit is still a quotient of a Hilbert unit sphere by an isometric group action, but it has complicated singularities. Our main result is the following:
\begin{theo} \label{theorem:plateau solution}
 Let $M$ be a closed, oriented, negatively curved $n$-manifold, with fundamental group $\Gamma$. Then, there is a mass-minimizing $n$-cycle $T$ inside the ultralimit $(S^\infty/\Gamma)_\omega$, whose mass is equal to $\spherevol(M)$.
Moreover, the intersection of the support of $T$ and the singular part of $(S^\infty/\Gamma)_\omega$ is a discrete set.
\end{theo}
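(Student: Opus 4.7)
The plan is to realize $\spherevol(M)$ as the mass of an ultralimit of pushforwards of the fundamental class of $M$ under a minimizing sequence of equivariant maps, then to establish minimality and the regularity statement separately. First I would fix a sequence $\phi_k \in \mathscr{H}_M$ with $\Vol(D_M,\phi_k^*\mathbf{g}_{\mathrm{Hil}}) \to \spherevol(M)$. Each $\phi_k$ descends to a smooth map $\bar\phi_k : M \to S^\infty/\Gamma$, and the Ambrosio--Kirchheim pushforward $T_k := (\bar\phi_k)_*\llbracket M \rrbracket$ is a metric $n$-cycle in $S^\infty/\Gamma$ with $\mass(T_k) \le \Vol(D_M,\phi_k^*\mathbf{g}_{\mathrm{Hil}})$ by the area formula, and $\partial T_k = 0$ since $M$ is closed. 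Viewing the $T_k$ as currents in $(S^\infty/\Gamma)_\omega$ via the diagonal embedding and using lower semicontinuity of mass under the ultralimit, one extracts a limit $n$-cycle $T$ with $\mass(T) \le \spherevol(M)$.

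The main content is the matching lower bound, equivalently the mass-minimality of $T$. I would argue by contradiction: suppose there is an $(n+1)$-current $U$ in $(S^\infty/\Gamma)_\omega$ with $\mass(T+\partial U) < \spherevol(M)-\epsilon$. Using density of Lipschitz-chain representatives in Ambrosio--Kirchheim theory, approximate $T+\partial U$ by the pushforward $(\psi_\omega)_*\llbracket M\rrbracket$ of a Lipschitz map $\psi_\omega : M \to (S^\infty/\Gamma)_\omega$ homologous to $\bar\phi_k$. The ultralimit structure then produces, for $\omega$-almost every $k$, Lipschitz maps $\psi_k : M \to S^\infty/\Gamma$ with pushforward mass below $\spherevol(M)-\epsilon/2$. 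Lifting $\psi_k$ to a $\Gamma$-equivariant map $\tilde\psi_k : \tilde M \to S^\infty$ (using the contractibility of $S^\infty$ and the fact that $\psi_k$ and $\bar\phi_k$ are homotopic), then smoothing and retracting onto a sufficiently high-dimensional subsphere, yields a competitor in $\mathscr{H}_M$ whose induced volume falls below $\spherevol(M)$, contradicting the definition of the infimum. The principal obstacle in this step is arranging the approximation, smoothing, and finite-dimensional projection so that the cumulative increase in mass stays within the slack $\epsilon/2$, while preserving $\Gamma$-equivariance; this requires a careful diagonal choice along the ultrafilter and is the hardest part of the proof.

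For the regularity claim, identify $(S^\infty/\Gamma)_\omega$ with $(S^\infty)_\omega/\Gamma$, so that the singular locus is the $\Gamma$-image of points in $(S^\infty)_\omega$ with nontrivial $\Gamma$-stabilizer. Even though individual infinite-order elements of $\Gamma$ admit almost-invariant sequences in $\ell^2(\Gamma)$, the torsion-freeness and hyperbolicity of $\Gamma$ force limit stabilizers of points in $(S^\infty)_\omega$ to be elementary (virtually cyclic) subgroups whose fixed sets are of ``codimension at least two'' in a measured sense controlled by the spectral behavior of $\lambda_\Gamma$ along $\omega$. Combining this structural information with the monotonicity formula for mass-minimizing rectifiable $n$-currents, valid in the Ambrosio--Kirchheim setting in sufficiently regular metric targets, I would then rule out accumulation of $\spt(T)$ along the singular stratum, concluding that the intersection is discrete. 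Making the quantitative control of limit stabilizers sharp enough to feed into the monotonicity-based regularity is the other principal hurdle, and is where the hyperbolic geometry of $\Gamma$ enters most essentially.
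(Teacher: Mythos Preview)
Your proposal has the right large-scale architecture (minimizing sequence, ultralimit, contradiction for minimality) but contains genuine gaps at the two technical hearts of the argument.

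\textbf{Minimality step.} The sentence ``approximate $T+\partial U$ by the pushforward $(\psi_\omega)_*\llbracket M\rrbracket$ of a Lipschitz map $\psi_\omega:M\to(S^\infty/\Gamma)_\omega$'' is not justified and is in general false: an arbitrary integral current homologous to $T$ need not be close to the image of any single map from $M$. More seriously, even after replacing this by polyhedral approximation, the ``lift back along the ultrafilter'' step breaks down precisely when $\spt U$ approaches the singular set $\Sing(S^\infty/\Gamma)_\omega$: points there are ultralimits of sequences in $S^\infty/\Gamma$ with injectivity radius tending to zero, so there is no controlled way to realize them by currents in $S^\infty/\Gamma$ with nearly the same mass. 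The paper addresses exactly this by constructing, from the structure of the ultralimit representation $\lambda_\omega$ (Corollary~\ref{ultralimit regular representation}, Lemma~\ref{find induced}), a continuous family of $1$-Lipschitz deformation maps $\mathscr{D}_\theta$ that push any separable subset into the regular part up to a discrete set, then combines this with a Margulis-type lemma for $S^\infty/\Gamma$ (Corollary~\ref{margulis}) and a controlled-filling lemma (Lemma~\ref{controlled filling}) to cap off the small boundaries created. Without such a device your contradiction argument cannot be closed.

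\textbf{Regularity step.} Two points are incorrect. First, $(S^\infty/\Gamma)_\omega$ is not $(S^\infty)_\omega/\Gamma$ but $S^\infty_\omega/\lambda_\omega(\Gamma_\omega)$, where $\Gamma_\omega$ is the uncountable ultraproduct; stabilizers live in $\Gamma_\omega$, not in $\Gamma$. Second, the fixed sets of nontrivial elements of $\Gamma_\omega$ are \emph{not} of codimension at least two: the spaces $\mathcal{K}_\sigma$ in the paper's decomposition are typically infinite-dimensional, so the singular set is a union of infinite-dimensional subspheres. A monotonicity argument cannot rule out accumulation along such a set. The paper's discreteness conclusion does not come from monotonicity at all; it is a direct consequence of the deformation map $\mathscr{D}_1$, which factors through an intermediate quotient $S^\infty_{(2)}/\lambda_\omega(\Gamma_{(1)})$ whose singular set is, by construction, a discrete orthonormal family (Lemma~\ref{find induced}, Lemma~\ref{J easy}). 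The discreteness is thus engineered representation-theoretically, not deduced from minimality.
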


In this statement, ``mass-minimizing $n$-cycle'' is taken in the sense of
the theory of metric currents, developed by Ambrosio-Kirchheim \cite{AK00}, Lang \cite{Lang11}, Wenger \cite{Wenger11} and others, This theory provides a powerful framework to study \emph{integral currents}, a kind of generalized notion of submanifolds, in any complete metric space $(E,d)$. An $n$-dimensional integral current $T$ in $(E,d)$ has well-defined mass $\mathbf{M}(T)$ and boundary $\partial T$. If $T$ has no boundary, namely $\partial T=0$, then $T$ is called an $n$-cycle. An integral current $T$ it is called \emph{mass-minimizing} \cite[Definition 9.1]{FF60} if for all $(n+1)$-dimensional integral current $R$ in $(E,d)$, the following mass inequality is true:
$$\mathbf{M}(T)\leq \mathbf{M}(T + \partial R).$$

In finite dimensions, a fundamental theorem due to Federer-Fleming \cite[Corollary 9.6]{FF60} states that in a closed Riemannian manifold, any homology class is represented by a mass-minimizing cycle, in the sense above. In infinite-dimensional Riemannian spaces, this existence theorem does not hold:  surprisingly, the quotient $S^\infty/\Gamma$ never contains nonzero mass-minimizing cycles \cite[Remark 3.8]{Antoine23a}. 
Thus, Theorem \ref{theorem:plateau solution} can be viewed as a corrected Federer-Fleming theorem for $S^\infty/\Gamma$.

The mass-minimizing $n$-cycle $T$ in Theorem \ref{theorem:plateau solution} is obtained as a limit of a minimizing sequence of cycles in $S^\infty/\Gamma$.
A unexpected feature is that $T$ is almost entirely supported in the smooth part of $(S^\infty/\Gamma)_\omega$, called $\Reg(S^\infty/\Gamma)_\omega $, which is an open domain locally isometric to a Hilbert unit sphere. The partial regularity result of Ambrosio-De Lellis-Schmidt \cite[Theorem 1.2]{ADLS18} is stated for mass-minimizing integral currents in Hilbert spaces, 
but a suitable modification of the argument allows to handle Hilbert spheres \cite{DL24}. Hence, the support of the cycle $T$ in Theorem \ref{theorem:plateau solution} is regular on an open dense subset. Conjecturally,  Almgren's big regularity theorem \cite[Theorem 0.1]{Ambrosio16} also holds in this infinite-dimensional setting, meaning that our $T$ should be smooth outside of a codimension 2 subset.
Minimal surfaces of dimension $n$ inside domains of Hilbert unit spheres have special analytic properties since they are isometrically embedded by Laplace eigenfunctions with eigenvalue $n$ \cite{Takahashi66}.



We will prove a version of Theorem \ref{theorem:plateau solution} which holds more generally for any homology class of $S^\infty/\Gamma$ whenever $\Gamma$ is assumed to be torsion-free \emph{hyperbolic} \cite{Gromov87}\cite{GDLH90}, a well-known family of groups in  geometric group theory.
There are many known constructions of torsion-free hyperbolic $\Gamma$ such that $S^\infty/\Gamma$ has nontrivial homology, see Section \ref{hyp groups}. The fact that $\spherevol(M)>0$ for negatively curved closed manifolds $M$ was known, by combining \cite[Subsection 0.3]{Gromov82} and \cite[Th\'{e}or\`{e}me 3.16]{BCG91}. Our arguments provide a different proof, see Corollary  \ref{positive spherevol}.

It is instructive to see what happens when $M$ is a hyperbolic $n$-manifold or more generally negatively curved locally symmetric. In that case, it essentially  follows from Besson-Courtois-Gallot \cite{BCG95}\cite{BCG96}\cite[Section 4.2]{Antoine23a} that $T$ in Theorem \ref{theorem:plateau solution} can be chosen to be given by the image $\mathscr{P}(M)$ of a special minimal embedding $\mathscr{P}$ from $M$ to a smooth spherical quotient $S^\infty/\rho_B(\Gamma)$ where $\rho_B$ is a certain  isometric, proper, free action of $\Gamma$ on $S^\infty$, and $S^\infty/\rho_B(\Gamma)$ is isometrically embedded inside $(S^\infty/\Gamma)_\omega$. The uniqueness of such $T$ is conjectured when $n\geq 3$ \cite[Question 4.5]{Antoine23a}.

In Theorem \ref{theorem:plateau solution}, can $(S^\infty/\Gamma)_\omega$ be replaced by some smooth spherical quotient $S^\infty/\rho(\Gamma)$, 
as for negatively curved locally symmetric manifolds?  
In that direction, we propose the conjecture below, which we discuss in more details in Remark \ref{plausible}.
\begin{conj}\label{conjecture}
Let $M$ be a closed oriented negatively curved Riemannian $n$-manifold with fundamental group $\Gamma$. 
Then, there exist an orthogonal representation $\rho: \Gamma\to \End(H)$ weakly equivalent to the regular representation $\lambda_\Gamma$, and a $\Gamma$-equivariant, indecomposable, $n$-dimensional minimal surface $\Sigma$ in  the unit sphere of $H$. 
\end{conj}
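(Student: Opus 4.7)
The plan is to extract both the representation $\rho$ and the minimal surface $\Sigma$ directly from the objects produced by Theorem \ref{theorem:plateau solution}. The key observation is that $(S^\infty/\Gamma)_\omega$ is naturally the quotient of a (typically non-separable) Hilbert unit sphere by an orthogonal $\Gamma$-action which, essentially by construction, is weakly equivalent to $\lambda_\Gamma$. Concretely, let $H := (\ell^2(\Gamma))_\omega$ be the metric ultrapower of the constant sequence; this carries a canonical real Hilbert space structure, and its unit sphere $S(H)$ is canonically identified with $(S^\infty)_\omega$. Since $\Gamma$ is countable and acts by isometries, the diagonal ultrapower $\rho := (\lambda_\Gamma)_\omega$, defined by $\rho(g)[v_n] := [\lambda_\Gamma(g) v_n]$, is a well-defined orthogonal representation with $S(H)/\rho(\Gamma) = (S^\infty/\Gamma)_\omega$. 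The diagonal embedding $\ell^2(\Gamma) \hookrightarrow H$ is $\Gamma$-equivariant and isometric, giving $\lambda_\Gamma \prec \rho$, while each matrix coefficient of $\rho$ is a pointwise $\omega$-limit of matrix coefficients of $\lambda_\Gamma$, giving $\rho \prec \lambda_\Gamma$. Hence $\rho \sim \lambda_\Gamma$.

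Next, apply Theorem \ref{theorem:plateau solution} to obtain a mass-minimizing $n$-cycle $T$ in $(S^\infty/\Gamma)_\omega$ whose support meets the singular locus in a discrete set. Using the decomposition theorem for integral currents in the Ambrosio--Kirchheim setting, write $T = \sum_i T_i$ as a countable sum of indecomposable integral $n$-cycles; by mass additivity, each $T_i$ is itself mass-minimizing. Pick a non-trivial summand $T_1$ and lift it, via the covering $S(H) \to S(H)/\rho(\Gamma)$ over the regular part, to a $\Gamma$-equivariant integral $n$-current $\tilde T_1$ in $S(H)$. Set $\Sigma := \mathrm{supp}(\tilde T_1)$. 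Indecomposability of $T_1$ downstairs implies $\Sigma$ is indecomposable as a $\Gamma$-equivariant cycle. By the partial regularity of Ambrosio--De Lellis--Schmidt adapted to Hilbert spheres, $\Sigma$ is smooth on an open dense subset, and Takahashi's theorem identifies the smooth part as an honest $n$-dimensional minimal surface in $S(H)$ isometrically embedded by Laplace eigenfunctions of eigenvalue $n$.

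The main obstacle is the interplay between the ultrapower construction, the geometric measure theory, and the $\Gamma$-action. Three specific issues must be handled: (i) verifying that $\rho$ acts properly on the regular part of $S(H)$, so that the lifting of $T_1$ is truly a covering construction---for torsion-free hyperbolic $\Gamma$ the $C_0$-ness of matrix coefficients of $\lambda_\Gamma$ makes this plausible, but the property must survive passing to the ultrapower on generic vectors; (ii) matching the decomposition into indecomposable pieces with the $\Gamma$-action, ensuring that a single indecomposable downstairs corresponds to a single $\Gamma$-orbit of indecomposables upstairs rather than to a nontrivial $\Gamma$-invariant splitting; and (iii) ensuring enough regularity of $\tilde T_1$ that its support is genuinely a minimal surface away from a small singular set, which at present relies on conjectural analogues of Almgren's big regularity theorem in infinite dimensions. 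A separate natural question, beyond the conjecture as stated, is whether $\rho$ may be realized on a separable $\Gamma$-invariant subspace of $H$ containing $\Sigma$.
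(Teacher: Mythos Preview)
This statement is a \emph{conjecture} in the paper, not a theorem; the paper offers no proof, only a heuristic discussion in Remark~\ref{plausible}. Your proposal is essentially that heuristic written out, and the three issues you flag at the end are precisely the reasons the paper leaves the statement open rather than claiming it.

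Concretely, the paper's Theorem~\ref{theorem:plateau solution} gives a mass-minimizing cycle $T$ whose support meets $\Sing(S^\infty/\Gamma)_\omega$ in a discrete set, but \emph{not} in the empty set. The paper explicitly says (Remark~\ref{plausible}) that ``it seems reasonable to expect that the mass-minimizing cycle $T$ in fact avoids completely the singular set''---this is not proved. Until it is, your lift $\tilde T_1$ is only a locally mass-minimizing current away from a discrete set of points where you have no control, so $\Sigma=\spt\tilde T_1$ is only known to be a minimal surface in the sense of Remark~\ref{plausible} \emph{outside} that discrete set. That is exactly what the paper says ``already follows from our results,'' and no more.

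Your point (ii) is also a genuine gap: the decomposition $T=\sum_i T_i$ into indecomposables takes place in the quotient $(S^\infty/\Gamma)_\omega$, where the acting group is $\Gamma_\omega$, not $\Gamma$. There is no argument given that a single indecomposable piece downstairs lifts to something on which $\Gamma$ (as opposed to some conjugate or some larger subgroup of $\Gamma_\omega$) acts, or that a connected component of the lift is $\Gamma$-invariant. The paper flags exactly this: ``one connected component $\Sigma$ of $\tilde T$ \emph{should} be indecomposable, and invariant under $\Gamma$.'' Finally, your point (iii) is a self-admitted reliance on a conjectural regularity theorem; you cannot use that to prove another conjecture. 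The part of your argument that \emph{is} correct and matches the paper is the weak equivalence $\lambda_\omega|_\Gamma \sim \lambda_\Gamma$, which the paper records in Remark~\ref{derniere rem} via $C^*$-simplicity of torsion-free hyperbolic groups.
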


\subsection{Motivations and related works}

\textbf{The spherical volume}

Our first motivation is to better understand the spherical volume of closed oriented $n$-manifolds $M$. While its definition is geometric, Brunnbauer proved that it is a homotopy invariant \cite[Theorem 1.1]{Brunnbauer08}. Its value has been computed for all rank one locally symmetric spaces by Besson-Courtois-Gallot \cite[Equation (1.2)]{BCG95}, and for all closed oriented surfaces \cite[Exemples 3.3 (d)]{BCG91} and $3$-manifolds \cite[Theorem 5.1]{Antoine23a} (in low dimensions, it coincides with the hyperbolic volume up to a scaling factor). The spherical volume of a closed oriented manifold vanishes if the fundamental group is amenable \cite[Subsection 3.3.2]{Antoine23a}.  A vague heuristic based on low-dimensional manifolds suggests that ``most'' closed oriented manifolds have positive spherical volume. Besides, there is a rich interplay between the spherical volume and other invariants in topology, dynamical systems and Riemmanian geometry \cite[Equation (1)]{Kotschick11}:
\begin{equation}\label{ineq chain}
\frac{1}{2^n n!}\|M\| \leq \spherevol(M) \leq \frac{1}{2^n n^{n/2}} \minent(M)^n 
\leq \frac{(n-1)^n}{2^n n^{n/2}} \minvol_{\Ric}(M)
\end{equation}
where  $\|M\|$ is Gromov's simplicial volume \cite{Gromov82}, $\minent(M)$ is the infimum of the normalized volume entropy, and   $\minvol_{\Ric}(M)$ is the infimum of the volume under the Ricci curvature bound $\Ric\geq -(n-1)$.  
The spherical volume of $M$ might always be naturally realized by an Ambrosio-Kirchheim $n$-dimensional minimal surface inside a spherical quotient \cite[Question 3.4]{Antoine23a}.
This possible realization by a nice geometric object is a property that distinguishes the spherical volume. 
The hypothetical minimal surface should be the solution of a variational problem associated with the spherical volume,  called \emph{spherical Plateau problem} \cite[Section 3]{Antoine23a}. One version of it can be described as follows. 
Given a closed oriented $n$-manifold $M$, $D_M\subset \tilde{M}$, $\Gamma:=\pi_1(M)$, $S^\infty \subset \ell^2(\Gamma)$, $\lambda_\Gamma$ and $\mathscr{H}_M$ as above, consider a sequence of  maps  $\phi_i\in \mathscr{H}_M$, which is minimizing in the sense that 
$$\lim_{i\to \infty} \Vol(D_M,\phi_i^*\mathbf{g}_{\mathrm{Hil}}) = \spherevol(M).$$
By a compactness theorem of Wenger \cite{Wenger11}\cite[Theorem 4.19]{SW11}, subsequentially, the sequence of compact quotients $\phi_i(M)/\Gamma \subset S^\infty/\Gamma$ converges to an ``\emph{integral current space}'' $C_\infty$ called \emph{spherical Plateau solution}, which lives in a quotient of a Hilbert unit sphere. In fact, the minimal surface of Theorem \ref{theorem:plateau solution} will be a spherical Plateau solution, and Theorem \ref{theorem:plateau solution} answers Questions 3.3 and 3.4 in \cite{Antoine23a} for negatively curved manifolds.
(There is also an equivariant version of the spherical Plateau problem in the unit sphere $S^\infty$, instead of the quotient $S^\infty/\Gamma$.)

This variational problem has attractive properties \cite{BCG95}\cite{Kotschick11}\cite{Antoine23a}\cite{Antoine23b}\cite{CN23}. For example, when $(M,g_0)$ is negatively curved locally symmetric of dimension at least $3$, then $C_\infty$ recovers $(M,g_0)$ up to a dimensional factor. When $M$ is a closed oriented $3$-manifold, $C_\infty$ recovers the hyperbolic part $(M_{\mathrm{hyp}},g_0)$ of $M$ given by the Geometrization theorem. 
A similar classification of the possible limits $C_\infty$ arising from closed surfaces seems possible.

\vspace{1em}

\textbf{Geometrization problem for manifolds and orthogonal representations}

The classical geometrization problem for manifolds is the following question of Hopf-Thom-Yau  \cite[Section 11.1]{Berger03} \cite[Problem 1]{Yau93}: given a closed manifold $M$, does it admit a ``best'' metric? 
Taken too literally, this question does not have a satisfying answer \cite{Nabutovsky95}\cite{NW00}\cite[Chapter III, Remarks and references (e)]{Gromov00}. 
Under a more relaxed interpretation of the problem,  
Conjecture \ref{conjecture} attempts to ``geometrize'' negatively curved manifolds using the following kind of geometric structure: 
$
(X,\Sigma,\Gamma)
$
where $X$ is a (possibly infinite-dimensional) homogeneous space, $\Gamma$ is a discrete subgroup of the isometry group of $X$, and $\Sigma$ is a $\Gamma$-invariant $n$-dimensional minimal surface in $X$.
Theorem \ref{theorem:plateau solution} and its proof suggest that if $M$ is negatively curved, the minimal surface in Conjecture \ref{conjecture} could be obtained as a solution of the equivariant version of the spherical Plateau problem.

An extension of the spherical Plateau problem also provides a framework to investigate the geometry of general orthogonal (or unitary) representations, not just the regular representation. 
In the context of  representations, the geometrization problem can be recast as the problem of
understanding triples $(\Gamma,\rho,\Sigma)$ where $\Gamma$ is a group, $(\rho,V)$ is a unitary representation of $\Gamma$, and $\Sigma$ is a $\Gamma$-equivariant $m$-dimensional minimal surface in the unit sphere $S_V$ of $V$. 
This perspective  leads to many questions. 
Theorem \ref{theorem:plateau solution} and Conjecture \ref{conjecture} focus on the regular representation of negatively curved fundamental groups.
Spherical volumes can be interpreted as a kind of higher dimensional version of Kazhdan constants for unitary representations.
In \cite{Antoine24a}, we give a geometric counterpart of a strong asymptotic freeness theorem due Bordenave-Collins \cite[Theorem 3]{BC19}: equivariant $2$-dimensional minimal surfaces in spheres, corresponding to certain random representations of the free group $F_2$, are almost hyperbolic. 

\vspace{1em}

\textbf{Minimal surfaces in infinite-dimensional spaces}

Another motivation was to find new ways of constructing $n$-dimensional minimal surfaces (without boundary) in infinite-dimensional Riemannian spaces.  There are by now many existence theories for minimal surfaces in finite-dimensional manifolds, including explicit constructions, area minimization in homotopy or homology classes, and min-max theory.  
Recently, in the framework of metric currents of Ambrosio-Kirchheim, numerous papers have examined the metric Plateau problem, namely the problem of constructing and using minimal surfaces with a given non-empty boundary inside metrics spaces, \cite[Section 10]{AK00}\cite{Wenger14}\cite{AS13}\cite{Wenger14} \cite{LW17}\cite{LW18}\cite{KL20} etc. 
Before that, following the work of Korevaar-Schoen \cite{KS93}\cite{KS97}, many results about harmonic maps from domains with or without boundary into nonpositively curved metric spaces were established.
In contrast, aside from some beautiful but isolated examples, there is so far no general existence theory for minimal surfaces without boundary in infinite-dimensional spaces. Theorem \ref{theorem:plateau solution} is a step in that direction.

\vspace{1em}


\textbf{Limit of cycles, limit of representations, limit of groups}

An underlying theme of this paper is the problem of taking limits of geometric structures.
Concretely,  we are interested in taking limits of minimizing sequences of cycles in spaces of the form $S^\infty/\Gamma$, and those limits live inside the ultralimit $(S^\infty/\Gamma)_\omega$. The space $(S^\infty/\Gamma)_\omega$ is of independent interest, because in some way, it encodes some properties of limits of the regular representation $\lambda_\Gamma$ in the Fell topology \cite[Section F.2]{BDLHV08} and limit groups of $\Gamma$ in the sense of Sela \cite{Sela01}, see Subsection \ref{prelim3}. 
The study of $(S^\infty/\Gamma)_\omega$ involves a description of an ``ultralimit representation'' $\lambda_\omega$ of the ultraproduct $\Gamma_\omega$ of $\Gamma$, see Subsections \ref{subsection:nota} and \ref{subsection:lambdaomega}. 
This has applications to the geometry of $S^\infty/\Gamma$, which is an ``infinite-dimensional locally symmetric space'' of the simplest kind. 
When $\Gamma$ is  non-elementary torsion-free hyperbolic, we will see that the singularities of $(S^\infty/\Gamma)_\omega$ are in fact tractable. With this observation, we show that a Margulis type lemma, akin to the classical result for hyperbolic manifolds, holds for the $\delta$-thin part of $S^\infty/\Gamma$ when $\delta$ is small.
We will also construct special $1$-Lipschitz deformation maps from separable subsets of $(S^\infty/\Gamma)_\omega$ to $(S^\infty/\Gamma)_\omega$, which are essential for our purpose. 

\subsection{Strategy of proof}

\textbf{Preliminaries.} 
Consider a non-elementary hyperbolic group $\Gamma$. 
Given a nonzero homology class $h\in H_n(S^\infty/\Gamma;\mathbb{Z})$, 
let $\mathscr{C}(h)$ be the set of compactly supported $n$-cycles in $S^\infty/\Gamma$ representing $h$, in the sense of Ambrosio-Kirchheim \cite{AK00}. Set
$$\spherevol(h):= \inf\{\mathbf{M}(C); \quad C\in \mathscr{C}(h)\}.$$
It can be shown \cite[Section 3.5]{Antoine23a} that the spherical volume of a closed, oriented, negatively curved manifold $M$ with fundamental group $\Gamma$ (as defined in (\ref{first def sv})) is equal to $\spherevol(h)$ for a well-defined homology class $h$. 

Let $C_i\in \mathscr{C}(h)$ be a minimizing sequence, i.e. such that 
$$\lim_{i\to \infty} \mathbf{M}(C_i) = \spherevol(h).$$
A simple observation is that by Wenger's compactness theorem \cite{Wenger11}, $C_i$ subsequentially converges in the intrinsic flat topology \cite{SW11} to an integral current space $C_\infty$, called a spherical Plateau solution, which is isomorphic to an $n$-cycle $S_\infty$ inside the ultralimit $(S^\infty/\Gamma)_\omega$. Note that $S_\infty$ is not yet known to be mass-minimizing. In fact, showing this is one of the main challenges.
By lower semicontinuity of the mass under intrinsic flat convergence, we at least know that 
\begin{equation} \label{lowwer}
\mathbf{M}(S_\infty)\leq \spherevol(M).
\end{equation} 


The ultralimit  $(S^\infty/\Gamma)_\omega$ is itself a quotient of a Hilbert unit sphere, but it is not separable and not smooth.
Let $\Reg(S^\infty/\Gamma)_\omega $ and $\Sing(S^\infty/\Gamma)_\omega $ denote respectively its smooth part and its singular part.
Roughly speaking, the points in $\Reg(S^\infty/\Gamma)_\omega $ correspond to sequences of points in $S^\infty/\Gamma$ with injectivity radii uniformly bounded away  from $0$.

\textbf{Main difficulties and the deformation property.}
 After those standard preliminaries, there are three a priori independent major difficulties, due to the infinite-dimensionality of the problem and the non-smoothness of $(S^\infty/\Gamma)_\omega $: 
\begin{enumerate} [label=(\alph*)]
\item to show that equality actually holds in (\ref{lowwer}),
\item to show that $S_\infty$ is mass-minimizing in $(S^\infty/\Gamma)_\omega$, even across $\Sing (S^\infty/\Gamma)_\omega$,
\item to show that $S_\infty$ is almost entirely contained in $\Reg(S^\infty/\Gamma)_\omega $.
\end{enumerate} 
The key to prove all those points is a \emph{deformation property}:
any integral current  in $(S^\infty/\Gamma)_\omega$ can be continuously deformed to an integral current almost entirely contained in $\Reg(S^\infty/\Gamma)_\omega $ via $1$-Lipschitz maps which send $\Sing(S^\infty/\Gamma)_\omega $ to itself.

Once (a), (b), (c) are established, the main theorem is mostly proved.
 Here ``almost entirely contained'' means that the restriction of the cycle to  $\Sing(S^\infty/\Gamma)_\omega $ has zero mass.
Interestingly, we do not know how to show either one of the properties (a) (b) (c) without essentially proving the other properties at the same time. Our construction of the $1$-Lipschitz deformation maps relies strongly on the fact that $\Gamma$ is hyperbolic, as discussed below.

\textbf{The $1$-Lipschitz deformation maps.}
Let us describe the \emph{deformation maps}, which are the key new tool of our proof, see Subsection \ref{subsection:projection}. The construction is general in that it does not involve metric currents. For any separable subset $\mathbf{A}\subset (S^\infty/\Gamma)_\omega$, we can construct a continuous family of $1$-Lipschitz maps $\{\mathscr{D}_\theta\}_{\theta\in [0,1]}$ depending on $\mathbf{A}$,
$$\mathscr{D}_\theta : \mathbf{A} \to (S^\infty/\Gamma)_\omega$$
such that 
$\mathscr{D}_0=\Id$, $\mathscr{D}_1(\mathbf{A})\cap \Sing(S^\infty/\Gamma)_\omega$  is contained in a  discrete subset $\underline{\underline{\mathcal{D}}}$, and if $q\in \mathbf{A}$ is $d$-close to $\Sing (S^\infty/\Gamma)_\omega$, then $\mathscr{D}_1(q)$ is  $d$-close to $\underline{\underline{\mathcal{D}}}$ for any $d>0$.
 
In fact, $\mathscr{D}_1$ is the composition of two $1$-Lipschitz maps: $\mathscr{D}_1 = \mathscr{J}\circ \overline{\mathscr{D}}_1$ with
\begin{equation}\label{Q def}
\overline{\mathscr{D}}_1: \mathbf{A} \to \mathcal{Q}\quad \text{and} \quad \mathscr{J}: \mathcal{Q}\to (S^\infty/\Gamma)_\omega,
\end{equation}
where $\mathcal{Q}$ is an intermediate Hilbert spherical quotient such that $\mathcal{Q}\setminus \underline{\mathcal{D}}$ is smooth for some discrete subset $\underline{\mathcal{D}}\subset \mathcal{Q}$.
The map $\overline{\mathscr{D}}_1$ sends $\mathbf{A}\cap\Sing(S^\infty/\Gamma)_\omega$ to $\underline{\mathcal{D}}$, and $\mathscr{J}$ sends $\mathcal{Q}\setminus \underline{\mathcal{D}}$ (resp. $\underline{\mathcal{D}}$) to $\Reg(S^\infty/\Gamma)_\omega $ (resp. the discrete subset $\underline{\underline{\mathcal{D}}}$).


\textbf{Outline of proof for (a), (b), (c).}
On a heuristic level, the reason why the deformation property is useful can be explained as follows. The support of any integral current is separable. The $n$-cycle $S_\infty$ can thus be deformed via $1$-Lipschitz maps $\{\mathscr{D}_\theta\}_{\theta\in [0,1]}$ corresponding to $\mathbf{A}:=\spt S_\infty$, to an $n$-cycle $(\mathscr{D}_1)_\sharp S_{\infty}$ almost entirely contained in $\Reg(S^\infty/\Gamma)_\omega $. Take a thin neighborhood $N$ of $\Sing(S^\infty/\Gamma)_\omega $, let $N^c$ be its complement and consider the restriction $S_{\infty} \llcorner N^c$ of $S_{\infty} $ to $N^c$. 
A technical lemma ensures that $(\mathscr{D}_\theta)_\sharp (S_{\infty} \llcorner N^c)$ can be assumed to be contained in $\Reg(S^\infty/\Gamma)_\omega $ for any $\theta\in [0,1]$.
Since points of $\Reg(S^\infty/\Gamma)_\omega $ are limits of points in $S^\infty/\Gamma$ with bounded geometry, it is possible to approximate the pushforward current $(\mathscr{D}_1)_\sharp (S_{\infty} \llcorner N^c)$ by some compactly supported integral current $C'$ with boundary, inside $S^\infty/\Gamma$, with almost the same mass as $(\mathscr{D}_1)_\sharp (S_{\infty} \llcorner N^c)$. 
Crucially, since the deformation map 
$\mathscr{D}_1$  sends points of $\spt S^\infty\cap N$ to points close to a discrete subset $\{z_1,z_2,...\}$, 
it sends the boundary of $(\mathscr{D}_1)_\sharp S_{\infty} \llcorner N^c$ to small neighborhoods of $z_1,z_2,...$
We can then squeeze the boundary of $(\mathscr{D}_1)_\sharp S_{\infty} \llcorner N^c$ towards those points $z_j$ using conical contractions, and arbitrarily reduce its mass, without increasing too much the mass of $(\mathscr{D}_1)_\sharp S_{\infty} \llcorner N^c$ (Lemma \ref{conical retraction}).
We can thus assume without loss of generality that the boundary of $(\mathscr{D}_1)_\sharp S_{\infty} \llcorner N^c$ is a cycle in  $\Reg(S^\infty/\Gamma)_\omega $ with small mass. 
Hence the boundary $\partial C'$ also can be chosen to have small mass, and can be capped-off in such a way that we obtain an $n$-cycle $C''$ in $S^\infty/\Gamma$ without changing too much the mass (Lemma \ref{controlled filling}).
That controlled filling lemma relies on some knowledge about the topology of the thin part of $S^\infty/\Gamma$, a Margulis type lemma.
With some additional work, we can show that $C'' \in \mathscr{C}(h)$ because $\{\mathscr{D}_\theta\}_{\theta\in [0,1]}$ is a continuous family of maps. From this approximation argument, we get 
$$\mathbf{M}((\mathscr{D}_1)_\sharp S_{\infty}) \geq\spherevol(h).$$
But since $\mathscr{D}_1$ is $1$-Lipschitz, (\ref{lowwer}) implies (a), namely
$$\mathbf{M}(S_\infty) = \mathbf{M}((\mathscr{D}_1)_\sharp S_{\infty}) = \spherevol(h).$$
Since $(\mathscr{D}_1)_\sharp S_{\infty}$ is almost entirely contained in $\Reg(S^\infty/\Gamma)_\omega $ and the $1$-Lipschitz map   $\mathscr{D}_1$ sends singular points to singular points, this also shows (c), namely that 
$$\mathbf{M}(S_\infty) = \mathbf{M}\big(S_\infty \llcorner \Reg(S^\infty/\Gamma)_\omega \big).$$
Next, if $S_\infty=U+\partial V$, then applying the deformation map $\mathscr{D}_1$ corresponding to $\mathbf{A}:=\spt S_\infty \cup \spt V$, we get new currents
$$(\mathscr{D}_1)_\sharp S_{\infty} = (\mathscr{D}_1)_\sharp U + (\mathscr{D}_1)_\sharp \partial V.$$
By similar arguments, we can show that $(\mathscr{D}_1)_\sharp U$ is approximated by $n$-cycles in $S^\infty/\Gamma$, belonging to $\mathscr{C}(h)$, and with almost the same mass as $(\mathscr{D}_1)_\sharp U$. Thus 
$$\mathbf{M}((\mathscr{D}_1)_\sharp U) \geq \spherevol(h),$$
which is enough to conclude (b) since $\mathbf{M}((\mathscr{D}_1)_\sharp U)  \leq \mathbf{M}(U) $.
This is roughly how the deformation maps are used, although there is an added technical difficulty due to the fact that integral currents do not in general have compact supports. 
The arguments above are described in Section \ref{section:final}.


\textbf{Geometry of $(S^\infty/\Gamma)_\omega$ and construction of the deformation maps.}
The construction of the deformation maps relies on a structural result for $(S^\infty/\Gamma)_\omega$, in particular for its singular set. The ultralimit $(S^\infty/\Gamma)_\omega$ is equal to the quotient $S^\infty_\omega/\lambda_\omega(\Gamma_\omega)$ where $S^\infty_\omega$ is a non-separable Hilbert unit sphere of a real Hilbert space $H_\omega$, $\Gamma_\omega$ is the ultraproduct of $\Gamma$ with respect to $\omega$, and $\lambda_\omega$ is the ``ultralimit'' of the regular representation:
 this is an orthogonal representation of $\Gamma_\omega$ which contains all possible ``limits'' of the regular regular representation of the hyperbolic group $\Gamma$. Understanding $(S^\infty/\Gamma)_\omega$ essentially amounts to understanding the representation $\lambda_\omega$, see Subsection \ref{subsection:lambdaomega}. 

Let $\mathbf{A}\subset (S^\infty/\Gamma)_\omega$ be a separable subset. It is not hard to see that $\mathbf{A}$ can be assumed to be equal to a spherical quotient $$S^\infty_{(1)}/\lambda_\omega(\Gamma_{(1)})$$
where $S^\infty_{(1)} \subset S^\infty_\omega$ is the unit sphere of a separable subspace $H_{(1)}\subset H_\omega$, $\Gamma_{(1)}\subset \Gamma_\omega$ is a countable subgroup, and the restriction $\lambda_\omega|_{\Gamma_{(1)}}$ of  $\lambda_\omega$ to $\Gamma_{(1)}$ leaves $H_{(1)} $ invariant. 
Relying  on the result of Delzant for free subgroups of hyperbolic groups \cite[Th\'{e}or\`{e}me I]{Delzant96}, we will show that the orthogonal representation $(H_{(1)},\lambda_\omega|_{\Gamma_{(1)}})$ decomposes as follows:
$$H_{(1)} = H_{\mathrm{pf}} \oplus \bigoplus_j  H_j$$
$$\lambda_\omega|_{\Gamma_{(1)}} = \eta_{\mathrm{pf}} \oplus \bigoplus_j \lambda_j,$$
where $\eta_{\mathrm{pf}}(\Gamma_{(1)})$ acts freely properly on the unit sphere of $H_{\mathrm{pf}}$, and each $(H_j, \lambda_j)$ is the induced representation from an orthogonal representation of an abelian subgroup $G_j$ of $\Gamma_{(1)}$, see \cite[Section 1.F]{BDLH19} or Section \ref{rep theory} for the definition of induced representations. 

We will then find a subrepresentation $(\bigoplus_j \hat{H}_j,\bigoplus_j \hat{\lambda}_j)$ of $(H_\omega,\lambda_\omega|_{\Gamma_{(1)}})$, such that 
$$\bigoplus_j \hat{H}_j\quad \text{is orthogonal to $H_{(1)}$}$$
and each $(\hat{H}_j, \hat{\lambda}_j)$ is the induced representation from the trivial one-dimensional representation of the abelian subgroup $G_j$ of $\Gamma_{(1)}$. This is where we need $\mathbf{A}$ to be separable.

Next, there is a natural $1$-Lipschitz equivariant map from  $H_{(1)}=H_{\mathrm{pf}} \oplus \bigoplus_j  H_j$ to $H_{(2)}:= H_{\mathrm{pf}} \oplus  \bigoplus_j \hat{H}_j$, which is the identity on $H_{\mathrm{pf}}$, and is the ``obvious'' map from $H_j$ to $\hat{H}_j$ for each $j$.
This map descends to a $1$-Lipschitz map between the quotients of the respective unit spheres
$$\overline{\mathscr{D}}_1: S^\infty_{(1)}/\lambda_\omega(\Gamma_{(1)}) \to S^\infty_{(2)}/\lambda_\omega(\Gamma_{(1)}).$$
The space $S^\infty_{(2)}/\lambda_\omega(\Gamma_{(1)})$ is the intermediate spherical quotient $\mathcal{Q}$ mentioned in (\ref{Q def}). It is smooth outside a discrete subset $\underline{\mathcal{D}}$.
There is also a natural quotient map 
$$\mathscr{J}: S^\infty_{(2)}/\lambda_\omega(\Gamma_{(1)}) \to (S^\infty/\Gamma)_\omega.$$
We finally set
$$\mathscr{D}_1 = \mathscr{J}\circ \overline{\mathscr{D}}_1.$$
We can interpolate from $\Id$ to $\mathscr{D}_1 $ via $1$-Lipschitz maps $\{\mathscr{D}_\theta\}_{\theta\in [0,1]}$ using a similar construction. 
With a good choice $S^\infty_{(1)}$ and $\Gamma_{(1)}$, all the desired properties for $\mathscr{D}_\theta$ described earlier hold.
This concludes the sketch of the definition of the deformation maps.

\textbf{A Margulis type lemma.}
Incidentally, a corollary of the structural result for $(S^\infty/\Gamma)_\omega$ mentioned above is a Margulis type lemma for  $S^\infty/\Gamma$, which says that for some $\bar{\delta}>0$ depending on  $\Gamma$, each component of the $\bar{\delta}$-thin part (the part where the injectivity radius is at most $\bar{\delta}$) has cyclic fundamental group inside $S^\infty/\Gamma$,  see Corollary \ref{margulis}. This implies that any $n$-cycle in $S^\infty/\Gamma$ representing a nontrivial homology class $h$ of dimension $n\geq 2$, necessarily intersects the  $\bar{\delta}$-thick part of $S^\infty/\Gamma$. That in turn provides a new  and direct proof that $\spherevol(h)>0$, see Theorem \ref{positive spherevol}. This non-vanishing theorem ensures that our mass-minimizing cycle is non-empty.


\subsection{Organization}
\textbf{Section \ref{section:plateau problem}:}
We introduce the spherical Plateau problem, and study it for general groups and group homology classes. We show that a minimizing sequence of cycles can be replaced by a pulled-tight sequence with better convergence properties. We find that, in general,  spherical Plateau solutions are divided into a non-collapsed part, which is mass-minimizing, and a collapsed part. This section mostly only relies on the theory of metric currents.

\textbf{Section \ref{section:noncollapsing}:} 
We prove general properties for the structure of ``limits'' of the regular representation of a torsion-free hyperbolic group $\Gamma$. This is applied to show a Margulis lemma for $S^\infty/\Gamma$, and to define $1$-Lipschitz deformation maps on $(S^\infty/\Gamma)_\omega$ which are crucial for us. This section mostly only relies on geometric group theory and representation theory.

\textbf{Section \ref{section:final}:} Combining the previous sections, we employ the deformation maps to show that any spherical Plateau solution for a homology class $h$ of a  torsion-free hyperbolic group $\Gamma$ realizes the spherical volume of $h$,  is globally mass-minimizing in the ultralimit $(S^\infty/\Gamma)_\omega$, and is almost entirely supported in the smooth part $\Reg(S^\infty/\Gamma)_\omega $. 

\textbf{Appendix:} We briefly review some basic properties of metric currents, hyperbolic groups, regular representations, ultralimits and representation theory.

\subsection*{Acknowledgements}
I am grateful to G\'{e}rard Besson and Gilles Courtois for insightful conversations about the spherical volume, Bachir Bekka for explanations about unitary representations, Daniel Groves for exchanges about limit groups, John Lott for discussions about the simplicial volume, and Luca Spolaor for clarifications about the theory of currents. I would also like to thank Camillo De Lellis, Song Sun, Stefan Wenger, Alain Valette, Yves de Cornulier, Jingyin Huang, David Fisher, Jason Manning, Ian Agol, Richard Bamler, Guido De Philippis, St\'{e}phane Sabourau, Kevin Schreve, Alexander Nabutovsky, Tamunonye Cheetham-West, Alexander Nolte, Shi Wang, Xin Zhou, Ben Lowe for stimulating discussions during the writing of this article.

A.S. was partially supported by NSF grant DMS-2104254. This research was conducted during the period A.S. served as a Clay Research Fellow.

\section{The spherical Plateau problem} \label{section:plateau problem}
\label{definition of spherical plateau problem}

\subsection{Setup} \label{subsection:setup}

All Hilbert spaces will be real unless otherwise noted.  
Let $\Gamma$ be an infinite countable group.
Let $S^\infty$ be the unit sphere of $\ell^2(\Gamma)$, the space of real-valued $\ell^2$ functions on $\Gamma$. 
The $\ell^2$-norm induces the standard Hilbert-Riemannian metric $\mathbf{g}_{\mathrm{Hil}}$ on the unit sphere $S^\infty$. 
The group $\Gamma$ acts isometrically on $S^\infty$ by the left regular representation, see (\ref{reg rep}).
Denote by $S^\infty/\Gamma$
the corresponding Hilbert spherical quotient, implicitly endowed with the standard quotient metric.
The action of $\Gamma$ on $S^\infty$ is proper, and it is non-free exactly when $\Gamma$ has torsion elements. When $\Gamma$ is torsion-free and nontrivial, then it is not hard to check that $\Gamma$ acts properly freely on $S^\infty$ by the regular representation and the quotient space $S^\infty/\Gamma$ is in fact a classifying space for $\Gamma$, namely a $K(\Gamma,1)$ space (recall that the infinite-dimensional sphere $ S^\infty$ is contractible), see \cite[Section 3.2]{Antoine23a}.

The basic notions of the theory of metric currents, like integral currents \cite{AK00} or intrinsic flat convergence \cite{SW11} are reviewed in \cite[Section 1]{Antoine23a}.
Let $n\geq 0$ be an integer. Consider the following homology groups defined using integral currents:

\begin{align*}
\mathcal{Z}_n(S^\infty/\Gamma)  := \{T; \quad \text{$T$ is an integral $n$-current with} \\
 \text{compact support in $S^\infty/\Gamma$}\}, 
 \end{align*}
\begin{align*}
\mathcal{B}_n(S^\infty/\Gamma)  := \{\partial D; \quad \text{$D$ is an integral $(n+1)$-current with}\\
  \text{compact support in $S^\infty/\Gamma$}\},
 \end{align*}
 $$
\mathbf{H}_n(S^\infty/\Gamma)  := \mathcal{Z}_n(S^\infty/\Gamma)/\mathcal{B}_n(S^\infty/\Gamma).
$$
There is a natural morphism 
$$
\pi: H_*(\Gamma;\mathbb{Z}) \to \mathbf{H}_*(S^\infty/\Gamma)
$$
where $H_*(\Gamma;\mathbb{Z})$ are the singular homology groups of the group $\Gamma$ with coefficients in $\mathbb{Z}$ (\emph{group homology} for short). Recall that one of the equivalent definitions of the group homology $H_*(\Gamma;\mathbb{Z})$ is that it is equal to the homology $H_*(\mathcal{K};\mathbb{Z})$ of any classifying space $\mathcal{K}$ for $\Gamma$.
Given a group homology class $h\in H_n(\Gamma;\mathbb{Z})$, consider the space 
$$\mathscr{C}(h)$$ of boundaryless $n$-dimensional integral currents with compact supports inside $S^\infty/\Gamma$ which represent the homology class $\pi(h)\in \mathbf{H}_n(S^\infty/\Gamma)$, see \cite[Section 3.2]{Antoine23a} for a precise definition.


\begin{definition} [Spherical volume] \label{seconde def}
Let $h\in H_n(\Gamma;\mathbb{Z})$. The \emph{spherical volume} of $h$ is defined as
$$\spherevol(h) = \inf\{\mathbf{M}(C); \quad C\in \mathscr{C}(h)\}.$$
\end{definition} 
In case $\Gamma$ is the fundamental group of a closed, negatively curved, $n$-manifold, and $h_M$ is the fundamental class of $M$, it is known that $\spherevol(h_M)$ is equal to the spherical volume of $M$ defined in (\ref{first def sv}) \cite[Section 3.5]{Antoine23a}.
\begin{definition}[Spherical Plateau solution] 
We call spherical Plateau solution for $h$ any $n$-dimensional integral current space $C_\infty$ which is the limit in the intrinsic flat topology of a sequence $\{C_i\}\subset \mathscr{C}(h)$ such that
$$\lim_{i \to \infty} \mathbf{M}(C_i) = \spherevol(h).$$
\end{definition}
Spherical Plateau solutions $C_\infty$ for a group homology class $h$ always exist by the compactness theorem of Wenger \cite{Wenger11}\cite[Theorem 4.19]{SW11}. We will use the notation:
$$C_\infty = (X_\infty,d_\infty,S_\infty)$$
where $(\overline{X}_\infty,d_\infty)$ is a complete metric space containing an integral current $S_\infty$, and $X_\infty$ is the canonical set of $S_\infty$. Often, we will identify $C_\infty$ with $S_\infty$. By definition, $\mathbf{M}(C_\infty) := \mathbf{M}(S_\infty)$, and by lower semicontinuity of mass, $\mathbf{M}(C_\infty) \leq \spherevol(h)$.
Note that, while we focus on the regular representation only, all those definitions could be extended to general orthogonal or unitary representations.

\subsection{Spherical Plateau solutions and ultralimits} \label{subsection:nota}

Let us talk about $S^\infty/\Gamma$, its ultralimit and how it relates to spherical Plateau solutions.
Let $\Gamma$ be an infinite countable group, acting on $S^\infty\subset \ell^2(\Gamma)$ by the regular representation, and let $1$ denote its unit element. 
Let $\dist$ denote the standard length metric in the Hilbert unit sphere $S^\infty$. 
Let 
$$\Pi:S^\infty\to S^\infty/\Gamma$$ be the natural projection. For a threshold $\delta \in[0,1)$, define the \emph{$\delta$-thick part} and the \emph{$\delta$-thin part} of  $S^\infty/\Gamma$ as follows:
$$[S^\infty/\Gamma]^{>\delta} := \Pi(\{x \in S^\infty ; \quad \dist(x,\gamma.x) > \delta \text{ for all $\gamma \in \Gamma \setminus\{1\}$}\}),$$
$$[S^\infty/\Gamma]^{\leq\delta} := \Pi(\{x \in S^\infty; \quad \dist({x},\gamma.{x}) \leq  \delta \text{ for some $\gamma \in \Gamma\setminus\{1\}$}\}).$$
The points in $[S^\infty/\Gamma]^{>\delta}$ have injectivity radius at least $\delta$.
The following open subset
\begin{equation}\label{sstar}
[S^\infty/\Gamma]^{>0}= \bigcup_{\delta>0} [S^\infty/\Gamma]^{>\delta}.
\end{equation} 
is smooth and locally isometric to $(S^\infty,\mathbf{g}_{\mathrm{Hil}})$. 
We often denote by $\mathbf{g}_{\mathrm{Hil}}$ the standard round Hilbert-Riemannian metric on $[S^\infty/\Gamma]^{>0}$.
A useful fact to keep in mind is that given $\delta>0$ and $\epsilon\in (0,\frac{\delta}{3})$, the $\epsilon$-neighborhood of $ [S^\infty/\Gamma]^{>\delta}$ is contained in $ [S^\infty/\Gamma]^{>\delta-2\epsilon}$.
If $C$ is an integral current in $S^\infty/\Gamma$, then set
\begin{equation}\label{c>delta}
C^{>\delta}:=C \llcorner [S^\infty/\Gamma]^{>\delta}.
\end{equation}

As explained in Subsection \ref{prelim3} of the Appendix, given a non-principal ultrafilter $\omega$,  the ultralimit of the constant sequence $S^\infty$  with respect to $\omega$ is the unit sphere 
$S^\infty_\omega$ of a non-separable Hilbert space $H_\omega$.
The group $\Gamma$ and its action on $S^\infty$ give rise to a discrete uncountable group $\Gamma_\omega$, the ultraproduct of $\Gamma$ with respect to $\omega$, which acts (non-freely) by isometries on $S^\infty_\omega$. The action of $\Gamma_\omega$ on $S^\infty_\omega$ defines an orthogonal representation, which we will sometimes call  the ``ultralimit representation'': 
\begin{equation}\label{ultralimit representation}
\mathbf{\lambda}_\omega :\Gamma_\omega \to O(H_\omega).
\end{equation}
One checks directly that $S^\infty_\omega/\lambda_\omega(\Gamma_\omega)$, endowed with the quotient metric, is equal to the ultralimit $(S^\infty/\Gamma)_\omega$ of the constant sequence $S^\infty/\Gamma$ (or equivalently of the constant sequence $[S^\infty/\Gamma]^{>0}$):
 $$S^\infty_\omega/\lambda_\omega(\Gamma_\omega) = (S^\infty/\Gamma)_\omega.$$
In Section \ref{section:noncollapsing}, we will study in detail the ultralimit representation $\lambda_\omega$, which will be useful to describe $(S^\infty/\Gamma)_\omega$ and its singularities.
Let $\mathbf{\dist}_\omega$ denote the metric on the ultralimit $(S^\infty/\Gamma)_\omega$.

Let $F_\omega$ be the subset of points in $S^\infty_\omega$ fixed by at least one element of $\Gamma_\omega$ which is not the identity element $1\in \Gamma_\omega$, and let $\Omega_\omega$ be the complementary set:
\begin{equation} \label{Fomega}
\begin{split}
F_\omega & :=\{x\in S^\infty_\omega;\quad \mathbf{\lambda}_\omega(\mathbf{\gamma}).x = x \quad  \text{for some $\mathbf{\gamma}\in \Gamma_\omega \setminus \{1\}$}\}, \\
\Omega_\omega & := \{x\in S^\infty_\omega;\quad \mathbf{\lambda}_\omega(\mathbf{\gamma}).x \neq x \quad \text{for any $\mathbf{\gamma}\in \Gamma_\omega \setminus \{1\}$}\}.
\end{split}
\end{equation}
By properties of ultralimits and (\ref{sstar}), $\Omega_\omega$ is an open subset of $S^\infty_\omega$, the action of $\Gamma_\omega$ on ${\Omega}_\omega$ is proper free
and $\Omega_\omega/\lambda_\omega(\Gamma_\omega)$ is an open subset which is exactly the smooth (or regular) part of $(S^\infty/\Gamma)_\omega$,
while the complement $F_\omega/\lambda_\omega(\Gamma_\omega)$ is the singular part of  $(S^\infty/\Gamma)_\omega$:
$$\Reg(S^\infty/\Gamma)_\omega := \Omega_\omega/\lambda_\omega(\Gamma_\omega),$$
$$\Sing(S^\infty/\Gamma)_\omega := F_\omega/\lambda_\omega(\Gamma_\omega).$$
The open subset $ \Reg(S^\infty/\Gamma)_\omega$ is locally isometric to the unit Hilbert sphere $S^\infty_\omega$. We often denote by $\mathbf{g}_{\mathrm{Hil}}$ the standard round Hilbert-Riemannian metric on $\Reg(S^\infty/\Gamma)_\omega$.

If $U^{>\delta}$ denotes the ultralimit of the constant sequence $[S^\infty/\Gamma]^{>\delta}$ where $\delta\in(0,1)$, then $U^{>\delta}$ is a nested sequence of complete metric spaces such that 
\begin{equation} \label{omegaU}
\Reg(S^\infty/\Gamma)_\omega = \bigcup_{\delta\in(0,1)} U^{>\delta}.
\end{equation}
 At points of $U^{>\delta}$, the injectivity radius  is bounded from below by $\delta$.

Given $d>0$, if $\dist_\omega$ denotes the standard quotient metric on $(S^\infty/\Gamma)_\omega$, set
$$N_\omega(d) :=  \{p\in (S^\infty/\Gamma)_\omega;\quad \mathbf{\dist}_\omega(p, \Sing(S^\infty/\Gamma)_\omega ) \leq d\}, $$
$$N^c_\omega(d) := (S^\infty/\Gamma)_\omega \setminus N_\omega(d).$$

The main reason for introducing ultralimits is that it naturally contains any spherical Plateau solution. The lemma below follows from \cite[Proposition 2.2]{Wenger11}:

\begin{lemme} \label{ultraembedding1}
Let $h\in H_n(\Gamma;\mathbb{Z})$ and  let $C_\infty=(X_\infty,d_\infty,S_\infty)$ be a spherical Plateau solution for $h$. There is an isometric embedding 
$$\iota_{u} :(\spt S_\infty,d_\infty) \hookrightarrow  (S^\infty/\Gamma)_\omega.$$
It induces an embedding of $S_\infty$ as an integral current without  boundary in $(S^\infty/\Gamma)_\omega$. 
\end{lemme}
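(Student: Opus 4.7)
The plan is to construct $\iota_u$ directly, following the standard realization of intrinsic flat limits inside ultralimits given by Wenger's Proposition 2.2 in \cite{Wenger11}, specialized to the constant sequence $X_i = S^\infty/\Gamma$ whose ultralimit is $(S^\infty/\Gamma)_\omega$. The key idea is that each point of $\spt S_\infty$ should correspond to an $\omega$-equivalence class of approximating sequences drawn from the supports $\spt C_i$ of the minimizing cycles.

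The core construction would proceed as follows. By definition of intrinsic flat convergence $C_i \to C_\infty$, there exist, for each $i$, a complete metric space $Z_i$ and isometric embeddings $\psi_i : \spt C_i \hookrightarrow Z_i$, $\phi_i : \spt S_\infty \hookrightarrow Z_i$, such that the flat distance between $(\psi_i)_\sharp C_i$ and $(\phi_i)_\sharp S_\infty$ in $Z_i$ tends to zero. Fixing a countable dense subset $\{x^{(k)}\}_{k\geq 1} \subset \spt S_\infty$ and diagonalizing, I would select approximating sequences $x^{(k)}_i \in \spt C_i$ with $\dist_{Z_i}(\psi_i(x^{(k)}_i), \phi_i(x^{(k)})) \to 0$, and set
$$\iota_u(x^{(k)}) := \bigl[(x^{(k)}_i)_i\bigr]_\omega \in (S^\infty/\Gamma)_\omega.$$
Since $\psi_i$ and $\phi_i$ are isometric, one obtains
$$\dist_\omega\bigl(\iota_u(x^{(k)}), \iota_u(x^{(\ell)})\bigr) = \lim_\omega \dist_{S^\infty/\Gamma}(x^{(k)}_i, x^{(\ell)}_i) = d_\infty(x^{(k)}, x^{(\ell)}),$$
so $\iota_u$ is isometric on $\{x^{(k)}\}$ and extends uniquely by continuity to an isometric embedding $\iota_u : (\spt S_\infty, d_\infty) \hookrightarrow (S^\infty/\Gamma)_\omega$.

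For the pushforward statement, once $\iota_u$ is an isometric embedding of $\spt S_\infty$, the current $(\iota_u)_\sharp S_\infty$ is a well-defined integral $n$-current in $(S^\infty/\Gamma)_\omega$ with the same mass as $S_\infty$, and
$$\partial (\iota_u)_\sharp S_\infty = (\iota_u)_\sharp(\partial S_\infty) = 0,$$
using that $\partial S_\infty = 0$, which follows from continuity of the boundary operator under intrinsic flat convergence applied to the boundaryless cycles $C_i$. The main delicate point I would address with care is ensuring the diagonalization is consistent so that pairwise $\omega$-limits of distances recover $d_\infty$ on the dense set. This is where the mass-based control inherent in flat convergence, together with the canonical set characterization of $\spt S_\infty$, is crucial: it forces the chosen approximants in $\spt C_i$ to converge, along $\omega$, to a genuinely dense collection of points of $\spt S_\infty$ with the correct metric, after which isometric extension and functoriality of pushforward under isometric embeddings finish the argument.
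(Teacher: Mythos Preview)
Your proposal is correct and is precisely the approach the paper takes: the paper's entire proof is the sentence ``follows from \cite[Proposition 2.2]{Wenger11}'', and you have faithfully unpacked that citation. One small remark: with a non-principal ultrafilter $\omega$ in hand you do not need the diagonalization step---for each $x^{(k)}$ it suffices to pick any $x^{(k)}_i \in \spt C_i$ with $\dist_{Z_i}(\psi_i(x^{(k)}_i),\phi_i(x^{(k)}))$ close to the infimum, and the $\omega$-limit automatically selects a coherent limit for all pairwise distances simultaneously.
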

 
The embeddings in the above lemma depend on the minimizing sequence $\{C_i\}$ converging to $C_\infty$ (and the choice of non-principal ultrafilter $\omega$). Nevertheless, we will from now on always  view $S_\infty$ as an integral current inside $(S^\infty/\Gamma)_\omega$.

\begin{definition} \label{definition:noncollapsed}
Let $h\in H_n(\Gamma;\mathbb{Z})$ and  let $C_\infty=(X_\infty,d_\infty,S_\infty)$ be a spherical Plateau solution for $h$. Set
$$Y_\infty := \spt S_\infty\cap \Reg(S^\infty/\Gamma)_\omega ,$$
$$T_\infty := S_\infty \llcorner \Reg(S^\infty/\Gamma)_\omega .$$
The triple $C_\infty^{>0}:=(Y_\infty,d_\infty,T_\infty)$ is called a noncollapsed part of $C_\infty$.
\end{definition}
Subsequently, for simplicity we will talk about ``the noncollapsed part'' $C_\infty^{>0}$ of a Plateau solution $C_\infty$, even though a noncollapsed part $C_\infty^{>0}$ depends not only on $C_\infty$ but also on the embedding in Lemma \ref{ultraembedding1}. 
(In general, the noncollapsed part $C_\infty^{>0}$ may not be ``completely settled'' in the sense of \cite[Definition 2.10]{SW11} so strictly speaking $C_\infty^{>0}$ may not be an integral current space. We do not know if the current $T_\infty$ can have nonzero boundary.)

\subsection{Approximation by currents with compact supports}

Metric currents have in general non-compact supports, but it is often easier to work with compactly supported currents. The goal of this technical subsection is to check some useful and rather general approximation results.

Let us first review the Ekeland variational principle \cite{Ekeland74}, which we will apply to pull-tight currents in a way similar to \cite[Theorem 10.6]{AK00}. 
Recall the statement of that variational principle: if $(X,d)$ is a complete metric space and if $f:X\to \mathbb{R}$ is a nonnegative continuous function, for any $x_0\in X$ and any $\varepsilon>0$, there exists some $y\in X$ such that 
$$f(y)\leq f(x_0) -\varepsilon d(x_0,y)$$
and for every $y'$ different from $y$,
$$f(y)< f(y')+ \varepsilon d(y,y').$$

\begin{lemme} \label{1st approx}
Consider some $n$-dimensional integral currents $S_0$, $U_0$ in $S^\infty/\Gamma$ and an $(n+1)$-dimensional integral current $V_0$ in  in $S^\infty/\Gamma$ such that
$$S_0=U_0+\partial V_0.$$
Suppose that $\spt S_0$ is compact and suppose that $\spt S_0$, $\spt U_0$, $\spt V_0$ are contained in $[S^\infty/\Gamma]^{>\delta_0}$ for some $\delta_0>0$. Then there are integrals current $U_1$, $V_1$ such that $\spt U_1$, $\spt V_1$ are compact, contained in $[S^\infty/\Gamma]^{>\delta_0/2}$, 
$$S_0 = U_1+ \partial V_1,$$
$$\mathbf{M}(U_1) \leq \mathbf{M}(U_0) -\frac{1}{10}\mathbf{M}(U_0-U_1),$$
and for any $\delta\in (\delta_0,1)$, for any $x\in [S^\infty/\Gamma]^{>\delta} \cap \spt U_1$, for all $\rho\in (0,\frac{\delta}{100})$,
\begin{equation}\label{monot ineq}
\mathbf{M}(U_1\llcorner B(x,\rho)) \geq c_{n} \rho^n
\end{equation}
where $c_n$ is a positive constant depending only on $n$.

\end{lemme}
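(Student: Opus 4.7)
The plan is to obtain $U_1$ as an ``almost-minimizer'' of mass in the homology class of $U_0$ via Ekeland's variational principle, and then to extract the density estimate (\ref{monot ineq}) from the local quasi-minimality this yields, using slicing and the isoperimetric inequality in the smooth part of $S^\infty/\Gamma$.

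\textbf{Truncation and Ekeland setup.} Since only $\spt S_0$ is assumed compact, I first truncate by slicing $V_0$ against $d:=\dist(\spt S_0,\cdot)$. Because $\int_0^\infty\mathbf{M}(\langle V_0,d,r\rangle)\,dr\leq\mathbf{M}(V_0)<\infty$, for a.e.\ sufficiently large $r$ the slice has arbitrarily small mass, and setting $\tilde{V}_0:=V_0\llcorner\{d<r\}$, $\tilde{U}_0:=U_0\llcorner\{d<r\}-\langle V_0,d,r\rangle$ yields $S_0=\tilde{U}_0+\partial\tilde{V}_0$ with (effectively) compact supports in $[S^\infty/\Gamma]^{>\delta_0}$ and $\mathbf{M}(\tilde{U}_0-U_0)<\eta$ for any prescribed $\eta>0$. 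Next, let $\mathcal{U}$ be the class of compactly supported integral $n$-currents $U$ in $[S^\infty/\Gamma]^{>\delta_0/2}$ such that $U-\tilde{U}_0=\partial W$ for some compactly supported integral $(n+1)$-current $W$ in the same open set, equipped with a flat-type distance built from fillings supported there (complete via Wenger's compactness theorem). Applying Ekeland's variational principle to the lower semicontinuous function $\mathbf{M}$ at $\tilde{U}_0$ with a constant slightly above $\tfrac{1}{10}$ produces $U_1\in\mathcal{U}$ and a witness $W_1$ of $U_1-\tilde{U}_0=\partial W_1$ satisfying, for some $c>\tfrac{1}{10}$,
$$\mathbf{M}(U_1)\leq\mathbf{M}(\tilde{U}_0)-c\,\mathbf{M}(U_1-\tilde{U}_0),\quad \mathbf{M}(U_1)<\mathbf{M}(U')+c\,\mathbf{M}(U'-U_1)\text{ for all }U'\neq U_1.$$
Setting $V_1:=\tilde{V}_0-W_1$ gives $S_0=U_1+\partial V_1$ with compact supports in $[S^\infty/\Gamma]^{>\delta_0/2}$. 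Shrinking the truncation error $\eta$ relative to the slack $c-\tfrac{1}{10}$ and applying the triangle inequality converts the first bound into the required $\mathbf{M}(U_1)\leq\mathbf{M}(U_0)-\tfrac{1}{10}\mathbf{M}(U_0-U_1)$.

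\textbf{Monotonicity via isoperimetric comparison.} Fix $\delta\in(\delta_0,1)$, a point $x\in\spt U_1\cap[S^\infty/\Gamma]^{>\delta}$ and a radius $\rho\in(0,\delta/100)$. The ball $B(x,\rho)$ sits in $[S^\infty/\Gamma]^{>\delta/2}$, which is smooth and locally isometric to $(S^\infty,\mathbf{g}_{\mathrm{Hil}})$, hence CAT(1). Slicing $U_1$ by $d_x:=\dist(x,\cdot)$, for a.e.\ $r\in(0,\rho)$ the slice $Z_r:=\langle U_1,d_x,r\rangle=\partial(U_1\llcorner B(x,r))$ is an integral $(n-1)$-cycle, and the isoperimetric inequality in the smooth ball provides an integral $n$-current $W_r$ with $\partial W_r=Z_r$, $\spt W_r\subset\overline{B(x,r)}$ and $\mathbf{M}(W_r)\leq C_n\mathbf{M}(Z_r)^{n/(n-1)}$. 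The competitor $U':=U_1-U_1\llcorner B(x,r)+W_r$ lies in $\mathcal{U}$ because the $n$-cycle $U'-U_1=W_r-U_1\llcorner B(x,r)$ in $\overline{B(x,\rho)}$ bounds an $(n+1)$-current via a cone from $x$ inside the smooth ball. Substituting $U'$ into the second Ekeland inequality and using mass subadditivity on essentially disjoint supports yields
$$m(r):=\mathbf{M}(U_1\llcorner B(x,r))\leq C'_n\,\mathbf{M}(Z_r)^{n/(n-1)}.$$
The slicing theorem gives $\mathbf{M}(Z_r)\leq m'(r)$ a.e., so $m(r)\leq C'_n(m'(r))^{n/(n-1)}$. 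Since $x\in\spt U_1$ forces $m(r)>0$ for every $r>0$, integrating the resulting lower bound $\frac{d}{dr}(m^{1/n})\geq c'_n$ from $0$ to $\rho$ yields $m(\rho)\geq c_n\rho^n$, which is (\ref{monot ineq}).

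\textbf{Expected main obstacle.} The subtlest point is setting up Ekeland on a genuinely complete metric space enforcing both the compact-support constraint in $[S^\infty/\Gamma]^{>\delta_0/2}$ and the homology constraint through fillings supported there. The mass distance is not complete on integral currents in general, so I expect to work with a flat-type distance involving an infimum over compactly supported fillings in $[S^\infty/\Gamma]^{>\delta_0/2}$, with completeness verified through Wenger's compactness theorem; the buffer between $\delta_0$ and $\delta_0/2$ is precisely what prevents Cauchy sequences from drifting to the topological boundary of the admissible region. The absorption of the truncation error $\eta$ into the Ekeland slack to recover the clean constant $\tfrac{1}{10}$ is a routine but delicate bookkeeping step.
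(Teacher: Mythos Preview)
Your monotonicity argument is essentially the same as the paper's and is fine. The real issue is the Ekeland step, and it is precisely the obstacle you flag at the end.

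You set up Ekeland on $\mathcal{U}$ with a \emph{flat-type} distance, arguing (correctly) that the mass distance is not obviously complete. But then you write the Ekeland output as
\[
\mathbf{M}(U_1)\leq\mathbf{M}(\tilde U_0)-c\,\mathbf{M}(U_1-\tilde U_0),
\]
which is exactly what Ekeland would give with the \emph{mass} distance, not with a flat distance. With a flat-type distance $d_{\mathcal{F}}$, Ekeland only yields $\mathbf{M}(U_1)\leq \mathbf{M}(\tilde U_0)-c\,d_{\mathcal{F}}(U_1,\tilde U_0)$, and since $d_{\mathcal{F}}$ can be arbitrarily small compared to $\mathbf{M}(U_1-\tilde U_0)$, this does not recover the required inequality $\mathbf{M}(U_1)\leq \mathbf{M}(U_0)-\tfrac{1}{10}\mathbf{M}(U_0-U_1)$. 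Your density estimate survives (the second Ekeland inequality with flat distance still controls local mass, since the cone-filling of $U'-U_1$ in a small ball has small mass), but the first inequality is the whole point of the lemma in its later applications, and your setup does not deliver it.

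The paper resolves this by a different device: it conformally rescales the round metric on $[S^\infty/\Gamma]^{>3\delta_0/5}$ by a factor $F$ that equals $1$ on the $\tfrac{4\delta_0}{5}$-thick part and blows up like $\dist(\cdot,\partial)^{-2}$ near the boundary. The rescaled space is then complete, quasiconvex, and admits cone-type inequalities in the sense of Wenger, which makes the class $\mathscr{C}(S_0)=\{A:\ S_0=A+\partial R\}$ a \emph{complete} metric space under the \emph{mass} distance $\mathbf{M}_{\mathbf{g}_F}(\cdot-\cdot)$. Ekeland is then applied directly with this mass distance, so the first inequality comes out in the right form; on the $\delta_0$-thick part $F\equiv1$ and one reads it back as the stated bound in the original mass. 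Compactness of $\spt U_1$ is then a consequence of the density estimate (no preliminary truncation of $V_0$ is needed), and a second Ekeland pass produces a compactly supported $V_1$. The conformal blow-up is exactly the mechanism that prevents minimizing sequences from drifting out of the thick part while keeping the distance equal to the genuine mass difference where it matters.
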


\begin{proof}
To get around the issue that $S^\infty/\Gamma$ does not have bounded geometry, the trick is to rescale the metric conformally so that the geometry becomes bounded. 
More concretely, let $F:[S^\infty/\Gamma]^{>3\delta_0/5}\to [1,\infty)$ be a smooth function such that $F=1$ on $[S^\infty/\Gamma]^{>4\delta_0/5}$, and for all $x$ close enough to the boundary of $[S^\infty/\Gamma]^{>3\delta_0/5}$,
$$\frac{1}{\dist(x, [S^\infty/\Gamma]^{\leq 3\delta_0/5})^2} \leq F(x) \leq \frac{100}{\dist(x, [S^\infty/\Gamma]^{\leq 3\delta_0/5})^2}.$$ 
Consider the complete rescaled metric on $[S^\infty/\Gamma]^{>3\delta_0/5}$:
$$\mathbf{g}_F := F. \mathbf{g}_{\mathrm{Hil}}.$$
Integral currents for the rescaled  metric are endowed with the rescaled mass $\mathbf{M}_{\mathbf{g}_F}$. They  are integral currents for the original metric, and conversely  integral currents for the original metric are integral currents for the new one if they are supported in $[S^\infty/\Gamma]^{>\delta_0}$ (but not in general).
It is not hard to check that the new metric space $([S^\infty/\Gamma]^{>3\delta_0/5},\mathbf{g}_F)$ admits a local Euclidean type isoperimetric inequality in $S^\infty/\Gamma$ in the following sense: for any $\delta\in (3\delta_0/5,1)$, for any $x\in [S^\infty/\Gamma]^{>\delta}$ and any $\mathbf{g}_F$-radius $r\in (0,\delta/100)$, for any $k$-dimensional integral current $T$ with support in the $\mathbf{g}_F$-geodesic ball $B_{\mathbf{g}_F}(x,r)$, there is a $(k+1)$-dimensional integral current $R$  supported in $B_{\mathbf{g}_F}(x,r)$, such that $\partial R = T$ and $\mathbf{M}_{\mathbf{g}_F}(R)\leq \tilde{c}_k \mathbf{M}_{\mathbf{g}_F}(T)^{\frac{n}{n-1}}$ where $\tilde{c}_k$ only depends on the dimension $k$.
Additionally, $([S^\infty/\Gamma]^{>3\delta_0/5},\mathbf{g}_F)$ is $c'$-quasiconvex and  admits $c'$-cone type inequalities in the sense of \cite[]{Wenger07} for some $c'>0$.

Let $\mathscr{C}(S_0)$ denote the set of integral currents $A$ in $([S^\infty/\Gamma]^{>3\delta_0/5},\mathbf{g}_F)$ such that 
$$S_0 = A+ \partial R$$ for some $(n+1)$-dimensional integral current $R$ in $([S^\infty/\Gamma]^{>3\delta_0/5},\mathbf{g}_F)$.
By \cite[]{Wenger07} (this is where we need $c'$-quasiconvexity and $c'$-cone type inequalities), $\mathscr{C}(S_0)$  is a  complete metric space when endowed with the new mass topology induced by $\mathbf{M}_{\mathbf{g}_F}$.
We apply the Ekeland variational principle \cite{Ekeland74} to $\mathscr{C}(S_0)$ with $\varepsilon:=\frac{1}{10}$, 
so that we find $U_1\in \mathscr{C}(S_0)$ with support in $[S^\infty/\Gamma]^{>3\delta_0/5}$ such that
$$ \mathbf{M}_{\mathbf{g}_F}(U_1)\leq \mathbf{M}_{\mathbf{g}_F}(U_0) - \frac{1}{10} \mathbf{M}_{\mathbf{g}_F}(U_0-U_1),$$
and
$$T\mapsto \mathbf{M}_{\mathbf{g}_F}(T) +\frac{1}{10} \mathbf{M}_{\mathbf{g}_F}(T-U_1)$$
achieves a minimum at $U_1$. Note that the first inequality above implies
$$\mathbf{M}(U_1) \leq \mathbf{M}(U_0) -\frac{1}{10}\mathbf{M}(U_0-U_1).$$

As in the proof of \cite[Theorem 10.6]{AK00}, the local Euclidean type isoperimetric inequality implies that for any $\delta \in (0,1)$, for any $x\in [S^\infty/\Gamma]^{>\delta} \cap \spt U_1$, for all $\rho\in (0,\frac{\delta}{100})$,
$$\mathbf{M}_{\mathbf{g}_F}(U_1\llcorner B(x,\rho)) \geq c_{n} \rho^n$$
where $c_n$ is a positive constant depending only on the dimension.
In particular, $\spt U_1$ is compact. Of course, if $\delta\geq\delta_0$, $\mathbf{M}_{\mathbf{g}_F}$ can be replaced by $\mathbf{M}$.

Since $U_1\in  \mathscr{C}(S_0)$, there is an $(n+1)$-dimensional integral current $V_1$ in $[S^\infty/\Gamma]^{>3\delta_0/5}$  such that
$$S_0=U_1+\partial V_1.$$ 
Another application of the Ekeland variational principle as above ensures that $V_1$ can be chosen to be compactly supported  in $[S^\infty/\Gamma]^{>\delta_0/2}$.


\end{proof}

\begin{lemme} \label{apply coarea}
Let  $d>0$ and $\eta>0$. Consider a $k$-dimensional integral current $W$  possibly with boundary in $(S^\infty/\Gamma)_\omega$, with support in $N^c_\omega(d)$. Suppose that $\spt W$ is locally compact, then there is  an open subset $O_W\subset N^c_\omega(d/2)$ such that $W\llcorner O_W$ is an integral current with compact support, and
$$\mathbf{M}(W-W\llcorner O_W) \leq \eta,$$
 $$\mathbf{M}(\partial(W-W\llcorner O_W))\leq \eta.$$
 Besides, given a compact subset $A\subset \spt W$, the open subset $O_W$ can be chosen to contain $A$.

\end{lemme}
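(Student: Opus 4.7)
The plan is to realize $O_W$ as a small tubular neighborhood of a compact set $K \subset \spt W$ that almost exhausts the mass of both $W$ and $\partial W$, with the tube radius chosen via the Ambrosio--Kirchheim coarea inequality so that the slice of $W$ at the boundary level is also small.

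First, since $W$ is an integral current of finite mass, both $\|W\|$ and $\|\partial W\|$ are tight Borel measures concentrated on a $\sigma$-compact subset of $\spt W$. So for any $\eta_0>0$ I can pick a compact set $K$ with $A \subset K \subset \spt W$ such that
\[
\|W\|(\spt W \setminus K) \leq \eta_0 \quad \text{and} \quad \|\partial W\|(\spt \partial W \setminus K) \leq \eta/2.
\]
Combining the local compactness of $\spt W$ with the compactness of $K$, a finite-cover and Lebesgue-number argument produces some $\epsilon_1 \in (0, d/2)$ for which the closed $\epsilon_1$-neighborhood
\[
L := \{p \in \spt W : \dist_\omega(p,K) \leq \epsilon_1\}
\]
is compact in $(S^\infty/\Gamma)_\omega$.

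Next, consider the $1$-Lipschitz function $g(p) := \dist_\omega(p,K)$, which vanishes on $K$. The Ambrosio--Kirchheim coarea inequality applied to $g$ gives
\[
\int_0^{\epsilon_1} \mathbf{M}(\langle W, g, t\rangle)\,dt \leq \|W\|(\{0 < g \leq \epsilon_1\}) \leq \|W\|(\spt W \setminus K) \leq \eta_0.
\]
Choosing $\eta_0$ small enough (smaller than both $\eta/2$ and $\eta\epsilon_1/4$, iterating the choice of $K$ if a smaller $\eta_0$ forces a smaller $\epsilon_1$), Chebyshev's inequality produces $\epsilon \in (0, \epsilon_1)$ with $\mathbf{M}(\langle W, g, \epsilon\rangle) \leq \eta/2$ at which the standard slicing identity
\[
\partial(W \llcorner \{g < \epsilon\}) = (\partial W) \llcorner \{g < \epsilon\} - \langle W, g, \epsilon\rangle
\]
holds. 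Set $O_W := \{p \in (S^\infty/\Gamma)_\omega : g(p) < \epsilon\}$.

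Finally I verify the conclusions. The set $O_W$ is open and contains $A \subset K$. Since $K \subset \spt W \subset N^c_\omega(d)$, $g$ is $1$-Lipschitz, and $\epsilon < d/2$, the triangle inequality gives $O_W \subset N^c_\omega(d/2)$. The support of $W \llcorner O_W$ is contained in $\overline{O_W} \cap \spt W \subset L$, hence compact. The mass bound $\mathbf{M}(W - W \llcorner O_W) \leq \|W\|(\spt W \setminus K) \leq \eta/2$ is immediate, and the slicing identity together with the choice of $\epsilon$ yields $\mathbf{M}(\partial(W - W \llcorner O_W)) \leq \|\partial W\|(\spt \partial W \setminus K) + \mathbf{M}(\langle W, g, \epsilon\rangle) \leq \eta$. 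The main technical obstacle is the interdependence between making $\|W\|(\spt W \setminus K)$ small enough for the Chebyshev step and keeping the compactness scale $\epsilon_1$ of the tube around $K$ bounded below; this is resolved by the iteration alluded to above, and the rest of the argument is a standard application of the coarea/slicing machinery of Ambrosio--Kirchheim.
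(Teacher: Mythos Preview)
Your strategy---choose a compact $K\subset\spt W$ exhausting most of the mass, then slice at a level of the ambient distance $g=\dist_\omega(\cdot,K)$---is natural, and every step except one is fine. The exception is exactly the point you single out as ``the main technical obstacle'': to get a slice of mass $\le\eta/2$ via Chebyshev you need $\eta_0\le \eta\epsilon_1/4$, yet $\epsilon_1$ is only determined \emph{after} $K$ has been chosen from $\eta_0$. Enlarging $K$ (to shrink $\eta_0$) can only shrink the admissible $\epsilon_1$, since $K\subset K'$ implies that any $r$ with $\overline{N_r(K')}\cap\spt W$ compact also works for $K$. Thus your proposed ``iteration'' has no reason to terminate: at step $n$ you require $\eta_0^{(n+1)}\le \eta\epsilon_1^{(n)}/4$ but only obtain $\epsilon_1^{(n+1)}\le\epsilon_1^{(n)}$, and nothing prevents $\epsilon_1^{(n)}\to 0$ faster than $\eta_0^{(n)}$. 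The circularity is genuine and your sketch does not close it.

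The paper avoids this by replacing the ambient distance with an intrinsic one. It fixes a locally finite cover of $\spt W$ by small balls $B_j$ (each $\overline{2B_j}\cap\spt W$ compact, each $2B_j$ meeting only finitely many others), selects finitely many centers $x_1,\dots,x_J$ whose balls capture all but $\eta/10$ of the mass of $W$ and $\partial W$, and slices with respect to $\varphi(x)=d_{\mathcal B}(x,\{x_1,\dots,x_J\})$, where $d_{\mathcal B}$ is a chain distance through the cover (agreeing with $\dist_\omega$ locally, allowed to equal $+\infty$). The key gain is that a continuity argument---using local finiteness of the cover---shows $\spt W\cap\varphi^{-1}([0,R])$ is compact for \emph{every} $R>0$, not only for small $R$. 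One can therefore apply coarea over an interval of unit length placed as far out as desired, decoupling the slice level from the mass threshold and eliminating the circularity that blocks your argument.
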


\begin{proof}
Since $\spt W$ is locally compact by assumption and always $\sigma$-compact, it admits an exhaustion by compact subsets. This in turn implies that we can 
find a sequence of open embedded balls $B_1,B_2... \subset \Reg(S^\infty/\Gamma)_\omega $ centered at points $x_1,x_2,...$ such that 
\begin{itemize}
\item we have $\spt W\subset  \bigcup_{j\geq1}B_j$,
\item the radius of each ball $B_j$ is smaller than $\frac{1}{10}$ times the injectivity radius of $\Reg(S^\infty/\Gamma)_\omega $ at $x_j$, 
\item the closure of each intersection $\spt W\cap 2B_j$ is compact, where $2B_j$ is the ball of center $x_j$ and radius twice that of $B_j$
\item each ball $2B_j$ only intersects finitely many balls of the form $2B_l$.

\end{itemize}
Set
$$\mathcal{B}:= \bigcup_{j\geq1}B_j.$$ 
We can also impose  that 
$\mathcal{B} \subset  N^c_\omega(\frac{d}{2}).$

Denote by $d_{\mathcal{B}}$ the metric on $\spt W$ defined as follows. Given  two points $a,b\in \spt W$, $d_{\mathcal{B}}(a,b)$ is the infimum of the lengths of piecewise geodesic curves inside $\mathcal{B}$ joining $a$ to $b$, such that each geodesic segment composing such curves has endpoints $u,v$ satisfying:
$u,v\in \spt W$ and there are $j,j'$ such that $B_j\cap B_{j'}\neq \varnothing$, $u\in B_j$, $v\in B_{j'}$. The distance $d_{\mathcal{B}}$ coincides with the original distance $\dist$ for pairs of points on $\spt W$ which are close enough, and is allowed to equal $\infty$.

After reordering the balls $B_j$ if necessary, we can find a large integer $J$ such that 
$$\mathbf{M}(W-W\llcorner \bigcup_{j=1}^J B_j) \leq \eta/10,$$
$$\mathbf{M}(\partial W - \partial W\llcorner \bigcup_{j=1}^J B_j)\leq \eta/10.$$
Given any compact subset $A\subset \spt W$, we can also ensure that $A\subset  \spt W\cap \bigcup_{j=1}^J B_j.$
(We cannot conclude yet, since in general $\partial W - \partial W\llcorner \bigcup_{j=1}^J B_j \neq \partial (W - W\llcorner \bigcup_{j=1}^J B_j)$.)

By the coarea formula and the slicing theorem \cite{AK00}\cite[Sections 1.4 and 1.5]{Antoine23a} applied to the following $1$-Lipschitz function (with respect to $d_{\mathcal{B}}$)
$$\varphi:\spt W\to \mathbb{R}$$
$$\varphi(x):= d_{\mathcal{B}}(x,\{x_1,...,x_J\}),$$
we find some large $R>0$ and an open subset $O_W$ of $\Reg(S^\infty/\Gamma)_\omega $ contained in  $N^c_\omega(\frac{d}{2})$, such that
the restrictions $W\llcorner O_W$ and $W\llcorner \varphi^{-1}([0,R))$ are equal and are integral currents, and 
$$A\subset \spt W\cap  \bigcup_{j=1}^J B_j\subset O_W,$$
$$\mathbf{M}(W-W\llcorner O_W) \leq \eta,$$
$$\mathbf{M}(\partial(W-W\llcorner O_W))\leq \eta.$$

To finish the proof, let us check that $\spt (W\llcorner O_W)$ is indeed compact. It is enough to show that $\spt W \cap \varphi^{-1}([0,R'])$ is compact for any $R'>0$. This can be shown by a continuity argument. In fact, using that $d_{\mathcal{B}} = \dist$ locally, and that each intersection $\spt W\cap 2B_j$ is compact and each $2B_j$ intersects finitely many other $2B_{j'}$, it is an exercise to check that the set of $R'>0$ such that $\spt W \cap \varphi^{-1}([0,R'])$ is compact is non-empty, open and closed, and thus equal to the whole interval $[0,\infty)$.

\end{proof}

\begin{lemme} \label{reduce to compact}
Consider some $n$-dimensional integral currents $S_0$, $U_0$  without boundary, and an $(n+1)$-dimensional integral current $V_0$ in $(S^\infty/\Gamma)_\omega$ such that
$$S_0=U_0+\partial V_0.$$
Let $d>0$ and $\eta>0$. 
Suppose that the closure of $\spt S_0 \cap U^{>\delta}$ is compact for any $\delta>0$, and that $\spt V_0 \subset N^c_\omega(d)$. 
Then there exists integral currents $U_2$, $V_2$ in $(S^\infty/\Gamma)_\omega$ such that 
$\spt V_2$ is compact and
$$S_0=U_2+\partial V_2,$$
$$\spt V_2 \subset N^c_\omega(d/2),$$
$$\mathbf{M}(U_2 )\leq \mathbf{M}(U_0) +\eta.$$

\end{lemme}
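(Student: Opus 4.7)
The plan is to apply Lemma \ref{apply coarea} directly to the $(n+1)$-current $V_0$ in order to truncate it to a compactly supported piece $V_2$, and then define $U_2 := S_0 - \partial V_2$, with the resulting mass error on the $U$-side absorbed by the boundary estimate that comes out of Lemma \ref{apply coarea}.

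First I would invoke Lemma \ref{apply coarea} on $W := V_0$; since $\spt V_0 \subset N^c_\omega(d)$ by hypothesis, it yields an open set $O_V \subset N^c_\omega(d/2)$ such that $V_2 := V_0 \llcorner O_V$ is an integral current with compact support and satisfies
$$\mathbf{M}(V_0 - V_2) \leq \eta, \qquad \mathbf{M}(\partial(V_0 - V_2)) \leq \eta.$$
Then set $U_2 := S_0 - \partial V_2$. Rewriting $U_2 = U_0 + \partial(V_0 - V_2)$ via the given identity $S_0 = U_0 + \partial V_0$ and using the boundary estimate above gives immediately
$$\mathbf{M}(U_2) \leq \mathbf{M}(U_0) + \mathbf{M}(\partial(V_0 - V_2)) \leq \mathbf{M}(U_0) + \eta.$$
Boundarylessness of $U_2$ follows from $\partial S_0 = 0$ and $\partial\partial V_2 = 0$; the identity $S_0 = U_2 + \partial V_2$ holds by construction; and $\spt V_2$ is compact and contained in $N^c_\omega(d/2)$ by Lemma \ref{apply coarea}. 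All asserted properties are then in place.

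The single delicate point, which I expect to be the main obstacle, is that Lemma \ref{apply coarea} is formally stated for currents with locally compact support, whereas the present hypotheses constrain only $\spt S_0$ directly and not $\spt V_0$. I would handle it by running the coarea/slicing construction used in the proof of Lemma \ref{apply coarea} on the $\sigma$-compact canonical set of positive density of $V_0$ rather than on $\spt V_0$ itself: any integral current of finite mass in the sense of Ambrosio--Kirchheim is concentrated on a rectifiable $\sigma$-compact set which does admit an exhaustion by compact pieces, and the covering by geodesic balls $B_j$ together with the $1$-Lipschitz distance function $\varphi$ used there go through in this setting. Once this reduction is in place, the chain of inequalities above closes the proof. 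A minor additional care is to choose the constants in the application of Lemma \ref{apply coarea} (the depth $d$ and the error $\eta$) so that the two outputs $\mathbf{M}(V_0 - V_2)$ and $\mathbf{M}(\partial(V_0 - V_2))$ are each at most $\eta$, which is automatic from the statement.
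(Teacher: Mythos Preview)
Your strategy --- truncate $V_0$ via Lemma~\ref{apply coarea}, set $U_2=S_0-\partial V_2$, and read off $\mathbf{M}(U_2)\le\mathbf{M}(U_0)+\eta$ from the boundary estimate --- is exactly how the paper \emph{finishes}, but there is a genuine obstacle before that step which your workaround does not overcome.

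Lemma~\ref{apply coarea} needs $\spt W$ locally compact, and its proof uses this essentially: one must cover $\spt W$ by geodesic balls $B_j$ of radius bounded below (tied to the injectivity radius on $N^c_\omega(d/2)$) such that each $\spt W\cap 2B_j$ is compact and each $2B_j$ meets only finitely many others; this is what drives the continuity argument showing $\spt(W\llcorner O_W)$ is compact. Replacing $\spt V_0$ by the $\sigma$-compact canonical set $Y$ does not suffice: $\sigma$-compactness is strictly weaker than the local precompactness you would need to build such a cover, and in an infinite-dimensional Hilbert sphere there is no a~priori reason the canonical set of an arbitrary integral current is locally precompact at each of its points. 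Without that, no Lipschitz $\varphi$ has sublevel sets meeting $Y$ in precompact sets, and $V_0\llcorner O$ need not be compactly supported. The compactness hypothesis on $\spt S_0\cap U^{>\delta}$ says nothing about $\spt V_0$ or $Y$, so it cannot be leveraged here either.

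The paper supplies exactly this missing piece via an Ekeland step. After a conformal rescaling making $N^c_\omega(d/2)$ complete, it applies Ekeland's principle on the complete space of pairs $(A,R)$ with $S_0=A+\partial R$ to the penalized functional
\[
(A,R)\longmapsto \mathbf{M}_{\mathbf g_F}(A)+\frac{\eta}{2(1+\mathbf{M}(V_0))}\,\mathbf{M}_{\mathbf g_F}(R),
\]
obtaining a quasi-minimizing pair $(U_1,V_1)$ with $\mathbf{M}(U_1)\le\mathbf{M}(U_0)+\eta/2$. Quasi-minimality combined with the local Euclidean isoperimetric inequality then yields lower density bounds (as in \cite[Theorem~10.6]{AK00}), which force $\spt U_1$ and subsequently $\spt V_1$ to be locally compact. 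Only now is Lemma~\ref{apply coarea} legitimately applied, to $V_1$, and the mass chain you wrote finishes the proof. So your closing computation is correct, but the Ekeland detour producing local compactness is the substantive idea you are missing.
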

\begin{proof}

Similarly to  the proof of Lemma \ref{1st approx}, consider the conformally rescaled Riemannian metric on $N^c_\omega(d/2)$
$$\mathbf{g}_F:=F.\mathbf{g}_{\mathrm{Hil}},$$
where $F\geq 1$ everywhere, $F=1$ on $N^c_\omega(d)$, 
and for all $x$ close enough to the boundary of $N^c_\omega(d/2)$,
$$\frac{1}{\dist(x, N_\omega(d/2))^2} \leq F(x) \leq \frac{100}{\dist(x, N_\omega(d/2))^2}.$$
Consider the rescaled mass $\mathbf{M}_{\mathbf{g}_F}$.  The space $(N^c_\omega(d/2),\mathbf{M}_{\mathbf{g}_F})$ admits a local Euclidean type isoperimetric inequality.

Compared to Lemma \ref{1st approx}, we do not need to prove a uniform inequality of the type (\ref{monot ineq}). On the other hand, the currents $U_0, V_0$  are not assumed to be supported in  $U^{>\delta}$ for some $\delta$. For those reasons, it is convenient to apply the Ekeland principle to a different function. Let $\mathscr{S}(S_0)$ be the set of pairs of integral currents $(A,R)$ in $(N^c_\omega(d/2),\mathbf{g}_F)$ such that $S_0= A+\partial R$. 
Note that  $\mathscr{S}(S_0)$ is clearly closed with respect to the product of rescaled mass topologies, hence complete. 
By the Ekeland variational principle applied to $\varepsilon:=\frac{1}{10}$ and to the function 
$$(A,R) \mapsto \mathbf{M}_{\mathbf{g}_F}(A)+\frac{\eta  \mathbf{M}_{\mathbf{g}_F}(R)}{2(1+\mathbf{M}(V_0))},$$ we get  $(U_1,V_1)\in \mathscr{S}(S_0)$ such that
$$\spt U_1 \subset N^c_\omega(d/2), \quad \spt V_1 \subset N^c_\omega(d/2),$$
\begin{equation}\label{u1u0}
\mathbf{M}_{\mathbf{g}_F}(U_1)+ \frac{\eta  \mathbf{M}_{\mathbf{g}_F}(V_1)}{2(1+\mathbf{M}(V_0))} \leq \mathbf{M}_{\mathbf{g}_F}(U_0)+ \frac{\eta \mathbf{M}_{\mathbf{g}_F}(V_0) }{2(1+\mathbf{M}(V_0))} - \frac{1}{10}\big(\mathbf{M}_{\mathbf{g}_F}(U_0-U_1)+\mathbf{M}_{\mathbf{g}_F}(V_0-V_1)\big)
\end{equation}
and 
\begin{equation}\label{4 janv}
(A,R)\mapsto \mathbf{M}_{\mathbf{g}_F}(A)+  \frac{\eta  \mathbf{M}_{\mathbf{g}_F}(R) }{2(1+\mathbf{M}(V_0))} +\frac{1}{10}\big(\mathbf{M}_{\mathbf{g}_F}(A-U_1)+\mathbf{M}_{\mathbf{g}_F}(R-V_1)\big)
\end{equation}
is minimized at $(U_1,V_1)$. In (\ref{u1u0}), $\mathbf{M}_{\mathbf{g}_F}$ can be replaced by $\mathbf{M}$. In particular,
\begin{equation}\label{5 janv}
\mathbf{M}(U_1) \leq \mathbf{M}(U_0) +\frac{\eta}{2}.
\end{equation}

The local Euclidean type isoperimetric inequality implies that $\spt U_1$ is locally compact by the standard comparison argument. Roughly, for $x\in \spt U_1$ and any generic ball $b$ centered at a point $x'$ close enough to $x$, with small enough radius, 
the mass $\mathbf{M}_{\mathbf{g}_F}(U_1\llcorner b)$ can be made arbitrarily small, and $\partial (U_1\llcorner b)$ bounds an $n$-dimensional integral current $T$ in $b$ such that 
$$\mathbf{M}_{\mathbf{g}_F}(T) \leq \min\{\mathbf{M}_{\mathbf{g}_F}(U_1\llcorner b), \tilde{c}_{n-1} \mathbf{M}_{\mathbf{g}_F}(\partial(U_1\llcorner b))^{\frac{n}{n-1}}\}.$$ 
Moreover, $T- U_1\llcorner b$ bounds an  $(n+1)$-dimensional integral current $Q$ in $b$ such that 
$$\mathbf{M}_{\mathbf{g}_F}(Q) \leq \tilde{c}_{n} \mathbf{M}_{\mathbf{g}_F}(T- U_1\llcorner b)^{\frac{n+1}{n}}\leq \tilde{c}_{n}(2\mathbf{M}_{\mathbf{g}_F}(U_1\llcorner b))^{\frac{n+1}{n}}$$
which in turn is smaller than $\frac{1}{100} \mathbf{M}_{\mathbf{g}_F}(U_1\llcorner b)$ whenever $b$ is small enough depending on $U_1$ and the distance between $x'$ and $x$. Then, using the minimization property of $(U_1,V_1)$, and comparing with $(U_1- U_1\llcorner b+T,V_1+Q)$ in (\ref{4 janv}), in a similar way to \cite[Theorem 10.6]{AK00}, we get for some $c''>0$ and $\rho_0>0$:
$$\forall \rho\leq \rho_0,\quad \mathbf{M}_{\mathbf{g}_F}(U_1\llcorner B(x',\rho)) \geq c'' \rho^n$$
where $c''$, $\rho_0$ only depend on the (small enough) distance between $x'$ and $x$. 
This implies that $\spt U_1$ is locally compact. A similar argument then shows that $\spt V_1$ is also locally compact.

To finish the proof, we apply Lemma \ref{apply coarea} to $V_1$ and $\frac{\eta}{2}$, and find the corresponding open subset $O_{V_1}$, then we set
$$V_2:= V_1\llcorner O_{V_1}, \quad U_2 = S_0-\partial V_2.$$
By (\ref{5 janv}) and Lemma \ref{apply coarea}, we get 
$$\mathbf{M}(U_2) =\mathbf{M}( U_1+\partial V_1-\partial V_2) \leq \mathbf{M}(U_1) +\frac{\eta}{2}\leq \mathbf{M}(U_0) +\eta.$$

\end{proof}


\subsection{Pulled-tight sequence and no mass cancellation} \label{thickdefinition}

The next two sections pertain to the general structure of spherical Plateau solutions. Here, we first improve the convergence of a minimizing sequence to a spherical Plateau solution.


Given $\Gamma$ and $h\in H_n(\Gamma;\mathbb{Z})$, let $C'_i \in \mathscr{C}(h)$ be a minimizing sequence converging to  a spherical Plateau solution $$C_\infty = (X_\infty,d_\infty,S_\infty)$$  in the intrinsic flat topology.
The following result improves the convergence of $C'_i$: it states that by pulling tight the minimizing sequence, the supports of the ``noncollapsed parts'' converge in a Gromov-Hausdorff sense and without cancellation in the sense of currents. 
Recall that ${C}_i^{>\frac{1}{k}}$ is defined in (\ref{c>delta}).

\begin{theo} \label{compactness}
There is a minimizing sequence $\{{C}_i\} \subset \mathscr{C}(h)$ converging to $C_\infty$ in the intrinsic flat topology,  
which is pulled-tight in the following sense: 
there is a Banach space $\mathbf{Z}$ containing isometrically $\spt(S_\infty)$, and there are isometric embeddings  $j_i : \spt(C_i) \hookrightarrow \mathbf{Z}$ such that 
\begin{enumerate}
\item
$(j_i)_\#(C_i) $ converges to $S_\infty$ in the flat topology inside $\mathbf{Z}$,
\item
for each positive integer $k$, $j_i(\spt({C}_i^{>\frac{1}{k}}))$ converges in the Hausdorff topology to some compact subset $X_{k,\infty}$ of $\spt(S_\infty)$ inside $\mathbf{Z}$.
\end{enumerate}

\end{theo}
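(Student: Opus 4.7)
The plan is to first apply the Ekeland variational principle, in the spirit of Lemmas \ref{1st approx} and \ref{reduce to compact}, to replace $C'_i$ by a pulled-tight sequence $C_i$ which is a negligible flat perturbation of $C'_i$ and satisfies a uniform local mass lower bound; then to realize everything in a common Banach space via the standard construction underlying Wenger's intrinsic flat topology; and finally to use the monotonicity inequality to upgrade flat convergence to Hausdorff convergence of the thick parts.

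For each $i$, since $\spt C'_i$ is compact and the injectivity radius on $S^\infty/\Gamma$ is continuous, there exists $\delta_i>0$ with $\spt C'_i \subset [S^\infty/\Gamma]^{>\delta_i}$. Setting $\eta_i := \mathbf{M}(C'_i) - \spherevol(h) \geq 0$ (and ignoring the trivial case $\eta_i = 0$), I would apply the Ekeland variational principle with parameter $\varepsilon := 1/10$ to the functional
$$(T,R) \mapsto \mathbf{M}_{\mathbf{g}_F}(T) + \mu_i \mathbf{M}_{\mathbf{g}_F}(R), \qquad \mu_i := \sqrt{\eta_i},$$
on the complete metric space of pairs $(T,R)$ with $C'_i = T+\partial R$, after conformally rescaling the metric on $[S^\infty/\Gamma]^{>\delta_i/2}$ as in Lemma \ref{reduce to compact}. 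The resulting pair $(C_i, V_i)$ satisfies
$$\mathbf{M}(C_i) \leq \mathbf{M}(C'_i), \qquad \mu_i \mathbf{M}(V_i) \leq \mathbf{M}(C'_i)-\mathbf{M}(C_i) \leq \eta_i,$$
hence $\mathbf{M}(V_i) \leq \sqrt{\eta_i} \to 0$. Since $C'_i - C_i = \partial V_i$ with $V_i$ compactly supported, the intrinsic flat distance from $C_i$ to $C'_i$ tends to zero, so $C_i \in \mathscr{C}(h)$ still converges to $C_\infty$ and $\mathbf{M}(C_i) \to \spherevol(h)$. Comparing $(C_i, V_i)$ with competitors of the form $(C_i - C_i\llcorner B(x,\rho) + T_0,\, V_i + Q)$, where $T_0$ and $Q$ are isoperimetric fillings inside a small ball (as in the proof of Lemma \ref{1st approx}), I would then derive the uniform monotonicity estimate
$$\mathbf{M}(C_i\llcorner B(x,\rho)) \geq c_n \rho^n$$
for every $x \in [S^\infty/\Gamma]^{>\delta}\cap \spt C_i$ and $\rho \in (0,\delta/100)$, with $c_n$ independent of $i$.

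Next, by the Kuratowski-type embedding theorem underlying Wenger's intrinsic flat convergence, there is a separable Banach space $\mathbf{Z}$ containing $\spt S_\infty$ isometrically, together with isometric embeddings $j_i: \spt C_i \hookrightarrow \mathbf{Z}$ such that $(j_i)_\# C_i \to S_\infty$ in flat distance inside $\mathbf{Z}$; this gives conclusion (1). For conclusion (2), fix $k$ and apply the monotonicity estimate with $\delta = 1/k$: any $(\rho/2)$-separated subset of $\spt C_i^{>1/k}$ has cardinality at most $\mathbf{M}(C_i)/(c_n(\rho/2)^n)$, which is uniformly bounded in $i$ for every $\rho \in (0,1/(100k))$. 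Combined with the fact that $\spt(j_i)_\# C_i$ stays within a bounded region of $\mathbf{Z}$ thanks to the flat convergence to $S_\infty$, the sets $j_i(\spt C_i^{>1/k})$ are uniformly totally bounded in $\mathbf{Z}$. Blaschke's selection theorem together with a diagonal argument over $k$ then extracts a subsequence along which each $j_i(\spt C_i^{>1/k})$ converges in Hausdorff distance to a compact set $X_{k,\infty} \subset \mathbf{Z}$.

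It remains to verify $X_{k,\infty} \subset \spt S_\infty$. Arguing by contradiction, if $x \in X_{k,\infty} \setminus \spt S_\infty$, pick $\rho \in (0, 1/(200k))$ small enough that the closed ball $\overline{B}(x,3\rho)$ avoids $\spt S_\infty$. For large $i$ there is $x_i \in \spt C_i^{>1/k}$ with $j_i(x_i) \to x$, so the monotonicity yields $\mathbf{M}((j_i)_\# C_i \llcorner B(x,2\rho)) \geq c_n \rho^n$ for large $i$; by flat convergence, $S_\infty$ must then have positive mass in $B(x,3\rho)$, a contradiction. The main obstacle is precisely this last step, upgrading intrinsic flat convergence combined with the mass monotonicity to genuine Hausdorff convergence of the thick parts and identifying the limit as a subset of $\spt S_\infty$: the Ekeland construction of Step 1 automatically supplies the required lower density bound, so the crux reduces to adapting the classical density-plus-flat-convergence argument of \cite[Theorem 10.6]{AK00} to our infinite-dimensional, non-smooth ambient setting.
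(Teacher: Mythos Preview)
Your first three steps are essentially the paper's strategy: apply Ekeland (the paper invokes Lemma~\ref{1st approx} directly with $S_0=U_0=C'_i$) to obtain a new minimizing sequence $C_i$ that is flat-close to $C'_i$ and satisfies the uniform lower density bound $\mathbf{M}(C_i\llcorner B(x,\rho))\geq c_n\rho^n$ on the thick part, then use that bound to get equi-compactness of $\spt C_i^{>1/k}$ and embed everything in a common Banach space. That part is fine.

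The gap is in your final step. The implication ``$\mathbf{M}((j_i)_\# C_i\llcorner B(x,2\rho))\geq c_n\rho^n$ for all large $i$, hence by flat convergence $S_\infty$ has positive mass in $B(x,3\rho)$'' is simply false: a lower density bound together with flat convergence does \emph{not} prevent mass cancellation. Two oppositely oriented nearby disks converge flatly to $0$ while each carries mass $\geq c_n\rho^n$ in a fixed ball. There is no ``classical density-plus-flat-convergence argument'' of the kind you invoke; the result you cite, \cite[Theorem~10.6]{AK00}, gives a density lower bound for a single quasi-minimizer, not a no-cancellation statement for sequences.

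What actually rules out cancellation here is the \emph{minimizing} property of $\{C_i\}$, which you never use in this step. The paper argues as follows: if $x\in X_{k,\infty}\setminus\spt S_\infty$, choose $r$ with $\mathbf{M}(S_\infty\llcorner B'(x,r))=0$; by slicing, pick $r_i\in(r/2,r)$ so that $C_i\llcorner B(x_i,r_i)$ is an integral current with uniformly bounded boundary mass, hence converges intrinsically-flatly to $0$. By \cite{Wenger07} one writes $C_i\llcorner B(x_i,r_i)=U+\partial V$ with $\mathbf{M}(U)\leq\epsilon$ and $U,V$ compactly supported. Then $W_i:=C_i-\partial V\in\mathscr{C}(h)$ has
\[
\mathbf{M}(W_i)\leq\mathbf{M}(C_i)-\mathbf{M}(C_i\llcorner B(x_i,r_i))+\epsilon\leq \mathbf{M}(C_i)-c_n(r/2)^n+\epsilon,
\]
which for small $\epsilon$ eventually drops below $\spherevol(h)$, a contradiction. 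Your argument is missing exactly this cut-and-fill comparison, and without it the conclusion $X_{k,\infty}\subset\spt S_\infty$ does not follow.
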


\begin{proof}


As a first step, we  prove that there is another minimizing sequence $\{C_p\}\subset \mathscr{C}(h)$ converging to $C_\infty$ in the intrinsic flat topology, such that for each positive integer $k$, $\spt({C}_p^{>\frac{1}{k}})$ converges in the Gromov-Hausdorff topology to some compact metric space $X_{k,\infty}$.

For each $p$, apply Lemma \ref{1st approx} to $S_0=U_0= C'_p$, and let $U_1$, $V_1$ be as in the statement of the lemma. Set $C_p:= U_1$, then since 
$C'_p = C_p +\partial V_1$ and $V_1$ has compact support, we have
$$C_p\in \mathscr{C}(h).$$
In particular, $\mathbf{M}(C_p) \geq \spherevol(h)$.
Moreover, Lemma \ref{1st approx} also gives 
$$\frac{1}{10}\mathbf{M}(C'_p-C_p) \leq \mathbf{M}(C'_p) - \mathbf{M}(C_p).$$
By assumption, $\lim_{p\to \infty}\mathbf{M}(C'_p)  =\spherevol(h)$. The above inequality therefore implies that
$$\lim_{p\to \infty} \mathbf{M}({C}_p) = \lim_{p\to \infty} \mathbf{M}({C}'_p)=\spherevol(h)$$
and that
$C_p$ converges to $C_\infty$ too in the intrinsic flat topology, as $p\to \infty$.

Inequality (\ref{monot ineq}) in Lemma \ref{1st approx} also implies the following. Given $k>0$, for any $p$, for all 
$x\in \spt({C}_p) \cap [S^\infty/\Gamma]^{\frac{1}{2k}}$
and $\rho\in(0,\frac{1}{200k})$, 
\begin{equation} \label{equicomp}
\mathbf{M}({C}_p \llcorner B(x,\rho)) > c_{n} \rho^n
\end{equation}
for some strictly positive dimensional constant $c_{n}$. 
This inequality shows that the sequence of sets $ \spt {C}_p \cap [S^\infty/\Gamma]^{\frac{1}{2k}} $ is equi-compact  as $p\to \infty$ (see \cite[Definition 6.5]{AK00}), and of course those sets have uniformly bounded diameter.
By Gromov's compactness theorem,  subsequentially $ \spt(C_p^{>\frac{1}{k}})$ converges in the Gromov-Hausdorff topology to a limit  
$X_{k,\infty}$. We can choose the subsequence to be of full measure with respect to the non-principal ultrafilter $\omega$.
The first step is completed.

By Lemma \ref{in a common Banach} in the Appendix, there exist a Banach space $\mathbf{Z}$, and isometric embeddings
$$j_p : \spt(C_p) \hookrightarrow \mathbf{Z}, \quad \spt(S_\infty)\hookrightarrow \mathbf{Z}, \quad X_{k,\infty} \hookrightarrow \mathbf{Z},$$
such that $(j_p)_\#(C_p)$ converges in the flat topology to $S_\infty$  inside $\mathbf{Z}$, and for each $k>0$, $\{(j_p)_\#(\spt(C_p^{>\frac{1}{k}}))\}$ converges in the Hausdorff topology to $X_{k,\infty}$.

In the second step, we want to show that for each $k$, the Hausdorff limit $X_{k,\infty}$ of $(j_p)_\#(\spt(C_p^{>\frac{1}{k}}))$ is contained inside $\spt(S_\infty) \subset \mathbf{Z}$, in other words that there is no mass cancellation. 

By construction, for any $k\leq k'$, 
$$X_{k,\infty} \subset X_{k',\infty}.$$
Fix $k$ and let $x\in  X_{k,\infty}$ be a limit of points $j_p(x_p) \in j_p(\spt(C_p^{>\frac{1}{k}}))$. 
Suppose by contradiction that $x$ is not contained in $\spt(S_\infty)$. Then  there exists some $r\in (0,\frac{1}{10k})$ so that
\begin{equation} \label{tinfinityb'}
\mathbf{M}(S_\infty \llcorner B'(x,r))  =0 ,
\end{equation}
where $B'(x,r)$ is the $r$-ball  in $\mathbf{Z}$ centered at $x$. Note that for any $p$, the ball $B(x_p,r)$  is entirely contained inside $[S^\infty/\rho(\Gamma)]^{>\frac{1}{2k}}$. Hence by (\ref{tinfinityb'}), by the slicing theorem \cite[Theorem 5.6]{AK00} and by compactness \cite[Theorem 4.6]{SW11}, for some choice of $r_p\in (r/2,r)$, 
$ C_p \llcorner B(x_p,r_p)$ is an integral current with compact support, such that $\mathbf{M}( C_p \llcorner B(x_p,r_p))+\mathbf{M}(\partial  (C_p \llcorner B(x_p,r_p)))$ is uniformly bounded as $p\to \infty$, and which converges in the intrinsic flat topology to the zero current.
By \cite{Wenger07}, for any $\epsilon$, whenever $p$ is large enough there are integral currents  $U,V$ in $B(x_p,r)$ depending on $p$, such that
$$C_p \llcorner B(x_p,r_p) = U+\partial V,$$
$$\mathbf{M}(U)\leq \epsilon.$$
By Lemma \ref{1st approx} applied to $S_0 =C_p \llcorner B(x_p,r_p)$, $U_1=U$, $V_1=V$, $\delta_0=\frac{1}{2k}$,
the integral currents $U$ and $V$ can be chosen to have compact supports without loss of generality.
Then, set 
$$W_p := C_p - C_p \llcorner B(x_p,r_p) + U = C_p - \partial V.$$
Since $U$, $V$ have compact supports, $W_p \in \mathscr{C}(h)$. 
By the uniform mass lower bound (\ref{equicomp}) for ${C}_p$,
\begin{align*}
\mathbf{M}(W_p) & \leq \mathbf{M}(C_p) - \mathbf{M}(C_p \llcorner B(x_p,r_p))  +\epsilon\\
& \leq \mathbf{M}({C}_p) - c'_{k,n} r_p^n +\epsilon  \\
& \leq \mathbf{M}({C}_p) - (\frac{c'_{k,n}r^n}{2^n}-\epsilon).
\end{align*}
Choosing $\epsilon$ so small that $\frac{c'_{k,n}r^n}{2^n}-\epsilon> 0$, we obtain from the above inequality that
$$\limsup_{p\to \infty}\mathbf{M}(W_p)< \lim_{p\to \infty} \mathbf{M}({C}_p) = \lim_{p\to \infty} \mathbf{M}({C}'_p)=\spherevol(h)$$ which is a contradiction with $W_p \in \mathscr{C}(h)$. We conclude that in fact $x\in \spt(S_\infty)$. This finishes the second step and the proof.

\end{proof}


Let $C_\infty=(X_\infty,d_\infty,S_\infty)$ be a spherical Plateau solution for $h$.
Let $\{C_i\}\subset \mathscr{C}(h)$ be a minimizing sequence converging to $C_\infty$ in the intrinsic flat topology. We suppose that 
$$\text{$\{C_i\}$ is pulled-tight in the sense of Theorem \ref{compactness}}.$$
Let $X_{k,\infty} \subset \spt(S_\infty)$ be the compact sets as in Theorem \ref{compactness}. Consider the non-collapsed part  $C_\infty^{>0} = (Y_\infty,d_\infty,T_\infty)$ as in Definition \ref{definition:noncollapsed}.  Recall the notation $U^{>\delta}$ from (\ref{omegaU}). The lemma below follows from definitions.

\begin{lemme} \label{lemma:yinfinity}
We have
$$Y_\infty=\bigcup_{k>0}X_{k,\infty},\quad T_\infty = S_\infty \llcorner Y_\infty,$$ 
$$X_{k,\infty} \subset \text{closure of $\spt S_\infty \cap U^{>\frac{1}{k}}$} \subset X_{k+1,\infty}.$$
In particular, for every $\delta>0$, the closure of $\spt S_\infty \cap U^{>\delta}$ is compact.
\end{lemme}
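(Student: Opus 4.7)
My plan is to deduce all three assertions from a single sandwich of sets:
\begin{equation*}
X_{k,\infty}\ \subset\ \spt S_\infty\cap U^{>\frac{1}{k}}\ \subset\ \overline{\spt S_\infty\cap U^{>\frac{1}{k}}}\ \subset\ X_{k+1,\infty}.
\end{equation*}
Granted this, the set equality $Y_\infty=\bigcup_{k\geq 1} X_{k,\infty}$ follows by taking nested unions and invoking $\Reg(S^\infty/\Gamma)_\omega=\bigcup_{k\geq 1}U^{>\frac{1}{k}}$ from (\ref{omegaU}); the current identity $T_\infty=S_\infty\llcorner Y_\infty$ is the tautology $S_\infty\llcorner A=S_\infty\llcorner(A\cap\spt S_\infty)$ applied to $A=\Reg(S^\infty/\Gamma)_\omega$; and the ``in particular'' compactness statement follows because for any $\delta>0$ I can pick $k$ with $\frac{1}{k}<\delta$ to get $\spt S_\infty\cap U^{>\delta}\subset X_{k+1,\infty}$, while each $X_{k+1,\infty}$ is itself compact as a Hausdorff limit of the compact sets $j_p(\spt C_p^{>\frac{1}{k+1}})$.

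The first inclusion is essentially bookkeeping. For $x\in X_{k,\infty}$, Theorem \ref{compactness} places $x\in\spt S_\infty$ and supplies $x_p\in\spt(C_p^{>\frac{1}{k}})$ with $j_p(x_p)\to x$ in $\mathbf{Z}$. The construction of $\iota_u$ in Lemma \ref{ultraembedding1} then identifies $x$ with the $\omega$-class $[x_p]_\omega$; a harmless perturbation lets me assume the representing sequence actually lies in $[S^\infty/\Gamma]^{>\frac{1}{k}}$ (the support of $C_p^{>\frac{1}{k}}$ is contained in the closure of $[S^\infty/\Gamma]^{>\frac{1}{k}}$), so $[x_p]_\omega\in U^{>\frac{1}{k}}$.

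The third inclusion carries the actual content. Starting from $x\in\spt S_\infty\cap U^{>\frac{1}{k}}$, I choose $x_p\in\spt C_p$ with $j_p(x_p)\to x$ using $(j_p)_\#C_p\to S_\infty$ in the flat topology, so that $\iota_u(x)=[x_p]_\omega$. Membership $\iota_u(x)\in U^{>\frac{1}{k}}$ also furnishes representatives $y_p\in[S^\infty/\Gamma]^{>\frac{1}{k}}$ with $[y_p]_\omega=[x_p]_\omega$, hence $\lim_\omega d(x_p,y_p)=0$. Invoking the elementary fact recalled in Subsection \ref{subsection:nota} that the $\epsilon$-neighborhood of $[S^\infty/\Gamma]^{>\delta}$ sits inside $[S^\infty/\Gamma]^{>\delta-2\epsilon}$, I conclude $x_p\in[S^\infty/\Gamma]^{>\frac{1}{k+1}}$ along an $\omega$-full subsequence, which is precisely the subsequence realizing the Hausdorff convergence of Theorem \ref{compactness}. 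Hence $x\in X_{k+1,\infty}$, and closedness of the Hausdorff limit upgrades the inclusion to its closure.

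The one place where I expect to be slightly careful — though not a genuine obstacle — is compatibility between the Banach-space picture and the ultralimit picture: specifically, that the embedding $\iota_u$ from Lemma \ref{ultraembedding1} sends $x=\lim_p j_p(x_p)$ to the $\omega$-class $[x_p]_\omega$ independently of the chosen approximating sequence $x_p\in\spt C_p$. This is built into the construction of $\iota_u$ via $\omega$-limits combined with the isometric property of the $j_p$, and it is the only appendix reference I would want to check carefully before turning this plan into a full write-up.
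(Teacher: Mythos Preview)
Your proposal is correct and is precisely the unpacking of definitions that the paper leaves implicit (the paper gives no proof beyond ``follows from definitions''). Your sandwich of inclusions, together with the identification of $\iota_u(x)$ with the $\omega$-class of any approximating sequence $x_p\in\spt C_p$, is exactly the intended argument; the one point you flag—compatibility of the Banach space embedding with the ultralimit embedding—is indeed the only thing requiring care, and it holds by the construction in \cite[Proposition 2.2]{Wenger11} combined with the choice (in the proof of Theorem \ref{compactness}) of a subsequence of full $\omega$-measure.
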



\subsection{Minimality of the non-collapsed part}
\label{hilbert spherical quotients}

In this section, we show that the non-collapsed part of a spherical Plateau solution is always mass-minimizing.
We continue to use the notations introduced earlier.
Let $C_i$ be a minimizing sequence converging to a Plateau solution 
$$C_\infty = (X_\infty,d_\infty,S_\infty)$$ in the intrinsic flat topology. Consider the boundaryless integral current $S_\infty$, which is  an integral current inside $(S^\infty/\Gamma)_\omega$, see Lemma \ref{ultraembedding1}. Let 
$$C_\infty^{>0}=(Y_\infty,d_\infty,T_\infty)$$
be its noncollapsed part as defined in Definition \ref{definition:noncollapsed}.

\begin{theo} \label{theorem:locally mass minimizing}
The current $T_\infty$ is mass-minimizing inside $\Reg(S^\infty/\Gamma)_\omega $ in the following sense: for any
$(n+1)$-dimensional integral current  $Q$ in $\Reg(S^\infty/\Gamma)_\omega $ with
$$\spt Q\subset N^c_\omega(d)$$
for some $d>0$, we have
$$\mathbf{M}(T_\infty)\leq \mathbf{M}(T_\infty +\partial Q).$$
\end{theo}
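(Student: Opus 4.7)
The plan is to argue by contradiction and convert a hypothetical mass-decreasing deformation of $T_\infty$ in $\Reg(S^\infty/\Gamma)_\omega$ into a mass-decreasing modification of the pulled-tight minimizing sequence $\{C_i\}$ inside $S^\infty/\Gamma$, contradicting the definition of $\spherevol(h)$. Assume for contradiction that $\mathbf{M}(T_\infty+\partial Q) = \mathbf{M}(T_\infty) - 3\eta$ for some $\eta>0$. I aim to produce compactly supported integral $(n+1)$-currents $Q_i$ in the thick part of $S^\infty/\Gamma$, such that the modified currents $C'_i := C_i+\partial Q_i$ still lie in $\mathscr{C}(h)$ and satisfy $\lim_\omega \mathbf{M}(C'_i) \leq \spherevol(h)-\eta$.

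The first reduction is to replace $Q$ by one with compact support. Imitating the proof of Lemma \ref{reduce to compact}, I would apply the Ekeland variational principle to the functional
$$R \longmapsto \mathbf{M}_{\mathbf{g}_F}(T_\infty + \partial R) + \varepsilon\,\mathbf{M}_{\mathbf{g}_F}(R)$$
on integral $(n+1)$-currents supported in $N^c_\omega(d/4)$, for a small $\varepsilon$, with the conformal rescaling $\mathbf{g}_F$ making this set of bounded geometry. The local Euclidean isoperimetric inequality then yields local compactness of the Ekeland minimizer's support, and Lemma \ref{apply coarea} truncates it to a compactly supported $Q'$ with $\spt Q'\subset N^c_\omega(d/2)$ and $\mathbf{M}(T_\infty+\partial Q') \leq \mathbf{M}(T_\infty)-2\eta$. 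Compactness of $\spt Q'$ in $\Reg(S^\infty/\Gamma)_\omega$ gives some $\delta_0>0$ with $\spt Q' \subset U^{>\delta_0}$. Because $U^{>\delta_0}$ is the ultralimit of the bounded-geometry spaces $[S^\infty/\Gamma]^{>\delta_0}$, I would then realize $Q'$ as the $\omega$-ultraproduct of integral $(n+1)$-currents $Q_i$ in $[S^\infty/\Gamma]^{>\delta_0/2}$ with compact support satisfying $\lim_\omega \mathbf{M}(Q_i)=\mathbf{M}(Q')$ and $\lim_\omega \mathbf{M}(\partial Q_i)=\mathbf{M}(\partial Q')$. Each $Q_i$ is a compactly supported $(n+1)$-current in $S^\infty/\Gamma$, so $C'_i := C_i+\partial Q_i \in \mathscr{C}(h)$.

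The key mass estimate uses no cancellation in the thick part. Choose an open neighborhood $A$ of $\spt Q'$ inside $N^c_\omega(d/4)\cap U^{>\delta_0/4}$ whose boundary is $\|T_\infty\|$-null (via a slicing of the distance function to $\spt Q'$), together with open sets $A_i \subset [S^\infty/\Gamma]^{>\delta_0/4}$ containing $\spt Q_i$ and having $\|C_i\|$-null boundaries. The pulled-tight property of Theorem \ref{compactness}, combined with the monotonicity lower bound (\ref{equicomp}), prevents mass cancellation of $C_i\llcorner A_i$ and of $C_i\llcorner A_i+\partial Q_i$ onto the singular set, so
$$\lim_\omega \mathbf{M}(C_i\llcorner A_i) = \mathbf{M}(T_\infty\llcorner A) \quad\text{and}\quad \lim_\omega \mathbf{M}(C_i\llcorner A_i+\partial Q_i) = \mathbf{M}(T_\infty\llcorner A+\partial Q').$$
Splitting $C'_i = C_i\llcorner A_i^c + (C_i\llcorner A_i + \partial Q_i)$ and using the additivity $\mathbf{M}(T_\infty+\partial Q')=\mathbf{M}(T_\infty)-\mathbf{M}(T_\infty\llcorner A)+\mathbf{M}(T_\infty\llcorner A+\partial Q')$, I get
$$\lim_\omega \mathbf{M}(C'_i) = \spherevol(h) + \mathbf{M}(T_\infty+\partial Q') - \mathbf{M}(T_\infty) \leq \spherevol(h) - 2\eta,$$
contradicting $\mathbf{M}(C'_i)\geq \spherevol(h)$.

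I expect the main technical obstacle to be the lifting step, namely constructing the compact-support approximants $Q_i$ with mass-preserving convergence $\mathbf{M}(Q_i)\to \mathbf{M}(Q')$ and $\mathbf{M}(\partial Q_i)\to \mathbf{M}(\partial Q')$, together with the companion no-cancellation identity $\lim_\omega \mathbf{M}(C_i\llcorner A_i) = \mathbf{M}(T_\infty\llcorner A)$. Both rely on the fact that the thick parts $[S^\infty/\Gamma]^{>\delta_0/2}$ are locally isometric to the bounded-geometry space $S^\infty$, so that the Ambrosio-Kirchheim approximation theory and the pulled-tight compactness from Theorem \ref{compactness} combine to give strong mass-preserving convergence of restrictions to open thick subsets. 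The Ekeland reduction, the slicing argument, and the final algebraic manipulation of masses are routine by comparison.
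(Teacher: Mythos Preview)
Your reduction of $Q$ to compact support via Ekeland is fine and parallels the paper's Lemma~\ref{reduce to compact}. The gap is in the two ``no-cancellation identities''
\[
\lim_\omega \mathbf{M}(C_i\llcorner A_i) = \mathbf{M}(T_\infty\llcorner A),
\qquad
\lim_\omega \mathbf{M}(C_i\llcorner A_i+\partial Q_i) = \mathbf{M}(T_\infty\llcorner A+\partial Q').
\]
The pulled-tight property of Theorem~\ref{compactness} and the density bound~(\ref{equicomp}) yield only Hausdorff convergence of \emph{supports} into $\spt S_\infty$; they do not imply mass continuity of the restrictions. At this point in the paper one does \emph{not} know $\mathbf{M}(S_\infty)=\spherevol(h)$---that is Theorem~\ref{theorem:continuity}, proved much later and only under the hyperbolicity assumption, whereas Theorem~\ref{theorem:locally mass minimizing} is stated for arbitrary countable $\Gamma$. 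If $\mathbf{M}(S_\infty)<\spherevol(h)$, the minimizing sequence carries excess mass that disappears in the flat limit, and there is no reason this excess contributes identically to $\mathbf{M}(C_i\llcorner A_i)$ and to $\mathbf{M}(C_i\llcorner A_i+\partial Q_i)$. Lower semicontinuity gives only $\liminf\geq$, which is the wrong direction for the second identity (the one you need bounded from above).

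The paper's Lemma~\ref{approx before limit} sidesteps this entirely. Rather than adding $\partial Q_i$ to $C_i$ and hoping the masses behave, it \emph{excises} $C_i\llcorner\mathcal{O}_i$ and \emph{replaces} it by a lifted polyhedral chain $(\phi_i)_\sharp\llbracket 1_{K_2}\rrbracket$ approximating $U_0\llcorner\mathcal{O}$. Because $K_2$ is a finite simplicial complex and the lift is done vertex-by-vertex via the ultralimit, one has the exact mass control $\mathbf{M}((\phi_i)_\sharp\llbracket 1_{K_2}\rrbracket)\to\mathbf{M}(P_2)\leq\mathbf{M}(U_0\llcorner\mathcal{O})+\epsilon$. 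The subtracted term $\mathbf{M}(C_i\llcorner\mathcal{O}_i)$ then only needs lower semicontinuity $\liminf_i\mathbf{M}(C_i\llcorner\mathcal{O}_i)\geq\mathbf{M}(S_\infty\llcorner\mathcal{O})$, which is the correct direction. This produces $\hat D\in\mathscr{C}(h)$ with $\mathbf{M}(\hat D)\leq\spherevol(h)-\mathbf{M}(S_\infty)+\mathbf{M}(U_0)+\eta$, and the contradiction follows without any appeal to mass continuity.
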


\begin{proof}

Let $\{{C}_i\}$ be a pulled-tight sequence converging to $C_\infty$ as in Theorem \ref{compactness}, and let 
$$j_i : \spt(C_i) \hookrightarrow \mathbf{Z}, \quad \spt(S_\infty) \hookrightarrow \mathbf{Z},$$
be as in Theorem \ref{compactness}.
Let $Q$ be an $(n+1)$-dimensional integral current in $\Reg(S^\infty/\Gamma)_\omega $ such that
for some $d>0$, 
$$\spt Q\subset N^c_\omega(d).$$
Suppose towards a contradiction that for some $\alpha>0$,
$$\mathbf{M}(T_\infty +\partial Q) \leq \mathbf{M}(T_\infty) - \alpha$$
or equivalently
$$\mathbf{M}(S_\infty +\partial Q) \leq \mathbf{M}(S_\infty) - \alpha.$$

By Lemma \ref{lemma:yinfinity}, the closure of $\spt S_\infty \cap U^{>\delta}$ is compact for any $\delta>0$.
By Lemma \ref{reduce to compact} applied to $S_0 = S_\infty$, $V_0=-Q$, $U_0=S_\infty +\partial Q$ and $\eta\in (0,\alpha/2)$, we can assume that $Q$ has compact support after reducing $d$ if necessary.

Let $\eta_i\in (0,\alpha/2)$ be a sequence converging to $0$. 
By the approximation lemma,  Lemma \ref{approx before limit}, which is stated right after this proof, there exists an integral current $C'_i\in \mathscr{C}(h)$ in $S^\infty/\Gamma$ such that 
\begin{align*}
\mathbf{M}(C'_i)& \leq \spherevol(h) - \mathbf{M}(S_\infty) +\mathbf{M}(S_\infty + \partial Q) +\eta_i \\
& \leq \spherevol(h) - \alpha +\eta_i.
\end{align*}
Letting $i\to \infty$, we get a contradiction. This ends the proof of the theorem.

\end{proof}

In the proof of the previous theorem, we needed the approximation lemma below in order to approximate limits in $(S^\infty/\Gamma)_\omega$ by cycles in $S^\infty/\Gamma$. 
Recall that $S_\infty$ is the integral current in $(S^\infty/\Gamma)_\omega$ coming from a spherical Plateau solution $C_\infty=(X_\infty,d_\infty,S_\infty)$.
\begin{lemme} \label{approx before limit}
Consider an $n$-dimensional integral current $U_0$ in $(S^\infty/\Gamma)_\omega$ without boundary and an $(n+1)$-dimensional integral current $V_0$ in $(S^\infty/\Gamma)_\omega$ such that
$$S_\infty=U_0+\partial V_0.$$
Suppose that $\spt V_0$ is compact and contained inside $\Reg(S^\infty/\Gamma)_\omega $.
Let $\eta>0$. Then there exists integral currents $C\in  \mathscr{C}(h)$ in $S^\infty/\Gamma$ such that 
$$\mathbf{M}(C) \leq \spherevol(h) - \mathbf{M}(S_\infty) + \mathbf{M}(U_0) +\eta.$$

\end{lemme}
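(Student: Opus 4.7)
The plan is to define $C := C_i - \partial V_0^{(i)}$ for a sufficiently large $i$, where $\{C_i\} \subset \mathscr{C}(h)$ is a pulled-tight minimizing sequence converging to $S_\infty$ in the sense of Theorem \ref{compactness}, and $V_0^{(i)}$ is a suitable compactly supported integral current in $S^\infty/\Gamma$ lifting $V_0$. Because $V_0^{(i)}$ has compact support, $C$ differs from $C_i$ by the boundary of a compactly supported current, so $C \in \mathscr{C}(h)$ automatically; the substance lies in arranging the lift so that the mass bookkeeping yields the target bound.

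First I would localize. Since $\spt V_0$ is compact and $\Reg(S^\infty/\Gamma)_\omega = \bigcup_{\delta > 0} U^{>\delta}$, there exists $\delta > 0$ with $\spt V_0 \subset U^{>\delta}$. The set $U^{>\delta}$ is the ultralimit of the smooth Hilbert manifolds $[S^\infty/\Gamma]^{>\delta}$, each locally isometric to $S^\infty$; this lets me pick a bounded open neighborhood $\mathcal{U}$ of $\spt V_0$ in $U^{>\delta/2}$ together with approximating open neighborhoods $\mathcal{U}_i \subset [S^\infty/\Gamma]^{>\delta/4}$. First approximating $V_0$ inside $\mathcal{U}$ by a polyhedral integral current (possible since $\mathcal{U}$ is an open subset of a smooth Hilbert manifold) and then transferring the approximation chart-by-chart through the local isometric charts with $S^\infty$, I produce compactly supported integral currents $V_0^{(i)}$ with support in $\mathcal{U}_i$ such that $\lim_{i \to \omega} \mathbf{M}(V_0^{(i)}) = \mathbf{M}(V_0)$ and whose ultralimit recovers $V_0$. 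A slicing argument applied to the distance from $\spt V_0$, in the spirit of Lemma \ref{apply coarea}, ensures that $\partial \mathcal{U}_i$ intersects both $C_i$ and $V_0^{(i)}$ generically so that restrictions commute with the expected limits.

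Next I would compute the mass. Since $\spt V_0$ lies in the noncollapsed part $Y_\infty$, Theorem \ref{compactness} and Lemma \ref{lemma:yinfinity} give mass convergence on the smooth side, namely $\mathbf{M}(C_i \llcorner \mathcal{U}_i) \to \mathbf{M}(S_\infty \llcorner \mathcal{U})$ along $\omega$. Combined with the chart-wise match between $V_0^{(i)}$ and $V_0$, this upgrades to the key mass equality
$$\mathbf{M}\bigl((C_i - \partial V_0^{(i)}) \llcorner \mathcal{U}_i\bigr) \longrightarrow \mathbf{M}\bigl((S_\infty - \partial V_0) \llcorner \mathcal{U}\bigr) = \mathbf{M}(U_0 \llcorner \mathcal{U}).$$
Since $\spt \partial V_0^{(i)} \subset \mathcal{U}_i$, the mass of $C$ splits as
$$\mathbf{M}(C) = \mathbf{M}\bigl((C_i - \partial V_0^{(i)}) \llcorner \mathcal{U}_i\bigr) + \mathbf{M}(C_i \llcorner \mathcal{U}_i^c),$$
and $\mathbf{M}(C_i \llcorner \mathcal{U}_i^c) = \mathbf{M}(C_i) - \mathbf{M}(C_i \llcorner \mathcal{U}_i) \to \spherevol(h) - \mathbf{M}(S_\infty \llcorner \mathcal{U})$. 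Using that $\spt V_0 \subset \mathcal{U}$ forces $S_\infty \llcorner \mathcal{U}^c = U_0 \llcorner \mathcal{U}^c$, so that $\mathbf{M}(U_0 \llcorner \mathcal{U}) + \mathbf{M}(S_\infty \llcorner \mathcal{U}^c) = \mathbf{M}(U_0)$ and $\mathbf{M}(S_\infty \llcorner \mathcal{U}) = \mathbf{M}(S_\infty) - \mathbf{M}(S_\infty \llcorner \mathcal{U}^c)$, summing the two contributions gives $\lim_{i \to \omega} \mathbf{M}(C) = \mathbf{M}(U_0) + \spherevol(h) - \mathbf{M}(S_\infty)$, and picking any $i$ for which this limit is achieved up to $\eta$ finishes the proof.

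The main obstacle is upgrading flat convergence to the genuine mass equality $\mathbf{M}((C_i - \partial V_0^{(i)}) \llcorner \mathcal{U}_i) \to \mathbf{M}(U_0 \llcorner \mathcal{U})$, since mass is only lower semicontinuous under flat convergence and the subtraction of $\partial V_0^{(i)}$ could a priori introduce cancellations that spuriously lower the limiting mass. The resolution relies on two facts. First, $\mathcal{U}_i$ sits in the uniformly smooth locus $[S^\infty/\Gamma]^{>\delta/4}$, where pulled-tightness and the monotonicity estimate (\ref{monot ineq}) in Lemma \ref{1st approx} force no-cancellation mass convergence $C_i \llcorner \mathcal{U}_i \to S_\infty \llcorner \mathcal{U}$. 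Second, $V_0^{(i)}$ is constructed chart-by-chart to match $V_0$ pointwise in the ultralimit sense, so any cancellation between $C_i \llcorner \mathcal{U}_i$ and $\partial V_0^{(i)}$ is exactly the one reflected by the relation $S_\infty = U_0 + \partial V_0$ in the limit.
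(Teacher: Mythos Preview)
Your strategy hinges on the claim that Theorem~\ref{compactness} and the monotonicity estimate~(\ref{monot ineq}) yield mass convergence $\mathbf{M}(C_i\llcorner\mathcal{U}_i)\to\mathbf{M}(S_\infty\llcorner\mathcal{U})$ on the thick part. They do not. Theorem~\ref{compactness} gives flat convergence of $(j_i)_\sharp C_i$ to $S_\infty$ and Hausdorff convergence of the thick supports to subsets of $\spt S_\infty$; the density lower bound~(\ref{monot ineq}) only rules out Hausdorff cancellation, i.e.\ it forces the Hausdorff limit to sit inside $\spt S_\infty$. None of this produces an \emph{upper} bound on $\mathbf{M}(C_i\llcorner\mathcal{U}_i)$. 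At this point in the paper we only know $\mathbf{M}(S_\infty)\le\spherevol(h)$, and the gap $\spherevol(h)-\mathbf{M}(S_\infty)$ may well be realized by mass of $C_i$ sitting inside $\mathcal{U}_i$ that vanishes in the flat limit. If so, $\mathbf{M}((C_i-\partial V_0^{(i)})\llcorner\mathcal{U}_i)$ can exceed $\mathbf{M}(U_0\llcorner\mathcal{U})$ by exactly that gap, and your final estimate collapses to the useless $\mathbf{M}(C)\le\spherevol(h)+\mathbf{M}(U_0)-\mathbf{M}(S_\infty)+(\spherevol(h)-\mathbf{M}(S_\infty))+\eta$. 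Note also that volume continuity $\mathbf{M}(S_\infty)=\spherevol(h)$ is only established later (Theorem~\ref{theorem:continuity}) and requires $\Gamma$ hyperbolic, whereas this lemma is used in Section~\ref{section:plateau problem} for arbitrary countable $\Gamma$; you cannot invoke it here.

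The paper circumvents this by a \emph{replacement} rather than a \emph{modification}: it takes polyhedral approximations $P_1,P_2,R$ of $S_\infty\llcorner\mathcal{O}$, $U_0\llcorner\mathcal{O}$, $V_0$ separately, with $\partial R=P_2-P_1$ and with no-cancellation so that the \emph{volumes} of the lifted simplicial images $(\phi_i)_\sharp\llbracket 1_{K_2}\rrbracket$ converge to $\mathbf{M}(P_2)\le\mathbf{M}(U_0\llcorner\mathcal{O})+\epsilon$. It then discards $C_i\llcorner\mathcal{O}_i$ entirely, glues in $(\phi_i)_\sharp\llbracket 1_{K_2}\rrbracket$, and uses only lower semicontinuity of mass to bound $\mathbf{M}(C_i)-\mathbf{M}(C_i\llcorner\mathcal{O}_i)\le\spherevol(h)-\mathbf{M}(S_\infty\llcorner\mathcal{O})+o(1)$. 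The gluing error between the lifted $P_1$ and $C_i\llcorner\mathcal{O}_i$ is killed by flat closeness plus the isoperimetric inequality in finitely many convex balls (Wenger), producing the small corrector $U$. The point is that the mass of the ``new'' piece is controlled directly as a polyhedral volume, never as the mass of a difference of currents.
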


\begin{proof}

 Choose a small $\delta>0$ and let $\mathcal{O} \subset  U^{>\delta}$ be an open subset containing $\spt V_0$; such an $\mathcal{O}$  exists when $\delta$ is chosen small enough by compactness of $\spt V_0$. Recall by Lemma \ref{lemma:yinfinity} that $\spt S_\infty \cap U^{>\delta}$ is compact. By the slicing theorem, one can choose $\mathcal{O}$ so that both $S_\infty\llcorner \mathcal{O}$ and  $U_0 \llcorner \mathcal{O}$ are integral currents with compact supports.
Given $\epsilon>0$, by a standard polyhedral chain approximation explained in \cite[Lemma 1.6]{Antoine23a}, there are polyhedral chains $P_1, P_2, R$ supported in $\mathcal{O}$ such that 
\begin{itemize}
\item
$\partial R = P_2-P_1$,
\item
$P_1$ (resp. $P_2$, $R$) is $\epsilon$-close to $S_\infty\llcorner \mathcal{O}$ (resp. $U_0 \llcorner \mathcal{O}$, $V_0$) in the flat topology,  
\item
$\mathbf{M}(P_1) \leq \mathbf{M}(S_\infty\llcorner \mathcal{O}) +\epsilon$,
\item
$\mathbf{M}(P_2) \leq \mathbf{M}(U_0\llcorner \mathcal{O}) +\epsilon$.
\end{itemize}

By definition of polyhedral chains, there are finite simplicial complexes $K_1,K_2, L$ with an orientation, such that $K_1\cup K_2 \subset L$, $\partial L = K_2-K_1$, and a Lipschitz map 
$$\phi:L \to \mathcal{O}$$ 
such that
$$\phi_\# \llbracket 1_{K_1} \rrbracket = P_1, \quad \phi_\#\llbracket 1_{K_2} \rrbracket = P_2, \quad \phi_\#\llbracket 1_{L} \rrbracket = R.$$
By refining the simplicial complexes and redefining the map $\phi$,
we can ensure that  there is no cancellation in the sense of currents so that 
\begin{equation} \label{pas d'annulat}
\Vol(K_1, \phi^*\mathbf{g}_{\mathrm{Hil}}) = \mathbf{M}(P_1) \text{ and }
\Vol(K_2, \phi^*\mathbf{g}_{\mathrm{Hil}}) = \mathbf{M}(P_2),
\end{equation}
where $\phi^*\mathbf{g}_{\mathrm{Hil}}$ denotes the pull-back Riemannian metric.
We can always suppose the triangulation of $L$ fine enough so that any $k$-simplex inside $L$ is sent by $\phi$ to a totally geodesic $k$-simplex embedded inside a small convex embedded geodesic ball of $\mathcal{O} \subset  \Reg(S^\infty/\Gamma)_\omega $.

Let $C_i\in \mathscr{C}(h)$ be a pulled-tight sequence converging to $C_\infty=(X_\infty;d_\infty,S_\infty)$, with $\lim_{i\to \infty}\mathbf{M}(C_i) = \spherevol(h)$, as supposed at the beginning of this subsection. 
By Theorem \ref{compactness}, by the slicing theorem and Wenger's compactness \cite{Wenger11}, by changing a bit $\mathcal{O}$ if necessary, we have a sequence of open sets $\mathcal{O}_i \subset [S^\infty/\Gamma]^{>\delta/2}$, such that 
\begin{itemize}
\item
 $ \spt(C_i \llcorner \mathcal{O}_i)$ converges in the Gromov-Hausdorff topology to $\spt(S_\infty\llcorner \mathcal{O})$,
 \item $C_i \llcorner \mathcal{O}_i$  is an integral current  whose boundary has uniformly bounded mass and which converges in the intrinsic flat topology to $S_\infty\llcorner \mathcal{O}$.
 \end{itemize}

By the definition of ultralimits, there exist Lipschitz maps 
$$\phi_i : L \to [S^\infty/\Gamma]^{>\delta/2}$$
such that if $\{p_1,...,p_m\}$ denotes the vertices of $L$, the metric spaces 
$$\{\phi_i(p_1),...,\phi_i(p_m)\}, \quad \phi_i(L), \quad \phi_i(L)\cup \spt(C_i\llcorner \mathcal{O}_i)$$
with the metric induced by $S^\infty/\Gamma$ converge subsequentially in the Gromov-Hausdorff topology respectively to 
$$\{\phi(p_1),...,\phi(p_m)\}, \quad \phi(L), \quad \phi(L) \cup \spt(S_\infty\llcorner \mathcal{O})$$ 
with the metric induced by $(S^\infty/\Gamma)_\omega$ and for any $k$-simplex $\Delta\subset L$, $\phi_i(\Delta)$ is the totally geodesic embedded $k$-simplex determined by its vertices.
By Lemma \ref{in a common Banach} in the Appendix, after taking a renumbered subsequence, we get new isometric embeddings
$$j'_i: \spt(C_i) \cup \phi_i(L) \hookrightarrow \mathbf{Z}', \quad \spt(S_\infty) \cup  \phi(L) \subset \mathbf{Z}',$$
into a Banach space $\mathbf{Z}'$ such that the conclusion of Theorem \ref{compactness} is still true, and moreover:
\begin{itemize}
\item $j'_i(\phi_i(p_j))$ converges to $\phi(p_j)$ for each $j\in \{1,...,m\}$, 
and $j'_i(\phi_i(L)) \cup j'_i(\spt(C_i\llcorner \mathcal{O}_i))$ converges in the Hausdorff topology to $\phi(L) \cup \spt(S_\infty\llcorner \mathcal{O})$,
\item $(j'_i)_\#(C_i\llcorner \mathcal{O}_i)$ and  $(j'_i\circ \phi_i)_\#\llbracket 1_{K_1} \rrbracket $ converge in the flat topology respectively to $S_\infty\llcorner \mathcal{O}$ and $P_1= \phi_\#\llbracket 1_{K_1} \rrbracket$.
\item by (\ref{pas d'annulat}), we have
\begin{equation} \label{mass images}
 \lim_{i\to \infty} \mathbf{M}( (\phi_i)_\#\llbracket 1_{K_2} \rrbracket ) = \mathbf{M}(P_2).
 \end{equation}
\end{itemize}
Since the flat distance between $S_\infty\llcorner \mathcal{O}$ and $P_1= \phi_\#\llbracket 1_{K_1} \rrbracket$ is less than $\epsilon$, we have that  for $i$ large, 
\begin{equation}\label{leq é epsilon}
\mathbf{d}_\mathcal{F}(0, (\phi_i)_\#\llbracket 1_{K_1} \rrbracket - C_i\llcorner\mathcal{O}_i) \leq 2\epsilon
\end{equation}
where $\mathbf{d}_\mathcal{F}$ is the intrinsic flat distance.
Then for any $\epsilon_1>0$, if $\epsilon$ is chosen small enough, we claim that there are integral currents $U,V$ in  $[S^\infty/\Gamma]^{>\delta/2}$ so that
$$(\phi_i)_\#\llbracket 1_{K_1} \rrbracket - C_i\llcorner \mathcal{O}_i  = U+ \partial V,$$
\begin{equation}\label{mumveta}
\mathbf{M}(U) < \epsilon_1.
\end{equation}
The argument for checking this claim is quite standard: note that by Gromov-Hausdorff convergence, $$\spt (C_i\llcorner\mathcal{O}_i) \cup \phi_i(K_1),$$ can be covered by $J$ convex balls $b_{i,1},...,b_{i,J}$ of radius $\frac{\delta}{100}$, where $J$ is a number that does not depend on $i$.   For each $j=1,...,J$, after maybe moving the balls $b_{i,j}$ a bit and using the slicing theorem, there are domains $d_1,...d_{J'}$ with piecewise smooth boundary for some number $J'$ depending only on $J$, such that $\bigcup_{j=1}^{J'} d_j = \bigcup_{j=1}^{J} b_j$, the interiors of $d_j$ are pairwise disjoint, and 
$$((\phi_i)_\#\llbracket 1_{K_1} \rrbracket - C_i\llcorner \mathcal{O}_i) \llcorner d_{i,j}$$ 
are integral currents whose sum over $j=1,...,J'$ is equal to $(\phi_i)_\#\llbracket 1_{K_1} \rrbracket - C_i\llcorner \mathcal{O}_i$. Moreover,
by (\ref{leq é epsilon}), for all large $i$, for some $\varphi(\epsilon)>0$ converging to $0$ as $\epsilon \to 0$,
$$\mathbf{d}_\mathcal{F}(0, ((\phi_i)_\#\llbracket 1_{K_1} \rrbracket - C_i\llcorner \mathcal{O}_i) \llcorner d_{i,j}) \leq \varphi(\epsilon).$$
By \cite[Theorem 1.4]{Wenger07} applied to a convex ball containing $d_{i,j}$, taking $\epsilon$ small enough, for each $j$ and all large $i$,  
$$((\phi_i)_\#\llbracket 1_{K_1} \rrbracket - C_i\llcorner \mathcal{O}_i) \llcorner d_{i,j} =U_{i,j}+ \partial V_{i,j} \quad \text{with}\quad  \mathbf{M}(U_{i,j}) \leq \frac{\epsilon_1}{J}.$$ 
 Now, just set $U=\sum_{j=1}^{J'} U_{i,j}$, $V=\sum_{j=1}^{J'} V_{i,j}$ and the claim is checked.

By Lemma \ref{1st approx}, $U,V$ can be chosen to have compact supports.
Set 
$$D:= C_i - C_i\llcorner \mathcal{O}_i -U + (\phi_i)_\#\llbracket 1_{K_1} \rrbracket  = C_i+\partial V,$$
and 
$$\hat{D} := D +\partial (\phi_i)_\#\llbracket 1_{L} \rrbracket  = D- (\phi_i)_\#\llbracket 1_{K_1} \rrbracket  + (\phi_i)_\#\llbracket 1_{K_2} \rrbracket .$$
From those equalities, clearly $\hat{D} \in \mathscr{C}(h)$.
Since $C_i \llcorner \mathcal{O}_i$ converges in the intrinsic  flat topology to $S_\infty\llcorner \mathcal{O}$, lower semicontinuity of the mass tells us that 
\begin{equation}\label{oio...}
\liminf_{i\to \infty} \mathbf{M}(C_i \llcorner \mathcal{O}_i) \geq \mathbf{M}(S_\infty \llcorner \mathcal{O}).
\end{equation}
Given $\epsilon_1>0$, if $i$ is large enough and $\epsilon$ small enough, (\ref{mass images}), (\ref{oio...}) and (\ref{mumveta}) finally yield
\begin{align*}
\mathbf{M}(\hat{D}) & = \mathbf{M}(C_i - C_i\llcorner \mathcal{O}_i  -U + (\phi_i)_\#\llbracket 1_{K_2} \rrbracket) \\
& \leq \mathbf{M}(C_i)- \mathbf{M}(C_i\llcorner \mathcal{O}_i) + \mathbf{M}(U) + \mathbf{M}((\phi_i)_\#\llbracket 1_{K_2} \rrbracket) \\
& \leq \mathbf{M}(C_i)- \mathbf{M}(C_i\llcorner \mathcal{O}_i) + \epsilon_1 + \mathbf{M}(P_2)\\
& \leq \mathbf{M}(C_i)- \mathbf{M}(C_i\llcorner \mathcal{O}_i)  + \mathbf{M}(U_0\llcorner \mathcal{O}) + 2\epsilon_1\\
& \leq \spherevol(h) - \mathbf{M}(S_\infty\llcorner \mathcal{O})  + \mathbf{M}(U_0\llcorner \mathcal{O}) + 3\epsilon_1\\
& = \spherevol(h) - \mathbf{M}(S_\infty)  + \mathbf{M}(U_0) + 3\epsilon_1
\end{align*}
where the last equality is due to the fact that $S_\infty$ and $U_0$ coincide as currents outside of $\mathcal{O}$. The lemma is proved by taking $C:=\hat{D}$, $\eta:=3\epsilon_1$.

\end{proof}

\section{Hyperbolic groups and non-collapsing} \label{section:noncollapsing}

\subsection{Limits  of the regular representation} \label{subsection:lambdaomega}
In this section, we prove general structure results for limits of the regular representation of torsion-free hyperbolic groups.
Let $\Gamma$ be a given non-elementary torsion-free hyperbolic group, with identity element $1$.
Let 
$\lambda_{\Gamma}$ be the regular representation of $\Gamma$ on $\ell^2(\Gamma)$. As usual, $\langle.,.\rangle$ denotes the scalar product on $\ell^2(\Gamma)$. 
The following lemma seems to be a new observation. It will be our basic tool for describing the ultralimit representation $\lambda_\omega:\Gamma_\omega\to O(H_\omega)$ of (\ref{ultralimit representation}).

\begin{lemme} \label{degen of reg rep}
Let $\{g_j\}\subset \Gamma\setminus\{1\}$, $\{h_j\}\subset \Gamma\setminus\{1\}$ satisfy $g_jh_j\neq h_j g_j$ for all $j$.
\begin{enumerate}
\item If a sequence $\{v_j\}\subset S^\infty\subset \ell^2(\Gamma)$ is such that
$$\lim_{j\to \infty} \langle \lambda_{\Gamma}(g_j) .v_j,v_j \rangle = 1,$$
then
$$\lim_{j\to \infty} \langle \lambda_{\Gamma}(h_j) .v_j,v_j \rangle = 0.$$
\item If two sequences 
$\{v_j\}, \{w_j\}\subset S^\infty\subset \ell^2(\Gamma)$ are such that
$$\lim_{j\to \infty} \langle \lambda_{\Gamma}(g_j) .v_j,v_j \rangle = 1,\quad \lim_{j\to \infty} \langle \lambda_{\Gamma}(h_j) .w_j,w_j \rangle = 1,$$
then
$$\lim_{j\to \infty} \langle v_j,w_j \rangle = 0.$$
\end{enumerate}
\end{lemme}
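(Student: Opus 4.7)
My approach exploits the algebraic rigidity of torsion-free hyperbolic groups through a combinatorial injectivity lemma, combined with Fourier analysis on the $\langle g_j \rangle$- and $\langle h_j \rangle$-orbit decompositions of $\ell^2(\Gamma)$. The key algebraic fact I will establish first is: if $g, h \in \Gamma \setminus \{1\}$ do not commute in a torsion-free hyperbolic group, then $\langle g\rangle \cap \langle h\rangle = \{1\}$, and the map $\mathbb{Z}^2 \to \Gamma$, $(k, l) \mapsto g^{-l} h g^k$, is injective. The trivial intersection follows because a nontrivial relation $g^p = h^q$ would place both elements in a common maximal cyclic subgroup (centralizers in a torsion-free hyperbolic group are cyclic), making $g$ and $h$ commute. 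The injectivity reduces to ruling out $g^m = h g^n h^{-1}$ for $(m, n) \neq (0,0)$; translation-length equality forces $|m| = |n|$, the case $m = n$ puts $h \in Z(g)$, and the case $m = -n \neq 0$ gives $h g^{|m|} h^{-1} = g^{-|m|}$, producing an infinite dihedral subgroup containing torsion.

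For (1), set $\epsilon_j := \|\lambda_{\Gamma}(g_j) v_j - v_j\|$, so that $\epsilon_j^2 = 2 - 2\langle \lambda_\Gamma(g_j) v_j, v_j\rangle \to 0$. Suppose for contradiction that $|a_j| \geq \alpha > 0$ along a subsequence, where $a_j := \langle \lambda_\Gamma(h_j) v_j, v_j\rangle$. The identity
$$\langle \lambda_\Gamma(g_j^{-l} h_j g_j^k) v_j, v_j\rangle = \langle \lambda_\Gamma(h_j) \lambda_\Gamma(g_j^k) v_j, \lambda_\Gamma(g_j^l) v_j\rangle$$
combined with $\|\lambda_\Gamma(g_j^k) v_j - v_j\| \leq |k|\epsilon_j$ yields $|\langle \lambda_\Gamma(g_j^{-l} h_j g_j^k) v_j, v_j\rangle - a_j| \leq (|k|+|l|)\epsilon_j$. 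Averaging over $|k|,|l| \leq N_j$ with $N_j \to \infty$ and $N_j\epsilon_j \to 0$ produces a convolution operator $\lambda_\Gamma(\phi_j^{N_j})$, where by the injectivity lemma the measure $\phi_j^{N_j}$ is uniformly supported on $(2N_j+1)^2$ \emph{distinct} elements, and $\langle \lambda_\Gamma(\phi_j^{N_j}) v_j, v_j\rangle \to \alpha$. I then decompose $\ell^2(\Gamma)$ into its $\langle g_j\rangle$-orbit summands: the approximate $g_j$-invariance concentrates the Fourier mass of each coset restriction $v_j|_C$ near $\theta = 0$, yielding a uniformly small $\ell^\infty$ bound per orbit, while the injectivity ensures $h_j$-translation scatters every $\langle g_j\rangle$-orbit transversally across distinct $\langle g_j\rangle$-orbits. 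This transversality combined with the vanishing $\ell^\infty$ bounds should force $|\langle \lambda_\Gamma(\phi_j^{N_j}) v_j, v_j\rangle| \to 0$, contradicting $\alpha > 0$.

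Part (2) follows a parallel strategy. Using the injectivity lemma applied to $\langle g_j\rangle$ and $\langle h_j\rangle$, every $\gamma \in \Gamma$ is the unique intersection point of its left $\langle g_j\rangle$-coset $C$ with its left $\langle h_j\rangle$-coset $D$, so
$$\langle v_j, w_j\rangle = \sum_{(C, D) : C \cap D \neq \emptyset} v_j(\gamma_{C, D}) w_j(\gamma_{C, D}).$$
The approximate invariances yield $\|v_j|_C\|_{\ell^\infty}, \|w_j|_D\|_{\ell^\infty} \to 0$ through the same Fourier argument, and a weighted Cauchy-Schwarz adapted to the orbit structure then forces $\langle v_j, w_j\rangle \to 0$. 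Alternatively, (1) can be derived from (2) by setting $w_j := \lambda_\Gamma(h_j) v_j$, which is asymptotically fixed by $h_j g_j h_j^{-1}$, and checking that $g_j$ and $h_j g_j h_j^{-1}$ do not commute in $\Gamma$ --- a verification again reliant on torsion-freeness (commutativity would force $h_j \in Z(g_j)$, or produce an infinite dihedral subgroup).

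The hardest step I expect is the $\ell^\infty / \ell^2$ balance in the summation arguments of both parts: naive Cauchy-Schwarz in the coset decomposition only gives back the trivial bound $|\langle v_j, w_j\rangle| \leq 1$, so one needs a more refined summation simultaneously exploiting the orbit transversality (from the injectivity lemma) and the pointwise decay (from approximate invariance). Optimally calibrating the averaging parameter $N_j$ against $\epsilon_j$ and choosing the weights in Cauchy-Schwarz to reflect the coset-wise $\ell^2$-mass distribution of $v_j$ and $w_j$ should yield the required quantitative decay.
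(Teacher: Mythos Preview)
Your injectivity lemma is correct, but it is not strong enough to carry the argument, and the ``$\ell^\infty/\ell^2$ balance'' difficulty you flag at the end is fatal rather than merely technical. Consider the integer Heisenberg group $H = \langle x, y \mid [x,[x,y]] = [y,[x,y]] = 1\rangle$. The elements $x, y$ do not commute, one has $\langle x\rangle \cap \langle y\rangle = \{1\}$, and a direct computation in matrix coordinates shows that $(k,l) \mapsto x^{-l} y x^k$ is injective. Thus every ingredient you invoke---orbit transversality, Fourier concentration of $v_j|_C$ near $\theta = 0$, pointwise decay after averaging---is equally available in $H$. But $H$ is nilpotent, hence amenable, so there exist unit vectors $v_N \in \ell^2(H)$ with $\|\lambda(x) v_N - v_N\| \to 0$ and $\|\lambda(y) v_N - v_N\| \to 0$ simultaneously, whence $\langle \lambda(y) v_N, v_N\rangle \to 1$. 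Your scheme for Part~(1), run verbatim in $H$, would ``prove'' the opposite; taking $w_N = v_N$ likewise breaks your Part~(2). The point is that transversality of two cyclic orbit structures does not by itself forbid a single vector from being simultaneously almost invariant along both---some genuinely hyperbolic (or at least quantitatively non-amenable) input must enter, and your outline never uses one.

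The paper supplies this input through boundary dynamics. For $F_2 = \langle a, b\rangle$ it views $v_j^2$ as a probability measure on the compactification $F_2 \cup \partial F_2$; approximate $a$-invariance forces any weak limit to be supported on the two fixed points of $a$ in $\partial F_2$, and $b$ moves this pair to a disjoint pair, which forces $\langle \lambda(b) v_j, v_j\rangle \to 0$. The general torsion-free hyperbolic case is reduced to $F_2$ via Delzant's theorem (for a uniform $k = k(\Gamma)$, the elements $g_j^k$ and $h_j^{-1} g_j^k h_j$ generate a rank-two free subgroup), followed by a short spherical-geometry argument that upgrades $\langle \lambda(h_j^{-1} g_j^{kN} h_j) v_j, v_j\rangle \to 0$ for all $N$ to $\langle \lambda(h_j) v_j, v_j\rangle \to 0$. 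To rescue your combinatorial route you would need to replace the injectivity lemma by something that actually encodes negative curvature---a free subgroup, or a uniform spectral gap for $\lambda_\Gamma$ on non-commuting pairs---at which point you are essentially on the paper's path.
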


\begin{proof}
We begin by showing property (1).
First let us check that this is true if $\Gamma$ is the free group of rank two, $F_2=\langle a,b \rangle$, generated by two elements $a$ and $b$, and if $g_j=a$, $h_j=b$. Indeed let $v_j \in  S^\infty\subset \ell^2(F_2)$ be such that 
$$\lim_{j\to \infty} \langle \lambda_{F_2}(a) .v_j,v_j \rangle = 1.$$
For preliminaries on the boundary and compactification of hyperbolic groups, see Subsection \ref{hyp groups} in the Appendix.
The sequence of functions $v_j^2$ viewed as probability measures on the compactification 
$$\overline{F_2} := F_2 \cup \partial F_2$$
weakly converges subsequentially to a probability measure $\mu$ on   $\overline{F_2}$ such that
 $$\lambda_{F_2}(a)_* \mu = \mu$$
 where $\lambda_{F_2}(a)$ denotes the extended isometry induced by $a$ on $\overline{F_2}$. From the elementary facts about the group action on the boundary recalled in Subsection \ref{hyp groups}, this implies that $\mu$ is supported on at most two points $q_1,q_2\in \partial F_2$, where $q_1,q_2$ are the two fixed points of $a$. But
 \begin{equation}\label{disjoint support mu}
\{ \lambda_{F_2}(b).q_1, \lambda_{F_2}(b).q_2\}\cap \{q_1,q_2\} =0,
 \end{equation}
in other words $\lambda_{F_2}(b)_* \mu$ and $\mu$ are probability measures with disjoint supports. Let us check that this implies the desired claim:
  \begin{equation}\label{desired claim f2}
 \lim_{j\to \infty} \langle \lambda_{F_2}(b) .v_j,v_j \rangle =0.
 \end{equation}
 Consider neighborhoods $U_1, U_2$ of $\{q_1,q_2\},  \{ \lambda_{F_2}(b).q_1, \lambda_{F_2}(b).q_2\}$ respectively, such that $U_1\cup U_2= \overline{F_2}$, $U_1\cap U_2=\varnothing$ and
   \begin{equation}\label{lim 0 f2}
 \lim_{j\to \infty} \sum_{x\in U_1} v_j^2(x) = \lim_{j\to \infty} \sum_{x\in U_2}  (\lambda_{F_2}(b).v_j(x))^2 =0.
 \end{equation}
 Then by Peter-Paul inequality, for any $\hat{\epsilon}>0$, for any $j$,
\begin{align*}
\langle \lambda_{F_2}(b) .v_j,v_j \rangle   = & \sum_{x\in U_1} (\lambda_{F_2}(b).v_j)(x) v_j(x) + \sum_{x\in U_2} (\lambda_{F_2}(b).v_j)(x) v_j(x) \\
 \leq &\quad   \frac{\hat{\epsilon}}{2} \sum_{x\in U_1}  (\lambda_{F_2}(b).v_j(x))^2  +\frac{1}{2\hat{\epsilon}} \sum_{x\in U_1} v_j^2(x)
\\
& +
\frac{\hat{\epsilon}}{2} \sum_{x\in U_2} v_j^2(x) +\frac{1}{2\hat{\epsilon}} \sum_{x\in U_2}  (\lambda_{F_2}(b).v_j)^2(x).
\end{align*}
By (\ref{lim 0 f2}), the above expression is smaller than say $3\hat{\epsilon}$ when $j$ is large, and so (\ref{desired claim f2}) is proved.

This fact extends directly to the following statement: given $F_2=\langle a,b \rangle$ as above, embedded as a subgroup of some groups $\Gamma_j$, 
 let $\{v_j\}\subset S^\infty\subset \ell^2(\Gamma_j)$ be a sequence such that 
$$\lim_{j\to \infty} \langle \lambda_{\Gamma_j}(a) .v_j,v_j \rangle = 1,$$
then
$$\lim_{j\to \infty} \langle \lambda_{\Gamma_j}(b) .v_j,v_j \rangle = 0.$$

Now consider the general case and let $v_j$, $g_j$, $h_j$ be sequences  satisfying the assumptions of  the statement.
By Delzant's result \cite[Th\'{e}or\`{e}me I]{Delzant96}, see Theorem \ref{theorem:delzant} in the Appendix, for some integer $k=k(\Gamma)$ and  each $j$, the subgroup generated by 
$$a_j:=g_j^k \quad \text{and} \quad b_j:=h_j^{-1}g^k_j h_j$$ is a free group of rank 2. 
By assumption, $v_j$ is almost fixed by $g_j$, so 
$$\lim_{j\to \infty} \langle \lambda_{\Gamma}(a_j).v_j,v_j \rangle = 1.$$
By the conclusion of the first paragraph of this proof, we already obtain that for any positive integer $N$,
$$\lim_{j\to \infty} \langle \lambda_{\Gamma}(b_j^N). v_j , v_j \rangle =0.$$

The end of the argument is based on basic spherical geometry. 
First, for simplicity let us assume that $r$, $s$ are isometric transformations of the separable Hilbert unit sphere $S^\infty$ such that for some $v\in S^\infty$, and for any positive integer $N$,
\begin{equation}\label{spherical geometry}
r(v)=v\quad \text{and}\quad \langle s^{-1}r^N s (v) , v\rangle = 0.
\end{equation}
\textbf{Claim:} We have $\langle  s (v) , v\rangle=0$.

Indeed, fix $\epsilon_1>0$. 
Let us first check that there are a positive integer $l_1=l_1(\epsilon_1)$ and a positive constant  $\eta=\eta(\epsilon_1)>0$ independent of $r$, $s$, $v$ such that if $\alpha:=\langle  s (v) , v\rangle \geq \epsilon_1$, then for some $\tilde{l}\in \{1,...,l_1\}$:
\begin{equation}\label{6jann}
\langle s (v), r^{\tilde{l}} s (v) \rangle \geq \frac{\epsilon_1^2}{2}.
\end{equation}
Since $v$ is fixed by $r$, we can write $r^l s (v) = \alpha v+\beta_l u_l$ where $|\alpha|^2+|\beta_l|^2=1$, $\|u_l\|=1$, $\beta_l\in [0,1]$, $u_l\perp v$. 
For any $l\neq l'$,
\begin{equation}\label{rvs}
\langle r^l s (v), r^{l'} s (v) \rangle = \langle \alpha v+\beta_l u_l, \alpha v+\beta_{l'} u_{l'} \rangle = \alpha^2 +\beta_l\beta_{l'} \langle u_l,u_{l'}\rangle \geq \epsilon_1^2 - |\langle u_l,u_{l'}\rangle| .
\end{equation}
\textbf{Fact:} Any finite family of unit vectors $\{u_1,...,u_{l_1}\}$ in a Hilbert space contains two vectors $u_l,u_{l'}$ with $l\neq l'$, such that $\langle u_l,u_{l'} \rangle \geq -\frac{\epsilon_1^2}{2}$, if $l_1$ is large enough depending only on $\epsilon_1$. 

This fact is checked by expanding the inequality $0\leq \langle \sum_{k=1}^{l_1}u_k ,  \sum_{k=1}^{l_1}u_k\rangle$. It proves what we wanted, because applying that fact with (\ref{rvs}), we find $l < l'\in \{1,...,l_1\}$ such that
$$\langle s (v), r^{l'-l} s (v) \rangle  = \langle r^l s (v), r^{l'} s (v) \rangle  \geq \frac{\epsilon_1^2}{2}.$$
Next, (\ref{6jann}) means that $\langle s (v), r^{\tilde{l}} s (v) \rangle \geq \frac{\epsilon_1^2}{2}$, which contradicts the second part of
(\ref{spherical geometry}). 
Similarly, we get a contradiction if $\langle  s (v) , v\rangle \leq -\epsilon_1$. 
Thus $|\langle  s (v) , v\rangle| \leq \epsilon_1$ and since this holds for any $\epsilon_1>0$, we deduce $\langle  s (v) , v\rangle =0$ as in the claim.

Now coming back to our proof, we can applied an approximate version of the above argument to $a_j=g_j^k$, $b_j=h_j^{-1}g^k_j h_j$, $v_j$ and conclude that
$$\lim_{j\to \infty} \langle \lambda_{\Gamma}(h_j) .v_j,v_j \rangle = 0$$
which ends the proof of property (1) of the lemma.

Property (2) the lemma, which is generalizes (1), is shown following a similar logic. In fact, using (1) and $\lim_{j\to \infty} \langle \lambda_{\Gamma}(g_j) .v_j,v_j \rangle = 1$, we find that $\lim_{j\to \infty} \langle \lambda_{\Gamma}(h_j^{N}) .v_j,v_j \rangle = 0$ for any integer $N$. If for some $\epsilon_1>0$, we have $\liminf_{j\to \infty} |\langle v_j,w_j\rangle | \geq \epsilon_1$, then by using again the same argument as above, we get a contradiction with $\lim_{j\to \infty} \langle \lambda_{\Gamma}(h_j) .w_j,w_j \rangle = 1$.

\end{proof}

Let  
$$\lambda_\omega: \Gamma_\omega\to O({H}_\omega)$$ be the 
ultralimit representation of the group $\Gamma_\omega$, defined in (\ref{ultralimit representation}). Here, since $\Gamma$ is assumed to be torsion-free, $\Gamma_\omega$ is also torsion-free. Let $\Omega_\omega,F_\omega$ be as in (\ref{Fomega}).
As we will see, due to the construction of $\Gamma_\omega$ and $\lambda_\omega$, Lemma \ref{degen of reg rep} has immediate implications for $\lambda_\omega$.


Let $\Upsilon$ be the set of maximal abelian subgroups of $\Gamma_\omega$.  Given a subgroup $G$ of $\Gamma_\omega$, let 
$$\Upsilon_0(G):=\{\sigma_z\}_{z\in \mathcal{Z}}$$
be a collection of elements in $\Upsilon$ such that for every $\sigma\in \Upsilon$, there is exactly one $\sigma_z \in \Upsilon_0(G)$ such that  $$\{g^{-1}\sigma_z g\}_{g\in G}=\{g^{-1}\sigma g\}_{g\in G}.$$ 
Given $\sigma\in \Upsilon_0(G)$, set 
$$\mathcal{T}(G,\sigma) :=\text{a left transversal of $\sigma\cap G$ for $G$},$$
in other words $\mathcal{T}(G,\sigma)$ is a subset of $G$ such that $G$ is the disjoint union $\bigcup_{t\in \mathcal{T}(G,\sigma)} t.(\sigma\cap G)$.
The notions of proper free orthogonal representation and induced representation are defined in Subsection \ref{prelim3} in the Appendix. The direct sum of Hilbert spaces, orthogonal representations, and the symbol $\bigoplus$ are defined in \cite[Section A.1, Definition A.1.6]{BDLHV08}.

\begin{coro} \label{ultralimit regular representation}
Consider any subgroup $G$ of $\Gamma_\omega$ and any closed subspace $H_G$ of $H_\omega$ invariant by $G$.
 There are, for every $\sigma\in \Upsilon_0(G)$,
 an orthogonal representation $(\mathcal{K}_\sigma,\eta_\sigma)$ of the group $\sigma \cap G$, and a proper free orthogonal representation $(H_{\mathrm{pf}}, \eta_{\mathrm{pf}})$ of $G$ such that
 if $(H_\sigma, \Induced_{\sigma\cap G}^{G}(\eta_\sigma))$ denotes the induced representation, 
$$\lambda_\omega |_{G,H_G} = \eta_{\mathrm{pf}} \oplus \bigoplus_{\sigma \in \Upsilon_0(G)} \Induced_{\sigma\cap G}^{G}(\eta_\sigma),$$
$$H_G = H_{\mathrm{pf}} \oplus \bigoplus_{\sigma \in \Upsilon_0(G)}  H_\sigma = H_{\mathrm{pf}} \oplus \bigoplus_{\sigma \in \Upsilon_0(G)} \bigoplus_{\gamma \in \mathcal{T}(G,\sigma)} \lambda_\omega(\gamma).\mathcal{K}_{\sigma}.$$
Moreover, if $S_G$ denotes the unit sphere of $H_G$, we have
$$F_\omega \cap S_G \subset  \bigcup_{\sigma\in \Upsilon_0(G)} \bigcup_{\gamma\in \mathcal{T}(G,\sigma)} \lambda_\omega(\gamma).\mathcal{K}_\sigma.$$

\end{coro}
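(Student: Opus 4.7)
The fundamental input is a pointwise version of Lemma \ref{degen of reg rep} for the ultralimit representation $\lambda_\omega$. The first step is to show: if $\gamma_1,\gamma_2 \in \Gamma_\omega$ do not commute and if unit vectors $v,w \in H_\omega$ satisfy $\lambda_\omega(\gamma_1)v = v$ and $\lambda_\omega(\gamma_2)w = w$, then $\langle v,w\rangle = 0$. This follows by lifting $\gamma_1,\gamma_2,v,w$ to representative sequences $(g_j),(h_j),(v_j),(w_j)$: the condition $\gamma_1\gamma_2 \neq \gamma_2\gamma_1$ in $\Gamma_\omega$ means $g_jh_j \neq h_jg_j$ for $\omega$-almost every $j$, so Lemma \ref{degen of reg rep}(2) applies. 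Specialising $v=w$ gives: if $v$ is fixed by $\gamma_1$ and $\gamma_2$ does not commute with $\gamma_1$, then $\lambda_\omega(\gamma_2)v \perp v$.

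A direct consequence is that for every unit vector $v \in F_\omega$, its stabilizer $\mathrm{Stab}_{\Gamma_\omega}(v)$ is abelian (two non-commuting stabilizing elements would force $\langle v,v\rangle = 0$) and therefore lies in a unique maximal abelian subgroup of $\Gamma_\omega$. For each $\sigma \in \Upsilon_0(G)$, I would then let $\mathcal{K}_\sigma \subset H_G$ be the closed linear span of unit vectors fixed by at least one non-identity element of $\sigma\cap G$. Since $\sigma\cap G$ is abelian, $\mathcal{K}_\sigma$ is preserved by $\lambda_\omega(\sigma\cap G)$, giving a sub-representation $\eta_\sigma$.

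The crux is to establish mutual orthogonality: for $(\sigma,\gamma)\neq (\sigma',\gamma')$ with $\sigma,\sigma' \in \Upsilon_0(G)$, $\gamma \in \mathcal{T}(G,\sigma)$ and $\gamma' \in \mathcal{T}(G,\sigma')$, the subspaces $\lambda_\omega(\gamma)\mathcal{K}_\sigma$ and $\lambda_\omega(\gamma')\mathcal{K}_{\sigma'}$ are orthogonal. Since $\lambda_\omega(\gamma)$ is an isometry, this reduces to $\mathcal{K}_\sigma \perp \lambda_\omega(\gamma^{-1}\gamma')\mathcal{K}_{\sigma'}$. A spanning vector on the left is fixed by some $\alpha \in \sigma\cap G\setminus\{1\}$, while a spanning vector on the right is fixed by $(\gamma^{-1}\gamma')\alpha'(\gamma^{-1}\gamma')^{-1}$ for some $\alpha' \in \sigma'\cap G\setminus\{1\}$. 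Using that $\Upsilon_0(G)$ is a set of $G$-conjugacy representatives and $\mathcal{T}(G,\sigma)$ is a left transversal, one checks that the maximal abelian subgroups $\sigma$ and $(\gamma^{-1}\gamma')\sigma'(\gamma^{-1}\gamma')^{-1}$ are distinct. Combined with the structural fact (inherited from the torsion-free hyperbolicity of $\Gamma$) that two distinct maximal abelian subgroups of $\Gamma_\omega$ intersect trivially, this forces $\alpha$ and $(\gamma^{-1}\gamma')\alpha'(\gamma^{-1}\gamma')^{-1}$ to not commute, and the pointwise statement from the first paragraph yields the orthogonality.

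With this in place, $H_\sigma := \bigoplus_{\gamma \in \mathcal{T}(G,\sigma)} \lambda_\omega(\gamma)\mathcal{K}_\sigma$ is a genuine Hilbert direct sum, and the $G$-action on it matches $\Induced_{\sigma\cap G}^{G}\eta_\sigma$ by the very definition of the induced representation. Setting $H_{\mathrm{pf}} := H_G \ominus \bigoplus_{\sigma \in \Upsilon_0(G)} H_\sigma$ with restricted representation $\eta_{\mathrm{pf}}$, the final inclusion $F_\omega\cap S_G \subset \bigcup_{\sigma,\gamma}\lambda_\omega(\gamma)\mathcal{K}_\sigma$ is obtained by sorting any $v\in F_\omega \cap S_G$ fixed by $g\in G\setminus\{1\}$ into the right coset: choose $\sigma \in \Upsilon_0(G)$ and $\gamma \in \mathcal{T}(G,\sigma)$ such that $\gamma g\gamma^{-1} \in \sigma\cap G$, so that $\lambda_\omega(\gamma)v \in \mathcal{K}_\sigma$. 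Freeness of $\eta_{\mathrm{pf}}$ is then automatic. The main obstacles I anticipate are (i) justifying the structural properties of maximal abelian subgroups in $\Gamma_\omega$ used in the orthogonality step, which should descend from the corresponding facts in the torsion-free hyperbolic group $\Gamma$ through the ultraproduct construction (and Delzant's theorem), and (ii) proving \emph{properness} (not merely freeness) of $\eta_{\mathrm{pf}}$ in the sense of Subsection \ref{prelim3}; the latter should follow from an ultralimit compactness argument producing, from any sequence $g_i$ with $\lambda_\omega(g_i)v\to v$ and $v$ a unit vector in $H_{\mathrm{pf}}$, an element of $\Gamma_\omega$ that actually fixes $v$, contradicting $v \in H_{\mathrm{pf}}$.
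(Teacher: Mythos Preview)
Your overall strategy is exactly the paper's: transfer Lemma~\ref{degen of reg rep} to the ultralimit, define the $\mathcal{K}_\sigma$, prove mutual orthogonality via the malnormality of maximal abelian subgroups in $\Gamma_\omega$ (Lemma~\ref{limit group}(3)), and take $H_{\mathrm{pf}}$ to be the orthogonal complement. The orthogonality argument and the identification with induced representations are correct and match the paper.

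There is, however, a genuine gap coming from your choice to define $\mathcal{K}_\sigma$ as the span of vectors fixed by a nontrivial element of $\sigma\cap G$ rather than of $\sigma$. The set $F_\omega$ records fixed points of \emph{all} of $\Gamma_\omega$, not just of $G$, so your last inclusion argument (``sorting any $v\in F_\omega\cap S_G$ fixed by $g\in G\setminus\{1\}$'') assumes more than is given. Concretely: take $G$ to be a cyclic subgroup $\langle a\rangle\subset\Gamma\subset\Gamma_\omega$ and $H_G=H_\omega$; a unit vector $v$ fixed by $b$ (for $b\in\Gamma$ not commuting with $a$) lies in $F_\omega\cap S_G$, but is orthogonal to every vector fixed by a nontrivial power of $a$ by your own first paragraph, hence $v\notin\bigcup_{\sigma,\gamma}\lambda_\omega(\gamma)\mathcal{K}_\sigma$ with your definition. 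The same issue undermines obstacle~(ii): your ultralimit-compactness idea does produce (via a diagonal choice of representatives) a nontrivial $g^*\in\Gamma_\omega$ fixing $v$, but $g^*$ need not lie in $G$, so there is no contradiction with $v\in H_{\mathrm{pf}}$ as you have defined it.

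The fix is the paper's definition: let $\mathcal{K}_\sigma$ be the closed span of vectors in $H_G$ fixed by some nontrivial element of $\sigma$ (the full maximal abelian subgroup of $\Gamma_\omega$). This subspace is still $(\sigma\cap G)$-invariant, the orthogonality proof goes through unchanged, and now the last inclusion is tautological. Properness of $\eta_{\mathrm{pf}}$ then becomes a one-liner: since $S_{H_{\mathrm{pf}}}\cap F_\omega=\varnothing$, we have $S_{H_{\mathrm{pf}}}\subset\Omega_\omega$, and the $\Gamma_\omega$-action (hence the $G$-action) on $\Omega_\omega$ is already known to be proper free from Subsection~\ref{subsection:nota}.
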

\begin{remarque}
In the decomposition of $H_G$, we allow each term $H_{\mathrm{pf}}$ and $H_\sigma$ ($\sigma\in \Upsilon_0(G)$) to be the trivial vector space $\{0\}$. For instance, when $H_G$ is separable, then only at most countably many $H_\sigma$ are nontrivial in the decomposition. 

\end{remarque}
\begin{proof}
Let $G$ be a subgroup of $\Gamma_\omega$ and $H_G\subset H_\omega$ a $G$-invariant closed linear subspace.

For each $\sigma\in \Upsilon$, let $\mathcal{K}_\sigma$ be the closed subspace of $H_G$ generated by vectors in $H_G$ which are fixed by a nontrivial element of the maximal abelian subgroup $\sigma$ of $\Gamma_\omega$. This subspace is invariant by the subgroup $\sigma\cap G$ of $G$. 

If two elements $\sigma_1,\sigma_2\in \Upsilon$ are different, then $\sigma_1\cap \sigma_2=\{1\}$ and a nontrivial element of $\sigma_1$ never commutes with a nontrivial element of $\sigma_2$, since similar facts holds for the torsion-free hyperbolic group $\Gamma$, and by construction of $\Gamma_\omega$ (see Lemma \ref{limit group}).
Thus by the second bullet in Lemma \ref{degen of reg rep} and by construction of $\Gamma_\omega,\lambda_\omega$, 
\begin{equation}\label{perpendiculaire1}
\mathcal{K}_{\sigma_1}  \perp \mathcal{K}_{\sigma_2}.
\end{equation}
If two different elements $\sigma_1,\sigma_2\in \Upsilon$ are conjugate in $\Gamma_\omega$ by an element of $G$, namely if for some $g\in G$,
$$g\sigma_1g^{-1}= \sigma_2$$
then 
\begin{equation}\label{sigma1sigma2}
\mathcal{K}_{\sigma_2} = \lambda_\omega(g).\mathcal{K}_{\sigma_1}
\end{equation} 
and vice-versa. 
Besides, note that for any $\sigma\in \Upsilon_0(G)$ and for any two distinct elements $\gamma_1,\gamma_2$ in the transversal  $\mathcal{T}(G,\sigma)$, 
$$\gamma_1\sigma\gamma_1^{-1}\neq \gamma_2\sigma\gamma_2^{-1},$$
by Lemma \ref{limit group} (3), which implies with (\ref{perpendiculaire1}), (\ref{sigma1sigma2}) that
\begin{equation}\label{perpendiculaire}
\lambda_\omega(\gamma_1).\mathcal{K}_{\sigma}  \perp \lambda_\omega(\gamma_2).\mathcal{K}_{\sigma}.
\end{equation}

Next, the closed subspace
$$H_\sigma :=\bigoplus_{\gamma \in \mathcal{T}(G,\sigma)} \lambda_\omega(\gamma).\mathcal{K}_{\sigma}$$ 
is invariant by $G$ and the restriction of the representation $\lambda_\omega |_{G,H_G}$ of $G$ to $H_\sigma$ is exactly the induced representation $\Induced_{\sigma}^{G}(\eta_\sigma)$
where we set 
$$\eta_\sigma := \lambda_\omega |_{\sigma\cap G, \mathcal{K}_\sigma}.$$

From (\ref{perpendiculaire1}) and (\ref{sigma1sigma2}), it follows that if $\sigma'\in \Upsilon$ is not contained in the conjugacy class $\{g^{-1}\sigma g\}_{g\in G}$, then 
$$\mathcal{K}_{\sigma'} \perp H_{\sigma}$$
and so if $H_{\sigma'}$ is defined analogously as above,
$$H_{\sigma'} \perp H_{\sigma}.$$

We deduce that indeed 
$$\lambda_\omega |_{G,H_G} = \eta_{\mathrm{pf}} \oplus \bigoplus_{\sigma \in \Upsilon_0(G)} \Induced_{\sigma\cap G}^{G}(\eta_\sigma),$$
$$H_G = H_{\mathrm{pf}} \oplus \bigoplus_{\sigma \in \Upsilon_0(G)}  H_\sigma = H_{\mathrm{pf}} \oplus \bigoplus_{\sigma \in \Upsilon_0(G)} \bigoplus_{\gamma \in \mathcal{T}(G,\sigma)} \lambda_\omega(\gamma).\mathcal{K}_{\sigma},$$
where $\eta_{\mathrm{pf}}$ is the restriction of $\lambda_\omega|_{G}$ to the closed subspace $H_{\mathrm{pf}}$  equal to the orthogonal of the closed subspace $\bigoplus_{\sigma\in \Upsilon_0(G)} H_{\sigma}$ inside $H_G$.  
Each term in the above decomposition of $H_G$ may be the trivial vector space $\{0\}$.

By definition of the subspaces $\{\mathcal{K}_\sigma\}_{\sigma \in \Upsilon}$, any nontrivial vector of $H_G$ fixed by a nontrivial element of $\Gamma_\omega$ belongs to 
$$ \bigcup_{\sigma\in \Upsilon_0(G)} \bigcup_{\gamma\in \mathcal{T}(G,\sigma)} \lambda_\omega(\gamma).\mathcal{K}_\sigma.$$
Thus, the last inclusion in the statement of the corollary holds.

From Subsection \ref{subsection:nota}, we know that $\Gamma_\omega$, and thus $G$, acts freely properly on the complement $\Omega_\omega$ of the set of fixed points $F_\omega\subset S^\infty_\omega$. Since $H_{\mathrm{pf}}$ belongs to this complement, $\eta_{\mathrm{pf}}$ is a proper free orthogonal representation of $G$, as desired.
\end{proof}

Given any subgroup $G'$ of $\Gamma_\omega$, let $\mathbf{1}_{G'}$ denote the trivial $1$-dimensional representation. Given a vector $\mathbf{u}$, let $\mathbb{R}\mathbf{u}$ be the $1$-dimensional vector space generated by $\mathbf{u}$.

\begin{lemme} \label{find induced}
Given a countable subgroup $G$ of $\Gamma_\omega$, a countable subset $\Upsilon'_0(G)$ of $\Upsilon_0(G)$, a separable $G$-invariant closed subspace $H^{(\alpha)}\subset H_\omega$, there exists a separable $G$-invariant closed invariant subspace $H^{(\beta)}$ orthogonal to $H^{(\alpha)}$ in $H_\omega$, with unit sphere $S_{H^{(\beta)}}$, such that 
for some orthogonal family $\{\mathbf{u}_\sigma\}_{\sigma \in \Upsilon'_0(G)} \subset S^\infty_\omega$ where $\mathbf{u}_\sigma$ is fixed by $\sigma\cap G$, we have
$$ \lambda_\omega|_{G,H^{(\beta)}} = \bigoplus_{\sigma\in \Upsilon'_0(G)} \Induced_{\sigma\cap G}^{G}(\mathbf{1}_{\sigma\cap G}),$$
$$H^{(\beta)} = \bigoplus_{\sigma\in \Upsilon'_0(G)}  \bigoplus_{\gamma \in \mathcal{T}(G,\sigma)} \lambda_\omega(\gamma).\mathbb{R}\mathbf{u}_\sigma,$$
$$F_\omega \cap S_{H^{(\beta)}} = S_{H^{(\beta)}} \cap \bigcup_{\sigma\in \Upsilon'_0(G)} \bigcup_{\gamma \in \mathcal{T}(G,\sigma)} \lambda_\omega(\gamma).\mathbb{R}\mathbf{u}_\sigma.$$
Moreover, for any distinct $\sigma\neq \sigma' \in \Upsilon'_0(G)$, and any $\gamma'\in \Gamma_\omega$, we have $\mathbf{u}_\sigma \perp \lambda_\omega(\gamma').\mathbf{u}_{\sigma'}.$
In particular, the projection of $\{\pm \mathbf{u}_\sigma\}_{\sigma \in \Upsilon'_0(G)}$ to $(S^\infty/\Gamma)_\omega=S^\infty_\omega/\lambda_\omega(\Gamma_\omega)$ is a discrete set.

\end{lemme}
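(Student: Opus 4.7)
The plan is to construct, inductively over $\sigma \in \Upsilon'_0(G)$, unit vectors $\mathbf{u}_\sigma \in H_\omega$ fixed by $\sigma \cap G$ such that the full orbit family $\{\lambda_\omega(\gamma).\mathbf{u}_\sigma\}_{\sigma \in \Upsilon'_0(G),\, \gamma \in \mathcal{T}(G,\sigma)}$ is orthonormal and orthogonal to $H^{(\alpha)}$. Having done this, one sets $H^{(\beta)} := \overline{\bigoplus_{\sigma,\gamma} \lambda_\omega(\gamma).\mathbb{R}\mathbf{u}_\sigma}$; this is visibly separable, closed, $G$-invariant, and orthogonal to $H^{(\alpha)}$, and the decomposition into $\bigoplus_\sigma \Induced_{\sigma\cap G}^G(\mathbf{1}_{\sigma\cap G})$ follows tautologically from the definition of induced representations. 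The identification of $F_\omega \cap S_{H^{(\beta)}}$ and the discreteness of the projection of $\{\pm\mathbf{u}_\sigma\}$ will then be verified from this data.

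\textbf{Many $(\sigma\cap G)$-fixed vectors.}
For each $\sigma \in \Upsilon'_0(G)$, the intersection $\sigma\cap G$ is a countable abelian, hence amenable, subgroup of $\Gamma_\omega$. Representing its countably many elements by sequences in $\Gamma$ and carrying out a diagonal Folner-type averaging at each ultraproduct level $\ell^2(\Gamma)$, one produces in the ultralimit unit vectors of $H_\omega$ fixed by every element of $\sigma\cap G$. The freedom to translate the Folner sets by disjoint shifts in $\Gamma$ at each level makes the space $\mathcal{K}^G_\sigma \subset H_\omega$ of $(\sigma\cap G)$-fixed vectors non-separable; in the degenerate case $\sigma\cap G = \{1\}$ one sets $\mathcal{K}^G_\sigma = H_\omega$, which is also non-separable.

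\textbf{Automatic orthogonality and inductive selection.}
By Lemma~\ref{degen of reg rep}(2), any two vectors in $H_\omega$ fixed by non-commuting elements of $\Gamma_\omega$ are orthogonal. Two nontrivial elements of distinct maximal abelian subgroups of $\Gamma_\omega$ never commute, because a common maximal abelian overgroup would force the two subgroups to coincide. Combining this with Lemma~\ref{limit group}(3), which gives $\gamma_1\sigma\gamma_1^{-1}\ne \gamma_2\sigma\gamma_2^{-1}$ for distinct $\gamma_1,\gamma_2\in \mathcal{T}(G,\sigma)$, yields automatic orthogonality of $\lambda_\omega(\gamma_1).\mathbf{u}_\sigma$ and $\lambda_\omega(\gamma_2).\mathbf{u}_{\sigma'}$ in every case \emph{except} the exceptional one where $\gamma_1\sigma\gamma_1^{-1}=\gamma_2\sigma'\gamma_2^{-1}$. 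Enumerate $\Upsilon'_0(G)=\{\sigma_n\}_{n\geq 1}$ and pick $\mathbf{u}_{\sigma_n}$ inductively in $\mathcal{K}^G_{\sigma_n}$ so that it is additionally orthogonal to the projection onto $\mathcal{K}^G_{\sigma_n}$ of: (i) $H^{(\alpha)}$, and (ii) the exceptional translates $\lambda_\omega(\gamma').\mathbf{u}_{\sigma_m}$ for $m<n$ with $\gamma'\sigma_m\gamma'^{-1}=\sigma_n$. Since the exceptional $\gamma'$ range over at most one coset of the normalizer of $\sigma_m$, and since the constraint on $\mathbf{u}_{\sigma_n}$ only involves the \emph{component} of this data lying in $\mathcal{K}^G_{\sigma_n}$, the obstruction is a separable subspace, and non-separability of $\mathcal{K}^G_{\sigma_n}$ permits the selection of a unit vector $\mathbf{u}_{\sigma_n}$ with the required orthogonality.

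\textbf{Final verification and the main obstacle.}
To identify $F_\omega \cap S_{H^{(\beta)}}$ one applies Corollary~\ref{ultralimit regular representation} to the restriction $\lambda_\omega|_{G,H^{(\beta)}}$: the orthogonality built in to the construction collapses the $\mathcal{K}_\sigma$-piece for each $\sigma\in\Upsilon'_0(G)$ inside $H^{(\beta)}$ down to the line $\mathbb{R}\mathbf{u}_\sigma$, and summing the $G$-translates gives exactly the claimed union. For the discreteness claim, the pairwise orthogonality of the entire $\Gamma_\omega$-orbits of the $\mathbf{u}_\sigma$ gives $\|{\pm\mathbf{u}_\sigma} - \lambda_\omega(\gamma).({\pm\mathbf{u}_{\sigma'}})\| = \sqrt{2}$ for all $\sigma\ne \sigma'$ and all $\gamma\in\Gamma_\omega$, so the projected points in $(S^\infty/\Gamma)_\omega$ are pairwise at distance $\sqrt 2$, hence discrete. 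The hardest part is the bookkeeping inside the inductive step: verifying that the exceptional constraints, after restriction to $\mathcal{K}^G_{\sigma_n}$, indeed span only a separable subspace (controlling the normalizer coset and the resulting action on $\mathbf{u}_{\sigma_m}$), and in parallel checking that the Folner construction really produces a vector fixed by the entire countable abelian subgroup $\sigma\cap G$ rather than merely by a finite generating set.
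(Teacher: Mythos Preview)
Your outline has two genuine gaps, both concentrated in the ``automatic orthogonality and inductive selection'' step.

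First, the degenerate case $\sigma\cap G=\{1\}$ is not handled correctly. You set $\mathcal{K}^G_\sigma=H_\omega$ and pick $\mathbf{u}_\sigma$ freely there, but then $\mathbf{u}_\sigma$ need not be fixed by any nontrivial element of $\Gamma_\omega$. This breaks two things: the inclusion $\{\pm\lambda_\omega(\gamma).\mathbf{u}_\sigma\}\subset F_\omega$ (needed for the equality $F_\omega\cap S_{H^{(\beta)}}=\ldots$), and the invocation of Lemma~\ref{degen of reg rep}(2) for ``automatic'' orthogonality, which requires both vectors to be fixed by nontrivial (non-commuting) elements. The paper fixes this by always picking a nontrivial $a_{k,0}\in\sigma_k$ and building $\mathbf{u}_{\sigma_k}$ to be fixed by $a_{k,0}$ as well.

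Second, and more seriously, the exceptional constraint is not separable. Maximal abelian subgroups of $\Gamma_\omega$ are uncountable, and since they are malnormal the ``one coset of the normalizer'' you mention is a coset of $\sigma_m$ itself. The family $\{\lambda_\omega(\gamma_0 s).\mathbf{u}_{\sigma_m}:s\in\sigma_m\}$ is therefore uncountable, and because $\mathbf{u}_{\sigma_m}$ is only fixed by the countable group $\sigma_m\cap G$, these translates are in general pairwise far apart and span a non-separable subspace; projecting to $\mathcal{K}^G_{\sigma_n}$ does not help. So the inductive selection cannot be carried out as stated. The paper avoids this difficulty entirely by working at the level of sequences in $\ell^2(\Gamma)$ rather than in $H_\omega$: the F{\o}lner-type vectors $f_{k,i}$ are chosen with support sizes $N_{k,i}$ satisfying $N_{k+1,i}/N_{k,i}\to\infty$, which forces $\langle \lambda_\Gamma(h_i).f_{k_1,i},f_{k_2,i}\rangle\to 0$ for \emph{every} sequence $(h_i)\subset\Gamma$ whenever $k_1\neq k_2$, by a crude Cauchy--Schwarz bound. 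This single growth-rate trick delivers $\mathbf{u}_\sigma\perp\lambda_\omega(\gamma').\mathbf{u}_{\sigma'}$ for all $\gamma'\in\Gamma_\omega$ at once, with no case analysis and no reliance on Lemma~\ref{degen of reg rep} for this part.
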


\begin{proof}
Let $\{\sigma_0,\sigma_1,\sigma_2,...\}$ be a numbering of the countably many elements of $\Upsilon'_0(G)$. 
For each $k\geq0$, let $\{a_{k,l}\}_{l\geq1}$ be the countably many group elements of the subgroup $ \sigma_k\cap G$. In case 
$$\sigma_k\cap G =\{1\}$$
let us choose an arbitrary element $a_{k,0}\neq 1$ in $\sigma_k$, otherwise just set $a_{k,0}=1$.
By construction of $\Gamma_\omega$, for each $k,l\geq0$, $a_{k,l}$ can be identified with a sequence $\{g_{k,l,i}\}_i \subset \Gamma$. Since each $\sigma_k$ is abelian,  $\{g_{k,l,i}\}_i $ can be chosen so that for any $l_1,l_2$, $g_{k,l_1,i}$ and $g_{k,l_2,i}$ commute  for $i$ large (see Lemma \ref{limit group}), and so given $k$, for $i$ large enough, every $g_{k,l,i}$ is in a cyclic subgroup of $\Gamma$ generated by one element called $z_k\in \Gamma$ depending only on $k$.

For each $k\geq0$, consider the functions $f_{k,i}\in S^\infty\subset \ell^2(\Gamma)$ equal to $\frac{1}{\sqrt{N_{k,i}}}$ on $\{z_k^{K_i},...,z_k^{K_i+N_{k,i}}\}$ and $0$ elsewhere, for some integer $N_{k,i}$ large enough and some arbitrary integer $K_i$. 
Then for any $k,l\geq0$,
\begin{equation} \label{almost fixx}
\lim_{i\to \infty} \| \lambda_\Gamma(g_{k,l,i}). f_{k,i} - f_{k,i}\| =0.
\end{equation}
By imposing that $\lim_{i\to \infty}N_{k+1,i}/N_{k,i} =\infty$ for each $k\geq1$, we can ensure that for any sequence $h_i\in \Gamma$,  
$$\forall k_1\neq k_2,\quad \lim_{i\to \infty} \langle  \lambda_\Gamma(h_i). f_{k_1,i}, f_{k_2,i} \rangle =0.$$
Similarly, given a countable family $\mathcal{F}$ of sequences in $S^\infty$, by making $K_i$ converge to $\infty$ fast enough depending on $\mathcal{F}$ and a diagonal argument, then for any sequence $\{f'_i\}\in \mathcal{F}$, 
$$\lim_{i\to \infty} \langle  f_{k,i}, f'_i \rangle =0.$$

For each $k\geq 0$, let $\mathbf{u}_{\sigma_k}\in S^\infty_\omega$ be the vector in the ultralimit corresponding to $\{f_{k,i}\}_i$.
Translating the previous properties to the ultralimit and using that $H^{(\alpha)}$ is separable, 
we can choose $f_{k,i}$ so that for any distinct $\sigma\neq \sigma' \in \Upsilon'_0(G)$, and any $\gamma'\in \Gamma_\omega$, 
$$\mathbf{u}_\sigma \perp \lambda_\omega(\gamma').\mathbf{u}_{\sigma'}, \quad \mathbf{u}_{\sigma} \perp H^{(\alpha)}.$$
The second orthogonality property crucially relies on the separability of $H^{(\alpha)}$.
By (\ref{almost fixx}), for each $k$, the vector $\mathbf{u}_{\sigma_k}$ is fixed by all the elements of $\sigma_k\cap G$, and also by $a_{k,0} \in \sigma_k$. In particular, it is fixed by a nontrivial element of $\Gamma_\omega$, namely
\begin{equation}\label{ak0}
\mathbf{u}_{\sigma_k}\in F_\omega.
\end{equation}

Applying Lemma \ref{degen of reg rep}, we see that if $H_k$ is the Hilbert space associated with $\Induced_{\sigma_k\cap G}^{G}(\mathbf{1}_{\sigma_k\cap G})$, $(H_k,\Induced_{\sigma_k\cap G}^{G}(\mathbf{1}_{\sigma_k\cap G}))$ is equivalent to a subrepresentation of $(H_\omega,\lambda_\omega|_G)$ that we call $(H_k^{(1)},\lambda_\omega|_{G,H_k^{(1)}})$, and
$$ \lambda_\omega|_{G,H_k^{(1)}} = \Induced_{\sigma_k\cap G}^{G}(\mathbf{1}_{\sigma_k\cap G}),$$
$$H_k^{(1)} =\bigoplus_{\gamma \in \mathcal{T}(G,\sigma_k)} \lambda_\omega(\gamma).\mathbb{R}\mathbf{u}_{\sigma_k}.$$
Since $\mathbf{u}_{\sigma_k}\perp H^{(\alpha)}$ and $ H^{(\alpha)}$ is $G$-invariant, $$H_k^{(1)}\perp H^{(\alpha)}.$$ 
By Lemma \ref{degen of reg rep} (2), for two diffferent $k_1,k_2\geq 0$, 
$$H_{k_1}^{(1)}\perp H_{k_2}^{(1)}.$$
Putting these facts together, we prove the first two equalities of the statement by setting $H^{(\beta)}:=\bigoplus_{k\geq 0} H_{k}^{(1)}$. 

To check the equality for $F_\omega\cap S_{H^{(\beta)}}$, one inclusion follows from (\ref{ak0}), and the other inclusion 
follows by using Lemma \ref{degen of reg rep} (2) as in the proof of Corollary \ref{ultralimit regular representation}.
\end{proof}

\begin{remarque}\label{compatibility}
The decomposition of Corollary \ref{ultralimit regular representation} is compatible with Lemma \ref{find induced} in the following sense. If $G$ is a countable subgroup of $\Gamma_\omega$ and  $H^{(\alpha)}=H_G$ a separable $G$-invariant closed subspace of $H_\omega$, we can apply first  Corollary \ref{ultralimit regular representation} to get a decomposition 
$$H^{(\alpha)} =  H_{\mathrm{pf}} \oplus \bigoplus_{\sigma \in \Upsilon'_0(G)} \bigoplus_{\gamma \in \mathcal{T}(G,\sigma)} \lambda_\omega(\gamma).\mathcal{K}_{\sigma}$$
where $\Upsilon'_0(G)$ is the countable set of $\sigma\in \Upsilon_0(G)$ such that $\mathcal{K}_\sigma \neq \{0\}$. Then we can apply  Lemma \ref{find induced} to get an orthogonal subspace
$$H^{(\beta)}= \bigoplus_{\sigma\in \Upsilon'_0(G)}  \bigoplus_{\gamma \in \mathcal{T}(G,\sigma)} \lambda_\omega(\gamma).\mathbb{R}\mathbf{u}_\sigma.$$
Now by our construction, the decomposition of Corollary \ref{ultralimit regular representation} for $H^{(\alpha)} \oplus H^{(\beta)}$ would give
$$H^{(\alpha)} \oplus H^{(\beta)} = H_{\mathrm{pf}} \oplus \bigoplus_{\sigma \in \Upsilon'_0(G)} \bigoplus_{\gamma \in \mathcal{T}(G,\sigma)} \lambda_\omega(\gamma).(\mathcal{K}_{\sigma} \oplus \mathbb{R}\mathbf{u}_\sigma),$$
and moreover if $S_{H^{(\alpha)} \oplus H^{(\beta)}}$ denotes the unit sphere of $H^{(\alpha)} \oplus H^{(\beta)}$,
$$F_\omega \cap S_{H^{(\alpha)} \oplus H^{(\beta)}} \subset S_{H^{(\alpha)} \oplus H^{(\beta)}} \cap  \bigcup_{\sigma\in \Upsilon'_0(G)} \bigcup_{\gamma\in \mathcal{T}(G,\sigma)} \lambda_\omega(\gamma).(\mathcal{K}_\sigma \oplus \mathbb{R}\mathbf{u}_{\sigma}). $$
\end{remarque}

\subsection{A Margulis type lemma}
The classical Margulis lemma states that thin parts of closed hyperbolic manifolds have cyclic fundamental groups. Below, we show a similar statement for $S^\infty/\Gamma$. 

In general, if $\Gamma$ is a finitely generated group, let $\lambda_\Gamma$ denote the regular representation of $\Gamma$ in $\ell^2(\Gamma)$ and let $S$ be a finite subset of $\Gamma$.  The corresponding \emph{Kazhdan constant} 
is defined (\cite[Section 2]{ABLRSV05}) by
\begin{equation} \label{definitionK}
\mathbf{K}(\lambda_\Gamma, \Gamma,S):= \inf_{u \in \ell^2(\Gamma) \backslash \{0\}} \max_{s\in S} \frac{\| \lambda_\Gamma(s).u - u \|_{\ell^2}}{\| u \|_{\ell^2}}.
\end{equation}
If $S$ is a finite symmetric generating set for $\Gamma$, the Kazhdan constant is directly related to the first eigenvalue of the combinatorial Laplacian $\Delta$ on the Cayley graph of $(\Gamma,S)$, see \cite[(2)]{Valette94}. 
Given  $x\in \Gamma$, we will use the standard notation $ \langle x \rangle$ for the cyclic subgroup generated by $x$ in $\Gamma$.

\begin{lemme}\label{kazhdan lower bound}
Let $\Gamma$ be a non-elementary torsion-free hyperbolic group.
Then there exists $\alpha=\alpha(\Gamma)>0$ such that for any $x\in \Gamma \setminus \{e\}$ and $y\notin \langle x \rangle$, we have
$$\mathbf{K}(\lambda_{\Gamma}, \Gamma, \{x,y\})\geq \alpha.$$
\end{lemme}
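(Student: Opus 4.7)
The plan is to reduce Lemma \ref{kazhdan lower bound} to the classical positive spectral gap of $\lambda_{F_2}$, with Delzant's theorem serving as the bridge. Since $\Gamma$ is torsion-free non-elementary hyperbolic, the centralizer of any non-identity element is the maximal cyclic subgroup containing it, and the essential case of the hypothesis (and the one needed for Corollary \ref{margulis}) is that $\langle x, y \rangle$ is non-cyclic, which forces $[x, y] \neq e$. Delzant's theorem (Theorem \ref{theorem:delzant}) then yields an integer $k = k(\Gamma)$ depending only on $\Gamma$ such that $a := x^k$ and $b := y^{-1} x^k y$ freely generate a free subgroup $F \cong F_2$ of $\Gamma$.

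Next I would invoke the classical lower bound $\mathbf{K}(\lambda_{F_2}, F_2, \{a, b\}) \geq \alpha_0$ for a universal constant $\alpha_0 > 0$, which is a consequence of Kesten's computation of the spectral radius $\sqrt{3}/2 < 1$ for the simple random walk on the $4$-regular tree. Since $\lambda_\Gamma|_F$ decomposes as a Hilbert direct sum of copies of $\lambda_F$ indexed by $F \backslash \Gamma$, a short averaging step based on $\max_{s \in S} f(s) \geq \frac{1}{|S|}\sum_{s \in S} f(s)$ applied to $f(s) := \sum_i \|\lambda_F(s).u_i - u_i\|^2$ yields $\mathbf{K}(\lambda_\Gamma, F, \{a, b\}) \geq \alpha_0/\sqrt{2}$.

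To close the loop, I transfer almost-invariance from $\{x, y\}$ to $\{a, b\}$. If $u \in \ell^2(\Gamma) \setminus \{0\}$ satisfies $\max(\|\lambda(x).u - u\|, \|\lambda(y).u - u\|) \leq \epsilon \|u\|$, iterating the triangle inequality gives $\|\lambda(a).u - u\| = \|\lambda(x^k).u - u\| \leq k \epsilon \|u\|$, while the identity $\lambda(b).u - u = \lambda(y^{-1})\bigl(\lambda(x^k)\lambda(y).u - \lambda(y).u\bigr)$ combined with unitarity of $\lambda(y^{-1})$ and two applications of $\|\lambda(y).u - u\| \leq \epsilon \|u\|$ yields $\|\lambda(b).u - u\| \leq (k+2)\epsilon \|u\|$. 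The previous Kazhdan bound then forces $(k+2)\epsilon \geq \alpha_0/\sqrt{2}$, and one takes $\alpha := \alpha_0/(\sqrt{2}(k+2))$, a positive constant depending only on $\Gamma$ through $k(\Gamma)$.

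The main obstacle is interpretive rather than technical: the Delzant argument proves the stated inequality exactly when $\{x, y\}$ fails to generate a cyclic subgroup, which for torsion-free hyperbolic $\Gamma$ is equivalent to $y \notin E(x)$, where $E(x) = C_\Gamma(x)$ denotes the maximal cyclic subgroup containing $x$. Taken literally, $y \notin \langle x \rangle$ also admits the case $y \in E(x) \setminus \langle x \rangle$, in which $\langle x, y \rangle \subset E(x) \cong \mathbb{Z}$ and F\o lner sequences for $\mathbb{Z}$ produce almost-invariant unit vectors for $\{x, y\}$ in $\lambda_\Gamma$, causing the Kazhdan constant to vanish; I therefore read $\langle x \rangle$ as $E(x)$, consistent with the application in Corollary \ref{margulis}, under which reading the hypothesis is equivalent to $[x, y] \neq e$ and the Delzant argument above suffices.
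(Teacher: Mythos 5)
Your proof is correct (under the interpretive fix you state) and it takes a genuinely different route from the paper's. The paper derives the lemma as an immediate compactness corollary of the asymptotic statement Lemma \ref{degen of reg rep}, which is itself proved by passing to weak-$*$ limits of probability measures on the Gromov boundary of $F_2$, applying Delzant's theorem, and then running a spherical-geometry estimate; this yields a non-effective $\alpha$. You instead invoke Kesten's spectral gap for $\lambda_{F_2}$ (giving a universal $\alpha_0 = \sqrt{2-\sqrt{3}}$), the decomposition $\lambda_\Gamma|_F \cong \bigoplus_{F\backslash\Gamma}\lambda_F$, and a triangle-inequality transfer from almost-invariance under $\{x,y\}$ to almost-invariance under Delzant's free pair $\{x^k, y^{-1}x^k y\}$. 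Both arguments pivot on Delzant's theorem at the same place, but yours is direct and quantitative, producing the explicit constant $\alpha = \alpha_0/(\sqrt{2}(k+2))$; the $\sqrt{2}$ could even be removed by replacing your averaging step with the sharper identity $\mathbf{K}(\lambda_\Gamma,\Gamma,S)=\mathbf{K}(\lambda_F,F,S)$ for $S\subset F$, proved by the standard $\ell^2$-contraction onto cosets. The paper's route, in turn, buys reuse: Lemma \ref{degen of reg rep} is also the engine behind Corollary \ref{ultralimit regular representation}, so the boundary-dynamics machinery is not a detour there. Your diagnosis of the hypothesis is also correct and worth flagging: as literally written the lemma fails when $y \in C_\Gamma(x)\setminus\langle x\rangle$, since then $\langle x,y\rangle$ is infinite cyclic, hence amenable, and $\mathbf{K}(\lambda_\Gamma,\Gamma,\{x,y\})=0$; the paper's inference that $y_i\notin\langle x_i\rangle$ ``so that $x_i$ and $y_i$ do not commute'' does not hold in general, and the reading that is both true and is what Corollary \ref{margulis} actually uses is $y\notin C_\Gamma(x)$, i.e.\ $[x,y]\neq e$, which is precisely Delzant's hypothesis.
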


\begin{proof}
Consider sequences  $x_i\in \Gamma\setminus \{1\}$ and $y_i\in \Gamma$ with $y_i\notin \langle x_i\rangle$ for any $i$, so that $x_i$ and $y_i$ do not commute.
If we have
$$\lim_{i\to \infty} \mathbf{K}(\lambda_{\Gamma}, \Gamma, \{x_i,y_i\}) = 0,$$
it means that for a sequence of functions $u_i\in \ell^2(\Gamma)$ of $\ell^2$-norm $1$,  
$$\lim_{i\to \infty} \|\lambda_{\Gamma}(x_i).u_i -u_i\| =  \lim_{i\to \infty} \|\lambda_{\Gamma}(y_i).u_i -u_i\| =0.$$
That in turn is equivalent to
$$\lim_{i\to \infty} \langle \lambda_{\Gamma}(x_i).u_i ,u_i\rangle  = \lim_{i\to \infty} \langle \lambda_{\Gamma}(y_i).u_i ,u_i\rangle =1,$$
which of course contradicts Lemma \ref{degen of reg rep} (1).
\end{proof}

Given a path connected subset $R$ of $S^\infty/\Gamma$ and a base point $r_0\in R$, denote by $\iota:R\to S^\infty/\Gamma$ the inclusion map, and $\iota_*(\pi_1(R,r_0))$ the image fundamental group, which is a subgroup of $\pi_1(S^\infty/\Gamma,\iota(r_0)) \approx \Gamma$. This last identification is well-defined and unique only up to conjugacy, but in what follows it does not matter which one we pick, so for each $(R,r_0)$ we fix one identification.
The key result in this subsection is the following Margulis type lemma for $S^\infty/\Gamma$. Without assumptions on $\Gamma$, the conclusion fails to hold; examples can be constructed using \cite{Osin02,ABLRSV05}.
\begin{coro} \label{margulis}
Let $\Gamma$ be a given non-elementary torsion-free hyperbolic group. There exists $\overline{\delta}=\overline{\delta}(\Gamma) >0$ such that the following holds. 
For any path connected component $R$ of $[S^\infty/\Gamma]^{\leq \overline{\delta}}$ and base point $r_0\in R$, 
$\iota_*(\pi_1(R),r_0)$ is a cyclic subgroup of $\Gamma$.
\end{coro}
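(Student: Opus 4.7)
The plan is to take $\overline{\delta}<\alpha(\Gamma)/2$, where $\alpha(\Gamma)>0$ is the constant from Lemma \ref{kazhdan lower bound}, and to show that every element of $\iota_*(\pi_1(R,r_0))$ lies in a common maximal cyclic subgroup of $\Gamma$. The argument will rely on two standard facts about torsion-free hyperbolic groups: every nontrivial element is contained in a unique maximal cyclic subgroup (equal to its centralizer), and two maximal cyclic subgroups either coincide or intersect trivially.

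First, I would fix a lift $\tilde{r}_0\in S^\infty$ of $r_0$ and take any $g\in \iota_*(\pi_1(R,r_0))$, represented by a loop $\gamma:[0,1]\to R$ based at $r_0$. Lifting to $\tilde{\gamma}:[0,1]\to S^\infty$ with $\tilde{\gamma}(0)=\tilde{r}_0$ and $\tilde{\gamma}(1)=g.\tilde{r}_0$, at each $t\in[0,1]$ the assumption $\gamma(t)\in R\subset [S^\infty/\Gamma]^{\leq \overline{\delta}}$ together with $\|\cdot\|_{\ell^2}\leq \dist$ on the unit sphere provides $h_t\in \Gamma\setminus\{1\}$ with $\|\lambda_{\Gamma}(h_t).\tilde{\gamma}(t)-\tilde{\gamma}(t)\|_{\ell^2}\leq \overline{\delta}$. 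Let $M_t$ denote the unique maximal cyclic subgroup containing $h_t$. The main step is to prove that $t\mapsto M_t$ is locally constant on $[0,1]$. For $t$ close enough to a fixed $t_0$, the triangle inequality and isometry invariance give
\begin{equation*}
\|\lambda_{\Gamma}(h_{t_0}).\tilde{\gamma}(t)-\tilde{\gamma}(t)\|_{\ell^2} \leq \overline{\delta} + 2\|\tilde{\gamma}(t)-\tilde{\gamma}(t_0)\|_{\ell^2} < \alpha(\Gamma),
\end{equation*}
so the nontrivial elements $h_t$ and $h_{t_0}$ both move $\tilde{\gamma}(t)$ by strictly less than $\alpha(\Gamma)$. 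The contrapositive of Lemma \ref{kazhdan lower bound} with $x=h_t$ and $y=h_{t_0}$ forces $h_{t_0}\in\langle h_t\rangle$, whence $M_t=M_{t_0}$. Connectedness of $[0,1]$ then yields a single maximal cyclic subgroup $M$ with $h_t\in M$ for every $t$.

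Next, since $h_1$ moves $g.\tilde{r}_0$ by at most $\overline{\delta}$, the element $g^{-1}h_1g\in \Gamma\setminus\{1\}$ moves $\tilde{r}_0$ by at most $\overline{\delta}$. A second application of Lemma \ref{kazhdan lower bound} to $\{h_0,g^{-1}h_1g\}$ at the test vector $\tilde{r}_0$ forces $g^{-1}h_1g\in\langle h_0\rangle\subset M$. Both $h_1$ and $g^{-1}h_1g$ are then nontrivial elements of $M\cap g^{-1}Mg$, so these maximal cyclic subgroups coincide, meaning $g$ normalizes $M$.

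Finally, I would check that the normalizer of $M=\langle m\rangle$ coincides with $M$: translation length invariance forces $gmg^{-1}=m^{\pm 1}$, and if $gmg^{-1}=m^{-1}$ then $g^2\in C(m)=M$, say $g^2=m^r$, while $m^{-r}=gm^rg^{-1}=g\cdot g^2\cdot g^{-1}=g^2=m^r$, so $m^{2r}=1$; torsion-freeness forces $r=0$, hence $g^2=1$ and then $g=1$, contradicting $gmg^{-1}=m^{-1}\neq m$. So $g$ commutes with $m$ and lies in $C(m)=M$, and we conclude $\iota_*(\pi_1(R,r_0))\subset M$, a cyclic subgroup. The principal technical point is the continuity/local constancy step for $M_t$ along $\tilde{\gamma}$, which is precisely where the uniform Kazhdan lower bound of Lemma \ref{kazhdan lower bound} is essential.
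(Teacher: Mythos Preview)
Your proof is correct and follows essentially the same strategy as the paper's: lift a loop, use Lemma~\ref{kazhdan lower bound} along the path to pin down a single maximal cyclic subgroup, then use a second application at the basepoint together with malnormality to trap the deck transformation $g$ inside that subgroup. The only cosmetic differences are that the paper discretizes the path into finitely many times $t_0,\dots,t_J$ rather than arguing local constancy of $M_t$, and the paper invokes malnormality of cyclic subgroups directly (recalled in Subsection~\ref{hyp groups}) whereas you reprove the special case ``normalizer of $M$ equals $M$'' via the translation-length computation; both routes are standard. One small point worth making explicit: to conclude that \emph{all} of $\iota_*(\pi_1(R,r_0))$ lands in the \emph{same} $M$, note that $h_0$ (and hence $M$) can be chosen once and for all from the basepoint $\tilde r_0$, independently of the loop.
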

\begin{proof}
We will fix $\overline{\delta}$ small enough depending on $\alpha = \alpha(\Gamma)$ given by Lemma \ref{kazhdan lower bound}. 
Let $R\subset [S^\infty/\Gamma]^{\leq \overline{\delta}}$ be as in the statement and fix a reference point $r_0\in R$. Let $\sigma:[0,1] \to R$ be a continuous loop in $R$ so that $\sigma(0)=\sigma(1) = r_0$. The map $\sigma$ lifts to a map 
$$\tilde{\sigma}:[0,1] \to S^\infty.$$ 
Each element in the  image of $\tilde{\sigma}$ is a real $\ell^2$-function on $\Gamma$.
Since $R$ is contained inside $[S^\infty/\Gamma]^{\leq \overline{\delta}}$, $\tilde{\sigma}$ has the property that for any $t\in [0,1]$, there is $\gamma_{t} \in \Gamma \setminus \{1\}$ such that 
\begin{equation} \label{alpha sur 2}
\|\lambda_\Gamma(\gamma_t).\tilde{\sigma}(t)- \tilde{\sigma}(t) \|_{\ell^2} < \alpha/2
\end{equation}
provided $ \overline{\delta}$ was chosen small enough.
We choose a fine collection of times $t_0=0, t_1,..., t_J=1$ such that for all $j\in \{0,...,J-1\}$, 
$$\|\lambda_\Gamma(\gamma_{t_j}).\tilde{\sigma}(t_j)- \tilde{\sigma}(t_j) \|_{\ell^2} < \alpha \quad  \text{and} \quad \|\lambda_\Gamma(\gamma_{t_{j+1}}).\tilde{\sigma}(t_j)- \tilde{\sigma}(t_j) \|_{\ell^2} < \alpha,$$
which means in terms of Kazhdan constants that
$$\mathbf{K}(\lambda_\Gamma,  \Gamma,\{\gamma_{t_j},\gamma_{t_{j+1}}\}) < \alpha.$$
By Lemma \ref{kazhdan lower bound}, for any $j\in \{0,...,J-1\}$, the subgroup generated by $\gamma_{t_j}$ and $\gamma_{t_{j+1}}$ is a nontrivial cyclic subgroup of the torsion-free hyperbolic group $\Gamma$.
By induction we conclude that the subgroup 
$H$ generated by all the $\gamma_{t_j}$ is a nontrivial cyclic subgroup of $\Gamma$.

Next let $g$ be the element of $\Gamma$ such that $\lambda_\Gamma(g). \sigma'(0) = \sigma'(1)$. By (\ref{alpha sur 2}) applied at $t=1$, $g$ satisfies 
$$\| \lambda_\Gamma(g^{-1}\gamma_{t_J} g) . \tilde{\sigma}(0) - \tilde{\sigma}(0)\|_{\ell^2} < \alpha/2.$$
Combining that with (\ref{alpha sur 2}) applied at $t=0$, we deduce from Lemma \ref{kazhdan lower bound} that
$g^{-1}\gamma_{t_J} g \in H.$
But $H$ is malnormal inside the hyperbolic group $\Gamma$, therefore, $g\in H$. By applying this argument to any loop $\sigma \subset R$ based at $r_0$, we conclude that  
$\iota_*(\pi_1(R))$
is indeed a subgroup of the cyclic subgroup $H$
and the proof is completed.

\end{proof}


\subsection{The non-vanishing theorem}

We apply the Margulis type lemma Corollary \ref{margulis} in this section to deduce non-vanishing properties of the spherical Plateau problem for hyperbolic groups. 


\begin{lemme}\label{nonempty}
Let $\Gamma$ be a torsion-free hyperbolic group and suppose that  $h\in H_n(\Gamma;\mathbb{Z})$ is a nonzero homology class, for some $n\geq 2$.
Then there is $\overline{\delta}>0$ depending only on $\Gamma$ such that for any $C\in \mathscr{C}(h)$,
$$\spt C\cap [S^\infty/\Gamma]^{> \overline{\delta}} \neq \varnothing.$$
\end{lemme}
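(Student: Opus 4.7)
My plan is to argue by contradiction. Set $\overline{\delta}:=\overline{\delta}(\Gamma)$ as furnished by Corollary~\ref{margulis}, and suppose some $C\in\mathscr{C}(h)$ satisfies $\spt C\subset [S^\infty/\Gamma]^{\leq \overline{\delta}}$. Since $S^\infty/\Gamma$ is locally path-connected, the thin part is open and its path-connected components $\{R_j\}$ are open and pairwise disjoint. Compactness of $\spt C$ forces only finitely many of them, say $R_1,\dots,R_N$, to meet $\spt C$, and the compact pieces $\spt C\cap R_\alpha$ lie at positive pairwise distance. A routine slicing/bump-function argument, taking a generic tubular neighbourhood of each $\spt C\cap R_\alpha$ inside $R_\alpha$ and using $\partial C=0$, then decomposes $C=\sum_{\alpha=1}^{N}C_\alpha$ into integral $n$-cycles $C_\alpha$ with compact support contained in $R_\alpha$.

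Next, Corollary~\ref{margulis} says the image of $\pi_1(R_\alpha)\to\Gamma$ is a cyclic subgroup $H_\alpha$; since $\Gamma$ is torsion-free, $H_\alpha$ is either trivial or infinite cyclic. The plan is to exploit the intermediate covering $p_\alpha: S^\infty/H_\alpha\to S^\infty/\Gamma$, which is a local isometry. The lifting criterion produces an embedding $\tilde\iota_\alpha: R_\alpha\hookrightarrow S^\infty/H_\alpha$ covering the inclusion, under which $C_\alpha$ lifts to a compactly supported integral $n$-cycle $\tilde C_\alpha:=(\tilde\iota_\alpha)_\# C_\alpha$ with $(p_\alpha)_\# \tilde C_\alpha=C_\alpha$. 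The space $S^\infty/H_\alpha$ is a Hilbert manifold whose universal cover is the contractible sphere $S^\infty$, so it is aspherical and homotopy equivalent to either a point or the circle $S^1$.

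The crucial step will be to show $\tilde C_\alpha$ is the boundary of a compactly supported integral $(n+1)$-current in $S^\infty/H_\alpha$. The idea is to approximate $\tilde C_\alpha$ in the flat topology by a polyhedral cycle $P_\alpha=(\phi_\alpha)_\#\llbracket 1_{K_\alpha}\rrbracket$ coming from a Lipschitz map $\phi_\alpha$ out of a finite oriented $n$-pseudomanifold $K_\alpha$ (as in Lemma~1.6 of \cite{Antoine23a}, applied with the refinement used inside the proof of Lemma~\ref{approx before limit}), so that $\tilde C_\alpha-P_\alpha$ is already a boundary; it therefore suffices to fill $P_\alpha$. Since $S^\infty/H_\alpha$ is a $K(H_\alpha,1)$ space and $H_\alpha$ is cyclic, one can fix an explicit Lipschitz loop $g:S^1_\alpha\to S^\infty/H_\alpha$ realizing the generator of $\pi_1$ (or a constant map to a point in the trivial case), and then $\phi_\alpha$ will be continuously homotopic to $g\circ\psi_\alpha$ for some Lipschitz $\psi_\alpha:K_\alpha\to S^1_\alpha$ inducing the same homomorphism on $\pi_1$; standard smoothing on the finite complex $K_\alpha$ upgrades this to a Lipschitz homotopy $F_\alpha:K_\alpha\times[0,1]\to S^\infty/H_\alpha$. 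The prism pushforward $(F_\alpha)_\#(\llbracket 1_{K_\alpha}\rrbracket\times\llbracket 1_{[0,1]}\rrbracket)$ will then be a compactly supported integral $(n+1)$-current with boundary $P_\alpha-(g\circ\psi_\alpha)_\#\llbracket 1_{K_\alpha}\rrbracket$; because the target of $g\circ\psi_\alpha$ has Hausdorff dimension at most $1$ and $n\geq 2$, the second term will vanish as an integral current. Hence $P_\alpha$, and therefore $\tilde C_\alpha$, will bound a compactly supported integral current in $S^\infty/H_\alpha$.

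Pushing each filling down by $(p_\alpha)_\#$ realizes $C_\alpha$ as a boundary with compact support in $S^\infty/\Gamma$, so summing will give $[C]=\pi(h)=0$ in $\mathbf{H}_n(S^\infty/\Gamma)$. This will contradict $h\neq 0$ via the injectivity of $\pi$ on $H_n(\Gamma;\mathbb{Z})$ for torsion-free $\Gamma$ (using that $S^\infty/\Gamma$ is a $K(\Gamma,1)$, as noted in Subsection~\ref{subsection:setup}). The main obstacle is the central step above: converting the purely homotopical factorization of $\phi_\alpha$ through a one-dimensional classifying subspace into a genuinely Lipschitz, compactly supported filling of the integral current $P_\alpha$. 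This rests on combining polyhedral approximation, Lipschitz smoothing of a classifying homotopy on a finite complex, and the dimension count that kills any $n$-current ($n\geq 2$) supported in a one-dimensional target.
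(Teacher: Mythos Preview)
Your strategy is correct and matches the paper's: invoke Corollary~\ref{margulis} to get cyclic $\pi_1$-image, homotope into a one-dimensional target, and use $n\geq 2$ to kill the cycle. One correction: $[S^\infty/\Gamma]^{\leq\overline{\delta}}$ is \emph{closed} (it is the complement of the open thick part), not open, so your decomposition into open components and the subsequent lifting of $R_\alpha$ need a small fix---either halve $\overline{\delta}$ and pass to an open neighborhood of $\spt C$ inside the $\overline{\delta}$-thin part, or do as the paper does and approximate $C$ by a polyhedral chain \emph{first}, after which the support is a finite complex on which the $\pi_1$ computation and the homotopy into a loop (carried out directly in the $K(\Gamma,1)$-space $S^\infty/\Gamma$, without passing to intermediate cyclic covers) are unproblematic.
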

\begin{proof}
Let $\overline{\delta}$ be given by Corollary \ref{margulis}.
Consider first a polyhedral chain $P$ which is an element of $\mathscr{C}(h)$, and suppose towards a contradiction that 
\begin{equation} \label{contradictiona}
\spt P \subset [S^\infty/\Gamma]^{\leq \overline{\delta}}.
\end{equation}
If $\iota:\spt P \to S^\infty/\Gamma$ denotes the inclusion map, Corollary \ref{margulis} implies that $\iota_*(\pi_1(\spt P))$ is cyclic. By basic topology, this means that $\spt P$ can be smoothly deformed via a homotopy of the $K(\pi,1)$-space $S^\infty/\Gamma$ inside a $1$-dimensional loop of  $S^\infty/\Gamma$. From this, we obtain that $\spt P$ viewed as a topological cycle representing $h$, is homologous to the trivial homology class  of $S^\infty/\Gamma$, a contradiction with our assumption that $h\neq 0$.
Hence for any polyhedral chain $P \in \mathscr{C}(h)$, we must have
$$\spt P \cap [S^\infty/\Gamma]^{> \overline{\delta}} \neq \varnothing.$$
Integral currents with compact supports in $S^\infty/\Gamma$ can be well approximated by polyhedral chains in the Hausdorff topology as explained in \cite[Lemma 1.6]{Antoine23a}, thus for any $C\in \mathscr{C}(h)$,
$$\spt C\cap [S^\infty/\Gamma]^{> \overline{\delta}} \neq \varnothing$$
(we may need to take $\overline{\delta}$ slightly smaller). This concludes the proof.

\end{proof}

A corollary of Lemma \ref{nonempty} is a strong non-vanishing theorem for the spherical volume, which generalizes the previously known theorem that closed oriented negatively curved manifolds have strictly positive spherical volume (a result which follows from combining \cite[Subsection 0.3]{Gromov82} and \cite[Th\'{e}or\`{e}me 3.16]{BCG91}).
\begin{coro} \label{positive spherevol}
Let $\Gamma$ be a torsion-free hyperbolic group and suppose that  $h\in H_n(\Gamma;\mathbb{Z})$ is a nonzero homology class, for some $n\geq 2$.
Then 
$$\spherevol(h)>0.$$
Furthermore, any spherical Plateau solution $C_\infty$ for $h$ has a non-empty noncollapsed part $C^{>0}_\infty$.

\end{coro}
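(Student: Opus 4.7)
The plan is to derive both assertions from Lemma~\ref{nonempty} combined with the pull-tight construction of Lemma~\ref{1st approx} (with its uniform monotonicity inequality (\ref{monot ineq})), together with the Hausdorff convergence of thick parts furnished by Theorem~\ref{compactness} and Lemma~\ref{lemma:yinfinity}. The whole point is to turn the purely topological non-vanishing statement of Lemma~\ref{nonempty} into a quantitative mass lower bound.

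For the positivity $\spherevol(h)>0$, I would start with an arbitrary $C\in\mathscr{C}(h)$. Since $\Gamma$ is torsion-free it acts properly freely on $S^\infty$, so $[S^\infty/\Gamma]^{>0}=S^\infty/\Gamma$ and the compact set $\spt C$ is contained in $[S^\infty/\Gamma]^{>\delta_0}$ for some $\delta_0\in(0,\bar\delta)$. Applying Lemma~\ref{1st approx} with $S_0=U_0=C$ and $V_0=0$ produces $C'\in\mathscr{C}(h)$ with $\mathbf{M}(C')\leq\mathbf{M}(C)$ that satisfies the uniform monotonicity inequality (\ref{monot ineq}). Lemma~\ref{nonempty} applied to $C'$ yields a point $x\in\spt C'\cap[S^\infty/\Gamma]^{>\bar\delta}$, and evaluating (\ref{monot ineq}) at $\delta=\bar\delta$ and $\rho=\bar\delta/200$ gives
$$\mathbf{M}(C)\;\geq\;\mathbf{M}(C')\;\geq\;\mathbf{M}\bigl(C'\llcorner B(x,\bar\delta/200)\bigr)\;\geq\;c_n(\bar\delta/200)^n.$$
Taking the infimum over $\mathscr{C}(h)$ gives $\spherevol(h)\geq c_n(\bar\delta/200)^n>0$.

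For the non-emptiness of $C_\infty^{>0}$, I would fix a pulled-tight minimizing sequence $\{C_i\}\subset\mathscr{C}(h)$ converging to $C_\infty=(X_\infty,d_\infty,S_\infty)$ as in Theorem~\ref{compactness}, with associated isometric embeddings $j_i$ into a Banach space $\mathbf{Z}$. Choose an integer $k_0$ with $1/k_0<\bar\delta$. Lemma~\ref{nonempty} supplies points $x_i\in\spt C_i\cap[S^\infty/\Gamma]^{>\bar\delta}\subset\spt(C_i^{>1/k_0})$ for every $i$. By part~(2) of Theorem~\ref{compactness}, $j_i(\spt(C_i^{>1/k_0}))$ converges in the Hausdorff topology to $X_{k_0,\infty}$, so after passing to a subsequence the images $j_i(x_i)$ converge to a point $x_\infty\in X_{k_0,\infty}$. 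Lemma~\ref{lemma:yinfinity} gives $X_{k_0,\infty}\subset Y_\infty$, hence $Y_\infty\neq\varnothing$. Moreover, the uniform bound (\ref{equicomp}) at $x_i$, together with lower semicontinuity of the mass under intrinsic flat convergence and the fact that small balls around $x_\infty$ lie entirely in $\Reg(S^\infty/\Gamma)_\omega$ (since $x_\infty$ is a regular point), yields $\mathbf{M}\bigl(T_\infty\llcorner B(x_\infty,\rho)\bigr)\geq c_n\rho^n$ for small $\rho>0$, so $C_\infty^{>0}$ carries strictly positive mass.

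The only genuinely delicate step is the first one: the Ekeland-type pull-tight Lemma~\ref{1st approx} is exactly what allows us to replace an arbitrary competitor $C$ by a representative of the same homology class enjoying the monotonicity (\ref{monot ineq}), which is what converts the topological conclusion of Lemma~\ref{nonempty} into a uniform positive mass. Once this is done, the second assertion is a routine consequence of the compactness and semicontinuity results of Section~\ref{section:plateau problem}.
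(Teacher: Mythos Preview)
Your proof is correct and, for the second assertion, essentially identical to the paper's: both use Lemma~\ref{nonempty} on a pulled-tight minimizing sequence from Theorem~\ref{compactness} and then read off $Y_\infty\neq\varnothing$ from the Hausdorff convergence of thick parts (Lemma~\ref{lemma:yinfinity}).

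For the positivity $\spherevol(h)>0$, your route differs slightly from the paper's. The paper argues indirectly: since $\spherevol(h)\geq\mathbf{M}(C_\infty)\geq\mathbf{M}(C_\infty^{>0})$, it suffices to know $C_\infty^{>0}$ is nontrivial, which it has just established. You instead apply Lemma~\ref{1st approx} directly to an arbitrary competitor $C\in\mathscr{C}(h)$ and extract a uniform mass lower bound $c_n(\bar\delta/200)^n$ from the monotonicity inequality~(\ref{monot ineq}) at a thick point supplied by Lemma~\ref{nonempty}. Your argument is a bit more explicit and yields a quantitative bound on $\spherevol(h)$ depending only on $n$ and $\bar\delta(\Gamma)$, whereas the paper's version is shorter but non-quantitative. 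One minor remark: your final sentence about $\mathbf{M}(T_\infty\llcorner B(x_\infty,\rho))\geq c_n\rho^n$ via lower semicontinuity is more than you need and would require a word of justification (lower semicontinuity of mass restricted to an open set under flat convergence in $\mathbf{Z}$); the conclusion $\mathbf{M}(T_\infty)>0$ follows more cheaply just from $x_\infty\in\spt S_\infty$ and the definition of support.
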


\begin{proof}
Let $\overline{\delta}$ be given by Lemma \ref{margulis}. Recall from the definitions that, in general, for any group homology class $h$ and a corresponding spherical Plateau solution $C_\infty$, 
$$\spherevol(h)\geq\mathbf{M}(C_\infty),$$
and for any noncollapsed part $C_\infty^{>0}$ of $C_\infty$,
$$\mathbf{M}(C_\infty)\geq\mathbf{M}(C_\infty^{>0}).$$
In particular, in order to prove the first part of the corollary on the strict positivity of the spherical volume of $h$, it is enough to show that any noncollapsed part $C_\infty^{>0}$ of $C_\infty$ is nontrivial.

Applying Theorem \ref{compactness}, we find a pulled-tight minimizing sequence $\{C_i\}\subset  \mathscr{C}(h)$, and by Lemma \ref{nonempty}, the supports $\spt(C_i)$ intersect $[S^\infty/\Gamma]^{> \overline{\delta}}$ nontrivially. In the notations of Subsection \ref{thickdefinition}, $Y_\infty$ is non-empty and so the noncollapsed part $C_\infty^{>0}$ is nontrivial. This finishes the proof.

\end{proof}


\subsection{Subspaces and subrepresentations for a separable subset}
\label{subsubsection:two spheres}

This section paves the way for the definition of the ``deformations maps'', with some defintions and notations. 
As before, let $\Gamma$ be a torsion-free hyperbolic group, and let $h\in H_n(\Gamma;\mathbb{Z})\setminus \{0\}$. Let $(S^\infty/\Gamma)_\omega = S^\infty_\omega/\lambda_\omega(\Gamma_\omega)$ be as in Subsection \ref{subsection:nota}.
 Here is an elementary lemma:


 \begin{lemme} \label{separable embedding}
Let $\mathbf{A}'\subset  (S^\infty/\Gamma)_\omega$ be a separable subset.
There is a countable subgroup $\Gamma_{(1)}$ of $\Gamma_\omega$, and there is a separable $\Gamma_{(1)}$-invariant closed subspace $H_{(1)}$ of  $H_\omega$
 such that if $S^\infty_{(1)}$ denotes the unit sphere in $H_{(1)}$,  then we have two isometric embeddings
 $$(\mathbf{A}',\dist_\omega)\hookrightarrow S^\infty_{(1)}/\lambda_\omega(\Gamma_{(1)}) \hookrightarrow (S^\infty/\Gamma)_\omega$$
 where $\dist_\omega$ is the metric on $(S^\infty/\Gamma)_\omega$, $S^\infty_{(1)}/\lambda_\omega(\Gamma_{(1)})$ is endowed with its quotient metric, and the second embedding is the natural map induced by the inclusion $S^\infty_{(1)}\subset S^\infty_\omega$. 
 The subgroup $\Gamma_{(1)}$ can be chosen to contain $\Gamma$.
 
\end{lemme}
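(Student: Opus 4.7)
The plan is a back-and-forth construction producing $\Gamma_{(1)}$ and $H_{(1)}$ as direct limits of an increasing chain of countable subgroups and separable closed subspaces. The underlying issue is that the orbit-infimum defining the quotient metric in $(S^\infty/\Gamma)_\omega$ must, for every pair of vectors in $S^\infty_{(1)}$, be realized up to arbitrary accuracy by elements of $\Gamma_{(1)}$, and achieving this for all pairs requires iteratively enlarging both the subgroup and the subspace.

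Concretely, fix a countable dense subset $\{p_n\}_{n\geq 0}$ of $\mathbf{A}'$ and lifts $x_n \in S^\infty_\omega$ of $p_n$. Set $\Gamma_0 := \Gamma$, let $V_0$ be the closed real linear span of $\{\lambda_\omega(\gamma).x_n : \gamma \in \Gamma_0,\ n \geq 0\}$, and pick a countable dense subset $D_0$ of the unit sphere of $V_0$ containing the $x_n$. Inductively, assume $\Gamma_k$ is a countable subgroup of $\Gamma_\omega$, $V_k$ is a separable $\Gamma_k$-invariant closed subspace of $H_\omega$, and $D_k$ is a countable dense subset of the unit sphere of $V_k$. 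For each pair $(x,y) \in D_k \times D_k$ and each integer $j \geq 1$, pick $\gamma_{x,y,j} \in \Gamma_\omega$ with
$$\|x - \lambda_\omega(\gamma_{x,y,j}).y\| < \inf_{\gamma \in \Gamma_\omega}\|x - \lambda_\omega(\gamma).y\| + 1/j.$$
Let $\Gamma_{k+1}$ be the countable subgroup generated by $\Gamma_k$ together with all these $\gamma_{x,y,j}$, let $V_{k+1}$ be the closed linear span of $\lambda_\omega(\Gamma_{k+1}).V_k$ (again separable and $\Gamma_{k+1}$-invariant), and take $D_{k+1} \supset D_k$ countable dense in the unit sphere of $V_{k+1}$. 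Finally set $\Gamma_{(1)} := \bigcup_k \Gamma_k$, $H_{(1)} := \overline{\bigcup_k V_k}$, and let $S^\infty_{(1)}$ be the unit sphere of $H_{(1)}$.

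Then $\Gamma_{(1)}$ is countable and contains $\Gamma$, while $H_{(1)}$ is separable and $\Gamma_{(1)}$-invariant (any $\gamma \in \Gamma_{(1)}$ lies in some $\Gamma_k$ and so preserves every $V_j$ with $j \geq k$). To verify the two embeddings, I first observe that $D := \bigcup_k D_k$ is dense in $S^\infty_{(1)}$, since any $x \in S^\infty_{(1)}$ is a limit of vectors in $\bigcup_k V_k$, which after normalization can be further approximated by elements of $D$. For $(x,y) \in D \times D$, both lie in some $D_k$, and the corresponding $\gamma_{x,y,j} \in \Gamma_{k+1} \subset \Gamma_{(1)}$ give
$$\inf_{\gamma \in \Gamma_{(1)}}\|x - \lambda_\omega(\gamma).y\| = \inf_{\gamma \in \Gamma_\omega}\|x - \lambda_\omega(\gamma).y\|.$$
Since $\lambda_\omega$ acts by isometries, each side is $1$-Lipschitz in the pair $(x,y)$, so this equality extends from $D \times D$ to all of $S^\infty_{(1)} \times S^\infty_{(1)}$ by density, yielding the isometric embedding $S^\infty_{(1)}/\lambda_\omega(\Gamma_{(1)}) \hookrightarrow (S^\infty/\Gamma)_\omega$. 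Applied to pairs $(x_n, x_m)$ it also shows that $p_n \mapsto [x_n]_{(1)}$ is distance-preserving into the complete quotient $S^\infty_{(1)}/\lambda_\omega(\Gamma_{(1)})$, and so extends uniquely by continuity to the isometric embedding of $\mathbf{A}'$. The main subtlety is not any individual step but the interplay between enlarging the subgroup (to approximate more infima) and enlarging the subspace (to introduce more pairs requiring approximation); the back-and-forth construction resolves this circularity, and what remains is a routine Lipschitz continuity argument.
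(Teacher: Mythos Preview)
Your proof is correct and follows essentially the same back-and-forth scheme as the paper: start from lifts of a countable dense subset of $\mathbf{A}'$, alternately enlarge a countable subgroup to capture near-minimizers of the orbit-distance and enlarge a separable invariant subspace, then take the union. The only cosmetic difference is that the paper asserts at each stage the existence of a countable subset of $\Gamma_\omega$ realizing the exact infimum for \emph{all} pairs on the current unit sphere (invoking separability), whereas you work with countable dense subsets $D_k$ and $1/j$-approximations and close the argument with a Lipschitz density step; both are equally valid.
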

\begin{proof}
Consider the quotient map 
$$\Pi_\omega: S^\infty_\omega \to (S^\infty/\Gamma)_\omega.$$ 
Choose any countable subset $Z_0$ in the preimage $\Pi_\omega^{-1}(\mathbf{A}')$ such that the closure of $\Pi_\omega(Z_0)$ satisfies
$$\overline{\Pi_\omega(Z_0)}=\mathbf{A}'.$$
Let $H_1$ be the smallest closed separable Hilbert subspace of $H_\omega$ which contains $Z_0$. 

Let $\dist_\omega$ be the usual distance on the round sphere $S^\infty_\omega$.
In order to prove the lemma, all we need to do is to find a closed separable Hilbert subspace $H_{(1)}\subset H_\omega$ containing $H_1$, and a countable subgroup $\Gamma_{(1)}$ of $\Gamma_\omega$ which leaves $H_{(1)}$ invariant, such that if 
$S^\infty_{(1)}$ denotes the unit sphere of $H_{(1)}$,
the following holds: for any two points $x,y\in S^\infty_{(1)}$, 
\begin{equation}\label{dist lift}
\inf\{\dist_\omega(x,\lambda_\omega(g).y); g\in \Gamma_{(1)}\} = \inf\{\dist_\omega(x,\lambda_\omega(g).y); g\in \Gamma_\omega\}.
\end{equation}
The meaning of this equality is that the quotient space $S^\infty_{(1)}/\lambda_\omega(\Gamma_{(1)})$, with its quotient metric, embeds isometrically into $(S^\infty/\Gamma)_\omega$. Since in that case, $\overline{\Pi_\omega(Z_0)}=\mathbf{A}'$ embeds isometrically inside  $S^\infty_{(1)}/\lambda_\omega(\Gamma_{(1)})$, the lemma would be checked.

In order to find such $H_{(1)}$ and $\Gamma_{(1)}$, we can argue as follows. Let $S_1$ be the unit sphere of $H_1$. 
By separability of $H_1$, there is a countable subset $Q_2\subset \Gamma_\omega$ such that for 
any two points $x,y\in S_1$, 
$$
\inf\{\dist_\omega(x,\lambda_\omega(g).y); g\in Q_2\} = \inf\{\dist_\omega(x,\lambda_\omega(g).y); g\in \Gamma_\omega\}.
$$
Note that we can enlarge $Q_2$ to contain any countable subset of $\Gamma_\omega$, in particular we can assume that $Q_2$ contains $\Gamma$. Set $\Gamma_2$ to be the countable subgroup of $\Gamma_\omega$ generated by $Q_2$, and let $H_2$ be the smallest closed separable Hilbert subspace of $H_\omega$ containing $H_1$ and invariant by $\Gamma_2$. Let $S_2$ be the unit sphere in $H_2$. We repeat the above process and get a sequence of separable Hilbert subspaces $H_1\subset H_2\subset H_3...$ and corresponding countable subgroups $\Gamma_2\subset \Gamma_3...$. Now take the closure $H_{(1)}$ of the union of the subspaces $\{H_j\}_{j\geq 1}$, and consider the union $\Gamma_{(1)}$ of the subgroups $\{\Gamma_j\}_{j\geq 2}$. The subspace $H_{(1)}$ is closed separable, the subgroup $\Gamma_{(1)}$ is countable and leaves $H_{(1)}$ invariant. Moreover, any two points $x,y\in S^\infty_{(1)}$ can be approximated by points $x_j,y_j$ belonging to the unit sphere $S_j$ of some $H_j$, and so (\ref{dist lift}) is indeed true by construction.

\end{proof}

Let $\Omega_\omega\subset S^\infty_\omega$, $F_\omega\subset S^\infty_\omega$ and $\Sing (S^\infty_\omega/\Gamma)$ be as in Subsection \ref{subsection:nota}.
Given a separable subset 
$$\mathbf{A}\subset  (S^\infty/\Gamma)_\omega,$$ 
let $\mathbf{B}\subset (S^\infty/\Gamma)_\omega$ be another separable subset such that
\begin{equation}\label{mathbfB}
\begin{split}
&\mathbf{B}\subset  \Sing  (S^\infty/\Gamma)_\omega=F_\omega/\lambda_\omega(\Gamma_\omega)\\& \text{and for any $a\in \mathbf{A}$,}\quad \dist_\omega(a,\Sing  (S^\infty/\Gamma)_\omega) = \dist_\omega(a,\mathbf{B}).
\end{split}
\end{equation}
Such $\mathbf{B}$ always exists. Set $\mathbf{A}':=\mathbf{A}\cup \mathbf{B}$.
Let $\Gamma_{(1)}$, $H_{(1)}$, 
be as in Lemma \ref{separable embedding} applied to $\mathbf{A}'$, and let $S^\infty_{(1)}$ be the unit sphere of $H_{(1)}$. In particular $\Gamma\subset \Gamma_{(1)}$. Recall from the elementary Lemma \ref{limit group} (2) that since $\Gamma$ is torsion-free, so are $\Gamma_\omega$ and $\Gamma_{(1)}$. 

By Lemma \ref{ultralimit regular representation}, there are an orthogonal representation $(\mathcal{K}_\sigma,\eta_\sigma)$ of $\sigma \cap\Gamma_{(1)}$  for every $\sigma\in \Upsilon_0(\Gamma_{(1)})$, and a proper free orthogonal representation $(H_{\mathrm{pf}}, \eta_{\mathrm{pf}})$ of $\Gamma_{(1)}$ such that
 \begin{equation}\label{h_(1)}
\begin{split}
\lambda_\omega|_{\Gamma_{(1)},H_{(1)}} & = \eta_{\mathrm{pf}} \oplus \bigoplus_{\sigma \in \Upsilon_0(\Gamma_{(1)})} \Induced_{\sigma \cap\Gamma_{(1)}}^{\Gamma_{(1)}}(\eta_\sigma),\\
H_{(1)} & = H_{\mathrm{pf}} \oplus \bigoplus_{\sigma \in \Upsilon_0(\Gamma_{(1)})} H_\sigma = H_{\mathrm{pf}} \oplus \bigoplus_{\sigma \in \Upsilon_0(\Gamma_{(1)})} \bigoplus_{\gamma\in \mathcal{T}(\Gamma_{(1)},\sigma)} \lambda_\omega(\gamma).\mathcal{K}_{\sigma}.
\end{split}
\end{equation}
Set also 
$$F_{(1)}:= F_\omega \cap S^\infty_{(1)}.$$
Recall that $\mathbf{B}$ is the subset introduced in (\ref{mathbfB}).
Let $\pi: S^\infty_{(1)}\to S^\infty_{(1)}/\lambda_\omega(\Gamma_{(1)})$ be the natural projection. 
By Lemma \ref{separable embedding} and the inclusion in Corollary \ref{ultralimit regular representation}, the following holds:
\begin{equation}\label{pi-1b}
\pi^{-1}(\mathbf{B}) \subset F_{(1)} \subset S^\infty_{(1)} \cap \bigcup_{\sigma\in \Upsilon_0(\Gamma_{(1)})} \bigcup_{\gamma\in \mathcal{T}(\Gamma_{(1)},\sigma)} \lambda_\omega(\gamma).\mathcal{K}_\sigma.
\end{equation}
Next, set 
\begin{align*}
\Upsilon'_0(\Gamma_{(1)}) := \{& \text{the countable set of $\sigma \in \Upsilon_0(\Gamma_{(1)})$ such that}\\
& \text{ $H_\sigma\neq \{0\}$ in the decomposition (\ref{h_(1)}) of $H_{(1)}$}\}.
\end{align*}
By Lemma \ref{find induced} applied to $H^{(\alpha)}:=H_{(1)}$, we can find some unit vectors $\{\mathbf{u}_\sigma\}_{\sigma\in \Upsilon'_0(\Gamma_{(1)})}$ and a $\Gamma_{(1)}$-invariant closed subspace $H^{(\beta)}$ orthogonal to $H_{(1)}$ in $H_\omega$, such that 
\begin{equation}\label{gamma'h(1)}
\begin{split}
\lambda_\omega |_{\Gamma_{(1)},H^{(\beta)}} & =  \bigoplus_{\sigma\in \Upsilon'_0(\Gamma_{(1)})} \Induced_{\sigma \cap\Gamma_{(1)}}^{\Gamma_{(1)}}(\mathbf{1}_{\sigma\cap \Gamma_{(1)}}),\\
H^{(\beta)} & = \bigoplus_{\sigma\in \Upsilon'_0(\Gamma_{(1)})} \bigoplus_{\gamma\in \mathcal{T}(\Gamma_{(1)},\sigma)} \lambda_\omega(\gamma). \mathbb{R}\mathbf{u}_\sigma
\end{split}
\end{equation}
and $\mathbf{u}_\sigma$ is fixed by $\sigma\cap \Gamma_{(1)}$.
Moreover, Lemma \ref{find induced} ensures that for any distinct  $\sigma\neq\sigma'\in \Upsilon'_0(\Gamma_{(1)})$ and any $\gamma'\in \Gamma_\omega$,
\begin{equation}\label{u perp u'}
\mathbf{u}_\sigma \perp \lambda_\omega(\gamma').\mathbf{u}_{\sigma'}.
\end{equation} 
Consider  the infinite-dimensional separable Hilbert space 
\begin{equation}\label{ploooouc2}
H_{(3)} := H_{(1)}  \oplus H^{(\beta)}
\end{equation}
and let $S^\infty_{(3)}$ be the unit sphere in $H_{(3)}$ and set $F_{(3)}:= F_\omega \cap S^\infty_{(3)}$. By Remark \ref{compatibility}, we have
$$F_{(3)} \subset S^\infty_{(3)} \cap \bigcup_{\sigma\in \Upsilon'_0(\Gamma_{(1)})} \bigcup_{\gamma\in \mathcal{T}(\Gamma_{(1)},\sigma)} \lambda_\omega(\gamma).(\mathcal{K}_\sigma \oplus \mathbb{R}\mathbf{u}_{\sigma}).$$
We also set
\begin{equation}\label{ploooouc}
H_{(2)}:=H_{\mathrm{pf}}\oplus H^{(\beta)},
\end{equation}
where $H_{\mathrm{pf}}$ appears in (\ref{h_(1)}). 
Let $S^\infty_{(2)}$ be the unit sphere in $H_{(2)}$ and set $F_{(2)}:= F_\omega \cap S^\infty_{(2)}$. Then  by Corollary \ref{ultralimit regular representation}, Remark \ref{compatibility}, and Lemma \ref{find induced}:
\begin{equation} \label{F_{(2)}}
F_{(2)} = \{\pm \lambda_\omega(\gamma).\mathbf{u}_{\sigma}\}_{\sigma\in \Upsilon'_0(\Gamma_{(1)}), \gamma\in \mathcal{T}(\Gamma_{(1)},\sigma)} = S^\infty_{(2)} \cap \bigcup_{\sigma\in \Upsilon'_0(\Gamma_{(1)})} \bigcup_{\gamma\in \mathcal{T}(\Gamma_{(1)},\sigma)} \lambda_\omega(\gamma).\mathbb{R}\mathbf{u}_{\sigma}.
\end{equation}
That is the key property of the orthogonal representation $(H_{(2)},\lambda_\omega|_{\Gamma_{(1)},H_{(2)}})$: the set of vectors in $S^\infty_{(2)}$ fixed by at least one nontrivial element of $\Gamma_\omega$ is the discrete countable set of points $F_{(2)}$.
Adding (\ref{u perp u'}), we deduce the following fact: the Hilbert spherical quotient  $S^\infty_{(2)}/\lambda_\omega(\Gamma_{(1)})$ 
is smooth outside the discrete countable set of points 
$$F_{(2)}/\lambda_\omega(\Gamma_{(1)}).$$
Around each such point, locally the spherical quotient looks like a cone over a smooth Hilbert spherical quotient. Note that a priori, a point of $F_{(2)}/\lambda_\omega(\Gamma_{(1)})$ is not necessarily a singular point inside $S^\infty_{(2)}/\lambda_\omega(\Gamma_{(1)})$, but its image in $(S^\infty/\Gamma)_\omega$ is always a singular point there.

\vspace{1em}
\textbf{Summary of notations:}

At this point, given a separable subset $\mathbf{A}\subset (S^\infty/\Gamma)_\omega$, we have a countable group $\Gamma_{(1)}\subset \Gamma_\omega$, and in (\ref{h_(1)}), (\ref{gamma'h(1)}), (\ref{ploooouc}), (\ref{ploooouc2}), we defined three separable Hilbert spaces:
\begin{equation} \label{decomp Hsigma}
\begin{split}
H_{(1)} & = H_{\mathrm{pf}} \oplus \bigoplus_{\sigma \in \Upsilon'_0(\Gamma_{(1)})}  \bigoplus_{\gamma\in \mathcal{T}(\Gamma_{(1)},\sigma)} \lambda_\omega(\gamma).\mathcal{K}_{\sigma},\\
H_{(2)} & = H_{\mathrm{pf}} \oplus \bigoplus_{\sigma\in \Upsilon'_0(\Gamma_{(1)})} \bigoplus_{\gamma\in \mathcal{T}(\Gamma_{(1)},\sigma)} \lambda_\omega(\gamma). \mathbb{R}\mathbf{u}_\sigma,\\
H_{(3)} & = H_{\mathrm{pf}} \oplus \bigoplus_{\sigma \in \Upsilon'_0(\Gamma_{(1)})}  \bigoplus_{\gamma\in \mathcal{T}(\Gamma_{(1)},\sigma)} \lambda_\omega(\gamma).(\mathcal{K}_{\sigma}\oplus \mathbb{R} \mathbf{u}_\sigma).
\end{split}
\end{equation}
where $\mathcal{K}_\sigma$ is the Hilbert space associated to the representation $\eta_\sigma$.
We also considered the corresponding orthogonal representations $\lambda_\omega|_{\Gamma_{(1)}, H_{(j)}}$ and unit spheres $S^\infty_{(j)}$ where $j\in \{1,2,3\}$.
By Lemma \ref{separable embedding}, we have isometric embeddings:
$$\mathbf{A}\subset S^\infty_{(1)}/\lambda_\omega(\Gamma_{(1)}) \to  (S^\infty/\Gamma)_\omega.$$
It is elementary to see that both natural maps 
$$S^\infty_{(1)}/\lambda_\omega(\Gamma_{(1)}) \to  S^\infty_{(3)}/\lambda_\omega(\Gamma_{(1)}) \quad \text{and}\quad  S^\infty_{(2)}/\lambda_\omega(\Gamma_{(1)}) \to S^\infty_{(3)}/\lambda_\omega(\Gamma_{(1)})$$ are isometric embeddings. 
We also define for $j\in \{1,2,3\}$:
$$F_j := F_\omega\cap S^\infty_{(j)},$$
namely the set of points in $S^\infty_{(j)}$ fixed by some nontrivial element of $\Gamma_\omega$. We saw that:
\begin{equation}\label{F_j subset}
\begin{split}
F_{(1)} &\subset S^\infty_{(1)} \cap \bigcup_{\sigma\in \Upsilon'_0(\Gamma_{(1)})} \bigcup_{\gamma\in \mathcal{T}(\Gamma_{(1)},\sigma)} \lambda_\omega(\gamma).\mathcal{K}_\sigma,\\
F_{(2)}&=  S^\infty_{(2)} \cap \bigcup_{\sigma\in \Upsilon'_0(\Gamma_{(1)})} \bigcup_{\gamma\in \mathcal{T}(\Gamma_{(1)},\sigma)} \lambda_\omega(\gamma).\mathbb{R}\mathbf{u}_{\sigma},\\
F_{(3)} &\subset S^\infty_{(3)} \cap \bigcup_{\sigma\in \Upsilon'_0(\Gamma_{(1)})} \bigcup_{\gamma\in \mathcal{T}(\Gamma_{(1)},\sigma)} \lambda_\omega(\gamma).(\mathcal{K}_\sigma \oplus \mathbb{R}\mathbf{u}_{\sigma}).
\end{split}
\end{equation}

\subsection{The $1$-Lipschitz deformation maps} \label{subsection:projection}

We can now define the deformation maps, which will be crucial later.
All the constructions depend on the separable subset $\mathbf{A}\subset (S^\infty/\Gamma)_\omega$.

\textbf{A first deformation map:}

We will use the notations from the previous subsection.
According to the first decomposition in (\ref{decomp Hsigma}), any vector of $S^\infty_{(1)}$ can be written as
$$v_{\mathrm{pf}} + \sum_{\sigma\in \Upsilon'_0(\Gamma_{(1)})} \sum_{\gamma\in \mathcal{T}(\Gamma_{(1)},\sigma)} v_{\gamma,\sigma}$$
for some $v_{\mathrm{pf}} \in H_{\mathrm{pf}}$ and $v_{\gamma,\sigma} \in \lambda_\omega(\gamma).\mathcal{K}_{\sigma}$ such that
$$\|v_{\mathrm{pf}}\|^2 + \sum_{\sigma\in \Upsilon'_0(\Gamma_{(1)})}\sum_{\gamma\in \mathcal{T}(\Gamma_{(1)},\sigma)} \|v_{\gamma,\sigma}\|^2=1.$$ 
Let 
$$\overline{\mathscr{D}}_1: S^\infty_{(1)}\to S^\infty_{(2)} \subset S^\infty_{(3)}$$
be the map defined by
\begin{align} \label{define overline proj1}
v_{\mathrm{pf}} + \sum_{\sigma\in \Upsilon'_0(\Gamma_{(1)})} \sum_{\gamma\in \mathcal{T}(\Gamma_{(1)},\sigma)} v_{\gamma,\sigma}
\quad \mapsto \quad  v_{\mathrm{pf}} + \sum_{\sigma\in \Upsilon'_0(\Gamma_{(1)})} \sum_{\gamma\in \mathcal{T}(\Gamma_{(1)},\sigma)} \|v_{\gamma,\sigma}\| \lambda_\omega(\gamma).\mathbf{u}_{\sigma}.
\end{align}
Recall from (\ref{F_{(2)}}) that $F_{(2)} = S^\infty_{(2)} \cap \bigcup_{\sigma\in \Upsilon'_0(\Gamma_{(1)})} \bigcup_{\gamma\in \mathcal{T}(\Gamma_{(1)},\sigma)} \lambda_\omega(\gamma).\mathbb{R} \mathbf{u}_\sigma$.
The first main lemma of this subsection is the following.
\begin{lemme} \label{projection main 1}
The map $\overline{\mathscr{D}}_1: S^\infty_{(1)}\to S^\infty_{(2)}$ is $\Gamma_{(1)}$-equivariant $1$-Lipschitz, and
\begin{equation}\label{proj_1-1}
\begin{split}
\overline{\mathscr{D}}_1\Big( S^\infty_{(1)}  \cap \bigcup_{\sigma\in \Upsilon'_0(\Gamma_{(1)})} \bigcup_{\gamma\in \mathcal{T}(\Gamma_{(1)},\sigma)} \lambda_\omega(\gamma).\mathcal{K}_\sigma  \Big)  & \subset   F_{(2)},\\
\overline{\mathscr{D}}_1\Big( S^\infty_{(1)}  \setminus \bigcup_{\sigma\in \Upsilon'_0(\Gamma_{(1)})} \bigcup_{\gamma\in \mathcal{T}(\Gamma_{(1)},\sigma)} \lambda_\omega(\gamma).\mathcal{K}_\sigma\Big)
& \subset  S^\infty_{(2)} \setminus F_{(2)}.
\end{split}
\end{equation}
Moreover, the quotient map
$\overline{\mathscr{D}}_1: S^\infty_{(1)}/\lambda_\omega(\Gamma_{(1)})\to S^\infty_{(2)}/\lambda_\omega(\Gamma_{(1)})$ satisfies the following:
for each $d> 0$, if $q\in \mathbf{A}$ and $\dist_\omega(q,\Sing (S^\infty/\Gamma)_\omega) < d$, then 
$$\overline{\mathscr{D}}_1(q)\text{ belongs to the $d$-neighborhood of  $F_{(2)}/\lambda_\omega(\Gamma_{(1)})$ in $S^\infty_{(2)}/\lambda_\omega(\Gamma_{(1)})$}.$$

\end{lemme}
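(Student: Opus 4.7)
The plan is to verify each claim by a direct calculation based on the block decomposition (\ref{decomp Hsigma}), exploiting the orthogonality of the summands together with two structural facts: the family $\{\lambda_\omega(\gamma).\mathbf{u}_\sigma\}_{\sigma,\gamma}$ is orthonormal (from (\ref{u perp u'})), and each $\mathbf{u}_\sigma$ is a unit vector fixed by $\sigma\cap\Gamma_{(1)}$.

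First I would record that $\overline{\mathscr{D}}_1$ actually maps $S^\infty_{(1)}$ into $S^\infty_{(2)}$: each block-image $\|v_{\gamma,\sigma}\|\,\lambda_\omega(\gamma).\mathbf{u}_\sigma$ has norm $\|v_{\gamma,\sigma}\|$, and since the blocks in $H_{(2)}$ are pairwise orthogonal, $\|\overline{\mathscr{D}}_1(v)\|^2 = \|v_{\mathrm{pf}}\|^2 + \sum\|v_{\gamma,\sigma}\|^2 = 1$. For $\Gamma_{(1)}$-equivariance, I would fix $g\in\Gamma_{(1)}$ and for each $\gamma\in\mathcal{T}(\Gamma_{(1)},\sigma)$ write $g\gamma=\gamma's$ with $\gamma'\in\mathcal{T}(\Gamma_{(1)},\sigma)$ and $s\in\sigma\cap\Gamma_{(1)}$. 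Writing $v_{\gamma,\sigma}=\lambda_\omega(\gamma).k$ with $k\in\mathcal{K}_\sigma$, one computes $\lambda_\omega(g).v_{\gamma,\sigma}=\lambda_\omega(\gamma').\lambda_\omega(s).k$, which has norm $\|v_{\gamma,\sigma}\|$, while the corresponding block of $\lambda_\omega(g).\overline{\mathscr{D}}_1(v)$ equals $\|v_{\gamma,\sigma}\|\,\lambda_\omega(\gamma').\lambda_\omega(s).\mathbf{u}_\sigma = \|v_{\gamma,\sigma}\|\,\lambda_\omega(\gamma').\mathbf{u}_\sigma$ since $s$ fixes $\mathbf{u}_\sigma$. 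The two expressions coincide block by block.

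For the $1$-Lipschitz property it suffices to show $\langle v,w\rangle\le\langle\overline{\mathscr{D}}_1(v),\overline{\mathscr{D}}_1(w)\rangle$ for all $v,w\in S^\infty_{(1)}$, since the spherical distance is a decreasing function of the inner product of unit vectors. Expanding both sides via orthogonality of the block decompositions,
\[
\langle v,w\rangle=\langle v_{\mathrm{pf}},w_{\mathrm{pf}}\rangle+\sum_{\sigma,\gamma}\langle v_{\gamma,\sigma},w_{\gamma,\sigma}\rangle,\qquad \langle\overline{\mathscr{D}}_1(v),\overline{\mathscr{D}}_1(w)\rangle=\langle v_{\mathrm{pf}},w_{\mathrm{pf}}\rangle+\sum_{\sigma,\gamma}\|v_{\gamma,\sigma}\|\,\|w_{\gamma,\sigma}\|,
\]
so the inequality follows from Cauchy--Schwarz applied in each block $\lambda_\omega(\gamma).\mathcal{K}_\sigma$. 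Equivariance then guarantees that the induced quotient map is also $1$-Lipschitz.

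Finally, the two inclusions (\ref{proj_1-1}) follow by inspection: a unit vector concentrated in a single block $\lambda_\omega(\gamma_0).\mathcal{K}_{\sigma_0}$ is sent to $\lambda_\omega(\gamma_0).\mathbf{u}_{\sigma_0}\in F_{(2)}$; conversely, because the coefficients $\|v_{\gamma,\sigma}\|$ are nonnegative and $\{\lambda_\omega(\gamma).\mathbf{u}_\sigma\}\cup(\text{basis of }H_{\mathrm{pf}})$ is orthogonal, an image of the form $\pm\lambda_\omega(\gamma_0).\mathbf{u}_{\sigma_0}\in F_{(2)}$ forces $v_{\mathrm{pf}}=0$, $\|v_{\gamma_0,\sigma_0}\|=1$, and all other blocks to vanish, whence $v\in\lambda_\omega(\gamma_0).\mathcal{K}_{\sigma_0}$. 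For the last assertion, if $q\in\mathbf{A}$ satisfies $\dist_\omega(q,\Sing(S^\infty/\Gamma)_\omega)<d$ then by definition of $\mathbf{B}$ there is $b\in\mathbf{B}$ with $\dist_\omega(q,b)<d$; Lemma \ref{separable embedding} lets me lift to $\tilde q,\tilde b\in S^\infty_{(1)}$ at the same distance, and (\ref{pi-1b}) places $\tilde b$ in $\bigcup\lambda_\omega(\gamma).\mathcal{K}_\sigma$, so the first inclusion gives $\overline{\mathscr{D}}_1(\tilde b)\in F_{(2)}$, and the $1$-Lipschitz quotient map then places $\overline{\mathscr{D}}_1(q)$ within distance $d$ of $F_{(2)}/\lambda_\omega(\Gamma_{(1)})$. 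The main bookkeeping obstacle I anticipate is the transversal calculation in the equivariance step; the remaining content is essentially Cauchy--Schwarz plus orthogonality.
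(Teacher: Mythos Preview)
Your proof is correct and follows essentially the same approach as the paper's: equivariance via the transversal decomposition, the $1$-Lipschitz property via blockwise Cauchy--Schwarz (using orthonormality of $\{\lambda_\omega(\gamma).\mathbf{u}_\sigma\}$), the inclusions (\ref{proj_1-1}) by direct inspection, and the final neighborhood statement via $\mathbf{B}$, (\ref{pi-1b}), and the $1$-Lipschitz bound. The paper's proof is considerably terser (it says ``clearly equivariant'' and ``from Cauchy--Schwarz''), but you have simply spelled out the details it leaves implicit.
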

\begin{proof}
The map is clearly $\Gamma_{(1)}$-equivariant. The vectors 
$$\{\lambda_\omega(\gamma).\mathbf{u}_{\sigma}\}_{\sigma\in \Upsilon'_0(\Gamma_{(1)}), \gamma\in \mathcal{T}(\Gamma_{(1)},\sigma)}$$ are of norm $1$ and pairwise orthogonal. From Cauchy-Schwarz inequality, we deduce that the map is $1$-Lipschitz.
The inclusions (\ref{proj_1-1}) follow from the definition of $\overline{\mathscr{D}}_1$.

There is a point $q'$ in $\mathbf{B}$, defined in (\ref{mathbfB}), such that $\dist_\omega(q,q')<d$.
From the inclusions (\ref{pi-1b}), the first inclusion in (\ref{proj_1-1}), and the fact that $\overline{\mathscr{D}}_1$ is $1$-Lipschitz, we deduce that $\overline{\mathscr{D}}_1(q)$ is at distance less than $d$ from $F_{(2)}/\lambda_\omega(\Gamma_{(1)})$. The last part of the lemma is proved. 
\end{proof}

\textbf{Interpolation:}

We can interpolate between the identity map and $\overline{\mathscr{D}}_1$ as follows. 
Let $\theta\in [0,1]$. For each $\sigma\in \Upsilon'_0(\Gamma_{(1)})$ and $\gamma\in \mathcal{T}(\Gamma_{(1)},\sigma)$, for every unit vector $v\in \lambda_\omega(\gamma).\mathcal{K}_\sigma \subset  H_{(1)}$, recall 
that $v$ and $\lambda_\omega(\gamma). \mathbf{u}_\sigma\in H_{(2)}$ are orthogonal inside $H_{(3)}$, namely they are at distance $\frac{\pi}{2}$ from each other in $S^\infty_{(3)}$.
Define
$$\overline{\mathscr{D}}_\theta(v)$$
to be the unique point $q$ on the minimizing geodesic segment between $v$ and $\lambda_\omega(\gamma). \mathbf{u}_\sigma$  inside $S^\infty_{(3)}$ such that $$\dist_\omega(q,v) = \frac{\theta\pi}{2}.$$ 
Now if, with respect to the previous decomposition (\ref{decomp Hsigma}), 
$$v:=v_{\mathrm{pf}} + \sum_{\sigma\in \Upsilon'_0(\Gamma_{(1)})} \sum_{\gamma\in \mathcal{T}(\Gamma_{(1)},\sigma)} v_{\gamma,\sigma}
$$
then set
$$\overline{\mathscr{D}}_\theta(v) := v_{\mathrm{pf}} + \sum_{\sigma\in \Upsilon'_0(\Gamma_{(1)})} \sum_{\gamma\in \mathcal{T}(\Gamma_{(1)},\sigma)} \|v_{\gamma,\sigma}\|\overline{\mathscr{D}}_\theta(\frac{v_{\gamma,\sigma}}{\|v_{\gamma,\sigma}\|})
$$
where by convention $\|v_{\gamma,\sigma}\|\overline{\mathscr{D}}_\theta(\frac{v_{\gamma,\sigma}}{\|v_{\gamma,\sigma}\|})$ means the $0$ vector if  $\|v_{\gamma,\sigma}\|=0$.
This yields a well-defined map
$$\overline{\mathscr{D}}_\theta:S^\infty_{(1)} \to S^\infty_{(3)}$$
such that $\overline{\mathscr{D}}_0=\Id$ and $\overline{\mathscr{D}}_1$ is indeed the map defined in (\ref{define overline proj1}).

We state the second main lemma of this subsection, whose proof is similar to Lemma \ref{projection main 1}:
\begin{lemme}\label{projection main 2}
For any $\theta\in [0,1]$, the map $\overline{\mathscr{D}}_\theta: S^\infty_{(1)} \to S^\infty_{(3)}$ is $\Gamma_{(1)}$-equivariant and $1$-Lipschitz.
Moreover, 
\begin{equation}\label{proj_1-1bis}
\begin{split}
\overline{\mathscr{D}}_\theta\Big( S^\infty_{(1)} \cap \bigcup_{\sigma\in \Upsilon'_0(\Gamma_{(1)})} \bigcup_{\gamma\in \mathcal{T}(\Gamma_{(1)},\sigma)} \lambda_\omega(\gamma).\mathcal{K}_\sigma \Big) & \subset S^\infty_{(3)} \cap \bigcup_{\sigma\in \Upsilon'_0(\Gamma_{(1)})} \bigcup_{\gamma\in \mathcal{T}(\Gamma_{(1)},\sigma)} \lambda_\omega(\gamma).(\mathcal{K}_{\sigma}\oplus \mathbb{R} \mathbf{u}_\sigma)
,\\
 \overline{\mathscr{D}}_\theta \Big(S^\infty_{(1)} \setminus \bigcup_{\sigma\in \Upsilon'_0(\Gamma_{(1)})} \bigcup_{\gamma\in \mathcal{T}(\Gamma_{(1)},\sigma)} \lambda_\omega(\gamma).\mathcal{K}_\sigma\Big) & \subset S^\infty_{(3)} \setminus \bigcup_{\sigma\in \Upsilon'_0(\Gamma_{(1)})} \bigcup_{\gamma\in \mathcal{T}(\Gamma_{(1)},\sigma)} \lambda_\omega(\gamma).(\mathcal{K}_{\sigma}\oplus \mathbb{R} \mathbf{u}_\sigma).
\end{split}
\end{equation}
\end{lemme}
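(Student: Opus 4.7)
The plan is to write $\overline{\mathscr{D}}_\theta$ explicitly in the orthogonal decomposition of $H_{(3)} = H_{\mathrm{pf}} \oplus \bigoplus_{\sigma,\gamma} \lambda_\omega(\gamma).(\mathcal{K}_\sigma \oplus \mathbb{R}\mathbf{u}_\sigma)$ and then reduce each claim to a direct computation. Writing $v = v_{\mathrm{pf}} + \sum v_{\gamma,\sigma} \in S^\infty_{(1)}$ and setting $c_\theta = \cos(\theta\pi/2)$, $s_\theta = \sin(\theta\pi/2)$, the defining formula unfolds to
\begin{equation*}
\overline{\mathscr{D}}_\theta(v) = v_{\mathrm{pf}} + c_\theta \sum_{\sigma,\gamma} v_{\gamma,\sigma} + s_\theta \sum_{\sigma,\gamma} \|v_{\gamma,\sigma}\| \, \lambda_\omega(\gamma).\mathbf{u}_\sigma,
\end{equation*}
where the three blocks are pairwise orthogonal in $H_{(3)}$ (by $H^{(\beta)} \perp H_{(1)}$ and by (\ref{u perp u'})), so a short computation using $\|\mathbf{u}_\sigma\|=1$ confirms $\|\overline{\mathscr{D}}_\theta(v)\|=1$, hence $\overline{\mathscr{D}}_\theta$ does land in $S^\infty_{(3)}$.

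For $\Gamma_{(1)}$-equivariance, I would take $g\in\Gamma_{(1)}$ and use that the subspace $H_\sigma = \bigoplus_{\gamma \in \mathcal{T}(\Gamma_{(1)},\sigma)} \lambda_\omega(\gamma).\mathcal{K}_\sigma$ is $\Gamma_{(1)}$-invariant, with $\lambda_\omega(g)$ permuting its coset summands. More precisely, if $g\gamma = \gamma'\tau$ with $\gamma'\in\mathcal{T}(\Gamma_{(1)},\sigma)$ and $\tau\in\sigma\cap\Gamma_{(1)}$, then $\lambda_\omega(g)$ sends $\lambda_\omega(\gamma).\mathcal{K}_\sigma$ isometrically onto $\lambda_\omega(\gamma').\mathcal{K}_\sigma$ (since $\mathcal{K}_\sigma$ is $(\sigma\cap\Gamma_{(1)})$-invariant), and sends $\lambda_\omega(\gamma).\mathbf{u}_\sigma$ to $\lambda_\omega(\gamma').\mathbf{u}_\sigma$ (since $\mathbf{u}_\sigma$ is fixed by $\sigma\cap\Gamma_{(1)}$). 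Since $\lambda_\omega(g)$ is an isometry of $S^\infty_{(3)}$ carrying the geodesic from $\lambda_\omega(\gamma).\hat v$ to $\lambda_\omega(\gamma).\mathbf{u}_\sigma$ to the geodesic from $\lambda_\omega(\gamma').\lambda_\omega(g).\hat v$ to $\lambda_\omega(\gamma').\mathbf{u}_\sigma$, the equivariance formula $\overline{\mathscr{D}}_\theta(\lambda_\omega(g).v) = \lambda_\omega(g).\overline{\mathscr{D}}_\theta(v)$ follows term by term.

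For the $1$-Lipschitz property, working on the round sphere it suffices to show $\langle \overline{\mathscr{D}}_\theta(v),\overline{\mathscr{D}}_\theta(w)\rangle \geq \langle v,w\rangle$. Using the orthogonality of the three blocks above, all cross terms vanish and one obtains
\begin{equation*}
\langle \overline{\mathscr{D}}_\theta(v),\overline{\mathscr{D}}_\theta(w)\rangle - \langle v,w\rangle = s_\theta^2 \sum_{\sigma,\gamma}\bigl(\|v_{\gamma,\sigma}\|\,\|w_{\gamma,\sigma}\| - \langle v_{\gamma,\sigma},w_{\gamma,\sigma}\rangle\bigr),
\end{equation*}
which is nonnegative by Cauchy--Schwarz applied to each summand. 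This is the main technical step, though not a difficult one once the explicit formula is in hand.

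For the inclusions in (\ref{proj_1-1bis}), a quick case analysis suffices. If $v \in \lambda_\omega(\gamma_0).\mathcal{K}_{\sigma_0}$ for some $(\gamma_0,\sigma_0)$, then only one $v_{\gamma,\sigma}$ is nonzero and $v_{\mathrm{pf}}=0$, so the explicit formula yields $\overline{\mathscr{D}}_\theta(v) = c_\theta v + s_\theta \lambda_\omega(\gamma_0).\mathbf{u}_{\sigma_0} \in \lambda_\omega(\gamma_0).(\mathcal{K}_{\sigma_0}\oplus\mathbb{R}\mathbf{u}_{\sigma_0})$, giving the first inclusion. Conversely, if $v$ lies outside every $\lambda_\omega(\gamma).\mathcal{K}_\sigma$, then either $v_{\mathrm{pf}}\neq 0$ or at least two of the components $v_{\gamma,\sigma}$ are nonzero; in either case, reading off the formula and using that the subspaces $H_{\mathrm{pf}}$, $\lambda_\omega(\gamma).(\mathcal{K}_\sigma\oplus\mathbb{R}\mathbf{u}_\sigma)$ for distinct $(\gamma,\sigma)$ are pairwise orthogonal, one sees that $\overline{\mathscr{D}}_\theta(v)$ has nonzero components outside any single $\lambda_\omega(\gamma).(\mathcal{K}_\sigma\oplus\mathbb{R}\mathbf{u}_\sigma)$ (separately checking $\theta<1$, where $c_\theta>0$ forces the $v_{\gamma,\sigma}$-components to survive, and $\theta=1$, where the $v_{\mathrm{pf}}$ or the multiplicity of nonzero $\|v_{\gamma,\sigma}\|$-components in $H^{(\beta)}$ forces the conclusion). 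The only real obstacle is bookkeeping: making sure that the orthogonality of all the summands involved is invoked cleanly, but no new ideas beyond Lemma \ref{projection main 1} are required.
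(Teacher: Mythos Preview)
Your proof is correct and follows essentially the same approach as the paper, which states only that the proof is ``similar to Lemma~\ref{projection main 1}'' (i.e.\ equivariance by construction, the $1$-Lipschitz bound via Cauchy--Schwarz on the orthonormal family $\{\lambda_\omega(\gamma).\mathbf{u}_\sigma\}$, and the inclusions by direct inspection of the defining formula). You have simply unpacked these steps explicitly, including writing out the closed form $\overline{\mathscr{D}}_\theta(v)=v_{\mathrm{pf}}+c_\theta\sum v_{\gamma,\sigma}+s_\theta\sum\|v_{\gamma,\sigma}\|\lambda_\omega(\gamma).\mathbf{u}_\sigma$ and the inner-product identity that makes the Cauchy--Schwarz step transparent.
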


\begin{remarque} \label{point of 36}
We chose $\Gamma_{(1)}$ so that it contains $\Gamma$, so $\mathcal{T}(\Gamma_{(1)},\sigma)$ is infinite for any $\sigma\in \Upsilon'_0(\Gamma_{(1)})$, or equivalently there is no maximal abelian subgroup of $\Gamma_{(1)}$ with finite index in $\Gamma_{(1)}$. 
The point of the inclusions (\ref{proj_1-1bis}) is then that, due to (\ref{decomp Hsigma}), 
$$S^\infty_{(1)} \cap \bigcup_{\sigma\in \Upsilon'_0(\Gamma_{(1)})} \bigcup_{\gamma\in \mathcal{T}(\Gamma_{(1)},\sigma)} \lambda_\omega(\gamma).\mathcal{K}_\sigma$$ is a union of closed disjoint subspheres  of infinite codimension inside $S^\infty_{(1)}$. 
Thus by (\ref{F_j subset}), given an integral current with compact support $S$ in $ S^\infty_{(1)}\cap \Omega_\omega$, we can easily approximate it by a polyhedral chain $P'$ in 
$$ S^\infty_{(1)} \setminus \bigcup_{\sigma\in \Upsilon'_0(\Gamma_{(1)})} \bigcup_{\gamma\in \mathcal{T}(\Gamma_{(1)},\sigma)} \lambda_\omega(\gamma).\mathcal{K}_\sigma\subset S^\infty_{(1)}\cap \Omega_\omega.$$
From Lemma \ref{projection main 2}, for any $\theta\in [0,1]$, $(\overline{\mathscr{D}}_\theta)_\sharp P'$ is then supported inside 
 $$ S^\infty_{(3)} \setminus \bigcup_{\sigma\in \Upsilon'_0(\Gamma_{(1)})} \bigcup_{\gamma\in \mathcal{T}(\Gamma_{(1)},\sigma)} \lambda_\omega(\gamma).(\mathcal{K}_{\sigma}\oplus \mathbb{R} \mathbf{u}_\sigma).$$
 
 \end{remarque}

\textbf{The quotient map $\mathscr{J}:$}

Consider the natural map
\begin{equation}\label{map J}
\mathscr{J}: S^\infty_{(3)}/\lambda_\omega(\Gamma_{(1)}) \to (S^\infty/\Gamma)_\omega=S^\infty_\omega/\lambda_\omega(\Gamma_\omega).
\end{equation}
A priori this map is not an isometric embedding, however we only need something weaker. 
Recall from (\ref{F_j subset}) that $\big(S^\infty_{(3)} \setminus \bigcup_{\sigma\in \Upsilon'_0(\Gamma_{(1)})} \bigcup_{\gamma\in \mathcal{T}(\Gamma_{(1)},\sigma)} \lambda_\omega(\gamma).(\mathcal{K}_{\sigma}\oplus \mathbb{R} \mathbf{u}_\sigma) \big) /\lambda_\omega(\Gamma_{(1)})$ is smooth. 
We check without difficulty, using (\ref{F_j subset}), (\ref{u perp u'}) and (\ref{F_{(2)}}), the following:
\begin{lemme}\label{J easy}
The map $\mathscr{J}$ is $1$-Lipschitz, and $\mathscr{J}(F_{(2)}/\lambda_\omega(\Gamma_{(1)}))$ is a countable discrete subset of $(S^\infty/\Gamma)_\omega$. Moreover the  restriction of $\mathscr{J}$ to 
$$\big(S^\infty_{(3)} \setminus \bigcup_{\sigma\in \Upsilon'_0(\Gamma_{(1)})} \bigcup_{\gamma\in \mathcal{T}(\Gamma_{(1)},\sigma)} \lambda_\omega(\gamma).(\mathcal{K}_{\sigma}\oplus \mathbb{R} \mathbf{u}_\sigma) \big) /\lambda_\omega(\Gamma_{(1)})$$
is a locally isometric immersion into the smooth part of $(S^\infty/\Gamma)_\omega$, namely  $\Reg (S^\infty/\Gamma)_\omega$.

\end{lemme}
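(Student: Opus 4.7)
Each of the three assertions of the lemma can be read off from the setup, but we need to combine the quotient-metric definitions with the algebraic facts recorded in \eqref{u perp u'}, \eqref{F_j subset} and Lemma \ref{find induced}. The first claim (1-Lipschitz) is essentially a tautology; the second (countable and discrete image of $F_{(2)}/\lambda_\omega(\Gamma_{(1)})$) reduces to the orthogonality in \eqref{u perp u'}, combined with the explicit discreteness statement at the end of Lemma \ref{find induced}; the third (local isometric immersion) uses properness and freeness of the $\Gamma_\omega$-action on $\Omega_\omega$.

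\textbf{Step 1 (1-Lipschitz).} Given $\bar x,\bar y \in S^\infty_{(3)}/\lambda_\omega(\Gamma_{(1)})$ with lifts $x,y\in S^\infty_{(3)}$, the quotient distance in $S^\infty_{(3)}/\lambda_\omega(\Gamma_{(1)})$ is
$\inf\{\dist(x,\lambda_\omega(\gamma).y):\gamma\in\Gamma_{(1)}\}$, while the quotient distance between their images in $(S^\infty/\Gamma)_\omega$ is
$\inf\{\dist(x,\lambda_\omega(\gamma).y):\gamma\in\Gamma_\omega\}$. Since $\Gamma_{(1)}\subset \Gamma_\omega$ the second infimum does not exceed the first, so $\mathscr{J}$ is $1$-Lipschitz.

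\textbf{Step 2 (countability and discreteness).} The set $F_{(2)}/\lambda_\omega(\Gamma_{(1)})$ consists of the classes of $\pm\mathbf{u}_\sigma$ for $\sigma\in\Upsilon'_0(\Gamma_{(1)})$ by \eqref{F_{(2)}}, so its image under $\mathscr{J}$ is at most countable. For discreteness, I would separate two cases for distinct representatives: if $\sigma\neq\sigma'$, the orthogonality \eqref{u perp u'} shows $\dist_\omega(\mathscr{J}(\overline{\pm\mathbf{u}_\sigma}),\mathscr{J}(\overline{\pm\mathbf{u}_{\sigma'}}))=\pi/2$, since any representative of the second class is orthogonal to $\mathbf{u}_\sigma$ in $H_\omega$; if $\sigma=\sigma'$, the last sentence of Lemma \ref{find induced} already states directly that the projection of $\{\pm\mathbf{u}_\sigma\}$ to $(S^\infty/\Gamma)_\omega$ is discrete, giving a uniform lower bound for the distance between $\mathscr{J}(\overline{\mathbf{u}_\sigma})$ and $\mathscr{J}(\overline{-\mathbf{u}_\sigma})$ (when they differ).

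\textbf{Step 3 (local isometric immersion).} Let $\bar x$ lie in the quotient of the complement of $\bigcup_{\sigma,\gamma}\lambda_\omega(\gamma).(\mathcal{K}_\sigma\oplus\mathbb{R}\mathbf{u}_\sigma)$, and lift it to $x\in S^\infty_{(3)}$. By the third inclusion in \eqref{F_j subset} we have $x\notin F_\omega$, hence $x\in\Omega_\omega$. Since the $\Gamma_\omega$-action on $\Omega_\omega$ is proper and free, there exists $r>0$ such that $\lambda_\omega(\gamma).B(x,r)\cap B(x,r)=\varnothing$ for every $\gamma\in\Gamma_\omega\setminus\{1\}$, and $B(x,r)\subset\Omega_\omega$. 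Then for any $y\in S^\infty_{(3)}$ with $\dist(x,y)<r/3$ the infima in the definitions of both quotient metrics are attained at $\gamma=1$, so they both equal $\dist(x,y)$. Therefore $\mathscr{J}$ restricted to the ball of radius $r/3$ around $\bar x$ in $S^\infty_{(3)}/\lambda_\omega(\Gamma_{(1)})$ is an isometry onto its image, which moreover lies in $\Pi_\omega(\Omega_\omega)=\Reg(S^\infty/\Gamma)_\omega$.

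\textbf{Main obstacle.} There is no substantial analytic difficulty: the only subtlety is making sure, in Step 3, that the neighborhood of $x$ in $S^\infty_{(3)}$ is small enough to avoid every translate $\lambda_\omega(\gamma).x$ for $\gamma\in\Gamma_\omega\setminus\{1\}$, not merely $\gamma\in\Gamma_{(1)}\setminus\{1\}$. This is exactly where one must invoke that the full ultralimit action is proper and free on $\Omega_\omega$, a fact already recorded in Subsection \ref{subsection:nota}; once this is in hand both infima collapse to $\dist(x,y)$ simultaneously, which is what makes the map locally isometric rather than merely $1$-Lipschitz.
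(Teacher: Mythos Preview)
Your proposal is correct and follows essentially the same approach as the paper, which merely states that the lemma is checked ``without difficulty'' using \eqref{F_j subset}, \eqref{u perp u'} and \eqref{F_{(2)}}. You have supplied the details the paper omits, invoking exactly those facts together with the proper free action on $\Omega_\omega$ recorded in Subsection~\ref{subsection:nota}.
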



\textbf{The final deformation maps:}

Consider the compositions of quotient maps
\begin{equation} \label{notation proj theta}
\begin{split}
\mathscr{D}_1 & : S^\infty_{(1)}/\lambda_\omega(\Gamma_{(1)}) \xrightarrow{\overline{\mathscr{D}}_1}
S^\infty_{(2)}/\lambda_\omega(\Gamma_{(1)}) \hookrightarrow S^\infty_{(3)}/\lambda_\omega(\Gamma_{(1)}) \xrightarrow{\mathscr{J}} (S^\infty/\Gamma)_\omega,\\
\mathscr{D}_\theta & :S^\infty_{(1)}/\lambda_\omega(\Gamma_{(1)}) \xrightarrow{\overline{\mathscr{D}}_\theta}
S^\infty_{(3)}/\lambda_\omega(\Gamma_{(1)}) \xrightarrow{\mathscr{J}} 
 (S^\infty/\Gamma)_\omega.
\end{split}
\end{equation}
They are well-defined by $\Gamma_{(1)}$-equivariance. Here $\mathscr{J}$ is the $1$-Lipschitz natural map (\ref{map J}), and $\overline{\mathscr{D}}_\theta$ is the quotient map induced by $\overline{\mathscr{D}}_\theta$. All the maps above are $1$-Lipschitz by Lemma \ref{projection main 1}, Lemma \ref{projection main 2}, Lemma \ref{J easy}. Besides, $\mathscr{D}_\theta$ is a continuous family of maps interpolating between $\mathscr{D}_0=\Id$ and $\mathscr{D}_1$. Finally, by (\ref{F_j subset}), Lemma \ref{projection main 1} and Lemma \ref{J easy},
\begin{equation}\label{3 janvier}
{\mathscr{D}}_1(\mathbf{A}) \cap \Sing(S^\infty/\Gamma)_\omega 
\end{equation}
is a discrete countable subset of $(S^\infty/\Gamma)_\omega$.


\begin{remarque} \label{point of 36 bis}
By Lemmas \ref{projection main 1} and  \ref{J easy}, given an integral current with compact support $S$ in the quotient of  $S^\infty_{(1)} \setminus \bigcup_{\sigma\in \Upsilon_0(\Gamma_{(1)})} \bigcup_{\gamma\in \mathcal{T}(\Gamma_{(1)},\sigma)} \lambda_\omega(\gamma).\mathcal{K}_\sigma$ by $\lambda_\omega(\Gamma_{(1)})$, we can approximate it with a polyhedral chain $P'$ with the following property: for any $\theta\in [0,1]$, 
$$\spt (\mathscr{D}_\theta)_\sharp P'\subset \Omega_\omega/\lambda_\omega(\Gamma_\omega)=\Reg(S^\infty/\Gamma)_\omega .$$
\end{remarque}

\section{The spherical Plateau problem for hyperbolic groups} \label{section:final}


\subsection{Preparations}

From now on, $\Gamma$ always denotes a non-elementary  torsion-free hyperbolic group. 
We continue to use the notations of Section \ref{subsection:projection}. 
Let $\mathbf{A}$ be any separable subset of $(S^\infty/\Gamma)_\omega$.
Recall from (\ref{F_j subset}) that
$$F_{(2)}=  S^\infty_{(2)} \cap \bigcup_{\sigma\in \Upsilon'_0(\Gamma_{(1)})} \bigcup_{\gamma\in \mathcal{T}(\Gamma_{(1)},\sigma)} \lambda_\omega(\gamma).\mathbb{R}\mathbf{u}_{\sigma}.$$
As we saw in  Subsection \ref{subsubsection:two spheres},
one of the key properties of the Hilbert spherical quotient  
$$S^\infty_{(2)}/\lambda_\omega(\Gamma_{(1)})$$ 
is that it is smooth  outside the discrete set of points $F_{(2)}/\lambda_\omega(\Gamma_{(1)})$. 
By (\ref{u perp u'}) and (\ref{F_{(2)}}), two distinct points of $F_{(2)}/\lambda_\omega(\Gamma_{(1)})$ are at distance at least $\frac{\pi}{2}$ from each other.
Around each point of $F_{(2)}/\lambda_\omega(\Gamma_{(1)})$, the spherical quotient  locally looks like a cone over a smooth Hilbert spherical quotient. 
Such a simple local geometry allow us to construct almost mass-non-increasing conical deformations of the type below. 

\begin{lemme}\label{conical retraction}
There is $\tilde{d}>0$ such that for any $\eta_1>0$ , there is $d(\eta_1)\in(0,\tilde{d})$ with the following property: for any $\eta_2\in(0,1)$,  there is a diffeomorphism 
$$\varphi : (S^\infty_{(2)}\setminus F_{(2)})/\lambda_\omega(\Gamma_{(1)})\to (S^\infty_{(2)}\setminus F_{(2)})/\lambda_\omega(\Gamma_{(1)})$$ 
 such that the following holds:
 \begin{enumerate}
 \item $\varphi$ is smoothly isotopic to the identity, 
 \item $\varphi = \Id$ outside of the $\tilde{d}$-neighborhood of 
 $F_{(2)})/\lambda_\omega(\Gamma_{(1)})$, 
 \item $\varphi$ is $(1+\eta_1)$-Lipschitz,
 \item  the restriction of $\varphi$ to the $d(\eta_1)$-neighborhood of 
$F_{(2)})/\lambda_\omega(\Gamma_{(1)})$ is $\eta_2$-Lipschitz.
\end{enumerate}
Moreover $\tilde{d}$ and $d(\eta_1)$ do not depend on $\Gamma_{(1)}$ or $S^\infty_{(2)}$.
\end{lemme}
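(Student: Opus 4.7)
The plan is to construct $\varphi$ as a composition of disjoint, purely radial contractions, one around each singular point, glued to the identity elsewhere. By (\ref{u perp u'}) and (\ref{F_{(2)}}), the singular points $p_1,p_2,\ldots\in F_{(2)}/\lambda_\omega(\Gamma_{(1)})$ are pairwise at spherical distance at least $\pi/2$, so fixing the universal constant $\tilde d:=\pi/10$ makes the closed balls $\overline{B}(p_i,2\tilde d)$ pairwise disjoint; $\varphi$ will equal the identity outside $\bigsqcup_i B(p_i,\tilde d)$.

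Around each $p_i$, the ball $B(p_i,2\tilde d)$ admits geodesic polar coordinates. Indeed, if $\mathbf{u}_i\in F_{(2)}$ is a preimage of $p_i$, its stabilizer in $\lambda_\omega(\Gamma_{(1)})$ fixes $\mathbf{u}_i$ and hence acts by orthogonal transformations on $\mathbf{u}_i^\perp\subset H_{(2)}$, preserving every geodesic sphere in $S^\infty_{(2)}$ centered at $\mathbf{u}_i$; together with the disjointness of the $2\tilde d$-balls around the orbit points, this identifies $B(p_i,2\tilde d)$ isometrically with the spherical cone with apex $p_i$ over an angular cross-section $\Xi_i$. On the smooth locus $B(p_i,2\tilde d)\setminus\{p_i\}$ one then has coordinates $(r,\xi)\in (0,2\tilde d)\times\Xi_i$ with Riemannian metric $dr^2+\sin^2(r)\,g_{\Xi_i}$.

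For $\eta_1>0$, set $d(\eta_1):=\tilde d\,\eta_1/(2(1+\eta_1))\in(0,\tilde d)$; given $\eta_2\in(0,1)$, choose any smooth strictly increasing function $f=f_{\eta_1,\eta_2}:[0,\tilde d]\to[0,\tilde d]$ with $f(r)=(\eta_2/2)\,r$ on $[0,d(\eta_1)]$, $f(r)=r$ near $\tilde d$, $f(r)\leq r$, and $f'(r)\leq 1+\eta_1$ everywhere. Such an $f$ exists because the average slope required on the transition interval $[d(\eta_1),\tilde d]$ is at most $\tilde d/(\tilde d-d(\eta_1))=2(1+\eta_1)/(2+\eta_1)\leq 1+\eta_1$ — this is exactly why the formula for $d(\eta_1)$ is chosen so, and, importantly, why $d(\eta_1)$ is independent of $\eta_2$. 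Define
$$
\varphi\bigl(\exp_{p_i}(r\xi)\bigr) := \exp_{p_i}\bigl(f(r)\xi\bigr)\ \text{on each}\ B(p_i,\tilde d)\setminus\{p_i\}, \qquad \varphi\equiv\mathrm{id}\ \text{elsewhere}.
$$
Smoothness across $\partial B(p_i,\tilde d)$ follows from $f=\mathrm{id}$ near $\tilde d$; the diffeomorphism property follows from strict monotonicity with $f(0^+)=0$; the isotopy $f_t:=(1-t)\mathrm{id}+tf$ gives (1), and (2) is immediate.

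To check the Lipschitz bounds, compute the differential in polar coordinates: $d\varphi$ acts by $f'(r)$ radially and by $\sin f(r)/\sin r$ angularly, and since these directions are orthogonal in the polar metric, the operator norm is their maximum. Using $f(r)\leq r<\pi/2$ and monotonicity of $\sin$ on $[0,\pi/2]$, the angular factor is always at most $1$, so $\|d\varphi\|\leq 1+\eta_1$, giving (3). On $[0,d(\eta_1)]$ we have $f'(r)=\eta_2/2$ and, using $\sin r\geq 2r/\pi$ on $[0,\pi/2]$, the angular factor is at most $(\eta_2 r/2)/(2r/\pi)=\eta_2\pi/4<\eta_2$; hence $\|d\varphi\|\leq\eta_2$ pointwise on the $d(\eta_1)$-neighborhood, which integrates to the desired $\eta_2$-Lipschitz bound inside each connected component $B(p_i,d(\eta_1))$ of that neighborhood, yielding (4). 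The main geometric step is the identification of each $B(p_i,\tilde d)$ with a metric cone — once this spherical-cone model is in hand, everything else reduces to a one-dimensional exercise on $[0,\tilde d]$, and $\tilde d, d(\eta_1)$ are manifestly universal, independent of $\Gamma_{(1)}$ and $S^\infty_{(2)}$.
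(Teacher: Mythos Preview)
Your proof is correct and follows essentially the same strategy as the paper: identify a conical neighborhood of each isolated singular point via polar coordinates, then push forward a one-variable radial reparametrization $f$ chosen so that $f'\le 1+\eta_1$ globally and $f'\le \eta_2$ near $0$. The paper carries out the same construction, writing the warped metric as $h(r)^2 g_{\mathrm{round}}\oplus dr^2$ with $h(r)=r+o(r)$ and choosing $\tilde d$ small so that $h(r)/r$ is close to $1$; your version is slightly more explicit in that you use $h(r)=\sin r$ directly and the concrete bound $\sin r\ge 2r/\pi$, which lets you fix $\tilde d=\pi/10$ once and for all rather than appealing to a smallness condition.
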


\begin{proof}

First consider the toy case of $[0,\infty)$. Consider the Lipschitz bijection $\psi_0$ of $[0,\infty)$ defined as follows. Fix $\tilde{d}>0$, $\eta_1>0$, $\eta_2>0$ and $d(\eta_1)\in(0,\tilde{d})$. Set $\psi_0$ to be equal to $0$ at $0$, $\frac{\eta_2}{2}x$ at $d(\eta_1)$, $x$ for any $x\geq \tilde{d}$, and interpolate linearly between $0$, $d(\eta_1)$, and $d(\eta_1)$, $\tilde{d}$. We can smooth $\psi_0$ to a diffeomorphism $\psi_1$ of $[0,\infty)$ with the following properties whenever $d(\eta_1)$ is smaller than say $\frac{\eta_1 \tilde{d}}{4(1+\eta_1)}$: $\psi_1=\Id$ outside of $[0,\tilde{d}]$, $\psi_1$ is $(1+\frac{\eta_1}{2})$-Lipschitz and for any $\eta_2\in(0,1)$, the restriction of $\psi_1$ to $[0,d(\eta_1)]$ is $\eta_2$-Lipschitz. Note that here $d(\eta_1)$ only depends on $\eta_1$, not on $\eta_2$.

Now consider a point $p\in F_{(2)}/\lambda_\omega(\Gamma_{(1)})$. By the description of $F_{(2)}$ in (\ref{gamma'h(1)}), (\ref{F_{(2)}}), there is a neighborhood of $p$ which can be written as a cone over a smooth spherical quotient $Q$. More concretely, this neighborhood can be described as the quotient space
$$Q\times [0,2\tilde{d})/\sim$$ 
where $\tilde{d}>0$ is some positive constant and two points of $Q\times [0,2\tilde{d})$ are identified if and only if they both belong to $Q\times \{0\}$. The point corresponding to $Q\times \{0\}$ is $p$. 
Next, $Q\times [0,2\tilde{d})/\sim$ is endowed with a metric of the form $g := h^2(r) g_{\mathrm{round}}\oplus dr^2$ for the round unit Hilbert-Riemannian metric $g_{\mathrm{round}}$ on $Q$ and an explicit, increasing,  warping function $h(r)$ depending on $r$ (here $r$ denotes the distance to $p$). We have the first order expansion $h(r) = r+o(r)$.

We can apply the discussion for the toy case, and define for $\tilde{d}>0$, $\eta_1>0$, $\eta_2>0$ and $d(\eta_1)>0$, the following diffeomorphism  $\phi$ of the smooth part $Q\times (0,2\tilde{d})$:
$$\phi : Q\times (0,2\tilde{d})\to Q\times (0,2\tilde{d}),$$
$$\phi((q,r)):=(q,\psi_1(r))$$
where $\psi_1$ was defined in the first paragraph. As long as $\tilde{d}$ is chosen small enough so that $h(r)/r$ is close enough to $1$ when $r\in (0,\tilde{d})$, we obtain $\varphi$ as in the lemma by defining such a diffeomorphism around disjoint neighborhoods of all points $p \in F_{(2)}/\lambda_\omega(\Gamma_{(1)})$.

Finally, $\tilde{d}$, $d(\eta_1)$ do not depend on $\Gamma_{(1)}$ or $S^\infty_{(2)}$ since the size $\tilde{d}$ of the neighborhoods of $p \in F_{(2)}/\lambda_\omega(\Gamma_{(1)})$ and the warping function $h(r)$ ($r\in [0,\tilde{d}]$) do not depend on those parameters.

\end{proof}

The next tool is a ''closing lemma'' which enables to construct controlled fillings for integral currents in the thin part of $S^\infty/\Gamma$.
Let $\overline{\delta}$ be the positive constant found in Corollary \ref{margulis}.

\begin{lemme} \label{controlled filling}
Let $n\geq 2$ 
and let $T$ be an $(n-1)$-dimensional integral current with compact support inside $[S^\infty/\Gamma]^{\leq \overline{\delta}/4}$. 
Suppose that $ T=\partial W_0$ for some $n$-dimensional integral current $W_0$ with compact support inside $[S^\infty/\Gamma]^{\leq \overline{\delta}/4}$. 
Then for some $n$-dimensional integral current with compact support $W$ inside $S^\infty/\Gamma$, $T=\partial W$ and
$$\mathbf{M}(W) \leq \bar{c} \mathbf{M}(T)$$
for some positive constant $\bar{c}$ depending only on the dimension $n$,
and
$$W_0= W+\partial V$$
for some integral current $V$ with compact support in $S^\infty/\Gamma$.
\end{lemme}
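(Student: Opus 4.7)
The plan is to reduce to a single component of the thin part, lift to a cyclic cover of $S^\infty/\Gamma$ where the thin component becomes topologically simple, apply a linear isoperimetric inequality there, and descend the filling.

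First I would reduce to the case where $\spt T$ and $\spt W_0$ lie in a single connected component $R$ of $[S^\infty/\Gamma]^{\leq \overline{\delta}/4}$. Since both supports are compact and contained in the thin part, they meet only finitely many components, and a slicing by a $1$-Lipschitz function constant on each component (transitioning across the annulus $[S^\infty/\Gamma]^{\leq \overline{\delta}} \setminus [S^\infty/\Gamma]^{\leq \overline{\delta}/4}$) decomposes $T, W_0$ componentwise. Let $R'$ be the component of $[S^\infty/\Gamma]^{\leq \overline{\delta}}$ containing $R$. By Corollary \ref{margulis}, $\iota_* \pi_1(R') = \langle\gamma\rangle$ for some $\gamma \in \Gamma$ (possibly trivial). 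Form the cover $\pi: \tilde X := S^\infty/\langle\gamma\rangle \to S^\infty/\Gamma$, with $\tilde X = S^\infty$ if $\gamma = 1$. Because $\pi_1(R')$ factors through $\langle\gamma\rangle$, each connected component of $\pi^{-1}(R')$ is homeomorphic to $R'$, so $T$ and $W_0$ admit lifts to compactly supported integral currents $\tilde T, \tilde W_0$ in $\tilde X$ with $\partial \tilde W_0 = \tilde T$ and with identical masses. The space $\tilde X$ is a smooth Hilbert manifold locally isometric to $S^\infty$, and homotopy equivalent either to a point (if $\gamma = 1$) or to $S^1$ (otherwise).

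The core step is a linear isoperimetric inequality in $\tilde X$: I would produce a compactly supported $n$-current $\tilde W$ with $\partial \tilde W = \tilde T$ and $\mathbf{M}(\tilde W) \leq \bar c(n)\, \mathbf{M}(\tilde T)$. Topologically, $\tilde T$ is null-homologous in $\tilde X$: $H_{n-1}(\tilde X) = 0$ for $n \geq 3$ by homotopy equivalence to $S^1$, while for $n = 2$ the class of $\tilde T$ in $H_1(\tilde X) = \mathbb{Z}$ vanishes because $\tilde W_0$ witnesses it as a boundary. For the uniform mass bound I would exploit that $\tilde X$ is homotopically one-dimensional: deform $\tilde T$ onto a $1$-dimensional geodesic core of $\tilde X$ (an axis-like circle when $\gamma \neq 1$, a point when $\gamma = 1$) via a Federer-Fleming style procedure applied to a $\langle\gamma\rangle$-equivariant cellular approximation of $\tilde X$ with bounded local geometry. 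This introduces an $n$-current of mass controlled by a dimensional constant times $\mathbf{M}(\tilde T)$, and the image of $\tilde T$ in the $1$-skeleton is an $(n-1)$-chain in a $1$-complex, hence either trivial (for $n \geq 3$) or a multiple of the core circle vanishing by null-homology (for $n = 2$), which can be closed up inside the core to yield $\tilde W$.

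To finish, set $W := \pi_\sharp \tilde W$; since $\pi$ is a local isometry and $\tilde W$ has compact support, $W$ is compactly supported in $S^\infty/\Gamma$ with $\partial W = T$ and $\mathbf{M}(W) \leq \bar c\, \mathbf{M}(T)$. For the interpolating current, $\tilde W_0 - \tilde W$ is a compactly supported $n$-cycle in $\tilde X$, null-homologous since $H_n(\tilde X) = 0$ for $n \geq 2$; a compactly supported filling $\tilde V$ exists by a cone construction (for instance, realizing the cycle as the boundary of a chain obtained by lifting to $S^\infty$ and coning to a base point, which is possible since the cycle bounds in $\tilde X$), and $V := \pi_\sharp \tilde V$ satisfies $W_0 = W + \partial V$. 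The principal obstacle is the linear filling step: a naive cone over a point in $S^\infty$ would produce a filling of mass proportional to $\diam(\spt \tilde T)$, which is not controlled by $\mathbf{M}(T)$ alone, so the argument genuinely requires combining the homotopy-$S^1$ structure of $\tilde X$ with its local Euclidean isoperimetric profile to extract a dimensional constant $\bar c(n)$.
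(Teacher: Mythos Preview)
Your overall architecture (use Margulis to reduce to a cyclic subgroup, fill in a space with one-dimensional homotopy type, descend) matches the paper's, but the central step---the linear isoperimetric inequality in $\tilde X = S^\infty/\langle\gamma\rangle$---has a genuine gap. You invoke a Federer--Fleming procedure on a ``$\langle\gamma\rangle$-equivariant cellular approximation of $\tilde X$ with bounded local geometry,'' but $\tilde X$ does \emph{not} have bounded local geometry: there are unit vectors $f$ with $\|\lambda_\Gamma(\gamma)f - f\|$ arbitrarily small, so the injectivity radius of $\tilde X$ is not bounded below. It is unclear what the cellular approximation should be in this infinite-dimensional setting, and Federer--Fleming type deformation theorems with a dimensional constant are not available here off the shelf. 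You flag this as the ``principal obstacle'' but do not resolve it; as written, the argument does not produce the constant $\bar c(n)$.

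The paper circumvents this by an explicit construction that exploits the regular representation rather than abstract deformation theory. After reducing to polyhedral chains $P=\partial Q$ in the thin part, it uses that for any $g\in\Gamma$ the function $f_N = N^{-1/2}\mathbf{1}_{\{g,\dots,g^N\}}$ satisfies $\|\lambda_\Gamma(g)f_N - f_N\|^2 = 2/N$, so there is a closed curve $\gamma_0\subset S^\infty/\Gamma$ of length at most $1$ representing the cyclic subgroup given by Margulis. One then builds a $1$-Lipschitz map $\psi:\spt Q\to\gamma_0$ homotopic to the inclusion, and straightens the homotopy $\Phi'$ so that each track $t\mapsto\Phi'(t,x)$ is the unique minimizing geodesic from $x$ to $\psi(x)$, of length $<2\pi/3$. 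Setting $W:=-\Phi'_\sharp\llbracket 1_{[0,1]\times\spt P}\rrbracket$ and $V:=-\Phi'_\sharp\llbracket 1_{[0,1]\times\spt Q}\rrbracket$ gives $\partial W=P$ and $Q=W+\partial V$ directly, and $\mathbf{M}(W)\le\bar c(n)\,\mathbf{M}(P)$ follows from elementary spherical geometry because the tracks have uniformly bounded length and $\psi$ is $1$-Lipschitz. The dimensional constant thus comes from explicit control on a concrete homotopy, and the relation $W_0=W+\partial V$ is built in rather than obtained from a separate $H_n$-vanishing argument.
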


\begin{proof}
Recall that by \cite[Lemma 1.6]{Antoine23a}, any integral current with compact support in $S^\infty/\Gamma$ is well approximated by polyhedral chains. Thus we only need to show the lemma for polyhedral chains. So let $P$ be an $(n-1)$-dimensional polyhedral chain in $[S^\infty/\Gamma]^{\leq \overline{\delta}/3}$, and assume that it bounds an $n$-dimensional polyhedral chain $Q$ with compact support inside $[S^\infty/\Gamma]^{\leq \overline{\delta}/3}$: $P = \partial Q.$
We can additionally assume that $\spt Q$ is connected since if not, we can apply the conclusion to each of the finitely many connected components of $\spt Q$.
We can also assume that each element of $\spt Q$ lifts to a function with finite support inside $S^\infty\subset \ell^2(\Gamma)$.

With those preparatory assumptions in place, we continue the proof of the lemma. By the Margulis type lemma, Corollary \ref{margulis}, there is a connected closed embedded curve $\gamma_0\subset S^\infty/\Gamma$ and a Lipschitz map 
$$\psi : \spt Q\to \gamma_0,$$
which is homotopic to the identity via a Lipschitz map 
$$\Phi : [0,1]\times \spt Q \to S^\infty/\Gamma$$
with $\Phi(0,.)=\Id$, $\Phi(1,.)=\psi$.
The push-forward integral current $\psi_\sharp P$ bounds the integral current $\psi_\sharp Q$ supported inside the curve $\gamma_0$. Of course, since $n\geq2$, the push-forward current $\psi_\sharp Q$ is trivial: $\psi_\sharp Q=0$. Thus $\psi_\sharp P = 0.$

The idea of the proof is then to choose a $\gamma_0$ of small length, and straighten the homotopy $\Phi$ without changing the end-maps, so that the new homotopy between $P$ and $\psi_\sharp P=0$ induces an integral current $W$ with mass controlled linearly by that of $P$.

We briefly recall the simple argument to choose $\gamma_0$ of length at most $1$. For any integer $N>0$, for any $g\in \Gamma$, the function $f$ in $S^\infty\subset \ell^2(\Gamma)$ equal to $\frac{1}{\sqrt{N}}$ on $\{g,...,g^N\}$ and equal to $0$ everywhere else is such that
$$\|\lambda_\Gamma(g).f -f\|^2 =\frac{2}{N}.$$  
Thus we can find a curve $\gamma_0$ as above with small length, in such a way that $\psi:\spt Q \to \gamma_0$ can be chosen to be $1$-Lipschitz.
Furthermore we can assume, by taking $N$ very large, that for any $x\in \spt Q$, the curve $\{\Phi(t,x)\}_{t\in [0,1]}$ is homotopic to a length minimizing geodesic segment joining $x$ to $ \Phi(1,x)=\psi(x)$ inside $S^\infty/\Gamma$ of length less than $\frac{2\pi}{3}$. This length condition ensures that the length minimizing geodesic segment is unique (for fixed endpoints and fixed homotopy class) and varies in a Lipschitz way with respect to $x\in \spt Q$. 

Next, we can suppose that the map $\psi$ is simplicial and ``linear'' on $\spt Q$, meaning that it maps $0$-dimensional faces in $\spt Q$ to one point $p_0\in \Gamma_0$, and it maps geodesics segments in $\spt Q$ to geodesic segments inside the curve $\gamma_0$ with respect to the metric induced on $\gamma_0$, in a linear way. 
We straighten the map
$$\Phi : [0,1]\times\spt Q \to S^\infty/\Gamma,$$
namely we replace, for each $x\in \spt Q$, the curve $\{\Phi(t,x)\}_{t\in [0,1]}$ by the unique length minimizing geodesic segment joining $x$ to $ \Phi(1,x)=\psi(x)$ inside $S^\infty/\Gamma$ (which has length less than $\frac{2\pi}{3}$ by choice of $\gamma_0$). This straightening yields a new Lipschitz map
$$\Phi' : [0,1]\times\spt Q \to S^\infty/\Gamma$$
such that $\Phi'(0,.)=\Id$, $\Phi'(1,.)=\psi$.
Set 
$$W:=-\Phi'_\sharp \llbracket 1_{[0,1]\times\spt P}\rrbracket + \psi_\sharp Q=-\Phi'_\sharp \llbracket 1_{[0,1]\times\spt P}\rrbracket,$$
and 
$$V:= - \Phi'_\sharp \llbracket 1_{[0,1]\times\spt Q}\rrbracket.$$
Here we put the  orientations on $[0,1]\times\spt Q$ and $[0,1]\times\spt P$ so that 
$$\partial \Phi'_\sharp \llbracket 1_{[0,1]\times\spt P} \rrbracket = \psi_\sharp P- P = -P,$$
$$\partial \Phi'_\sharp \llbracket 1_{[0,1]\times\spt Q}\rrbracket  = \psi_\sharp Q -Q - \Phi'_\sharp \llbracket 1_{[0,1]\times\spt P}\rrbracket .$$
Then 
$$\partial W=P\quad \text{and}\quad  Q=W+\partial V.$$ By elementary spherical geometry, and since $\psi:\spt Q\to \gamma_0$ was chosen to be $1$-Lipschitz, there is a positive constant $\bar{c}$ depending only on the dimension $n$ such that 
$$\mathbf{M}(W ) \leq \bar{c} \mathbf{M}(P).$$

\end{proof}

As in Section \ref{hilbert spherical quotients}, let $h\in H_n(\Gamma;\mathbb{Z})$,  let $C_i \in \mathscr{C}(h)$ be a minimizing sequence of cycles converging to a Plateau solution 
$$C_\infty = (X_\infty,d_\infty,S_\infty)$$
in the intrinsic flat topology. We consider the integral current $S_\infty$ as an integral current inside $ (S^\infty/\Gamma)_\omega$, see Lemma \ref{ultraembedding1}. We use the notation $U^{>\delta}$ introduced in Subsection \ref{subsection:nota}. Recall that $\mathbf{d}_\mathcal{F}$ denotes the intrinsic flat distance.
The following refinement of Lemma \ref{approx before limit} is needed: 
\begin{lemme} \label{approx before limit bis}
Let $0<\delta_1<\delta_2$.
Consider an $n$-dimensional integral current $U_0$ in $ (S^\infty/\Gamma)_\omega$ without boundary and an $(n+1)$-dimensional integral current $V_0$ in $ (S^\infty/\Gamma)_\omega$ such that
$$S_\infty=U_0+\partial V_0.$$
Suppose that $\spt V_0$ is compact and contained inside $\Reg(S^\infty/\Gamma)_\omega $. 
Suppose that $U_0=E_0+F_0$ for some integral currents $E_0,F_0$ such that $\spt E_0$  is compact and
$$\spt E_0\subset U^{>\delta_1},\quad \spt F_0\subset U^{\leq\delta_2}.$$
Let $\eta>0$. Then there exists integral currents $A,B$ with compact supports in $S^\infty/\Gamma$ such that 
$$A+B\in  \mathscr{C}(h)\quad \text{with}\quad \spt B\subset [S^\infty/\Gamma]^{\leq 2\delta_2},$$
$$\mathbf{d}_\mathcal{F}(A+B,U_0) \leq \eta \quad \text{and}\quad\mathbf{d}_\mathcal{F}(A,E_0) \leq \eta,$$
$$\mathbf{M}(A) \leq \mathbf{M}(E_0) +\eta \quad \text{and}\quad \mathbf{M}(\partial A) \leq \mathbf{M}(\partial E_0) +\eta.$$
\end{lemme}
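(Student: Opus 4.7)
The plan is to refine the argument of Lemma~\ref{approx before limit} by separating, on the simplicial-complex level, the polyhedral approximation of $U_0\llcorner\mathcal{O}$ into a ``thick'' piece approximating $E_0$ and a ``thin'' piece approximating $F_0\llcorner\mathcal{O}$, and then tracking the thick/thin decomposition after transport to $S^\infty/\Gamma$. Since $\spt V_0$ is compact in $\Reg(S^\infty/\Gamma)_\omega$ and $\spt E_0\subset U^{>\delta_1}$ is compact, both lie in $U^{>\delta_0}$ for some $\delta_0\in(0,\min(\delta_1,2\delta_2)/2)$; by Lemma~\ref{lemma:yinfinity}, $\overline{\spt S_\infty\cap U^{>\delta_0}}$ is compact. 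Using the slicing and coarea argument of Lemma~\ref{apply coarea}, I fix an open set $\mathcal{O}\subset U^{>\delta_0/2}$ containing $\spt E_0\cup\spt V_0\cup \overline{\spt S_\infty\cap U^{>\delta_0}}$ so that $S_\infty\llcorner\mathcal{O}$, $U_0\llcorner\mathcal{O}$, $E_0\llcorner\mathcal{O}=E_0$, $V_0\llcorner\mathcal{O}=V_0$ and $F_0\llcorner\mathcal{O}$ are integral currents with compact supports, and $\partial(S_\infty\llcorner\mathcal{O})$ and $\partial(F_0\llcorner\mathcal{O})$ have arbitrarily small mass.

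For small $\epsilon>0$, using the polyhedral approximation of \cite[Lemma~1.6]{Antoine23a} I construct polyhedral chains $P_1,P_E,P_F,R$ in $\mathcal{O}$ approximating $S_\infty\llcorner\mathcal{O}$, $E_0$, $F_0\llcorner\mathcal{O}$, $V_0$ in flat distance, with $\mathbf{M}(P_E)\leq\mathbf{M}(E_0)+\epsilon$, $\mathbf{M}(\partial P_E)\leq\mathbf{M}(\partial E_0)+\epsilon$, $\partial R=P_E+P_F-P_1$, $\spt P_E\subset U^{>\delta_1/2}$ and $\spt P_F\subset U^{\leq\delta_2+\epsilon}$. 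These are realized as oriented subcomplexes $K_E, K_F, K_1$ of a finite oriented simplicial complex $L$ with $\partial L=K_E+K_F-K_1$ and a Lipschitz map $\phi:L\to\mathcal{O}$ sending $K_E$ into $U^{>\delta_1/2}$ and $K_F$ into $U^{\leq\delta_2+\epsilon}$. As in Lemma~\ref{approx before limit}, the pulled-tight sequence $\{C_i\}$ and the construction of the ultralimit provide Lipschitz maps $\phi_i:L\to[S^\infty/\Gamma]^{>\delta_0/4}$ approximating $\phi$ together with open sets $\mathcal{O}_i\subset[S^\infty/\Gamma]^{>\delta_0/4}$ approximating $\mathcal{O}$, slightly enlarged so that by Theorem~\ref{compactness} $\spt C_i\cap[S^\infty/\Gamma]^{>\delta_0}\subset\mathcal{O}_i$ for large $i$. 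Hence for large $i$, $\phi_i(K_E)\subset[S^\infty/\Gamma]^{>\delta_1/4}$, $\phi_i(K_F)\subset[S^\infty/\Gamma]^{\leq 2\delta_2}$, and $\spt(C_i-C_i\llcorner\mathcal{O}_i)\subset[S^\infty/\Gamma]^{\leq 2\delta_2}$. By \cite[Theorem~1.4]{Wenger07} and Lemma~\ref{1st approx}, one writes $(\phi_i)_\sharp\llbracket 1_{K_1}\rrbracket-C_i\llcorner\mathcal{O}_i=U+\partial V'$ with $U,V'$ compactly supported and $\mathbf{M}(U)\leq\epsilon_1$. Setting
\[
\hat D := C_i - C_i\llcorner\mathcal{O}_i - U + (\phi_i)_\sharp\llbracket 1_{K_E+K_F}\rrbracket = C_i + \partial V' + \partial(\phi_i)_\sharp\llbracket 1_L\rrbracket \in \mathscr{C}(h),
\]
I take $A:=(\phi_i)_\sharp\llbracket 1_{K_E}\rrbracket$ and $B:=\hat D-A$. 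The polyhedral approximation together with the convergence $\phi_i\to\phi$ delivers $\mathbf{d}_\mathcal{F}(A,E_0)\leq\eta$, $\mathbf{M}(A)\leq\mathbf{M}(E_0)+\eta$, and $\mathbf{M}(\partial A)\leq\mathbf{M}(\partial E_0)+\eta$; the flat bound $\mathbf{d}_\mathcal{F}(A+B,U_0)=\mathbf{d}_\mathcal{F}(\hat D,U_0)\leq\eta$ follows since $\hat D\to U_0$ in flat distance as $\epsilon,\epsilon_1\to 0$ and $i\to\infty$.

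The hard part will be ensuring $\spt B\subset[S^\infty/\Gamma]^{\leq 2\delta_2}$. The pieces $(\phi_i)_\sharp\llbracket 1_{K_F}\rrbracket$ and $C_i-C_i\llcorner\mathcal{O}_i$ are already thin by construction, but the Wenger error $U$ may a priori lie in the thick region $[S^\infty/\Gamma]^{>2\delta_2}$. To remove this obstruction, one localizes the decomposition on a ball cover of $\phi_i(K_1)\cup\spt(C_i\llcorner\mathcal{O}_i)$ split into balls inside $[S^\infty/\Gamma]^{\leq 2\delta_2}$ and balls inside $[S^\infty/\Gamma]^{>2\delta_2}$; on each thick ball, the Hausdorff convergence $\spt(C_i^{>\delta_0})\to\overline{\spt S_\infty\cap U^{>\delta_0}}\subset\mathcal{O}$ from Theorem~\ref{compactness}, combined with the small-mass boundary slice property of $\mathcal{O}$ and the identity $\partial K_1=\partial K_E+\partial K_F$ (from $\partial^2 L=0$ together with $\partial F_0=-\partial E_0$), lets the local contribution to $(\phi_i)_\sharp\llbracket 1_{K_1}\rrbracket-C_i\llcorner\mathcal{O}_i$ be written as the boundary of a compactly supported current of arbitrarily small mass. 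These thick local boundaries are then absorbed into $V'$, forcing $\spt U\subset[S^\infty/\Gamma]^{\leq 2\delta_2}$ and hence $\spt B\subset[S^\infty/\Gamma]^{\leq 2\delta_2}$.
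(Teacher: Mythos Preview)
Your overall approach coincides with the paper's own (very brief) sketch: approximate $S_\infty\llcorner\mathcal{O}$, $U_0\llcorner\mathcal{O}$, $V_0$ by polyhedral chains as in Lemma~\ref{approx before limit}, together with an extra polyhedral chain approximating $E_0$ (the paper calls it $Z$, you call it $P_E$), lift everything to $S^\infty/\Gamma$, and set $A$ to be the lift of that extra chain. Your handling of the enlargement of $\mathcal{O}_i$ so that $\spt(C_i-C_i\llcorner\mathcal{O}_i)\subset[S^\infty/\Gamma]^{\leq\delta_0}$, and your verification that $\spt\partial(S_\infty\llcorner\mathcal{O})\subset U^{\leq\delta_2}$ via $\partial K_1=\partial K_E+\partial K_F$, are both correct and more detailed than what the paper records.

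There is, however, a genuine gap in your ``hard part''. You claim that on each thick ball the local contribution to $T:=(\phi_i)_\sharp\llbracket 1_{K_1}\rrbracket-C_i\llcorner\mathcal{O}_i$ can be written as the boundary of a compactly supported current of arbitrarily small mass, and that these pure boundaries can be absorbed into $V'$. This fails on two counts. First, the local restrictions $T\llcorner d_j$ carry nonzero slice boundaries at $\partial d_j$, so they are not cycles and cannot be written as pure boundaries. Second, even if you sum the thick pieces into $T_{\text{thick}}:=T\llcorner[S^\infty/\Gamma]^{>3\delta_2/2}$, the boundary $\partial T_{\text{thick}}$ is the slice of $T$, which is generically nonzero; and there is no reason for a filling of $T_{\text{thick}}$ to have \emph{small} mass, since $\mathbf{M}(T_{\text{thick}})$ is not small.

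The fix is simpler than what you attempt: do not try to push the thick part of $U$ into $V'$, but rather into $A$. After obtaining $T=U+\partial V'$ with $\mathbf{M}(U)\leq\epsilon_1$, note that $\partial U=\partial T$ is supported in $[S^\infty/\Gamma]^{\leq 2\delta_2}$ by your own observation about $\partial K_1$ and $\partial(C_i\llcorner\mathcal{O}_i)$. By coarea, pick a generic level $s\in(3\delta_2/2,2\delta_2)$ so that $U^{\,>}:=U\llcorner[S^\infty/\Gamma]^{>s}$ is an integral current with $\mathbf{M}(\partial U^{\,>})\leq C\epsilon_1/\delta_2$; since $\partial U$ avoids this region, $\partial U^{\,>}$ is just the slice. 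Now redefine
\[
A:=(\phi_i)_\sharp\llbracket 1_{K_E}\rrbracket - U^{\,>},\qquad B:=\hat D - A.
\]
Then $\spt B\subset[S^\infty/\Gamma]^{\leq 2\delta_2}$ automatically, and the extra term $U^{\,>}$ costs at most $\epsilon_1$ in $\mathbf{M}(A)$, at most $C\epsilon_1/\delta_2$ in $\mathbf{M}(\partial A)$, and at most $\epsilon_1$ in $\mathbf{d}_\mathcal{F}(A,E_0)$, all of which are absorbed into $\eta$ by choosing $\epsilon_1$ small.
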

\begin{proof}
The proof is very similar to that of Lemma \ref{approx before limit}. 
Roughly speaking, we consider polyhedral chains $P_1,P_2,R$ like in the proof of Lemma \ref{approx before limit}, and also another polyhedral chain $Z$ approximating $E_0$, such that $\partial Z$ approximates $\partial E_0$. By using the definition of ultralimits and applying the same lifting argument to those polyhedral chains, we construct a new element $\tilde{D}\in \mathscr{C}(h)$ close to $U_0$ in the intrinsic flat topology, and a decomposition $\tilde{D}=A+B$ satisfying the conclusion of the lemma. 

\end{proof}

\subsection{Volume continuity and avoidance of singularities}

Let $n\geq 2$, let $\Gamma$ be a torsion-free hyperbolic group, let $h\in H_n(\Gamma;\mathbb{Z})\setminus \{0\}$, and
 let $C'_i \in \mathscr{C}(h)$ be a minimizing sequence of cycles, 
 $$\lim_{i\to \infty} \mathbf{M}(C'_i) = \spherevol(h),$$
which converges to a spherical Plateau solution 
$$C_\infty = (X_\infty,d_\infty,S_\infty)$$
in the intrinsic flat topology. We view $S_\infty$ as an integral current inside $ (S^\infty/\Gamma)_\omega$.  



The volume continuity question asks: is it always true that $\lim_{i\to \infty} \mathbf{M}(C'_i) = \mathbf{M}(C_\infty)$?
The following theorem settles it, and its proof makes crucial use of the $1$-Lipschitz deformation maps. Consider the separable subset
$$\mathbf{A}:= \spt S_\infty \subset (S^\infty/\Gamma)_\omega$$
or more generally any separable subset containing $\spt S_\infty$, and let $\{\mathscr{D}_\theta\}_{\theta\in [0,1]}$ be the deformation maps of Subsection \ref{subsection:projection}. Recall that $\mathscr{D}_1:= \mathscr{J}\circ \overline{\mathscr{D}}_1$ and all those maps are $1$-Lipschitz.

\begin{theo} \label{theorem:continuity}
We have
$$\mathbf{M}({C}_\infty) =\spherevol(h).$$
Moreover, the integral current $({\mathscr{D}}_1)_\sharp S_\infty$ in $(S^\infty/\Gamma)_\omega$, viewed as an integral current space, is a spherical Plateau solution for $h$.
\end{theo}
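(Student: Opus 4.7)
The plan is to use the $1$-Lipschitz deformation maps $\{\mathscr{D}_\theta\}_{\theta\in[0,1]}$ of Subsection \ref{subsection:projection} applied to a separable set $\mathbf{A}$ containing $\spt S_\infty$, together with the conical retraction of Lemma \ref{conical retraction} and the controlled filling of Lemma \ref{controlled filling}, in order to approximate $(\mathscr{D}_1)_\sharp S_\infty$ from outside by honest cycles in $\mathscr{C}(h)$. Since $\mathscr{D}_1$ is $1$-Lipschitz we have $\mathbf{M}((\mathscr{D}_1)_\sharp S_\infty)\leq \mathbf{M}(S_\infty)\leq \spherevol(h)$, so if we can establish $\mathbf{M}((\mathscr{D}_1)_\sharp S_\infty)\geq \spherevol(h)$ then equality will hold throughout and both conclusions will follow.

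\textbf{Step 1: Split along the thin neighborhood.} For small $d>0$, write $S_\infty = S_\infty\llcorner N_\omega^c(d) + S_\infty\llcorner N_\omega(d)$, pushing forward by $\mathscr{D}_1$. The first term sits in the regular part $\Reg(S^\infty/\Gamma)_\omega$ (since $\mathscr{D}_1$ sends singular points to singular points, and regular points near the regular part remain regular by the Lipschitz estimate), and by Lemma \ref{lemma:yinfinity} the support of $S_\infty\cap U^{>\delta}$ is compact for every $\delta>0$. After slicing and applying Lemma \ref{apply coarea}, the pushforward $(\mathscr{D}_1)_\sharp(S_\infty\llcorner N_\omega^c(d))$ can be assumed compactly supported, with boundary $\partial\big((\mathscr{D}_1)_\sharp(S_\infty\llcorner N_\omega^c(d))\big)$ supported in a small neighborhood of $\mathscr{D}_1(\spt S_\infty\cap N_\omega(d))$; by the last part of Lemma \ref{projection main 1} and the definition (\ref{3 janvier}), this neighborhood is contained in a small neighborhood of the discrete set $\mathscr{D}_1(\mathbf{A})\cap \Sing(S^\infty/\Gamma)_\omega$.

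\textbf{Step 2: Squeeze the boundary and approximate in $S^\infty/\Gamma$.} Using Lemma \ref{conical retraction} at the discrete singular images (lifted through $\mathscr{J}$), I would deform $(\mathscr{D}_1)_\sharp(S_\infty\llcorner N_\omega^c(d))$ by a diffeomorphism isotopic to the identity, increasing its mass by at most a factor $(1+\eta_1)$ while shrinking the boundary mass by an arbitrary factor $\eta_2$. Having thus reduced the boundary mass to any desired $\varepsilon>0$, apply Lemma \ref{approx before limit bis} to approximate the resulting current by a compactly supported current $A+B$ in $S^\infty/\Gamma$, where $\spt B$ lies in the thin part $[S^\infty/\Gamma]^{\leq 2\delta_2}$ and $\mathbf{M}(\partial A)\leq 2\varepsilon$. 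Since $\partial A = -\partial B$ lies in the thin part, Lemma \ref{controlled filling} provides an $n$-cycle $B'$ in $S^\infty/\Gamma$ with $\partial B' = \partial A$ and $\mathbf{M}(B')\leq \bar{c}\mathbf{M}(\partial A)\leq 2\bar c\varepsilon$, so $C := A+B'$ is a boundaryless integral current in $S^\infty/\Gamma$.

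\textbf{Step 3: Identify the homology class.} To see $C\in \mathscr{C}(h)$, use the continuous family $\{\mathscr{D}_\theta\}_{\theta\in[0,1]}$: it produces a homotopy from $S_\infty$ to $(\mathscr{D}_1)_\sharp S_\infty$ inside $(S^\infty/\Gamma)_\omega$, and combined with the pulled-tight approximation of $S_\infty$ by $C'_i$ (Theorem \ref{compactness}), together with the replacement $B\mapsto B'$ (which differs from $B$ by the controlled filling of a cycle in the thin part, homologically trivial by the Margulis-type Corollary \ref{margulis} and the argument of Lemma \ref{nonempty}), the class of $C$ agrees with $h$. Therefore $\mathbf{M}(C)\geq \spherevol(h)$. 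Letting $d,\varepsilon,\eta_1\to 0$ and $\delta_2\to 0$ while controlling the collected error terms, one obtains
\begin{equation*}
\mathbf{M}((\mathscr{D}_1)_\sharp S_\infty) \geq \spherevol(h).
\end{equation*}
Combining with the $1$-Lipschitz bound $\mathbf{M}((\mathscr{D}_1)_\sharp S_\infty)\leq \mathbf{M}(S_\infty)\leq \spherevol(h)$ gives $\mathbf{M}(C_\infty) = \spherevol(h)$. Furthermore, $(\mathscr{D}_1)_\sharp S_\infty$ has mass $\spherevol(h)$ and, by Step 2, is the intrinsic flat limit of a minimizing sequence in $\mathscr{C}(h)$, so it is itself a spherical Plateau solution for $h$.

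\textbf{Main obstacle.} The delicate point will be Step 3: controlling the mass of $S_\infty\llcorner N_\omega(d)$ as $d\to 0$ (so that the portion of the current not directly approximated in $S^\infty/\Gamma$ has asymptotically negligible mass), while simultaneously ensuring that the cap-off $B'$ provided by Lemma \ref{controlled filling} does not perturb the homology class. This requires orchestrating the parameters $d$, $\eta_1$, $\eta_2$, $\varepsilon$, $\delta_1$, $\delta_2$ in a single compatible limit, and leveraging the continuity of $\theta\mapsto \mathscr{D}_\theta$ to certify that the resulting compactly supported cycle in $S^\infty/\Gamma$ still represents $\pi(h)\in \mathbf{H}_n(S^\infty/\Gamma)$.
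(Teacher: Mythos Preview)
Your proposal is correct and follows essentially the same approach as the paper. The paper packages your Steps~1--3 as a separate Proposition~\ref{proposition:plateau solution}, showing that $(\mathscr{D}_1)_\sharp S_\infty$ is an intrinsic flat limit of some $C_i\in\mathscr{C}(h)$ with $\limsup\mathbf{M}(C_i)\leq\mathbf{M}((\mathscr{D}_1)_\sharp S_\infty)$, and then derives Theorem~\ref{theorem:continuity} exactly as in your final paragraph. One organizational difference: the paper first restricts to $W:=S_\infty\llcorner N_\omega^c(d)$, compactifies via Lemma~\ref{apply coarea}, and approximates by a polyhedral chain $P$ \emph{before} applying $\mathscr{D}_\theta$ (after perturbing $P$ off the subspheres $\lambda_\omega(\gamma).\mathcal{K}_\sigma$, cf.\ Remark~\ref{point of 36 bis}); the homotopy $\theta\mapsto(\mathscr{D}_\theta)_\sharp P$ then directly furnishes the compactly supported $(n{+}1)$-current $V_0$ needed as a hypothesis of Lemma~\ref{approx before limit bis}, so the homology class of the resulting cycle $A+\hat B$ is automatic from that lemma together with the last clause of Lemma~\ref{controlled filling} (which gives $B=\hat B+\partial V$), rather than requiring a separate verification as you sketch in Step~3.
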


The above result will be a corollary of the following proposition:

\begin{prop} \label{proposition:plateau solution}
The integral current $({\mathscr{D}}_1)_\sharp S_\infty$ in $(S^\infty/\Gamma)_\omega$, viewed as an integral current space, is the intrinsic flat limit of a sequence $\{C_i\}_{i\geq 0} \subset \mathscr{C}(h)$ such that
$$\limsup_{i\to \infty}\mathbf{M}(C_i) \leq  \mathbf{M}(({\mathscr{D}}_1)_\sharp S_\infty).$$
\end{prop}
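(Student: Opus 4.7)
Fix $\eta>0$; it is enough to construct a single $C \in \mathscr{C}(h)$ with $\mathbf{d}_\mathcal{F}(C,(\mathscr{D}_1)_\sharp S_\infty) \leq \eta$ and $\mathbf{M}(C) \leq \mathbf{M}((\mathscr{D}_1)_\sharp S_\infty) + \eta$, since taking $\eta = 1/i$ will then produce the required sequence $\{C_i\}$. Throughout, I take $\mathbf{A} := \spt S_\infty$ and work inside the separable quotient $S^\infty_{(1)}/\lambda_\omega(\Gamma_{(1)})$ into which $\spt S_\infty$ embeds isometrically by Lemma \ref{separable embedding}; this is the setup underlying the deformation $\{\mathscr{D}_\theta\}_{\theta\in[0,1]}$ of Subsection \ref{subsection:projection}.

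First I would decompose $S_\infty$ with respect to $\Sing(S^\infty/\Gamma)_\omega$. By Lemma \ref{lemma:yinfinity}, the closure of $\spt S_\infty \cap U^{>\delta}$ is compact for every $\delta > 0$, so the slicing theorem applied to the $1$-Lipschitz function $\mathbf{dist}_\omega(\cdot,\Sing(S^\infty/\Gamma)_\omega)$ gives, at an arbitrarily small generic level $d > 0$, a splitting
\[
S_\infty = E_0 + F_0, \quad E_0 := S_\infty \llcorner N^c_\omega(d), \quad F_0 := S_\infty \llcorner N_\omega(d),
\]
with $\spt E_0$ compact, $\spt E_0 \subset U^{>\delta_1}$ for some $\delta_1 > 0$, and $\mathbf{M}(\partial E_0)$ as small as I wish. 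A polyhedral approximation of $E_0$ inside $S^\infty_{(1)}$ combined with Remark \ref{point of 36 bis} lets me further assume that $\spt(\mathscr{D}_\theta)_\sharp E_0 \subset \Reg(S^\infty/\Gamma)_\omega$ for every $\theta \in [0,1]$. By Lemma \ref{projection main 1} and the discreteness of $\mathscr{D}_1(\mathbf{A}) \cap \Sing(S^\infty/\Gamma)_\omega$ noted in (\ref{3 janvier}), the boundary $\partial(\mathscr{D}_1)_\sharp E_0 = (\mathscr{D}_1)_\sharp \partial E_0$ lies within distance $d$ of a countable discrete set of singular points. Applying the conical retraction $\varphi$ of Lemma \ref{conical retraction} inside $S^\infty_{(2)}/\lambda_\omega(\Gamma_{(1)})$ (through which $\mathscr{D}_1 = \mathscr{J}\circ \overline{\mathscr{D}}_1$ factors) with $\eta_1$ and $\eta_2$ chosen small enough, and pushing the result back through $\mathscr{J}$, I replace $(\mathscr{D}_1)_\sharp E_0$ by a current $\widetilde E$ supported in $\Reg(S^\infty/\Gamma)_\omega$ with
\[
\mathbf{M}(\widetilde E) \leq (1+\eta_1)^n \mathbf{M}(E_0), \qquad \mathbf{M}(\partial \widetilde E) \leq \eta.
\]
A variant of Lemma \ref{approx before limit bis} applied to $\widetilde E + (\mathscr{D}_1)_\sharp F_0$ then produces a compactly supported $A + B \in \mathscr{C}(h)$ in $S^\infty/\Gamma$, with $A$ at intrinsic flat distance $\leq \eta$ from $\widetilde E$, $\mathbf{M}(A) \leq \mathbf{M}(\widetilde E) + \eta$, $\mathbf{M}(\partial A) \leq 2\eta$, and $\spt B \subset [S^\infty/\Gamma]^{\leq 2d}$. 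Choosing $d < \overline{\delta}/8$, Lemma \ref{controlled filling} furnishes a filling $W$ of $\partial A$ with $\mathbf{M}(W) \leq \bar c\,\mathbf{M}(\partial A)$, and setting $C := A + W$ yields an $n$-cycle in $S^\infty/\Gamma$ with $\mathbf{M}(C) \leq \mathbf{M}((\mathscr{D}_1)_\sharp S_\infty) + O(\eta)$ by the $1$-Lipschitzness of $\mathscr{D}_1$ and the smallness of $\mathbf{M}(F_0)$.

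The main obstacle will be verifying that $C$ actually represents $\pi(h) \in \mathbf{H}_n(S^\infty/\Gamma)$ and lies at intrinsic flat distance at most $\eta$ from $(\mathscr{D}_1)_\sharp S_\infty$. My plan is to exploit the continuity of the family $\{\mathscr{D}_\theta\}_{\theta \in [0,1]}$: the equi-Lipschitz map $(\theta,p) \mapsto \mathscr{D}_\theta(p)$ from $[0,1]\times \mathbf{A}$ to $(S^\infty/\Gamma)_\omega$ pushes forward $\llbracket 1_{[0,1]}\rrbracket \times S_\infty$ to an $(n+1)$-current $H$ with $\partial H = (\mathscr{D}_1)_\sharp S_\infty - S_\infty$. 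Concatenating $H$ with the filling $W$, the intrinsic-flat bounding current between $A+B$ and $\widetilde E + (\mathscr{D}_1)_\sharp F_0$, and a bounding current between $S_\infty$ and a representative $C_{i_0}$ from the pulled-tight minimizing sequence of Theorem \ref{compactness}, I obtain an explicit $(n+1)$-cobordism in $(S^\infty/\Gamma)_\omega$ between $C$ (embedded canonically) and $C_{i_0}$. A final application of Lemma \ref{controlled filling} to the thin piece of this cobordism that strays outside the image of $S^\infty/\Gamma$ projects the whole cobordism back into $S^\infty/\Gamma$; this simultaneously identifies $[C] = [C_{i_0}] = \pi(h)$ in $\mathbf{H}_n(S^\infty/\Gamma)$ and delivers the estimate $\mathbf{d}_\mathcal{F}(C,(\mathscr{D}_1)_\sharp S_\infty) \leq \eta$. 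It is the continuity of $\{\mathscr{D}_\theta\}$, not merely the existence of $\mathscr{D}_1$, that is essential at this last step.
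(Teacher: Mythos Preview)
Your outline contains the right ingredients but has two genuine gaps, one minor and one serious.

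The minor one: the claim that $\mathbf{M}(\partial E_0)$ can be made ``as small as I wish'' by slicing is false. The coarea inequality only gives $\int_0^{d_0}\mathbf{M}\big(\partial(S_\infty\llcorner N^c_\omega(r))\big)\,dr\leq \mathbf{M}(S_\infty)$, so a generic slice has mass bounded by $\mathbf{M}(S_\infty)/d_0$, not by $\eta$. This is precisely why the conical retraction of Lemma~\ref{conical retraction} is needed: only \emph{after} $\overline{\mathscr{D}}_1$ has pushed $\partial E_0$ into the $d$-neighborhood of the discrete set $F_{(2)}/\lambda_\omega(\Gamma_{(1)})$ can one contract it to small mass. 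You do eventually apply the retraction, so this confusion is not fatal by itself.

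The serious gap is in how you obtain $C\in\mathscr{C}(h)$. Lemma~\ref{approx before limit bis} is stated for an identity $S_\infty=U_0+\partial V_0$ with $\spt V_0$ \emph{compact inside} $\Reg(S^\infty/\Gamma)_\omega$; the conclusion $A+B\in\mathscr{C}(h)$ comes from lifting that compact $V_0$ back to $S^\infty/\Gamma$ through the ultralimit (as in Lemma~\ref{approx before limit}). You invoke the lemma on $\widetilde E+(\mathscr{D}_1)_\sharp F_0$ without ever exhibiting such a $V_0$ linking this current to $S_\infty$. The paper fixes this by using the $\mathscr{D}_\theta$-interpolation \emph{before} the approximation lemma: the polyhedral chain $P$ is deformed to $(\mathscr{D}_1)_\sharp P$ through $\Reg(S^\infty/\Gamma)_\omega$ (Remark~\ref{point of 36 bis}), producing compactly supported $Q,Q'$ with $P=(\mathscr{J}\circ\varphi\circ\overline{\mathscr{D}}_1)_\sharp P+R+R'+\partial(Q+Q')$, and one sets $V_0:=J+Q+Q'$. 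Lemma~\ref{approx before limit bis} then applies directly and yields $A+B\in\mathscr{C}(h)$ outright; the controlled filling of Lemma~\ref{controlled filling} only replaces $B$ by a homologous $\hat B$ of small mass.

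Your attempted repair in the last paragraph does not work: the ``bounding current between $S_\infty$ and $C_{i_0}$'' lives in an abstract Banach space $\mathbf{Z}$ (Theorem~\ref{compactness}), not in $(S^\infty/\Gamma)_\omega$, so it cannot be concatenated with $H$; and there is no mechanism---certainly not Lemma~\ref{controlled filling}, which fills $(n{-}1)$-cycles in the thin part of $S^\infty/\Gamma$---to ``project'' an $(n{+}1)$-current from the ultralimit back into $S^\infty/\Gamma$. The role of the homotopy $\{\mathscr{D}_\theta\}$ is not to produce a cobordism after the fact, but to furnish the compact $V_0$ that makes Lemma~\ref{approx before limit bis} applicable in the first place.
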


\begin{proof}

As in Subsection \ref{subsection:nota}, we denote by $N_\omega(d)$ and $N^c_\omega(d)$ the closed $d$-neighborhood of $\Sing (S^\infty/\Gamma)_\omega$ and its complement.
By the slicing theorem, for any $d_0>0$, there is $d\in (0,d_0)$ such that $S_\infty\llcorner N^c_\omega(d) $ is an integral current. We will fix $d_0$ depending on small constants called ${\delta}, \eta,\eta_1$ later in the proof.

Let $\delta\in (0,\frac{\overline{\delta}}{100})$ where $\overline{\delta}$ is given by Corollary \ref{margulis}, and let $\eta>0$. Set
$$W:=S_\infty\llcorner N^c_\omega(d).$$
By $1$-Lipschitzness of $\mathscr{D}_1$, if $d_0$ is small enough depending on $\eta$ and $S_\infty$, we have the following estimate for the intrinsic flat distance $\mathbf{d}_\mathcal{F}$:
\begin{equation}\label{close D1 W S}
\mathbf{d}_\mathcal{F}((\mathscr{D}_1)_\sharp S_\infty, ({\mathscr{D}}_1)_\sharp  W) \leq \mathbf{M}(S_\infty) - \mathbf{M}(W)\leq \eta.
\end{equation}

By Lemma  \ref{lemma:yinfinity}, $\spt(W \llcorner U^{>\delta/2})$ is compact. Applying Lemma \ref{apply coarea} to $W$, we get a corresponding open set $O_W\subset N^c_\omega(d/2)$ such that $W\llcorner O_W$ is a compactly supported integral current with: 
$$\mathbf{M}(W-W\llcorner O_W)\leq \eta,$$
$$\mathbf{M}(\partial (W-W\llcorner O_W))\leq \eta,$$
\begin{equation} \label{wudeltaow}
\spt (W-W\llcorner O_W) \subset U^{\leq \delta/2},
\end{equation}
and in particular from the above,
\begin{equation} \label{dftwow}
\mathbf{d}_\mathcal{F}(W, W\llcorner O_W)\leq 2\eta.
\end{equation}

By a standard approximation argument by polyhedral chains, there is an $n$-dimensional polyhedral chain $P$ in $O_W \cap S^\infty_{(1)}/\lambda_\omega(\Gamma_{(1)})$, and some integral currents with compact supports $I,J$ such that  
$$W\llcorner O_W = P + I+\partial J$$
where $$\mathbf{M}(I)\leq \eta,\quad \mathbf{M}(J)\leq \eta.$$
By (\ref{dftwow}), we get
\begin{equation} \label{dftp}
\mathbf{d}_\mathcal{F}(W, P)\leq 4\eta.
\end{equation}
Moreover, if $d_0$ is small enough with respect to $\delta$, we can ensure that 
\begin{equation} \label{udeltaa}
\spt ((W-W\llcorner O_W) + I) \subset U^{\leq \delta},
\end{equation}
\begin{equation}\label{spt P u delta}
\spt \partial P \subset U^{\leq \delta},
\end{equation}
\begin{equation}\label{mpeta}
\mathbf{M}(P)\leq \mathbf{M}(W) +\eta,
\end{equation}
\begin{equation}\label{partialmpeta}
\mathbf{M}((\partial P)\llcorner N^c_\omega(2d)) \leq 2\eta,
\end{equation}
where the first inclusion above comes from (\ref{wudeltaow}), and the last property comes from the fact that $(\partial W)\llcorner N^c_\omega(2d)=0$.

Next, by using the deformation maps $\{\mathscr{D}_\theta\}_{\theta\in [0,1]}$, by (\ref{proj_1-1bis}) and Remark \ref{point of 36 bis},  after perturbing $P$ if necessary, it is possible to construct an $(n+1)$-dimensional integral current $Q$ with compact support in $\Reg(S^\infty/\Gamma)_\omega $ interpolating between $P$ and $ (\mathscr{D}_1)_\sharp P$ in the following sense:   
$$P= (\mathscr{D}_1)_\sharp P + R+ \partial Q$$
for some $n$-dimensional integral current with compact support $R$ in $\Reg(S^\infty/\Gamma)_\omega $. 
Moreover, by (\ref{spt P u delta}) and since each $\mathscr{D}_\theta$ is distance nonincreasing, we can ensure that
\begin{equation} \label{udeltab}
 \spt R \subset U^{\leq \delta}.
 \end{equation}

Recall from Subsection \ref{subsection:projection} the two 1-Lipschitz maps  $\mathscr{J}$ and $\overline{\mathscr{D}}_1$ such  that 
$$\mathscr{D}_1 := \mathscr{J} \circ \overline{\mathscr{D}}_1.$$ 
By the last part of Lemma \ref{projection main 1},
\begin{equation}\label{2dneighborhood}
\text{$\overline{\mathscr{D}}_1(\spt((\partial P)\llcorner N_\omega(2d)))$ is contained  in the $3d$-neighborhood of  $F_{(2)}/\lambda_\omega(\Gamma_{(1)})$.}
\end{equation}
Consider $\eta_1,\eta_2>0$ two small constants which will be fixed below.  Now we choose $d_0$ so that $d_0< \frac{d(\eta_1)}{100}$ where $d(\eta_1)$ is given by Lemma \ref{conical retraction} (recall that $d\in (0,d_0)$ and $W:=S_\infty\llcorner N^c_\omega(d)$).
Using the $(1+\eta_1)$-Lipschitz diffeomorphism 
$$\varphi : (S^\infty_{(2)}\setminus F_{(2)})/\lambda_\omega(\Gamma_{(1)})\to (S^\infty_{(2)}\setminus F_{(2)})/\lambda_\omega(\Gamma_{(1)})$$ 
constructed in Lemma \ref{conical retraction}, we can squeeze $\spt((\partial P)\llcorner N_\omega(2d))$ towards $F_{(2)}/\lambda_\omega(\Gamma_{(1)})$, and find  other integral currents $Q'$, $R'$ with compact supports in $\Reg(S^\infty/\Gamma)_\omega $ such that 
$$(\mathscr{D}_1)_\sharp P = (\mathscr{J}\circ \varphi\circ \overline{\mathscr{D}}_1)_\sharp P +R' +\partial Q' $$ 
such that
\begin{equation} \label{udeltac}
 \spt R' \subset U^{\leq \delta},
 \end{equation}
$$ \partial (\mathscr{J}\circ \varphi\circ \overline{\mathscr{D}}_1)_\sharp P \subset U^{\leq \delta},$$
$$\mathbf{M}((\mathscr{J}\circ \varphi\circ \overline{\mathscr{D}}_1)_\sharp P) \leq (1+\eta_1)(\mathbf{M}((\mathscr{D}_1)_\sharp W) +\eta),$$
$$\mathbf{M}( \partial (\mathscr{J}\circ \varphi\circ \overline{\mathscr{D}}_1)_\sharp P) \leq \eta_2 \mathbf{M}(\partial P) +(1+\eta_1)\mathbf{M}((\partial P)\llcorner N^c_\omega(2d)).$$
In particular, since $\mathbf{M}(W)\leq \mathbf{M}(S_\infty)$ and since we have (\ref{partialmpeta}), if we first choose $\eta_1$ small enough depending on $\mathbf{M}(S_\infty)$ and $\eta$, and then choose $\eta_2$ small enough depending on $\mathbf{M}(\partial P)$, we obtain 
\begin{equation} \label{masseta1}
\mathbf{M}((\mathscr{J}\circ \varphi\circ \overline{\mathscr{D}}_1)_\sharp P) \leq  \mathbf{M}((\mathscr{D}_1)_\sharp W) +2\eta,
 \end{equation}
\begin{equation} \label{partialeta1}
\mathbf{M}( \partial  (\mathscr{J}\circ \varphi\circ \overline{\mathscr{D}}_1)_\sharp P) \leq 3\eta.
 \end{equation}
Besides, when $\eta_1$ is small enough depending on $\mathbf{M}(S_\infty)$, 
\begin{equation} \label{dftprojvarphi}
\mathbf{d}_\mathcal{F}((\mathscr{D}_1)_\sharp P, (\mathscr{J}\circ \varphi\circ \overline{\mathscr{D}}_1)_\sharp P)\leq \eta.
\end{equation}
For clarity, note that $\eta_2$ depends on $\eta_1$, then $\eta_1$ depends on $\eta$, and $d_0 \in (0,\frac{d(\eta_1)}{100})$ only depends on $\eta$ and $\eta_1$, but not on $\eta_2$.

By definition,
$$S_\infty =  S_\infty \llcorner N_\omega(d) +S_\infty\llcorner N^c_\omega(d)= S_\infty \llcorner N_\omega(d) +W.$$
Next set 
$$V_0 := J+Q+Q',$$
\begin{align*}
U_0& := S_\infty- \partial V_0 \\
& = S_\infty \llcorner N_\omega(d) + (W-W\llcorner O_W) +W\llcorner O_W - \partial V_0 ,\\
& = S_\infty \llcorner N_\omega(d) + (W-W\llcorner O_W) + I+R+R'+ (\mathscr{J}\circ \varphi\circ \overline{\mathscr{D}}_1)_\sharp P,
\end{align*} 
By compactness of $Q,Q',R,R'$, by perturbing a bit $R,R'$ if necessary, we can find $\delta' \in (0,\delta)$ such that 
 after setting
 \begin{equation}\label{wxcvb}
E_0 := (\mathscr{J}\circ \varphi\circ \overline{\mathscr{D}}_1)_\sharp P,\quad F_0=U_0-E_0,
 \end{equation}
 we have by (\ref{udeltaa}) (\ref{udeltab}) (\ref{udeltac}):
 $$\spt E_0 \subset U^{>\delta'},$$
 $$\spt F_0\subset U^{\leq \delta}.$$

We can now apply Lemma \ref{approx before limit bis} to $S_\infty = U_0+\partial V_0$, $U_0=E_0+F_0$, $\delta_1=\delta'$, $\delta_2=\delta$, and we get integral currents $A,B$ with compact support in $S^\infty/\Gamma$ such that
\begin{equation} \label{dfau0O}
\begin{split}
&A+B\in \mathscr{C}(h)\quad  \text{with}\quad\spt B\subset [S^\infty/\Gamma]^{\leq \overline{\delta}/4},\\
&\mathbf{d}_\mathcal{F}(A+B,U_0) \leq \eta,\quad \mathbf{d}_\mathcal{F}(A,E_0) \leq \eta,\\
&\mathbf{M}(A) \leq \mathbf{M}(E_0) +\eta, \quad \text{and}\quad\mathbf{M}(\partial A) \leq \mathbf{M}(\partial E_0) +\eta.
\end{split}
\end{equation}

Lemma \ref{controlled filling} enables us to fill-in the integral current $\partial A$, namely we obtain an integral current $\hat{B}$  with compact support in $S^\infty/\Gamma$ and ``homologous'' to $B$, such that 
$$A+\hat{B}\in \mathscr{C}(h),$$
$$\mathbf{M}(\hat{B}) \leq \bar{c} \mathbf{M}(\partial A).$$

Note that by  (\ref{dfau0O}), (\ref{wxcvb}) and (\ref{partialeta1}), 
\begin{align*}
\mathbf{M}(\partial A) &\leq \mathbf{M}(\partial E_0) +\eta\\
&=\mathbf{M}(\partial (\mathscr{J}\circ \varphi\circ \overline{\mathscr{D}}_1)_\sharp P) +\eta\\
& \leq 4\eta.
\end{align*}
Hence, we find that 
\begin{equation}\label{bhat3ceta}
\mathbf{M}(\hat{B})\leq 4\bar{c} \eta,
\end{equation}
 and so by (\ref{bhat3ceta}), (\ref{dfau0O}), (\ref{wxcvb}), (\ref{dftprojvarphi}), (\ref{dftp}), (\ref{close D1 W S}), we get
\begin{align} \label{dfproj1tab}
\begin{split}
\mathbf{d}_\mathcal{F}((\mathscr{D}_1)_\sharp S_\infty, A+\hat{B}) 
& \leq \mathbf{d}_\mathcal{F}((\mathscr{D}_1)_\sharp S_\infty,A) + 4\bar{c} \eta \\
& \leq \mathbf{d}_\mathcal{F}((\mathscr{D}_1)_\sharp S_\infty, E_0) + 4\bar{c} \eta +\eta \\
& \leq \mathbf{d}_\mathcal{F}((\mathscr{D}_1)_\sharp S_\infty, (\mathscr{D}_1)_\sharp P) + 4\bar{c} \eta +2\eta \\
& \leq \mathbf{d}_\mathcal{F}((\mathscr{D}_1)_\sharp S_\infty, (\mathscr{D}_1)_\sharp W) + 4\bar{c} \eta +6\eta \\
&\leq  4\bar{c} \eta +7\eta.
\end{split}
\end{align}
Next, by the  
(\ref{dfau0O}),  (\ref{bhat3ceta}) and (\ref{masseta1}), 
\begin{align*}
\mathbf{M}(A+\hat{B}) &\leq \mathbf{M}(A)+4\bar{c} \eta\\
&\leq \mathbf{M}((\mathscr{J}\circ \varphi\circ \overline{\mathscr{D}}_1)_\sharp P)+\eta +4\bar{c} \eta\\
& \leq \mathbf{M}((\mathscr{D}_1)_\sharp W) +3\eta +4\bar{c} \eta\\
& \leq \mathbf{M}((\mathscr{D}_1)_\sharp S_\infty) + 3\eta +4\bar{c}\eta.
\end{align*}
Since $\eta$ is arbitrarily small, this finishes the proof.

\end{proof}

\begin{proof}[Proof of Theorem \ref{theorem:continuity}]
By Proposition \ref{proposition:plateau solution}, for any spherical Plateau solution $C_\infty=(X_\infty,d_\infty,S_\infty)$, there is a sequence $\{C_i\}\subset  \mathscr{C}(h)$ converging in the intrinsic flat topology to $({\mathscr{D}}_1)_\sharp S_\infty$ such that 
$$\limsup_{i\to \infty}\mathbf{M}(C_i) \leq  \mathbf{M}(({\mathscr{D}}_1)_\sharp S_\infty).$$
In particular, 
$$\spherevol(h)\leq \mathbf{M}(({\mathscr{D}}_1)_\sharp S_\infty).$$
On the other hand, since ${\mathscr{D}}_1$ is $1$-Lipschitz and by lower semicontinuity of the mass under intrinsic flat convergence, 
$$\mathbf{M}(({\mathscr{D}}_1)_\sharp S_\infty)\leq \mathbf{M}(S_\infty)=\mathbf{M}(C_\infty) \leq \spherevol(h).$$
Thus in fact
\begin{equation}\label{===}
\mathbf{M}(({\mathscr{D}}_1)_\sharp S_\infty) = \mathbf{M}(S_\infty)=\mathbf{M}(C_\infty) = \spherevol(h)=\limsup_{i\to \infty}\mathbf{M}(C_i).
\end{equation}

\end{proof}

Next, we prove the following avoidance phenomenon: any spherical Plateau solution mostly avoid the singular set of $(S^\infty/\Gamma)_\omega$. So around almost every point, a spherical Plateau solution is a mass-minimizing current in a spherical domain.
\begin{theo} \label{theorem:avoidance}
The integral current $S_\infty$ satisfies
$$\mathbf{M}\big(S_\infty \llcorner \Reg(S^\infty/\Gamma)_\omega \big) =\mathbf{M}(S_\infty).$$
Furthermore, $({\mathscr{D}}_1)_\sharp S_\infty$ is a spherical Plateau solution for $h$ and  
$$\spt ({\mathscr{D}}_1)_\sharp S_\infty \cap \Sing(S^\infty/\Gamma)_\omega $$ is a discrete subset.
\end{theo}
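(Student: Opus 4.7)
\textbf{Proof proposal for Theorem \ref{theorem:avoidance}.}
The plan is to derive all three assertions from a combination of Theorem \ref{theorem:continuity} (mass continuity), Proposition \ref{proposition:plateau solution}, and the discrete image property of the deformation map recorded in (\ref{3 janvier}). Throughout, I apply the constructions of Subsection \ref{subsection:projection} with the separable subset $\mathbf{A} := \spt S_\infty \subset (S^\infty/\Gamma)_\omega$, which gives the $1$-Lipschitz map $\mathscr{D}_1 : S^\infty_{(1)}/\lambda_\omega(\Gamma_{(1)}) \to (S^\infty/\Gamma)_\omega$.

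First, the claim that $(\mathscr{D}_1)_\sharp S_\infty$ is a spherical Plateau solution follows by combining Proposition \ref{proposition:plateau solution} with Theorem \ref{theorem:continuity}: the former produces a sequence $\{C_i\} \subset \mathscr{C}(h)$ converging to $(\mathscr{D}_1)_\sharp S_\infty$ in the intrinsic flat topology with $\limsup \mathbf{M}(C_i) \leq \mathbf{M}((\mathscr{D}_1)_\sharp S_\infty)$, while Theorem \ref{theorem:continuity} (equation (\ref{===})) gives $\mathbf{M}((\mathscr{D}_1)_\sharp S_\infty) = \spherevol(h)$. Since $\mathbf{M}(C_i) \geq \spherevol(h)$ for each $i$, we get $\lim_i \mathbf{M}(C_i) = \spherevol(h)$, so $(\mathscr{D}_1)_\sharp S_\infty$ is indeed a Plateau solution for $h$. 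The discreteness of $\spt(\mathscr{D}_1)_\sharp S_\infty \cap \Sing(S^\infty/\Gamma)_\omega$ is immediate from the containment
\[
\spt\bigl((\mathscr{D}_1)_\sharp S_\infty\bigr) \cap \Sing(S^\infty/\Gamma)_\omega \subset \mathscr{D}_1(\spt S_\infty) \cap \Sing(S^\infty/\Gamma)_\omega = \mathscr{D}_1(\mathbf{A}) \cap \Sing(S^\infty/\Gamma)_\omega,
\]
whose right-hand side is discrete by (\ref{3 janvier}).

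For the avoidance statement $\mathbf{M}(S_\infty \llcorner \Reg(S^\infty/\Gamma)_\omega) = \mathbf{M}(S_\infty)$, the idea is to decompose $S_\infty = T_\infty + U_\infty$ with $T_\infty := S_\infty \llcorner \Reg(S^\infty/\Gamma)_\omega$ and $U_\infty := S_\infty \llcorner \Sing(S^\infty/\Gamma)_\omega$, which is a valid decomposition since $\Reg(S^\infty/\Gamma)_\omega$ is open and hence Borel. Pushing forward, the support of $(\mathscr{D}_1)_\sharp U_\infty$ is contained in $\mathscr{D}_1(\spt S_\infty \cap \Sing(S^\infty/\Gamma)_\omega)$, which, as above, lies in the discrete countable set $\mathscr{D}_1(\mathbf{A}) \cap \Sing(S^\infty/\Gamma)_\omega$. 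Because the mass measure of an integral $n$-current is absolutely continuous with respect to $\mathcal{H}^n$ (Ambrosio--Kirchheim) and $n \geq 2$ forces any countable set to be $\mathcal{H}^n$-negligible, we conclude $(\mathscr{D}_1)_\sharp U_\infty = 0$, whence $(\mathscr{D}_1)_\sharp S_\infty = (\mathscr{D}_1)_\sharp T_\infty$. Combining the $1$-Lipschitz property of $\mathscr{D}_1$ with Theorem \ref{theorem:continuity} then yields the squeeze
\[
\mathbf{M}(S_\infty) = \mathbf{M}\bigl((\mathscr{D}_1)_\sharp S_\infty\bigr) = \mathbf{M}\bigl((\mathscr{D}_1)_\sharp T_\infty\bigr) \leq \mathbf{M}(T_\infty) \leq \mathbf{M}(S_\infty),
\]
forcing equalities throughout, so $\mathbf{M}(U_\infty) = \mathbf{M}(S_\infty) - \mathbf{M}(T_\infty) = 0$, as required.

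The real work is already packaged into Theorem \ref{theorem:continuity} and the structural result (\ref{3 janvier}); the present theorem is essentially a tidying step. The only subtlety is ensuring the decomposition $S_\infty = T_\infty + U_\infty$ is valid and that ``vanishing on a discrete set'' suffices to kill the pushed-forward singular part, but both facts are standard consequences of the Ambrosio--Kirchheim framework once $n \geq 1$.
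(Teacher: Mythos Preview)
Your proof is essentially correct and follows the same route as the paper's: both arguments hinge on Theorem~\ref{theorem:continuity} (giving $\mathbf{M}(S_\infty)=\mathbf{M}((\mathscr{D}_1)_\sharp S_\infty)$), the discreteness statement (\ref{3 janvier}), and the fact that $\mathscr{D}_1$ does not move singular points into the regular set. The only point that deserves a comment is your justification of the inclusion $\mathscr{D}_1(\spt S_\infty\cap\Sing(S^\infty/\Gamma)_\omega)\subset \mathscr{D}_1(\mathbf{A})\cap\Sing(S^\infty/\Gamma)_\omega$.

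You write that this holds ``as above'', but your earlier display only shows $\spt((\mathscr{D}_1)_\sharp S_\infty)\cap\Sing\subset\mathscr{D}_1(\mathbf{A})\cap\Sing$, which is automatic and does not imply that $\mathscr{D}_1$ carries $\mathbf{A}\cap\Sing$ into $\Sing$. What is actually needed is the property $\mathscr{D}_1^{-1}\bigl(\Reg(S^\infty/\Gamma)_\omega\bigr)\subset\Reg(S^\infty/\Gamma)_\omega$ on the domain $S^\infty_{(1)}/\lambda_\omega(\Gamma_{(1)})$, equivalently that $\mathscr{D}_1$ sends singular points to singular points. This is exactly what the paper invokes, citing Lemma~\ref{J easy} together with the first inclusion of (\ref{F_j subset}) and Lemma~\ref{projection main 1}: a singular point of $\mathbf{A}$ lifts into $F_{(1)}\subset\bigcup_{\sigma,\gamma}\lambda_\omega(\gamma).\mathcal{K}_\sigma$, is sent by $\overline{\mathscr{D}}_1$ into $F_{(2)}$, and then by $\mathscr{J}$ into the discrete set $\mathscr{J}(F_{(2)}/\lambda_\omega(\Gamma_{(1)}))\subset\Sing(S^\infty/\Gamma)_\omega$. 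Once you cite this, your squeeze argument goes through and is equivalent to the paper's (which phrases the same chain of inequalities via $(\mathscr{D}_1)_\sharp S_\infty\llcorner\Reg$ rather than via the decomposition $S_\infty=T_\infty+U_\infty$).
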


\begin{proof}
From (\ref{===}) in the proof of Theorem \ref{theorem:continuity}, we know that
$$\mathbf{M}(S_\infty) = \mathbf{M}((\mathscr{D}_1)_\sharp S_\infty).$$
By Lemmas \ref{projection main 1}, \ref{J easy},
\begin{align}\label{<<>>}
\begin{split}
&\spt ({\mathscr{D}}_1)_\sharp S_\infty \cap \Sing(S^\infty/\Gamma)_\omega  \subset \mathscr{J}(F_{(2)}/\lambda_\omega(\Gamma_\omega)),\\
&\text{$\mathscr{J}(F_{(2)}/\lambda_\omega(\Gamma_\omega))$ is a countable discrete set.}\\
\end{split}
\end{align}
In particular,
$$\mathbf{M}\Big(({\mathscr{D}}_1)_\sharp S_\infty\Big) = \mathbf{M}\Big(({\mathscr{D}}_1)_\sharp S_\infty \llcorner \Reg(S^\infty/\Gamma)_\omega  \Big).$$
But since by Lemma \ref{J easy} and the first inclusion in (\ref{F_j subset}),
$$\mathscr{D}_1^{-1}(\Reg(S^\infty/\Gamma)_\omega )
\subset 
\Reg(S^\infty/\Gamma)_\omega ,$$
we conclude as desired that 
$$\mathbf{M}(S_\infty \llcorner \Reg(S^\infty/\Gamma)_\omega ) =\mathbf{M}(S_\infty).$$
Now, by Theorem \ref{theorem:continuity}, 
$({\mathscr{D}}_1)_\sharp S_\infty $ is a spherical Plateau solution (viewed as an integral current space of course). From (\ref{<<>>}), we conclude the proof.

\end{proof}

\subsection{Global minimality}

We keep the assumptions and notations of the previous subsection. The following improves Theorem \ref{theorem:locally mass minimizing} by showing that $S_\infty$ is globally mass-minimizing in $(S^\infty/\Gamma)_\omega$, even around the singular part.
\begin{theo} \label{theorem:main statement}
The integral current $S_\infty$ is a mass-minimizing $n$-cycle in $(S^\infty/\Gamma)_\omega$.

\end{theo}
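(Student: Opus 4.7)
Suppose $S_\infty = U + \partial V$ for an $n$-cycle $U$ and an $(n+1)$-current $V$ in $(S^\infty/\Gamma)_\omega$; we must show $\mathbf{M}(S_\infty) \leq \mathbf{M}(U)$. The plan is to invoke the deformation maps $\{\mathscr{D}_\theta\}_{\theta\in[0,1]}$ of Subsection \ref{subsection:projection} associated to the separable subset $\mathbf{A} := \spt S_\infty \cup \spt U \cup \spt V$. Pushing forward the relation by $\mathscr{D}_1$ gives
$$(\mathscr{D}_1)_\sharp S_\infty = (\mathscr{D}_1)_\sharp U + \partial((\mathscr{D}_1)_\sharp V),$$
and $1$-Lipschitzness of $\mathscr{D}_1$ yields the trivial inequality $\mathbf{M}((\mathscr{D}_1)_\sharp U) \leq \mathbf{M}(U)$. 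Hence the theorem reduces to the reverse-type estimate
$$\mathbf{M}((\mathscr{D}_1)_\sharp U) \geq \spherevol(h),$$
since $\spherevol(h) = \mathbf{M}(S_\infty)$ by Theorem \ref{theorem:continuity}.

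To establish this inequality it suffices, by definition of $\spherevol(h)$, to produce for every $\eta > 0$ a cycle $C \in \mathscr{C}(h)$ with $\mathbf{M}(C) \leq \mathbf{M}((\mathscr{D}_1)_\sharp U) + \eta$. I would obtain such $C$ by replaying the argument of Proposition \ref{proposition:plateau solution} with $(\mathscr{D}_1)_\sharp U$ substituted for $(\mathscr{D}_1)_\sharp S_\infty$. Two structural inputs make this adaptation feasible. First, by Lemmas \ref{projection main 1} and \ref{J easy}, the support of $(\mathscr{D}_1)_\sharp U$ meets $\Sing(S^\infty/\Gamma)_\omega$ in at most the discrete set $\mathscr{J}(F_{(2)}/\lambda_\omega(\Gamma_{(1)}))$, so its mass is fully concentrated in $\Reg(S^\infty/\Gamma)_\omega$ and $(\mathscr{D}_1)_\sharp U \llcorner N^c_\omega(d)$ converges to $(\mathscr{D}_1)_\sharp U$ in both mass and flat norm as $d \to 0^+$. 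Second, the identity $(\mathscr{D}_1)_\sharp S_\infty - (\mathscr{D}_1)_\sharp U = \partial((\mathscr{D}_1)_\sharp V)$ lets us route the lifting step through the Plateau solution $S_\infty$: after replacing $(\mathscr{D}_1)_\sharp V$ by a current with compact support in $\Reg(S^\infty/\Gamma)_\omega$ via Lemmas \ref{reduce to compact} and \ref{apply coarea}, the hypotheses of Lemma \ref{approx before limit bis} are satisfied and one obtains polyhedral approximations with the correct mass bound.

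The construction of $C$ then proceeds exactly as in Proposition \ref{proposition:plateau solution}: approximate the thick part of $(\mathscr{D}_1)_\sharp U$ by a polyhedral chain $P$ in $S^\infty_{(3)}/\lambda_\omega(\Gamma_{(1)})$ with $\mathbf{M}(P) \leq \mathbf{M}((\mathscr{D}_1)_\sharp U) + \eta$; squeeze $\partial P$ into a small neighborhood of the discrete set $F_{(2)}/\lambda_\omega(\Gamma_{(1)})$ using the conical retraction of Lemma \ref{conical retraction} without appreciably increasing mass; lift the polyhedral data to $S^\infty/\Gamma$ via the ultralimit definition to obtain a compactly supported current $A$ whose residual boundary sits inside the thin part $[S^\infty/\Gamma]^{\leq \overline{\delta}/4}$; and finally cap off $\partial A$ via the Margulis-type controlled filling of Lemma \ref{controlled filling}, producing $C$ with $\mathbf{M}(C) \leq \mathbf{M}((\mathscr{D}_1)_\sharp U) + O(\eta)$. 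The main obstacle is the bookkeeping needed to certify that $C$ genuinely represents the class $\pi(h)$, i.e.\ lies in $\mathscr{C}(h)$: this requires using the continuous interpolation $\{\mathscr{D}_\theta\}_{\theta \in [0,1]}$ to exhibit a cylindrical filling between $U$ and $(\mathscr{D}_1)_\sharp U$ inside $(S^\infty/\Gamma)_\omega$, combining with the filling $V$ between $S_\infty$ and $U$, and then transferring the resulting homology statement back down to $S^\infty/\Gamma$ via the ultralimit lifting implicit in Lemma \ref{approx before limit bis}. Once this is in place, letting $\eta \to 0$ gives $\mathbf{M}((\mathscr{D}_1)_\sharp U) \geq \spherevol(h)$, and the chain of inequalities $\mathbf{M}(U) \geq \mathbf{M}((\mathscr{D}_1)_\sharp U) \geq \spherevol(h) = \mathbf{M}(S_\infty)$ closes the proof.
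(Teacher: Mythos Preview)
Your proposal is essentially correct and follows the same overall strategy as the paper. Two differences are worth noting.

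First, the paper reverses your order of operations: it applies the reduction-to-compact-support step \emph{before} invoking the deformation maps, via the new Lemma~\ref{reduce to compact bis}, which relaxes Lemma~\ref{reduce to compact} by dropping the hypothesis $\spt V_0 \subset N^c_\omega(d)$. Concretely, the paper starts from $S_\infty = U_0 + \partial V_0$, produces $S_\infty = E_3 + F_3 + \partial V_3$ with $V_3$ compactly supported in $\Reg(S^\infty/\Gamma)_\omega$, $\spt E_3 \subset N^c_\omega(d/2)$ locally compact, $\spt F_3 \subset N_\omega(d)$, and $\mathbf{M}(E_3) < \spherevol(h)$; only then does it choose $\mathbf{A} = \spt S_\infty \cup \spt V_3$ and apply $\mathscr{D}_1$. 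This ordering keeps all the subsequent polyhedral approximation and conical retraction inside a single $S^\infty_{(2)}/\lambda_\omega(\Gamma_{(1)})$, and it guarantees that $\spt \partial E_3 \subset N_\omega(d) \cap \mathbf{A}$, which is exactly the input needed for the last clause of Lemma~\ref{projection main 1}. In your ordering you must instead reduce $(\mathscr{D}_1)_\sharp V$ to compact support; since $\spt(\mathscr{D}_1)_\sharp V$ may accumulate on the discrete singular set it is not contained in any $N^c_\omega(d)$, so Lemma~\ref{reduce to compact} does not apply and you genuinely need Lemma~\ref{reduce to compact bis}.

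Second, the paper sidesteps the cylindrical-filling bookkeeping you flag as ``the main obstacle'': once Theorem~\ref{theorem:continuity} certifies that $(\mathscr{D}_1)_\sharp S_\infty$ is itself a spherical Plateau solution for $h$, Lemma~\ref{approx before limit bis} is applied directly with $(\mathscr{D}_1)_\sharp S_\infty$ in the role of the base Plateau solution, so the output $A+B$ lies in $\mathscr{C}(h)$ automatically, and the controlled filling of Lemma~\ref{controlled filling} replaces $B$ by $\hat{B}$ while preserving the homology class. No separate homotopy argument via $\{\mathscr{D}_\theta\}$ is required at this stage.
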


Before starting the proof of Theorem \ref{theorem:main statement}, we state an extension of Lemma \ref{reduce to compact}: 
\begin{lemme} \label{reduce to compact bis}
Consider some $n$-dimensional integral currents $S_0$, $U_0$  without boundary, and an $(n+1)$-dimensional integral current $V_0$ in $(S^\infty/\Gamma)_\omega$ such that
$$S_0=U_0+\partial V_0.$$
Let $d>0$ and $\eta>0$. Suppose that the closure of $\spt S_0 \cap U^{>\delta}$ is compact for any $\delta>0$.
Then there exists integral currents $U_3$, $V_3$ in $(S^\infty/\Gamma)_\omega$ such that 
$\spt V_3$ is compact, and
$$S_0=U_3+\partial V_3,$$
$$\spt V_3\subset \Reg(S^\infty/\Gamma)_\omega .$$
Moreover, $U_3 = E_3+F_3$ where $E_3,F_3$ are integral currents such that $\spt E_3$ is locally compact, 
$$\spt F_3 \subset N_\omega(d),\quad  \spt E_3\subset N^c_\omega(d/2)$$
$$\mathbf{M}(E_3 )\leq \mathbf{M}(U_0) +\eta,.$$
\end{lemme}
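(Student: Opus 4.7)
The plan is to adapt the proof of Lemma~\ref{reduce to compact}, which treated the case $\spt V_0 \subset N^c_\omega(d)$, by first extracting from $V_0$ the part that already satisfies this support condition via a slicing argument, and absorbing the leftover part together with the near-singular piece of $U_0$ into $F_3$. Concretely, applying the slicing theorem to the $1$-Lipschitz function $\phi(p) := \dist_\omega(p, \Sing(S^\infty/\Gamma)_\omega)$, for almost every $r > 0$ the restrictions $V_0 \llcorner N^c_\omega(r)$, $V_0 \llcorner N_\omega(r)$, $U_0 \llcorner N^c_\omega(r)$, $U_0 \llcorner N_\omega(r)$ and the slices $\langle U_0, \phi, r\rangle$, $\langle V_0, \phi, r\rangle$ are all integral currents. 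Fix such an $r \in (d/2, d)$ and set
$$\tilde{E}_0 := U_0 \llcorner N^c_\omega(r), \quad \tilde{F}_0 := U_0 \llcorner N_\omega(r) + \partial(V_0 \llcorner N_\omega(r)), \quad V_0' := V_0 \llcorner N^c_\omega(r).$$
A direct computation using $\partial S_0 = 0$ and $S_0 = U_0 + \partial V_0$ gives the decomposition $S_0 = \tilde{E}_0 + \tilde{F}_0 + \partial V_0'$, together with $\spt V_0' \subset N^c_\omega(r)$, $\spt \tilde{E}_0 \subset N^c_\omega(d/2)$, $\spt \tilde{F}_0 \subset N_\omega(d)$, and the mass bound $\mathbf{M}(\tilde{E}_0) \leq \mathbf{M}(U_0)$.

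Next, following the strategy of Lemma~\ref{reduce to compact}, I would replace $V_0'$ (whose support need not be locally compact) by an integral current with locally compact support, at the cost of perturbing $\tilde{E}_0$. Equip $N^c_\omega(d/2)$ with the conformally rescaled metric $\mathbf{g}_F = F \cdot \mathbf{g}_{\mathrm{Hil}}$, where $F \geq 1$ is smooth, $F \equiv 1$ on $N^c_\omega(r)$, and $F \to \infty$ near the boundary of $N^c_\omega(d/2)$. Then apply the Ekeland variational principle with $\varepsilon := \frac{1}{10}$ on the complete metric space $\mathscr{S}$ of pairs $(A, R)$ of integral currents in $(N^c_\omega(d/2), \mathbf{g}_F)$ satisfying $A + \partial R = S_0 - \tilde{F}_0 = \tilde{E}_0 + \partial V_0'$, using the functional
$$(A,R)\ \longmapsto\ \mathbf{M}_{\mathbf{g}_F}(A) + \frac{\eta\,\mathbf{M}_{\mathbf{g}_F}(R)}{2(1 + \mathbf{M}(V_0'))},$$
and starting from $(\tilde{E}_0, V_0')$. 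This yields $(U_1, V_1) \in \mathscr{S}$ with $\mathbf{M}(U_1) \leq \mathbf{M}(\tilde{E}_0) + \eta/2 \leq \mathbf{M}(U_0) + \eta/2$, and with both supports locally compact in $N^c_\omega(d/2)$ by the standard comparison argument using the local Euclidean isoperimetric inequality for $(N^c_\omega(d/2), \mathbf{g}_F)$. Applying Lemma~\ref{apply coarea} to $V_1$ with parameter $\eta/2$ then produces an open set $O_{V_1}$ such that $V_2 := V_1 \llcorner O_{V_1}$ has compact support in $\Reg(S^\infty/\Gamma)_\omega$ and $\mathbf{M}(\partial(V_1 - V_2)) \leq \eta/2$.

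To finish, set $V_3 := V_2$, $E_3 := U_1 + \partial(V_1 - V_2)$, $F_3 := \tilde{F}_0$, and $U_3 := E_3 + F_3$. The identity $U_1 + \partial V_1 = S_0 - \tilde{F}_0$ immediately gives $S_0 = U_3 + \partial V_3$; the support conditions $\spt V_3$ compact in $\Reg(S^\infty/\Gamma)_\omega$, $\spt E_3 \subset N^c_\omega(d/2)$, and $\spt F_3 \subset N_\omega(d)$ follow from the construction; local compactness of $\spt E_3$ holds because it is a finite union of closed locally compact sets; and the mass bound reads $\mathbf{M}(E_3) \leq \mathbf{M}(U_1) + \mathbf{M}(\partial(V_1 - V_2)) \leq \mathbf{M}(U_0) + \eta$. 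The main delicacy compared to Lemma~\ref{reduce to compact} is that $S_0 - \tilde{F}_0$ has nonzero boundary equal to $-\langle U_0, \phi, r\rangle$, supported on the level set $\{\phi = r\}$; however since $r > d/2$, this slice sits in the open region $N^c_\omega(d/2)$, so the completeness of $\mathscr{S}$ and the local isoperimetric inequality required for the Ekeland step continue to hold verbatim, and the rest of the argument proceeds just as in Lemma~\ref{reduce to compact}.
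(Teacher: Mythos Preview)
Your approach is close to the paper's but swaps the order of two steps: you slice $U_0$ and $V_0$ first at level $r$, then apply the Ekeland argument on $N^c_\omega(d/2)$ with a rescaled metric; the paper instead applies Ekeland first on all of $(S^\infty/\Gamma)_\omega$ (no rescaling needed, since the whole space is already complete), and only afterward slices $V_1$ and then $U_3$. This reordering creates a technical wrinkle that you do not fully resolve.

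The issue is the local compactness of $\spt U_1$ and $\spt V_1$ after your Ekeland step. The ``standard comparison argument'' you invoke from the proof of Lemma~\ref{reduce to compact} obtains a lower density bound for the quasi-minimizer $U_1$ at $x\in\spt U_1$ by filling $\partial(U_1\llcorner b)$ via the isoperimetric inequality and comparing. That argument uses $\partial U_1=0$, so that $\partial(U_1\llcorner b)$ is just a slice of $U_1$ and has small mass for small $b$. In your setup $\partial U_1=\partial(S_0-\tilde{F}_0)=-\langle U_0,\phi,r\rangle$ is nonzero and supported on $\{\phi=r\}$; at points $x\in\spt U_1\cap\{\phi=r\}$ the extra term $(\partial U_1)\llcorner b$ does not vanish as $b$ shrinks, and the usual differential inequality for $m(\rho)=\mathbf{M}(U_1\llcorner B(x,\rho))$ does not follow. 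Your closing remark addresses completeness of $\mathscr{S}$ and the local isoperimetric inequality, but not this density step. Since you then feed $V_1$ into Lemma~\ref{apply coarea} (which requires locally compact support) and deduce local compactness of $\spt E_3$ from that of $\spt U_1$ and $\spt V_1$, the gap propagates to the conclusion.

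The paper's ordering avoids the problem: since $\partial S_0=0$, applying Ekeland first on the whole space gives $U_1$ with $\partial U_1=0$, so the density argument and local compactness of $\spt U_1\cap\Reg(S^\infty/\Gamma)_\omega$ go through cleanly; only then does the paper slice $V_1$ at some $d'\in(d/2,d)$, apply Lemma~\ref{apply coarea}, and finally slice $U_3$ at $d''\in(d',d)$ to produce $E_3$ and $F_3$. Your scheme can be repaired by adopting this order of operations, but as written the local compactness claims require more justification.
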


\begin{proof}
The difference with Lemma \ref{reduce to compact} is that here 
$\spt V_0$ is not necessarily contained in $N_\omega^c(\hat{d})$ for some $\hat{d}>0$. 
Roughly speaking, we first apply Ekeland's variational principle as in Lemma \ref{reduce to compact} (without the need to rescale the metric) and find  integral currents $U_1,V_1$ in $(S^\infty/\Gamma)_\omega$ such that 
$$S_0=U_1+\partial V_1,$$
 $\spt U_1\cap  \Reg(S^\infty/\Gamma)_\omega $, $\spt V_1 \cap  \Reg(S^\infty/\Gamma)_\omega $ are locally compact, and the mass of $U_1$ is close to that of $U_0$. 
By the slicing theorem, we find a generic $d'\in (d/2,d)$ such that 
$$V'_1:=V_1\llcorner N^c_\omega(d')$$ is an integral current.
We then apply Lemma \ref{apply coarea} to $V'_1$, find the corresponding open subset $O_{V'_1}$ and set
$$V_3:= V'_1\llcorner O_{V'_1}, \quad U_3 = S_0-\partial V_3.$$
Then set 
$$E_3:= U_3 \llcorner N^c_\omega(d''), \quad F_3:=U_3-E_3$$
for some generic $d''\in (d',d)$. Local compactness of $\spt E_3$ readily follows from compactness of $V_3$ and compactness of the closure of $\spt S_0 \cap U^{>\delta}$ for any $\delta>0$.

\end{proof}

\begin{proof}[Proof of Theorem \ref{theorem:main statement}]

We only sketch the proof since it is technically very similar to that of Proposition \ref{proposition:plateau solution}.  We start with two integral currents $U_0,V_0$ inside $(S^\infty/\Gamma)_\omega$ such that
$$S_\infty = U_0+\partial V_0.$$
Suppose towards a contradiction that 
\begin{equation}\label{contrad <}
\mathbf{M}(U_0) < \mathbf{M}(S_\infty)\leq \spherevol(h).
\end{equation}

Let $\eta>0$ such that $(1+100\eta)\mathbf{M}(U_0) <\spherevol(h)$. Let $d>0$ be smaller than $\frac{\bar{\delta}}{100}$ where $\bar{\delta}$ is from  Corollary \ref{margulis} and smaller than $\frac{d(\eta_1)}{100}$ given by Lemma \ref{conical retraction} with $\eta_1:=\eta$. Recall from Lemma \ref{lemma:yinfinity} that the closure of $\spt S_\infty \cap U^{>\delta}$ is compact for all $\delta>0$. 
By (\ref{contrad <}) and Lemma \ref{reduce to compact bis}, we can find integral currents $U_3=E_3+F_3$ and  $V_3$  
such that $\spt V_3$ is compact, $\spt E_3$ is locally compact
$$S_\infty=U_3+\partial V_3,$$
$$\spt V_3 \subset  \Reg(S^\infty/\Gamma)_\omega ,$$
$$\spt F_3 \subset N_\omega(d), \quad \spt E_3\subset N_\omega^c(d/2),$$
$$
\mathbf{M}(E_3) < \spherevol(h).
$$

Consider the $1$-Lipschitz deformation maps $\{\mathscr{D}_\theta\}_{\theta\in [0,1]}$ associated this time with the separable subset
$$\mathbf{A} := \spt S_\infty\cup \spt V_3.$$
Let $U'_3:=(\mathscr{D}_1)_\sharp U_3$, $V'_3:= (\mathscr{D}_1)_\sharp V_3$, $E'_3:=(\mathscr{D}_1)_\sharp E_3$, $F'_3 :=(\mathscr{D}_1)_\sharp F_3$ so that 
$$(\mathscr{D}_1)_\sharp S_\infty = U'_3+\partial V'_3,$$
$$U'_3 = E'_3+F'_3,$$
and from the previous paragraph:
\begin{equation}\label{contrad < bis}
\mathbf{M}(E'_3) < \spherevol(h).
\end{equation}

By Remark \ref{point of 36 bis}, after perturbing $U'_3,V'_3$ a bit if necessary, we can assume that 
$$\spt V'_3 \subset  \Reg(S^\infty/\Gamma)_\omega ,$$
that for some $d'\in(0,d)$,
$$\spt E'_3 \subset N_\omega^c(d'),$$
and since $\mathscr{D}_\theta$ are $1$-Lipschitz,
\begin{equation}\label{f'_3 subset}
\spt F'_3 \subset N_\omega(2d).
\end{equation}

Recall from Subsection \ref{subsection:projection}  that $\mathscr{D}_1 := \mathscr{J} \circ \overline{\mathscr{D}}_1.$
Since $\spt \partial E_3 \subset N_\omega(d)$, by the last part of Lemma \ref{projection main 1}, $\overline{\mathscr{D}}_1( \spt \partial E_3)$ is contained  in the $2d$-neighborhood of  $F_{(2)}/\lambda_\omega(\Gamma_{(1)})$.
With the help of the $(1+\eta)$-Lipschitz diffeomorphism 
$$\varphi : (S^\infty_{(2)}\setminus F_{(2)})/\lambda_\omega(\Gamma_{(1)})\to (S^\infty_{(2)}\setminus F_{(2)})/\lambda_\omega(\Gamma_{(1)})$$ 
constructed in Lemma \ref{conical retraction}, as in the proof of Proposition \ref{proposition:plateau solution}, 
we can additionally ensure without loss of generality that
\begin{equation}\label{petit bord}
\mathbf{M}(\partial E'_3) \leq \eta.
\end{equation}

Applying the compact approximation lemma, Lemma \ref{apply coarea}, to $W:=E'_3$, we find  an open  subset $O\subset N_\omega^c(d')$ such that $E'_3 \llcorner O$ has compact support  and
\begin{equation}\label{petit bord bis}
\begin{split}
\mathbf{M}(E'_3-E'_3\llcorner O)& \leq \eta,\\
\mathbf{M}(\partial(E'_3-E'_3 \llcorner O)) & \leq \eta.
\end{split}
\end{equation}
In particular 
\begin{equation}\label{E'_3 subset}
\spt E'_3 \llcorner O\subset U^{>\delta_1}
\end{equation}
for some $\delta_1>$. Since the closure of $\spt S_\infty \cap U^{>\delta}$ is compact for all $\delta>0$, and since $\mathscr{D}_1$ is $1$-Lipschitz,  the closure of $(\mathscr{D}_1)_\sharp S_\infty \cap U^{>\delta}$ is also compact for all $\delta>0$. Next $(\mathscr{D}_1)_\sharp \spt V_3$ is of course compact. 
The last sentence in Lemma \ref{apply coarea} then ensures that we can impose
\begin{equation}\label{E'_3 complement subset}
\spt (E'_3-E'_3 \llcorner O) \subset U^{\leq \delta_2}
\end{equation}
for some $\delta_2>\delta_1$ which  can be chosen to be equal to $d$.

Summarizing the previous manipulations, we have
$$(\mathscr{D}_1)_\sharp S_\infty = U'_3+\partial V'_3,$$
$$U'_3 = E'_3+F'_3 = E''+F'',$$
where 
$$E'':= E'_3\llcorner O,\quad F'' = (E'_3-E'_3\llcorner O) +F'_3.$$
Here $\spt E''$ is compact, and by (\ref{f'_3 subset}), (\ref{E'_3 subset}), (\ref{E'_3 complement subset}),
$$\spt E'' \subset U^{>\delta_1}, \quad \spt F'' \subset U^{\leq \delta_2}.$$
By (\ref{petit bord}) and (\ref{petit bord bis}),
$$\mathbf{M}(\partial E'') \leq 2 \eta.$$

At this point, note that Proposition \ref{proposition:plateau solution} and Theorem \ref{theorem:continuity}  (see (\ref{===}))  already imply that $(\mathscr{D}_1)_\sharp S_\infty$ is the intrinsic flat limit of a sequence $\{\hat{C}_i\}_i \subset \mathscr{C}(h)$ such that 
$$\lim_{i\to \infty}\mathbf{M}(\hat{C}_i) =\mathbf{M}((\mathscr{D}_1)_\sharp S_\infty)=\mathbf{M}(S_\infty)=\spherevol(h),$$
namely $(\mathscr{D}_1)_\sharp S_\infty$ is also a spherical Plateau solution for $h$.

Applying the approximation lemma, Lemma \ref{approx before limit bis}, to the spherical Plateau solution $(\mathscr{D}_1)_\sharp S_\infty $ instead of $S_\infty$, together with the closing lemma, Lemma \ref{controlled filling}, we obtain the existence of a cycle 
$C\in \mathscr{C}(h)$ such that 
$$\mathbf{d}_\mathcal{F}(C,E'')\leq \hat{\eta},$$
$$\mathbf{M}(C) \leq \mathbf{M}(E'') +\hat{\eta},$$
where $\hat{\eta}$ is a number converging to $0$ as $d, \eta$ converge to $0$. When $\eta$ and $\hat{\eta}$ are small enough, (\ref{contrad < bis}) and (\ref{petit bord bis}) lead to 
$$\mathbf{M}(C)  < \spherevol(h)$$
which contradicts the definition of $\spherevol(h)$. This ends the proof.

\end{proof}

To summarize, from Corollary \ref{positive spherevol}, Theorem \ref{theorem:continuity}, Theorem \ref{theorem:avoidance} and Theorem \ref{theorem:main statement}, we readily obtain the following general result, of which Theorem \ref{theorem:plateau solution} is a special case. 
\begin{theo}
Given a torsion-free hyperbolic group $\Gamma$ and $h\in H_n(\Gamma;\mathbb{Z})\setminus \{0\}$, 
there is a spherical Plateau solution for $h$, which is a mass-minimizing $n$-cycle $T$ in $(S^\infty/\Gamma)_\omega$, with mass equal to $\spherevol(h)>0$. Moreover, $\spt T\cap \Sing(S^\infty/\Gamma)_\omega $ is discrete.
\end{theo}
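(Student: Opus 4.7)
The plan is to verify that the four cited results fit together to yield the statement with essentially no further work, by making an explicit choice of $T$ from a minimizing sequence and tracking which property comes from which result.

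First I would start with the existence and positivity. By the compactness theorem of Wenger (already invoked earlier in the paper), a minimizing sequence $\{C_i\} \subset \mathscr{C}(h)$ with $\mathbf{M}(C_i) \to \spherevol(h)$ subsequentially converges in the intrinsic flat topology to a spherical Plateau solution $C_\infty = (X_\infty, d_\infty, S_\infty)$, and by Lemma \ref{ultraembedding1} the current $S_\infty$ embeds as a boundaryless integral current in $(S^\infty/\Gamma)_\omega$. Corollary \ref{positive spherevol} gives $\spherevol(h) > 0$ (using the hypothesis $h \neq 0$ and $n \geq 2$, which is implicit once one observes that $H_0$ and $H_1$ cases are trivial or cyclic).

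Next I would define the candidate mass-minimizing cycle to be $T := (\mathscr{D}_1)_\sharp S_\infty$, where $\mathscr{D}_1$ is the $1$-Lipschitz deformation map of Subsection \ref{subsection:projection} applied to the separable subset $\mathbf{A} := \spt S_\infty \subset (S^\infty/\Gamma)_\omega$. Theorem \ref{theorem:continuity} and its proof (see the chain of equalities around (\ref{===})) then guarantee both that $\mathbf{M}(C_\infty) = \mathbf{M}(T) = \spherevol(h)$ and that $T$, viewed as an integral current space, is itself a spherical Plateau solution for $h$. Theorem \ref{theorem:avoidance} tells us that $\spt T \cap \Sing(S^\infty/\Gamma)_\omega$ is a discrete subset, since by Lemmas \ref{projection main 1} and \ref{J easy} the image $\mathscr{D}_1(\mathbf{A}) \cap \Sing(S^\infty/\Gamma)_\omega$ is contained in the countable discrete set $\mathscr{J}(F_{(2)}/\lambda_\omega(\Gamma_{(1)}))$.

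Finally, to conclude the global mass-minimality in $(S^\infty/\Gamma)_\omega$, I would invoke Theorem \ref{theorem:main statement}, which applies directly to the integral current $T$ (since $T$ is also a spherical Plateau solution for $h$, by the previous paragraph, and Theorem \ref{theorem:main statement} applies to any such solution). So for every $(n+1)$-dimensional integral current $R$ in $(S^\infty/\Gamma)_\omega$ one has $\mathbf{M}(T) \leq \mathbf{M}(T + \partial R)$, establishing mass-minimality. Since $T$ is boundaryless (being the pushforward of a boundaryless current by a Lipschitz map), it is an $n$-cycle. Combining the four items — existence from Wenger's compactness, positivity from Corollary \ref{positive spherevol}, correct mass and Plateau-solution property from Theorem \ref{theorem:continuity}, discrete singular intersection from Theorem \ref{theorem:avoidance}, and mass-minimality from Theorem \ref{theorem:main statement} — yields the claim. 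There is essentially no obstacle in this final synthesis: all the hard work (construction of the deformation maps, the Margulis-type lemma, controlled fillings, and the Ekeland-type pull-tight arguments) has been done in the preceding sections, and the only care needed is bookkeeping to confirm that the same $T$ witnesses all four properties simultaneously.
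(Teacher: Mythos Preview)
Your proposal is correct and matches the paper's own approach: the paper simply states that the theorem ``readily'' follows from Corollary~\ref{positive spherevol}, Theorem~\ref{theorem:continuity}, Theorem~\ref{theorem:avoidance} and Theorem~\ref{theorem:main statement}, and you have spelled out exactly this combination, correctly choosing $T=(\mathscr{D}_1)_\sharp S_\infty$ so that the discrete singular intersection from Theorem~\ref{theorem:avoidance} and the mass-minimality from Theorem~\ref{theorem:main statement} (applied to $T$, which is itself a Plateau solution) hold simultaneously.
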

\vspace{1em}

\begin{remarque}\label{plausible}
We end this section with a discussion of Conjecture \ref{conjecture}:

\emph{Let $M$ be a closed oriented negatively curved Riemannian $n$-manifold with fundamental group $\Gamma$. 
Then, there exist an orthogonal representation $\rho: \Gamma\to \End(H)$ weakly equivalent to the regular representation $\lambda_\Gamma$, and a $\Gamma$-equivariant, indecomposable, $n$-dimensional minimal surface $\Sigma$ in  the unit sphere of $H$.}

Here, we informally use the term ``$n$-dimensional minimal surface'' to denote a closed subset $\Sigma$ such that any $x\in \Sigma$ is contained in an open neighborhood $\Omega$ with $\Sigma \cap \Omega=\spt S \cap \Omega$ for some $n$-dimensional mass-minimizing integral current $S$ without boundary in $\Omega$. Equivalently, we could also define it to be the support of a boundaryless, locally mass-minimizing integral $n$-current in the sense of Lang \cite{Lang11} (contrarily to \cite{AK00}, \cite{Lang11} allows integral currents to have infinite mass).
A minimal surface is said to be indecomposable if it is not the union of two strictly smaller minimal surfaces. 
We say that  $\rho$ is weakly equivalent to $\lambda_\Gamma$ if their complexifications are weakly equivalent as unitary representations in the sense of \cite[Definition F.1.1]{BDLHV08}.

In Theorem \ref{theorem:plateau solution}, it seems reasonable to expect that the mass-minimizing cycle $T$ in fact avoids completely the singular set of $(S^\infty/\Gamma)_\omega=S^\infty_\omega/\lambda_\omega(\Gamma_\omega)$. 
Its lift $\tilde{T}$ to the unit sphere $S^\infty_\omega$ would be an $n$-dimensional  minimal surface equivariant under $\Gamma_\omega$.   
The locally symmetric case \cite{BCG95} and the fact that negatively curved manifolds are aspherical suggest that one connected component $\Sigma$ of $\tilde{T}$ should be indecomposable, and invariant under $\Gamma$ viewed as a subgroup of $\Gamma_\omega$. 
The restriction $\lambda_\omega\vert_\Gamma$ is weakly equivalent to $\lambda_\Gamma$, see Remark \ref{derniere rem}.
What already follows from our results is that there is a closed subset $\Sigma\subset S^\infty_\omega$ which projects to $\spt T$, which is an $n$-dimensional minimal surface outside a discrete subset.

Conjecture \ref{conjecture} is partly motivated by the hyperbolization theorem, which states that closed, oriented manifolds of dimension $2$ or $3$, which admit a metric of negative curvature, in fact have a hyperbolic metric.  The naive analogue of this statement fails to hold in higher dimensions \cite{GT87}. In the conjecture, we replace hyperbolic metrics by minimal surfaces in spheres. Note that, if $M$ is a priori known to admit a hyperbolic metric, then the conjecture is true and the natural choice of minimal surface $\Sigma$ is intrinsically isometric to the hyperbolic n-plane up to scaling \cite{BCG95}\cite[Section 4.2]{Antoine23a}.

\end{remarque}

\section{Appendix}

\subsection{Integral currents in metric spaces}

The theory of metric currents initiated by Ambrosio-Kirchheim \cite{AK00} extends the classical theory of currents in $\mathbb{R}^n$ of de Giorgi, Federer-Fleming \cite{FF60}. The reader is referred to \cite[Section 1]{Antoine23a}, where we review the main notions used in this paper: integral currents in complete metric spaces, mass, weak and flat topologies, area and coarea formulas, slicing theorem, integral current spaces, intrinsic flat topology of Sormani-Wenger and the intrinsic flat distance $\mathbf{d}_\mathcal{F}$. Below is an additional  technical lemma, which embeds different converging sequences in a common Banach space.
\begin{lemme} \label{in a common Banach}
For $i\geq 0$ and $k\geq 0$, let $C_i$ be an $n$-dimensional integral current in a complete metric space $(E_i,d_i)$ of uniformly bounded diameter, and   consider 
$$\text{a Borel set }U_{k,i} \subset E_i,\quad  \text{a compact set }K_{k,i}\subset E_i,$$ such that $C_i\llcorner U_{k,i}$ is an integral current space. Suppose that 
$$\mathbf{M}(C_i)+\mathbf{M}(\partial C_i)\quad \text{ and } \quad \mathbf{M}(C_i\llcorner U_{k,i})+\mathbf{M}(\partial(C_i\llcorner U_{k,i}))$$
are uniformly bounded as $i\to \infty$.
Suppose that $\{C_i\}_{i\geq 0}$ and $\{C_i\llcorner U_{k,i}\}_{i\geq 0}$ converge in the intrinsic flat topology respectively to $(X_\infty,d_\infty,S_\infty)$ and $(W_k,d_k,T_k)$, and that $\{K_{k,i}\}_{i\geq 0}$ converges in the Gromov-Hausdorff topology to $K_k$. 

Then after extracting a subsequence, there exist a Banach space $\mathbf{Z}$ and isometric embeddings
$$\spt(S_\infty) \hookrightarrow \mathbf{Z}, \quad  \spt(T_k) \hookrightarrow \mathbf{Z}, \quad K_k \hookrightarrow \mathbf{Z},\quad  j_i : E_i \to \mathbf{Z},$$
such that inside $\mathbf{Z}$:
\begin{itemize}
\item
$(j_i)_\sharp(C_i)$ converges in the flat topology to $S_\infty$,
\item
 for all $k$,
$(j_i)_\sharp(C_i\llcorner U_{k,i})$ converges in the flat topology to $T_k$,  
\item
 for all $k$, $j_i(K_{k,i})$ converges in the Hausdorff topology to $K_k$.
 \end{itemize}
\end{lemme}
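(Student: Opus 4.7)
The plan is to exploit the definition of intrinsic flat distance via isometric embeddings (as already used in the proof of Theorem \ref{compactness}) together with a diagonal extraction and an $\ell^\infty$-sum amalgamation. First, I note that each of the relevant sets $\spt(C_i)$, $\spt(C_i\llcorner U_{k,i})$ and $K_{k,i}$ is separable: the first two because integral currents of finite mass have separable supports, and the third because compact metric spaces are separable. Hence the ``relevant part'' $\widetilde{E}_i := \overline{\spt(C_i) \cup \bigcup_k \spt(C_i\llcorner U_{k,i}) \cup \bigcup_k K_{k,i}} \subset E_i$ is a complete separable metric space of bounded diameter, and we may work with it in place of $E_i$ (extending at the end via Kuratowski's embedding of $E_i$ into $\ell^\infty(E_i)$).

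Next, for each $k \in \{0,1,2,\dots\}$ and for the ``index $\infty$'' encoding the convergence $C_i \to S_\infty$, apply the standard metric realization lemma for intrinsic flat convergence (Sormani--Wenger, which is the direct unwinding of the definition of $\mathbf{d}_\mathcal{F}$ as an infimum over isometric embeddings, improved by a diagonal argument along $i$): this produces, along a subsequence, a complete separable metric space $\mathbf{Z}^{(k)}$ and isometric embeddings $\alpha_i^{(k)} : \widetilde{E}_i \to \mathbf{Z}^{(k)}$ and $\alpha_\infty^{(k)} : \spt(T_k) \to \mathbf{Z}^{(k)}$ (resp.\ $\alpha_\infty^{(\infty)} : \spt(S_\infty) \to \mathbf{Z}^{(\infty)}$) such that $(\alpha_i^{(k)})_\sharp (C_i\llcorner U_{k,i})$ converges in the flat topology of $\mathbf{Z}^{(k)}$ to $(\alpha_\infty^{(k)})_\sharp T_k$; the Gromov--Hausdorff convergence $K_{k,i} \to K_k$ is simultaneously realized in the same $\mathbf{Z}^{(k)}$ by the same embeddings (since $K_{k,i} \subset \widetilde{E}_i$), after possibly refining the subsequence to get Hausdorff convergence $\alpha_i^{(k)}(K_{k,i}) \to \beta^{(k)}(K_k)$ for an isometric embedding $\beta^{(k)}:K_k \to \mathbf{Z}^{(k)}$. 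A single diagonal subsequence of $i$'s then works for all $k$ simultaneously.

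Finally, I amalgamate by setting $\mathbf{Z} := \bigl( \bigoplus_{k \in \mathbb{N}\cup\{\infty\}} \mathbf{Z}^{(k)} \bigr)_{\ell^\infty}$, the $\ell^\infty$-direct sum of Banach space versions of the $\mathbf{Z}^{(k)}$ (each separable metric space of bounded diameter embeds isometrically into a separable Banach space via Kuratowski, so WLOG $\mathbf{Z}^{(k)}$ is already a Banach space), and define $j_i := (\alpha_i^{(k)})_{k}$ as a diagonal map $\widetilde{E}_i \to \mathbf{Z}$. Because each coordinate $\alpha_i^{(k)}$ is an isometry of $\widetilde{E}_i$ onto its image and the $\ell^\infty$-norm reads off the maximum distance, $j_i$ is itself an isometric embedding; the limit embeddings of $\spt(S_\infty)$, $\spt(T_k)$, $K_k$ go into the appropriate single summand. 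The convergence of $(j_i)_\sharp C_i$ to $S_\infty$, of $(j_i)_\sharp(C_i\llcorner U_{k,i})$ to $T_k$, and the Hausdorff convergence $j_i(K_{k,i}) \to K_k$ then follow from the corresponding facts inside each summand $\mathbf{Z}^{(k)}$ (the $\ell^\infty$-projection is $1$-Lipschitz, so flat convergence and Hausdorff convergence in $\mathbf{Z}^{(k)}$ imply the same in $\mathbf{Z}$). To finish I extend $j_i$ from $\widetilde{E}_i$ to all of $E_i$ by taking a further $\ell^\infty$-summand equal to $\ell^\infty(E_i)$ (where each $E_i$ sits via Kuratowski), so the final $\mathbf{Z}$ is still a Banach space and the $j_i$ remain isometric embeddings of all of $E_i$.

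The main obstacle is the simultaneity: one must produce, in one shot, a single embedding $j_i$ of $E_i$ that realizes every convergence. The $\ell^\infty$-sum trick resolves this cleanly because each individual embedding $\alpha_i^{(k)}$ is already an isometry on $\widetilde{E}_i$ and the only compatibility required between them is that they all restrict to the same intrinsic metric of $\widetilde{E}_i$, which holds by construction.
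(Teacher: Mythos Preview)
Your amalgamation step has a genuine gap. With $j_i=(\alpha_i^{(k)})_k$ sending $\widetilde E_i$ onto the \emph{diagonal} of the $\ell^\infty$-sum, while each limit $S_\infty$, $T_k$, $K_k$ is placed in a \emph{single} summand, the two are far apart in $\mathbf Z$: for $x\in\widetilde E_i$ and $y\in\spt S_\infty$ embedded in the $\infty$-slot, one has
\[
d_{\mathbf Z}\bigl(j_i(x),\iota_\infty(y)\bigr)=\max\Bigl(\|\alpha_i^{(\infty)}(x)-\alpha_\infty^{(\infty)}(y)\|,\ \sup_{k\neq\infty}\|\alpha_i^{(k)}(x)\|\Bigr),
\]
and the second term is of the order of the diameter of $\widetilde E_i$, not small. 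So neither Hausdorff nor flat closeness in the $\infty$-coordinate transfers to closeness in $\mathbf Z$. Your justification ``the $\ell^\infty$-projection is $1$-Lipschitz'' runs in the wrong direction: a $1$-Lipschitz projection pushes convergence \emph{from} $\mathbf Z$ \emph{to} $\mathbf Z^{(k)}$, not the reverse. (A two-line sanity check: in $\mathbb R^2$ with the sup-norm, the constant sequence $(1,1)$ projects to $1$ in each factor, yet is at distance $1$ from $(1,0)$.) The same obstruction kills the Hausdorff convergence $j_i(K_{k,i})\to K_k$.

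The paper does not attempt a coordinate-by-coordinate amalgamation. Instead it invokes Wenger's proof of his compactness theorem, which produces for each sequence of currents a decomposition $C_i=F_i^1+\dots+F_i^i+R_i^i$ (and likewise for each $C_i\llcorner U_{k,i}$) where, for each fixed $q$, the supports $\{\spt F_i^q\}_i$ form an \emph{equi-compact} family, and the tail $\sum_{q>L}F_i^q+R_i^i$ is uniformly small in intrinsic flat distance. Collecting finitely many of these equi-compact supports together with the $K_{k,i}$ yields equi-compact sets $B_i^m$, to which \cite[Proposition~5.2]{Wenger11} applies and furnishes a \emph{single} target space with one embedding $j_i$ realizing all the Hausdorff limits at once; the flat limits then follow from the Ambrosio--Kirchheim compactness/closure theorems and \cite{Wenger07}. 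In short, the simultaneity you identify as the main obstacle is handled not by an $\ell^\infty$-sum but by Wenger's equi-compact decomposition, and that ingredient (or an equivalent one) seems unavoidable.
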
 

\begin{proof}
This lemma is a corollary of the arguments in \cite{Wenger11}. Going through the proof of \cite[Theorem 1.2]{Wenger11}, one checks that it implies the following decomposition: for each $i\geq 0$,
$$C_i = F_i^1+...+F_i^i +R_i^i$$
where $F_i^q$ and $R_i ^i$ are integral currents such that for each fixed $q$, the sequence $\{\spt(F_i^q)\}_i$ is equi-compact with uniformly bounded diameter (see \cite[Definition 6.5]{AK00}). Moreover for each integer $L$,
\begin{equation} \label{mepj}
\limsup_{i\to \infty} \sum_{q=1}^L \mathbf{M}(F_i^q)+\mathbf{M}(\partial F_i^q) <\infty,
\end{equation}
and for any $\eta>0$, there is $L_\eta$ so that for all $L>L_\eta$ and $i>L$:
\begin{equation} \label{eer}
\mathbf{d}_\mathcal{F}(F_i^{L+1}+...+F_i^{i} +R^i_i,\mathbf{0}) \leq \eta
\end{equation}
where $\mathbf{d}_\mathcal{F}$ is the intrinsic flat distance and $\mathbf{0}$ is the zero integral current.
Similarly, for each $k\geq 0$ and $i\geq 0$, there are analogous decompositions
$$C_i \llcorner U_{k,i}= F_{k,i}^1+...+F_{k,i}^i +R_{k,i}^i.$$

For each $m\geq 0$, set
$$B_i^m :=  \big(\bigcup_{k=0}^m K_{k,i}\big) \cup \big(\bigcup_{q=0}^{\min\{m,i\}}\spt(F_i^q)\big) \cup \big(\bigcup_{k=0}^m \bigcup_{q=0}^{\min\{m,i\}}\spt(F_{k,i}^q)\big).$$
The sequence $\{B_i^m\}_{i\geq 0}$ is still equi-compact for each $m\geq 0$.
We can now apply \cite[Proposition 5.2 and Section 5]{Wenger11}: by extracting a subsequence with a diagonal argument and renumbering, we  conclude that there is a metric space $\mathbf{Z}$ (which can be assumed to be a Banach space by embedding into an $\ell^\infty$ space) and there are isometric embeddings 
$$j_i : (E_i,d_i) \hookrightarrow \mathbf{Z}$$
such that the following is true.
\begin{itemize}
\item $(j_i)_\sharp(C_i)$ converges in the flat topology to a limit integral current of $\mathbf{Z}$; this follows from (\ref{mepj}), (\ref{eer}), the compactness and closure theorems of \cite{AK00}, and \cite{Wenger07} as in the proof of \cite[Theorem 1.2]{Wenger11}. We identify that limit with $S_\infty$ by uniqueness of intrinsic flat limits \cite[Theorem 1.3]{Wenger11}.
\item Similarly for each $k>0$, $(j_i)_\sharp(C_i\llcorner U_{k,i})$ converges in the flat topology to a limit integral current of $\mathbf{Z}$ that we identify with $T_k$.
\item For each $k>0$, $j_i(K_{k,i})$ converges in the Hausdorff topology to a compact subset of $\mathbf{Z}$ which we identify with $K_k$ by uniqueness of compact Gromov-Hausdorff limits. 
\end{itemize}
\end{proof}

\subsection{Hyperbolic groups}\label{hyp groups}

The references \cite{GDLH90}\cite[Chapter 11]{DK18} provide introductions to hyperbolic groups, a fundamental class of groups in geometric group theory. 
Recall that a finitely generated group $\Gamma$ is hyperbolic if for some finite generating set $S\subset \Gamma$, the corresponding Cayley graph $\mathrm{Cay}(\Gamma,S)$ endowed with the word metric is a $\delta$-hyperbolic metric space for some $\delta\geq0$, in the sense that for any geodesic triangle $T$ in $\mathrm{Cay}(\Gamma,S)$ with sides $p,q,r$ and any point $ x\in p$, there exists $y\in q \cup r$ such that the word distance between $x$ and $y$ is at most $\delta$.

Let us assume that $\Gamma$ is a torsion-free hyperbolic group. Given a finite generating set $S$, there is a well-defined notion of boundary $\partial \mathrm{Cay}(\Gamma,S)$, defined by certain equivalence classes of geodesic rays in $\mathrm{Cay}(\Gamma,S)$. There is a standard topology on $\partial \mathrm{Cay}(\Gamma,S)$ which makes both $\partial \mathrm{Cay}(\Gamma,S)$ and $\mathrm{Cay}(\Gamma,S) \cup \partial \mathrm{Cay}(\Gamma,S)$ compact. This standard topology on $\partial \mathrm{Cay}(\Gamma,S)$ is induced by a metric (there are many possible choices for such a metric, but we arbitrarily choose one). The topology on the boundary does not depend on $S$, and will be denoted by $\partial \Gamma$. Thus 
$$\overline{\Gamma} :=\Gamma \cup \partial \Gamma $$ endowed with the natural topology is a compactification of $\Gamma$, which is identified with the vertices of the Cayley graph $\mathrm{Cay}(\Gamma,S) $ for an arbitrarily chosen finite generating set $S$.

Any group element $g\in \Gamma$ induces an isometric action on the boundary $\partial \Gamma$, and also on $\overline{\Gamma}$. Since $\Gamma$ is torsion-free and hyperbolic, it is known that when $g$ is not the identity element, $g$ has exactly two fixed points $a,b\in \partial \Gamma$. Moreover, if $\Gamma_a, \Gamma_b$ denote the stabilizers of $a,b$, then $\Gamma_a=\Gamma_b$ and they are equal to an infinite cyclic subgroup of $\Gamma$ containing $g$. 
In particular, if $g'$ does not commute with $g$, 
then $g'^N$ does not commute with $g$ for any positive integer $N$, and
\begin{equation}\label{disjoint pairs}
\{a,b\}\cap \{g'.a,g'.b\} =\varnothing.
\end{equation}
A related fact is that any nontrivial cyclic subgroup $H$ of $\Gamma$ are malnormal in th sense that for any $g\notin H$, $g^{1}Hg\cap H=\{1\}$.
The torsion-free hyperbolic group $\Gamma$ is called elementary if $\Gamma$ is the infinite cyclic group.

A theorem of Delzant \cite[Th\'{e}or\`{e}me I]{Delzant96}, (see also Chaynikov \cite{Chaynikov11} for a more complete proof) is important in our arguments:
\begin{theo}\label{theorem:delzant}
Let $\Gamma$ be a torsion-free hyperbolic group. There exists $k=k(\Gamma)$ such that for any two elements $x,y\in \Gamma$ which do not commute, the group generated by $x^k$ and $y^{-1}x^ky$ is a nonabelian free group of rank $2$.

\end{theo}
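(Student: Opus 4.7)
The plan is to prove the theorem by a ping-pong argument on the compactification $\overline{\Gamma} = \Gamma \cup \partial\Gamma$, with the main work being the uniformity of the exponent $k=k(\Gamma)$.

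First I would observe that if $x,y\in\Gamma$ do not commute, then $x$ and $y^{-1}xy$ also do not commute. Indeed, if they did, both would lie in a common maximal cyclic subgroup $H$; since maximal cyclic subgroups of torsion-free hyperbolic groups are malnormal, $y^{-1}Hy\cap H \ne \{1\}$ would force $y\in H$, contradicting non-commutativity. By the malnormality/disjointness statement (\ref{disjoint pairs}), the fixed-point pairs $\{x^+,x^-\}$ and $\{(y^{-1}xy)^+,(y^{-1}xy)^-\}$ on $\partial\Gamma$ are then disjoint. Each nontrivial element of the torsion-free hyperbolic group $\Gamma$ acts with north-south dynamics on $\overline{\Gamma}$, and sufficiently high powers contract any compact set avoiding the repelling fixed point into an arbitrarily small neighborhood of the attracting one. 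Given such dynamics and the disjointness of the fixed-point sets, the classical ping-pong lemma applied to disjoint neighborhoods $U_x^\pm$, $U_{y^{-1}xy}^\pm$ of the four fixed points yields that $\langle x^k, y^{-1}x^k y\rangle$ is free of rank $2$, provided $k$ is large enough that $x^{\pm k}$ and $(y^{-1}xy)^{\pm k}$ each map the complement of one of their neighborhoods into the other.

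The main obstacle, and the whole point of the theorem, is choosing $k$ uniformly in the pair $(x,y)$. This is where the deep properties of hyperbolic groups enter: (i) there is a uniform positive lower bound $\tau_0(\Gamma)>0$ on the stable translation length of any nontrivial element on the Cayley graph of $\Gamma$ (a theorem of Delzant, refining Gromov); (ii) $\mathrm{Cay}(\Gamma,S)$ is uniformly $\delta$-hyperbolic; and most importantly (iii) the action of $\Gamma$ on its Cayley graph is acylindrical, so the quasi-axes of any two non-commuting elements can only fellow-travel over a segment of uniformly bounded length $D=D(\Gamma)$. Using (i)--(iii), one can set up the ping-pong on a coarse neighborhood of the quasi-axes of $x$ and $y^{-1}xy$: choosing $k$ large enough (in terms of $\delta$, $\tau_0$, $D$, and the quasi-geodesic constants) guarantees that a single translation by $x^k$ pushes any point beyond the fellow-traveling region of the two quasi-axes, and symmetrically for $y^{-1}x^k y$, giving the required ping-pong inclusions with neighborhoods that can be defined uniformly in $(x,y)$.

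I would follow the strategy of Delzant \cite{Delzant96}, with the more detailed exposition of Chaynikov \cite{Chaynikov11} as a backup reference for carrying out the uniform ping-pong estimates inside the Cayley graph rather than on the boundary. The existence of $k(\Gamma)$ then reduces to tracking how the constants from (i)--(iii) combine in these estimates; no calculation depends on the specific pair $(x,y)$, only on the universal hyperbolicity data of $\Gamma$.
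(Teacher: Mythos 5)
The paper does not prove this theorem: it is stated as an external result and cited to Delzant \cite[Th\'{e}or\`{e}me I]{Delzant96}, with Chaynikov \cite{Chaynikov11} given for a more detailed exposition. There is therefore no ``paper's own proof'' for your argument to be compared against; I can only assess your sketch on its own merits.

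Your outline correctly identifies the standard ping-pong strategy. The initial reduction is sound: if $x$ and $y^{-1}xy$ commuted they would lie in a common maximal cyclic subgroup $H$, and then $x\in H\cap yHy^{-1}$ together with malnormality of $H$ forces $y\in H$, contradicting $[x,y]\neq 1$; combined with the disjointness of fixed-point pairs (the paper's equation giving $\{a,b\}\cap\{y.a,y.b\}=\varnothing$), this sets up the four ping-pong neighborhoods. You also correctly identify the genuine content of the theorem as the uniformity of $k$ in $(x,y)$.

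That uniformity step, however, is asserted rather than proved, and this is precisely where the whole difficulty lives. Two specific concerns. First, ping-pong on the boundary alone cannot be made uniform directly, because the fixed-point pairs of $x$ and $y^{-1}xy$ can be arbitrarily close in $\partial\Gamma$ as $(x,y)$ ranges over non-commuting pairs; you acknowledge this implicitly by moving to the Cayley graph, but you never establish the quantitative statement you need there, namely a bound $D=D(\Gamma)$ on the coarse overlap of the quasi-axes of $x$ and $y^{-1}xy$ uniform over all non-commuting pairs. Attributing this to ``acylindricity'' is not quite right as stated: acylindricity (in the Bowditch sense) controls the number of elements almost fixing two far-apart points, but converting that into the required uniform fellow-traveling bound for conjugate axes of non-commuting elements is itself a nontrivial lemma, and it is not the route Delzant takes in 1996 (his argument predates the acylindricity framework and is more directly combinatorial via the thin-triangle geometry of the Cayley graph and the finiteness of short elements). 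Second, the uniform lower bound on stable translation length that you invoke is itself a theorem that would deserve citation. In short, the sketch names the right ingredients and reduces the theorem to the right quantitative claim, but does not prove that claim; as written it is a plan that still rests on \cite{Delzant96} and \cite{Chaynikov11} for the substance, which is exactly what the paper does by citation alone.
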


In this paper, we are not only interested in negatively curved closed manifolds, but more generally in torsion-free hyperbolic groups with nontrivial group homology over $\mathbb{Z}$. Recall that the group homology of $\Gamma$ is simply the homology of any classifying space for $\Gamma$. Various constructions of nontrivial homology classes of torsion-free hyperbolic groups are known. Of course, for any oriented closed $n$-manifold $M$ with negative curvature, the fundamental group is torsion-free hyperbolic and so the fundamental class $[M]\in H_n(M;\mathbb{Z})$. Examples of negatively curved non-locally symmetric manifolds include the metrics of Gromov-Thurston \cite{GT87} and the large family of examples due to Ontaneda  \cite{Ontaneda20} which are smoothings of the older general $\mathrm{CAT}(-1)$ topological manifolds of Charney-Davis \cite{CD95}. There are additionally closed oriented aspherical manifolds with hyperbolic fundamental group, but which cannot carry a negatively curved metric \cite{DJ91} (using the correction in  \cite{CD95}). There are several additional examples of homology classes of torsion-free hyperbolic groups which are not given by manifolds: Mosher-Sageev \cite{MS97}, and more generally Fujiwara-Manning \cite{FM10,FM11} studied the so-called ``cusp closing'' construction. 
Other general examples of homology classes of torsion-free hyperbolic groups which are not given by manifolds appear in 
the work of Januszkiewicz-Swiatkowski \cite[Corollary 19.3]{JS06}, Haglund \cite{Haglund03}, Osajda \cite{Osajda13}.
More examples can be constructed via the combination theorem in \cite{BF92}. 

\subsection{Regular representation and ultralimits} \label{prelim3}

In this paper we will need to study ``limits of isometric group actions on the Hilbert sphere''. To clarify such a concept, the notions of ultralimit  comes in handy. We also point out how $\Gamma$-limit groups come into play.

The definition of an ultralimit of a sequence of metric spaces is presented in details in Dru\c{t}u-Kapovich \cite[Chapter 10]{DK18}, see also Bridson-Haefliger \cite[Chapter I.5]{BH99}, \cite[Chapter 19]{DK18}. We recall some basic elements.  
By \cite[Definition 10.20]{DK18}, a \emph{non-principal ultrafilter} on the set of  nonnegative integers $\mathbf{N}$ is a finite additive probability measure $\omega$ such that all subsets $S\subset \mathbb{N}$ are  $\omega$-measurable, $\omega(S)\in \{0,1\}$, and $\omega(S)=0$ if $S$ is finite. 
We fix a non-principal ultrafilter $\omega$ on $\mathbb{N}$ once and for all.
Any bounded sequence of real numbers $a_n\in \mathbb{R}$ converges to a unique limit $l\in \mathbb{R}$ with respect to $\omega$, in the sense that $\omega\{n; |a_n-l| <\epsilon\} =1$ for any $\epsilon>0$. If $S^\infty$ denotes the unit sphere in the real separable Hilbert space $H$ endowed with the standard round Hilbert-Riemannian metric $\mathbf{g}_{\mathrm{Hil}}$, then one defines the \emph{ultralimit} of the constant sequence $S^\infty$ with respect to $\omega$ as follows, see \cite[Section 10.4]{DK18}. We consider the space of all possible sequences of points in $S^\infty$, and we take the quotient of that space by the equivalence relation which says that two sequences of points are equivalent if the limit of the distances between the corresponding points converges to $0$ with respect to $\omega$. Let $S^\infty_\omega$ denote that ultralimit. By \cite[Corollary 19.3]{DK18}, $S^\infty_\omega$ is the unit sphere of a (non-separable) Hilbert space $H_\omega$ which is the ultralimit of $H$. 

The notion of \emph{$\Gamma$-limit group} is also useful, see Sela \cite{Sela01}.
Let $\Gamma$ be an arbitrary group with identity element $1$. Given a group $G$ and a sequence of homomorphisms $\varphi_i:G\to \Gamma$, suppose that for any $g\in G$, either $\varphi_i(g)=1$ for almost all $i$ or $\varphi_i(g)\neq 1$ for almost all $i$. Then the quotient $G/\underset{\rightarrow}{\ker}(\varphi_i)$ is called a $\Gamma$-limit group, where 
$$ \underset{\rightarrow}{\ker}(\varphi_i):=\{g\in G; \quad \varphi_i(g)=1 \text{ for almost all $i$}\}.$$

To see how these notions play a role in our discussion, consider a countable group $\Gamma$.  
Let $\ell^2(\Gamma)$ be the space of $\ell^2$ real functions on $\Gamma$. Identify $S^\infty$ with
$$\{f:\Gamma\to \mathbb{R};\quad \| f \|_{\ell^2} =1\}.$$
The group $\Gamma$ acts isometrically on $S^\infty$ by the \emph{left regular representation} $\lambda_\Gamma$: for all $\gamma\in \Gamma, x\in \Gamma, f\in S^\infty$,
\begin{equation}\label{reg rep}
\lambda_\Gamma(\gamma).f(x) := f(\gamma^{-1}x).
\end{equation}
The isometric action $\lambda_\Gamma$ gives rise to a discrete group $\Gamma_\omega$ acting non-freely by isometries on the ultralimit $S^\infty_\omega$. The group $\Gamma_\omega$ is defined as follows. First, any sequence $\{\gamma_j\}_{j\geq 1} \subset \Gamma$ yields an isometry of $S^\infty_\omega$, by \cite[Lemma 10.48]{DK18}. Next, we say that two sequences are equivalent if they yield the same isometry of $S^\infty_\omega$, and $\Gamma_\omega$ is defined as the quotient of the space of all sequences in $\Gamma$ by this equivalence relation. 
In fact, we will see in the next lemma that $\Gamma_\omega$ is the ultraproduct of $\Gamma$ with respect to $\omega$, see  \cite[Section 10.3]{DK18}.
The isometric action of 
$\Gamma_\omega$ on $S^\infty_\omega$ determines an ``ultralimit'' orthogonal representation
$$\lambda_\omega : \Gamma_\omega\to O(H_\omega)$$
where $O(H_\omega) $ denotes the orthogonal group of $H_\omega$.

We observe the following simple properties: 
\begin{lemme} \label{limit group}
\begin{enumerate}
\item A sequence $\{g_j\}_{j\geq 1} \subset \Gamma$ is identified with the identity element $1$ of $\Gamma_\omega$ if and only if eventually $g_j=1$ with respect to the non-principal ultrafilter $\omega$, in the sense that $\omega(\{j; g_j=1\})=1$. Equivalently, $\Gamma_\omega$ is the ultraproduct of $\Gamma$ with respect to $\omega$.
\item If $\Gamma$ is torsion-free then so is $\Gamma_\omega$.
\item If maximal abelian subgroups of $\Gamma$ are malnormal, then so are the maximal abelian subgroups of $\Gamma_\omega$.
\item Any finitely generated subgroup of $\Gamma_\omega$ is a $\Gamma$-limit group. Conversely, any $\Gamma$-limit group is isomorphic to a subgroup of $\Gamma_\omega$.

\end{enumerate}
\end{lemme}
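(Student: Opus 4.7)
The four statements follow from carefully unpacking the definition of $\Gamma_\omega$ as the group of isometries of $S^\infty_\omega$ induced by sequences in $\Gamma$ via $\lambda_\Gamma$, and in particular from using the distinguished unit vector $\delta_1\in S^\infty\subset \ell^2(\Gamma)$ (the indicator of the identity). For (1), first observe that a sequence $\{g_j\}$ with $g_j=1$ on an $\omega$-full set obviously acts as the identity on $S^\infty_\omega$. Conversely, if $\{g_j\}$ yields the identity isometry of $S^\infty_\omega$, then applying it to the constant sequence $\delta_1$, we get $\lim_\omega\|\lambda_\Gamma(g_j).\delta_1-\delta_1\|_{\ell^2}=\lim_\omega\|\delta_{g_j}-\delta_1\|_{\ell^2}=0$; but $\|\delta_{g_j}-\delta_1\|_{\ell^2}=\sqrt{2}$ whenever $g_j\neq 1$, so $\omega(\{j: g_j=1\})=1$. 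This immediately identifies $\Gamma_\omega$ with the usual ultraproduct $\prod_\omega\Gamma$: two sequences induce the same element of $\Gamma_\omega$ iff their ``quotient sequence'' $\{g_jg'^{-1}_j\}$ is eventually $1$ with respect to $\omega$, and the multiplication is coordinatewise.

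For (2), suppose $\gamma=[\{g_j\}]\in \Gamma_\omega$ satisfies $\gamma^n=1$ for some $n\geq 1$. Then the sequence $\{g_j^n\}$ represents the identity of $\Gamma_\omega$, so by (1), $g_j^n=1$ for $\omega$-almost every $j$. Since $\Gamma$ is torsion-free, this forces $g_j=1$ $\omega$-a.e., hence $\gamma=1$ again by (1).

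For (3), let $A\subset \Gamma_\omega$ be a maximal abelian subgroup and suppose $k\in A\cap gAg^{-1}$ with $k\neq 1$; I want to show $g\in A$. Pick any $h\in A\setminus\{1\}$, write $k=[\{k_j\}]$, $g=[\{g_j\}]$, $h=[\{h_j\}]$, and write $k=gag^{-1}$ with $a=[\{a_j\}]\in A$. Since $[k,h]=[a,h]=1$ in $\Gamma_\omega$, part (1) gives that $\omega$-a.e.\ $[k_j,h_j]=[a_j,h_j]=1$. Also $h_j, k_j\neq 1$ $\omega$-a.e. by (1). In the torsion-free hyperbolic group $\Gamma$, the centralizer $C_\Gamma(h_j)$ is the unique maximal abelian subgroup $A_j$ containing $h_j$ (commutative transitivity, which follows from malnormality in the torsion-free case), so $a_j, k_j\in A_j$, and consequently $k_j\in A_j\cap g_j A_j g_j^{-1}$. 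Malnormality of $A_j$ in $\Gamma$ then forces $g_j\in A_j$ $\omega$-a.e., so $[g_j,h_j]=1$ $\omega$-a.e., i.e.\ $g$ commutes with $h$ in $\Gamma_\omega$. Since $h\in A\setminus\{1\}$ was arbitrary and $A$ is maximal abelian, $g\in A$. The main subtlety here is avoiding any confusion between maximal abelian subgroups of $\Gamma_\omega$ (which is the ambient structure) and their coordinatewise counterparts in $\Gamma$; this is resolved by working one element $h\in A$ at a time and invoking commutative transitivity in $\Gamma$.

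For (4), combine both directions using (1). If $G=\langle \gamma_1,\dots,\gamma_m\rangle\subset \Gamma_\omega$ is finitely generated, lift each $\gamma_k$ to $[\{\gamma_{k,j}\}]$ and define $\varphi_j:F_m\to \Gamma$ on the free group of rank $m$ by $x_k\mapsto \gamma_{k,j}$. By (1) applied to the word $\varphi_j(w)$ for each $w\in F_m$, the dichotomy $\omega(\{j:\varphi_j(w)=1\})\in\{0,1\}$ (since $\omega$ is two-valued) matches exactly the stability condition in the definition of a $\Gamma$-limit group, and the stable kernel $\underset{\rightarrow}{\ker}(\varphi_j)$ is precisely the kernel of the natural map $F_m\to \Gamma_\omega$. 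Hence $F_m/\underset{\rightarrow}{\ker}(\varphi_j)\cong G$ is a $\Gamma$-limit group. Conversely, given a $\Gamma$-limit group $G/\underset{\rightarrow}{\ker}(\varphi_i)$ produced by a sequence $\varphi_i:G\to \Gamma$, the map $\Phi:G\to \Gamma_\omega$, $g\mapsto [\{\varphi_i(g)\}]$, is a homomorphism whose kernel equals $\underset{\rightarrow}{\ker}(\varphi_i)$ by (1), so $\Phi$ descends to an injection $G/\underset{\rightarrow}{\ker}(\varphi_i)\hookrightarrow \Gamma_\omega$. No step here is expected to be a serious obstacle; the only place requiring care is (3), where one must ensure that the coordinatewise malnormality argument really applies, and this is handled by selecting an $\omega$-full subset on which $h_j,k_j\neq 1$ simultaneously.
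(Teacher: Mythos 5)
Your arguments for (1)--(3) are correct and, if anything, marginally cleaner than the paper's in (1): you apply the sequence to the constant witness $\delta_1$ (so $\lambda_\Gamma(g_j).\delta_1 = \delta_{g_j}$, at distance $\sqrt{2}$ from $\delta_1$ when $g_j\neq 1$), whereas the paper uses the varying witness $\delta_{g_j}$; both work. The paper simply declares (2)--(3) elementary, and the details you supply via malnormality and commutative transitivity are the expected ones. One small remark on (3): the parenthetical ``commutative transitivity follows from malnormality in the torsion-free case'' is slightly off-target since malnormality of maximal abelian subgroups already gives the commutative transitivity you use, without invoking torsion-freeness; this does not affect the validity of the argument.

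The forward direction of (4) has a genuine gap. You assert that the dichotomy $\omega(\{j : \varphi_j(w)=1\})\in\{0,1\}$ ``matches exactly the stability condition in the definition of a $\Gamma$-limit group, and the stable kernel $\underset{\rightarrow}{\ker}(\varphi_j)$ is precisely the kernel of the natural map $F_m\to\Gamma_\omega$.'' These two kernels need not coincide: the definition of a $\Gamma$-limit group requires, for each $w$, that $\varphi_j(w)=1$ for \emph{all but finitely many} $j$ or $\varphi_j(w)\neq 1$ for all but finitely many $j$, and the stable kernel is defined by the cofinite condition. An $\omega$-full set is in general far from cofinite (take $\varphi_j(w)=1$ precisely for even $j$: the cofinite dichotomy fails, though $\omega$ will pick one side). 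So the sequence $\{\varphi_j\}$ you build need not even satisfy the hypotheses of the definition. The fix, which the paper does explicitly, is to pass to a subsequence $\{\varphi_{j_i}\}$ via a diagonal argument: enumerate the countably many words of $F_m$, and for each $k$ choose $j_k$ in the $\omega$-full set on which the first $k$ words have stabilized, so that along the subsequence the dichotomy holds cofinitely for every word and $\underset{\rightarrow}{\ker}(\varphi_{j_i})$ equals the kernel of $F_m\to\Gamma_\omega$. Your converse direction in (4) is fine, because there the dichotomy is part of the hypothesis, cofinitely many does imply $\omega$-almost all (as $\omega$ is non-principal), and your computation of $\ker\Phi$ is correct.
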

\begin{proof}

For (1): If  $g_j$ is not eventually equal to $1$ with respect to $\omega$, then consider the delta function $\delta_{g_j}\in S^\infty\subset \ell^2(\Gamma)$ and note that $\|\lambda_\Gamma(g_j). \delta_{g_j}-\delta_{g_j}\|$ is a constant sequence not equal to $0$ and so $\{g_j\}_j$ is not identified with the identity $1\in \Gamma_\omega$. 

Bullets (2) and (3) are elementary given (1).

For (4): Let $L$ be a finitely generated subgroup of $\Gamma_\omega$. Let $\langle a_1,...,a_{K}| r_1,r_2... \rangle$ be a presentation of $L$. Assume that $a_1,...,a_{K}$ are identified with respectively $\{g_{1,j}\}_{j\geq 1},...,\{g_{K,j}\}_{j\geq 1}$. Let $F_{K}$ be the free group of rank $K$, generated by $a_1,...,a_{K}$, and let $\varphi_j : F_{K} \to \Gamma$ be the homomorphism sending $a_k$ to $g_{k,j}$. Bullet (1) and a diagonal argument imply that after taking a subsequence of indices $j_i$, the direct limit $\underset{\rightarrow}{\ker}(\varphi_{j_i})$
exists and the $\Gamma$-limit group $L':=F_{K}/\underset{\rightarrow}{\ker}(\varphi_{j_i})$ is isomorphic to $L$. 

Conversely, let $L$' be a $\Gamma$-limit group equal to $F_{K}/\underset{\rightarrow}{\ker}(\varphi_{j})$, where $F_K$ is the free group generated by $a_1,...,a_K$, for some sequence of homomorphisms $\varphi_j : F_K \to \Gamma$. Let  $L$ be the subgroup of $\Gamma_\omega$ generated by the $K$ elements represented by the sequences $\{\varphi_j(a_1)\}_{j\geq 1},...,\{\varphi_j(a_K)\}_{j\geq 1}$. By (1) again, it follows that $L$ is isomorphic to $L'$.

\end{proof}

\subsection{Representation theory} \label{rep theory}
We review some elementary notions from the theory of orthogonal representations of discrete groups. Most of the literature concerns unitary representations on complex Hilbert spaces, while we only deal with orthogonal representations on real Hilbert spaces. Since an orthogonal representation can be complexified to yield a unitary representation, that distinction does not really matter for us. Good references are the books of Bekka-de la Harpe \cite{BDLH19} and Bekka-de la Harpe-Valette \cite{BDLHV08}.

Let $H$ be a (real) Hilbert space, $\End(H)$ the space of bounded linear operators on $H$, $O(H)$ the group of orthogonal transformations in $\End(H)$. Given a countable group $\Gamma$, an orthogonal representation $\rho$ of $\Gamma$ on $H$ is a homomorphism $\rho:\Gamma\to O(H)$. Equivalently, an orthogonal representation $\rho$ of $\Gamma$  is an isometric $\Gamma$-action on the unit sphere $S_H$ of $H$. We write either $\rho$ or $(\rho,H)$ to denote the representation. Given two orthogonal representations $(\rho,H), (\rho',H')$, we sometimes write $\rho=\rho'$ when they are equivalent, meaning that there is an orthogonal transformation $V:H\to H'$ such that $\rho'(g) = V\rho(g)V^{-1}$ for all $g\in G$. 

A subrepresentation of $(\rho,H)$ is given by the restriction of $\rho(G)$ to any $\rho$-invariant closed subspace $H_0$ of $H$. In that case, the orthogonal subspace of $H_0$ also yields  a subrepresentation. A direct sum of a family $\{(\rho_j,H_j)\}_j$ of orthogonal representations of $\Gamma$ is the representation $\bigoplus_j \rho_j$ on the Hilbert direct sum $\bigoplus_j H_j$. We always denote by $\mathbf{1}_\Gamma$ the trivial representation of $\Gamma$, which sends all of $\Gamma$ on the identity of the $1$-dimensional space $\mathbb{R}$ (sometimes written $\mathbb{R}\mathbf{u}$ where $\mathbf{u}$ is an abstract unit vector). 

An orthogonal representation $(\rho,H)$ will be called ``proper free'', if the action of $\rho(\Gamma)$ on the unit sphere $S_H$ is proper and free.

Given a subgroup $G$ of $\Gamma$, and a representation $(\rho,H)$, we denote by $\rho\vert_G$ the restriction of $\rho$ to $G$. If moreover $H_1$ is $\rho\vert_G$-invariant closed subspace of $H$, then $\rho\vert_{G,H_1}$ denotes the representation $(\rho\vert_G, H_1)$.
We will also need the notion of induced representations. If $G$ is a subgroup of $\Gamma$, a left transversal for $G$ is a subset $T$ of $\Gamma$ such that $\Gamma$ is the disjoint union $\bigcup_{t\in T} t.G$. Given an orthogonal representation $(\sigma,K_\sigma)$ of $G$,  an induced representation of $\sigma$ from $G$ to $\Gamma$ is an orthogonal representation $\rho$ of  $\Gamma$ on a Hilbert space $H$ statisfying:
\begin{enumerate}
\item there exists a closed subspace $K$ of $H$, invariant by $\rho(G)$, 
\item the restricted representation $(\rho\vert_H,K)$ is equivalent to $(\sigma,K_\sigma)$,
\item $H$ is the Hilbert direct sum $\bigoplus_{t\in T} \rho(t).K$ for some left transversal $T$ of $G$ in $\Gamma$.
\end{enumerate} 
Such an induced representation is denoted by $\Induced_G^\Gamma \sigma$.

\begin{remarque} \label{derniere rem}
The orthogonal representation $\lambda_\omega:\Gamma_\omega\to O(H_\omega)$ constructed in Subsection \ref{prelim3} satisfies the following properties:
$\Gamma$ naturally embeds as a subgroup of $\Gamma_\omega$ and if $\Gamma$ is torsion-free hyperbolic, the complexification of the restriction $\lambda_\omega\vert_\Gamma$ is weakly contained in the complex regular representation $\lambda_\Gamma:\Gamma\to \End(\ell^2(\Gamma,\mathbb{C}))$, in the sense of \cite[Definition F.1.1]{BDLHV08}. Indeed, for each $\gamma\in \Gamma$, the constant sequence $\gamma$ corresponds to an element of $\Gamma_\omega$, and by Lemma \ref{limit group}, this yields an  injective morphism $\Gamma\to \Gamma_\omega$. 
The second property is checked using the definition of $\lambda_\omega$ and \cite[Lemma F.1.3]{BDLHV08}. Note that if $\Gamma$ is torsion-free hyperbolic, then by $C^*$-simplicity of torsion-free hyperbolic groups \cite[Section 1 and Theorem 7]{DLH07}, the complexification of $\lambda_\omega\vert_\Gamma$ is actually weakly equivalent to $\lambda_\Gamma$.
\end{remarque}

\bibliographystyle{alpha}
\bibliography{biblio_24_01_08}

\end{document}